\documentclass[reqno]{amsart}
\usepackage[top=25mm, bottom=25mm, footskip=8mm, left=60mm, right=25mm]{geometry}
\usepackage{times}
\usepackage{fancyhdr}

\numberwithin{equation}{section}

\clearpage{}%

\usepackage{harvard_acta}
\usepackage{amsmath,amsfonts,mathrsfs,amssymb}
\usepackage{bm}
\usepackage{url}

\usepackage{xcolor}

\usepackage{tikz}

\usepackage[noend]{algpseudocode}
\usepackage{algorithm,algorithmicx}

\algrenewcommand\alglinenumber[1]{\sf\scriptsize\color{blue}{#1}}
\algrenewcommand\algorithmicrequire{\textbf{Input:}}
\algrenewcommand\algorithmicensure{\textbf{Output:}}

\newcommand{\ignore}[1]{}

\usepackage[normalem]{ulem}

\newtheorem{theorem}{Theorem}[section]
\newtheorem{lemma}[theorem]{Lemma}
\newtheorem{proposition}[theorem]{Proposition}
\newtheorem{corollary}[theorem]{Corollary}

\newtheorem{example}[theorem]{Example}
\newtheorem{remark}[theorem]{Remark}
\newtheorem{warning}[theorem]{Warning}

\newcommand{\eps}{\varepsilon}

\newcommand{\econst}{\mathrm{e}}
\newcommand{\iunit}{\mathrm{i}}

\newcommand{\N}{\mathbb{N}}

\newcommand{\RR}{\mathbb{R}}
\newcommand{\CC}{\mathbb{C}}
\newcommand{\F}{\mathbb{F}}

\newcommand{\Sym}{\mathbb{H}}

\newcommand{\Id}{\mtx{I}}

\newcommand{\vct}[1]{\bm{#1}}
\newcommand{\mtx}[1]{\bm{#1}}

\newcommand{\lspan}{\operatorname{span}}
\newcommand{\trace}{\operatorname{trace}}
\newcommand{\range}{\operatorname{range}}
\newcommand{\rank}{\operatorname{rank}}
\newcommand{\diag}{\operatorname{diag}}

\newcommand{\intdim}{\operatorname{intdim}}
\newcommand{\srank}{\operatorname{srank}}

\newcommand{\psdle}{\preccurlyeq}

\newcommand{\pinv}{\dagger}

\newcommand{\nys}[1]{\langle #1 \rangle}

\newcommand{\real}{\operatorname{real}}

\newcommand{\abs}[1]{\vert #1 \vert}

\newcommand{\norm}[1]{\Vert #1 \Vert}
\newcommand{\lnorm}[1]{\left\Vert #1 \right\Vert}

\newcommand{\fnorm}[1]{\norm{#1}_{\mathrm{F}}}

\newcommand{\ip}[2]{\langle #1, \, #2 \rangle}

\newcommand{\triplenorm}[1]{\vert\!\vert\!\vert #1 \vert\!\vert\!\vert}

\newcommand{\diff}[1]{\mathrm{d}{#1}}

\newcommand{\argmin}{\arg\min}

\newcommand{\Expect}{\operatorname{\mathbb{E}}}
\newcommand{\Prob}[1]{\mathbb{P}\left\{ #1 \right\}}

\newcommand{\Var}{\operatorname{Var}}

\newcommand{\normal}{\textsc{normal}}
\newcommand{\unif}{\textsc{unif}}

\usepackage{kbordermatrix}
\usepackage{bm}
\usepackage{graphicx}
\usepackage{color}
\renewcommand{\vct}[1]{\bm{\mathsf{#1}}}
\renewcommand{\mtx}[1]{\bm{\mathsf{#1}}}

\newcommand{\bigO}{O}
\clearpage{}%

\title[Randomized Numerical Linear Algebra]{Randomized Numerical Linear Algebra: \\ Foundations \& Algorithms}
\author[Martinsson and Tropp]{%
Per-Gunnar Martinsson \\
\texttt{pgm@oden.utexas.edu} \\
\and
Joel A.~Tropp\\
\texttt{jtropp@cms.caltech.edu}
}

\setcounter{tocdepth}{1}
\hyphenation{Sparse-Cholesky}

\pagestyle{fancyplain}
\fancyhead[EL]{\textsl{Randomized Numerical Linear Algebra}}
\fancyhead[ER]{P.G.~Martinsson and J.A.~Tropp}
\fancyhead[LO]{}
\fancyhead[RO]{\rightmark}

\fancyfoot[LE]{\thepage}
\fancyfoot[C]{}
\fancyfoot[RO]{\thepage}

\usepackage{xpatch}
\makeatletter
\xapptocmd{\@sect}{\csname #1mark\endcsname{#7}}{}{}
\makeatother

\renewcommand{\subsectionmark}[1]{}%

\begin{document}
\thispagestyle{empty}
\newgeometry{top=25mm,bottom=25mm,left=45mm,right=45mm}

\begin{center}

\phantom{.}\vspace{6mm}

{\huge\textbf{Randomized Numerical Linear Algebra: \\[4pt] Foundations \& Algorithms}}

\vspace{6mm}

Per-Gunnar Martinsson, University of Texas at Austin

\vspace{1mm}

Joel A.~Tropp, California Institute of Technology

\vspace{10mm}

\begin{minipage}{\textwidth}
\textbf{Abstract:}
This survey describes probabilistic algorithms
for linear algebra computations,
such as factorizing matrices and solving linear systems.
It focuses on techniques that have a proven track record for real-world problems.
The paper treats both the theoretical foundations
of the subject and practical computational issues.

\vspace{2mm}

Topics include
norm estimation;
matrix approximation by sampling;
structured and unstructured random embeddings;
linear regression problems;
low-rank approximation;
subspace iteration and Krylov methods;
error estimation and adaptivity;
interpolatory and CUR factorizations;
Nystr\"om approximation of positive semidefinite matrices;
single-view (``streaming'') algorithms;
full rank-revealing factorizations;
solvers for linear systems; and
approximation of kernel matrices that arise
in machine learning and in scientific computing.
\end{minipage}
\end{center}

\vfill

\tableofcontents

\clearpage

\restoregeometry

\section{Introduction}

Numerical linear algebra (NLA) is one of the great achievements
of scientific computing.
On most computational platforms, we can now routinely and automatically
solve small- and medium-scale linear algebra problems to high precision.
The purpose of this survey is to describe a set of probabilistic
techniques that have joined the mainstream of NLA over the last decade.
These new techniques have accelerated everyday computations
for small- and medium-size problems,
and they have enabled large-scale computations
that were beyond the reach of classical methods.

\subsection{Classical numerical linear algebra}

NLA definitively treats several major classes of problems,
including

\begin{itemize}  \setlength{\itemsep}{1mm}
\item	solution of dense and sparse linear systems;
\item	orthogonalization, least-squares, and Tikhonov regularization;
\item	determination of eigenvalues, eigenvectors, and invariant subspaces;
\item	singular value decomposition (SVD) and total least-squares.
\end{itemize}

In spite of this catalog of successes, important challenges remain.
The sheer scale of certain datasets (terabytes and beyond) makes
them impervious to classical NLA algorithms.
Modern computing architectures (GPUs, multi-core CPUs, massively distributed systems)
are powerful, but this power can only be unleashed by algorithms
that minimize data movement and that are designed \textit{ab initio} with parallel computation in mind.
New ways to organize and present data (out-of-core, distributed, streaming) also demand
alternative techniques.

Randomization offers novel tools for addressing all of these challenges.
This paper surveys these new ideas, provides detailed descriptions of algorithms
with a proven track record, and outlines the mathematical techniques
used to analyze these methods. %

\subsection{Randomized algorithms emerge}

Probabilistic algorithms have held a central place in scientific computing
ever since Ulam and von Neumann's groundbreaking work on Monte Carlo methods in the 1940s.
For instance, Monte Carlo algorithms are essential for high-dimensional integration
and for solving PDEs set in high-dimensional spaces.
They also play a major role in modern machine learning and uncertainty quantification.

For many decades, however, numerical analysts regarded randomized algorithms as a method of
last resort---to be invoked only in the absence of an effective deterministic alternative.
Indeed, probabilistic techniques have several undesirable features.
First, Monte Carlo methods often produce output with low accuracy.
This is a consequence of the central limit theorem, and in many situations it cannot be avoided.
Second, many computational scientists have a strong attachment to the engineering principle that
two successive runs of the same algorithm should produce identical results.
This requirement aids with debugging, and it can be critical for applications where
safety is paramount, for example simulation of infrastructure or control of aircraft.
Randomized methods do not generally offer this guarantee. (Controlling the seed of
the random number generator can provide a partial work-around, but this necessarily
involves additional complexity.)

Nevertheless, in the 1980s, randomized algorithms started to make inroads into NLA.
Some of the early work concerns spectral
computations, where it was already traditional
to use random initialization. %
\citeasnoun{Dix83:Estimating-Extremal} recognized that
(a variant of) the power method with a random start
\emph{provably} approximates the largest eigenvalue
of a positive semidefinite (PSD) matrix,
even without a gap between
the first and second eigenvalue. %
\citeasnoun{KW92:Estimating-Largest}
provided a sharp analysis of this
phenomenon for both the power method and the
Lanczos algorithm.
Around the same time, \citeasnoun{Gir89:Fast-Monte-Carlo}
and \citeasnoun{Hut90:Stochastic-Estimator} proposed
Monte Carlo methods for estimating the trace
of a large psd matrix.  Soon after,
\citeasnoun{1995_parker_randombutterfly}
demonstrated that randomized transformations
can be used to avoid pivoting steps in
Gaussian elimination.

Starting in the late 1990s, researchers in theoretical
computer science
identified other ways to apply
probabilistic algorithms in NLA.
\citeasnoun{AMS99:Space-Complexity} and \citeasnoun{AGMS02:Tracking-Join}
showed that randomized embeddings allow for
computations on streaming data with limited storage.
\cite{PRTV00:Latent-Semantic} and \cite{FKV04:Fast-Monte-Carlo}
proposed Monte Carlo methods for low-rank matrix approximation.
\cite{DKM06:Fast-Monte-Carlo-I},
\cite{DKM06:Fast-Monte-Carlo-II}, and
\cite{DKM06:Fast-Monte-Carlo-III}
wrote the first statement of
theoretical principles for randomized NLA.
\citeasnoun{Sar06:Improved-Approximation} showed
how subspace embeddings support linear
algebra computations.

In the mid-2000s, numerical analysts
introduced practical randomized
algorithms for low-rank matrix approximation
and least-squares problems.
This work includes the first computational evidence
that randomized algorithms outperform classical NLA algorithms
for particular classes of problems.
Early contributions include \cite{2006_martinsson_random1_orig,2007_martinsson_PNAS,RT08:Fast-Randomized,WLRT08:Fast-Randomized}.
These papers inspired later work, such as
\cite{2010_avron_BLENDENPIK,HMT11:Finding-Structure,HMST11:Algorithm-Principal},
that has made a direct impact in applications

Parallel with the advances in numerical analysis,
a tide of enthusiasm for randomized algorithms
has flooded into cognate fields.
In particular, stochastic gradient
descent~\cite{Bot10:Large-Scale-Machine}
has become a standard algorithm for solving
large optimization problems in machine learning.

At the time of writing, in late 2019,
randomized algorithms have
joined the mainstream of NLA.
They now appear in major reference works and textbooks
\cite{GVL13:Matrix-Computations-4ed,2019_strang_LA_and_data}.
Key methods are being incorporated into standard software
libraries \cite{2019_NAG_library27,2017_gu_langou,2017_ghysels_robust}.

\subsection{What does randomness accomplish?}
\label{sec:accomplishments}

Over the course of this survey we will explore a number
of different ways that randomization can be used to design
effective NLA algorithms.
For the moment, let us just summarize the most
important benefits.

Randomized methods can handle certain NLA problems
faster than any classical algorithm.  In Section~\ref{sec:overdet-ls},
we describe a randomized algorithm that can solve
a dense $m \times n$ least-squares problem with $m \gg n$
using about $O(mn + n^3)$ arithmetic operations
\cite{RT08:Fast-Randomized}.
Meanwhile, classical methods require $O(mn^2)$ operations.
In Section~\ref{sec:sparse-cholesky}, we present
an algorithm called \textsc{SparseCholesky} that can
solve the Poisson problem on a dense undirected graph
in time that is roughly \emph{quadratic} in the number
of vertices~\cite{KS16:Approximate-Gaussian}.
Standard methods have cost that is \emph{cubic}
in the number of vertices.
The improvements can be even larger for sparse graphs.

Randomization allows us to tackle problems
that otherwise seem impossible.
Section~\ref{sec:singlepass} contains an algorithm
that can compute a rank-$r$ truncated
SVD of an $m \times n$ matrix in a single pass
over the data using working storage $O(r (m + n))$.
The first reference for this kind of algorithm
is~\citeasnoun{WLRT08:Fast-Randomized}.
We know of no classical method with this
computational profile.

From an engineering point of view, randomization
has another crucial advantage: it allows us to
restructure NLA computations in a fundamentally different way. %
In Section~\ref{sec:random-rangefinder}, we will introduce
the randomized SVD algorithm \cite{2006_martinsson_random1_orig,HMT11:Finding-Structure}.
Essentially all the arithmetic in this procedure takes place
in a short sequence of matrix--matrix multiplications.
Matrix multiplication is a highly optimized primitive
on most computer systems;
it parallelizes easily; and it performs particularly well on
modern hardware such as GPUs.  In contrast, classical
SVD algorithms require either random access to the
data or sequential matrix--vector multiplications.
As a consequence, the randomized SVD can process
matrices that are beyond the reach of classical
SVD algorithms.

\subsection{Algorithm design considerations}

Before we decide what algorithm to use for a linear algebra computation,
we must ask how we are permitted to interact with the data.
A recurring theme of this survey is that randomization %
allows us to reorganize algorithms so that they control
whichever computational resource is the most scarce (flops, communication,
matrix entry evaluation, etc.).
Let us  illustrate with some representative examples:

\begin{itemize} \setlength{\itemsep}{1mm}

\item \textit{Streaming computations (``single-view''):}
There is rising demand for algorithms that can treat matrices
that are so large that they cannot be stored at all;
other applications involve matrices that are presented dynamically.
In the streaming setting,
the input matrix $\mtx{A}$ is given by a sequence of simple
linear updates that can viewed only once:
\begin{equation} \label{eqn:stream-intro}
\mtx{A} = \mtx{H}_1 + \mtx{H}_2 + \mtx{H}_3 + \dots.
\end{equation}
We must discard each innovation $\mtx{H}_i$ after it has been processed.
As it happens, the \emph{only} type of algorithm that can handle the
model~\eqref{eqn:stream-intro} is one based on randomized linear dimension reduction
\cite{LNW14:Turnstile-Streaming}.  Our survey describes a number of
algorithms that can operate in the streaming setting;
see Sections~\ref{sec:trace-est}, \ref{sec:schatten-p}, \ref{sec:nystrom}, and~\ref{sec:singlepass}.

\item \textit{Dense matrices stored in RAM:}
One traditional computational model for NLA assumes that the input matrix is stored in fast memory,
so that any entry can quickly be read and/or overwritten as needed.  The ability of
CPUs to perform arithmetic operations keeps growing rapidly, but memory latency has not kept up.
Thus, it has become essential to formulate blocked algorithms that operate on submatrices.
Section \ref{sec:full} shows how randomization can help.

\item \textit{Large sparse matrices:}
For sparse matrices, it is natural to search for techniques that interact with a matrix
only through its application to vectors, such as Krylov methods or subspace iteration.
Randomization expands the design space for these methods.
When the iteration is initialized with a random matrix, we can reach provably
correct and highly accurate results after a few iterations;
see Sections \ref{sec:rrf-subspace} and \ref{sec:rrf-krylov}.

Another idea is to apply randomized sampling to control sparsity levels.
This technique arises in Section \ref{sec:sparse-cholesky},
which contains a randomized algorithm
that accelerates incomplete Cholesky preconditioning for sparse graph Laplacians.

\item \textit{Matrices for which entry evaluation is expensive:}
In machine learning and computational physics, it is often desirable to solve linear
systems where it is too expensive to evaluate the full coefficient matrix.
Randomization offers a systematic way to extract data from the matrix
and to compute approximations that serve for the downstream applications.
See Sections~\ref{sec:kernel} and \ref{sec:rankstructured}.
\end{itemize}

\subsection{Overview}

This paper covers fundamental mathematical ideas,
as well as algorithms that have proved to be effective
in practice.  The balance shifts from theory at the
beginning toward computational practice at the end.
With the practitioner in mind, we have attempted to make
the algorithmic sections self-contained, so that they
can be read with a minimum of references to other
parts of the paper.

After introducing notation and covering preliminaries from linear algebra
and probability in Sections~\ref{sec:lin-alg}--\ref{sec:prob}, the
survey covers the following topics.

\begin{itemize} \setlength{\itemsep}{1mm}

\item	Sections~\ref{sec:trace-est}--\ref{sec:schatten-p}
discuss algorithms for trace estimation and Schatten $p$-norm estimation based on
randomized sampling (i.e., Monte Carlo methods).
Section~\ref{sec:max-eig} shows how iteration can improve the quality of estimates for maximum
eigenvalues, maximum singular values, and trace functions.

\item	Section~\ref{sec:matrix-mc} develops randomized sampling methods for
approximating matrices, including applications to matrix multiplication
and approximation of combinatorial graphs.

\item	Sections~\ref{sec:gauss}--\ref{sec:dimension-reduction} introduce
the notion of a randomized linear embedding.  These maps are frequently
used to reduce the dimension of a set of vectors, while preserving their
geometry.
In Section~\ref{sec:overdet-ls}, we explore several
ways to use randomized embeddings in the context of an overdetermined
least-squares problem.

\item	Sections~\ref{sec:random-rangefinder}--\ref{sec:error-est}
demonstrate how randomized methods can be used to find a subspace that
is aligned with the range of a matrix and to assess the quality of
this subspace.  Sections~\ref{sec:natural},~\ref{sec:nystrom}, and~\ref{sec:singlepass}
show how to use this subspace to compute a variety of low-rank
matrix approximations.

\item	Section~\ref{sec:full} develops randomized algorithms for computing a
factorization of a full-rank matrix, such as a pivoted QR decomposition
or a URV decomposition.

\item	Section~\ref{sec:linear-solve} describes some general approaches
to solving linear systems using randomized techniques.  Section~\ref{sec:sparse-cholesky}
presents the \textsc{SparseCholesky} algorithm for solving the Poisson
problem on an undirected graph (i.e., a linear system in a graph Laplacian).

\item	Last, Sections~\ref{sec:kernel} and~\ref{sec:rankstructured} show
how to use randomized methods to approximate kernel matrices that arise in machine
learning, computational physics, and scientific computing.

\end{itemize}

\subsection{Omissions}

While randomized NLA was a niche topic 15 years ago,
we have seen an explosion of research over the last decade.
This survey can only cover a small subset
of the many important and interesting ideas that have emerged.

Among many other omissions, we do not discuss spectral computations in detail.
There have been interesting and very recent developments,
especially for the challenging problem
of computing a spectral decomposition of a nonnormal
matrix~\cite{BGKS19:Pseudospectral-Shattering}.
We also had to leave out a treatment of
tensors and the rapidly developing field of randomized
multilinear algebra.

There is no better way to demonstrate the value of a numerical
method than careful numerical experiments that measure its speed
and accuracy against state-of-the-art implementations of competing
methods. Our selection of topics to cover is heavily influenced
by such comparisons; for reasons of space, we have often had to settle
for citations to the literature, instead of including the numerical evidence.

The intersection between optimization and linear algebra is
of crucial importance in applications, and it remains a fertile ground
for theoretical work. Randomized algorithms are invaluable in this context,
but we realized early on that the paper would double in length
if we included just the essential facts about randomized
optimization algorithms.

There is a complementary perspective on randomized
numerical analysis algorithms, called \emph{probabilistic numerics}.
See the website~\cite{HO:Probabilistic-Numerics} for a comprehensive bibliography.

For the topics that we do cover, we have made every effort to include
all essential citations. Nevertheless, the literature is vast, and we
are sure to have overlooked important work; we apologize in advance
for these oversights.

\subsection{Other surveys}

There are a number of other survey papers on randomized
algorithms for randomized NLA and related topics.

\begin{itemize}  \setlength{\itemsep}{1mm}
\item	\citeasnoun{HMT11:Finding-Structure} develop
and analyse computational methods for low-rank matrix approximation
(including the ``randomized SVD'' algorithm)
from the point of view of a numerical analyst.
The main idea is that randomization can furnish a subspace that captures the
action of a matrix, and this subspace can be used to build
structured low-rank matrix approximations.

\item	\citeasnoun{2011_mahoney_survey}
treats randomized methods for least-squares computations
and for low-rank matrix approximation.
He emphasizes
the useful principle that we can often decouple the linear algebra
and the probability when analyzing randomized NLA algorithms.

\item	\citeasnoun{2014_woodruff_sketching} %
describes how to use subspace embeddings as a primitive for developing randomized linear algebra algorithms.
A distinctive feature is the development of lower bounds.

\item	\citeasnoun{Tro15:Introduction-Matrix} %
gives an introduction to matrix concentration inequalities,
and includes some applications to randomized NLA algorithms.

\item The survey of \citeasnoun{2017_kannan_vempala_acta} appeared
in a previous volume of \emph{Acta Numerica}.
A unique aspect is the discussion of randomized tensor computations.

\item	\citeasnoun{2018_PCMI_mahoney_drineas} have
updated the presentation in~\citeasnoun{2011_mahoney_survey},
and include an introduction to linear algebra and probability
that is directed toward NLA applications.

\item	\citeasnoun{2018_PCMI_martinsson} %
focuses on computational aspects of randomized NLA.  A distinctive
feature is the discussion of efficient algorithms for factorizing
matrices of full, or nearly full, rank.

\item	\citeasnoun{Tro19:Matrix-Concentration-LN} %
gives a mathematical treatment of how matrix concentration
supports a few randomized NLA algorithms, and includes a
complete proof of correctness for the \textsc{SparseCholesky}
algorithm described in Section \ref{sec:sparse-cholesky}.
\end{itemize}

\subsection{Acknowledgments}

We are grateful to Arieh Iserles for proposing that we write this survey.
Both authors have benefited greatly from our collaborations with Vladimir Rokhlin
and Mark Tygert.
Most of all, we would like to thank Richard Kueng for his critical reading
of the entire manuscript, which has improved the presentation in many places.
Madeleine Udell, Riley Murray, James Levitt, and Abinand Gopal also gave us useful feedback on parts of the paper.
Lorenzo Rosasco offered invaluable assistance with the section on kernel methods
for machine learning.  Navid Azizan, Babak Hassibi, and Peter Richt{\'a}rik
helped with citations to the literature on SGD.  Finally, we would like to thank our
ONR programme managers, Reza Malek-Madani and John Tague, for supporting
research on randomized numerical linear algebra.

JAT acknowledges support from the Office of Naval Research (awards N-00014-17-1-2146 and N-00014-18-1-2363).
PGM acknowledges support from the Office of Naval Research (award  N00014-18-1-2354),
from the National Science Foundation (award DMS-1620472), and from Nvidia Corp.

\section{Linear algebra preliminaries}
\label{sec:lin-alg}

This section contains an overview of the linear algebra
tools that arise in this survey.
It collects the basic notation, along with
some standard and not-so-standard definitions.
It also contains a discussion about the role of the spectral norm.

Background references for linear algebra
and matrix analysis include
\citeasnoun{Bha97:Matrix-Analysis}
and
\citeasnoun{HJ13:Matrix-Analysis-2ed}.
For a comprehensive treatment of
matrix computations, we refer to
\cite{GVL13:Matrix-Computations-4ed,1997_trefethen_bau,1998_stewart_volume1,1998_stewart_volume2}.

\subsection{Basics}

We will work in the real field ($\RR$) or the complex field ($\CC$).
The symbol $\F$ refers to either the real or complex field,
in cases where the precise choice is unimportant.
As usual, scalars are denoted by lowercase italic
Roman ($a, b, c$) or Greek ($\alpha, \beta$) letters.

Vectors are elements of $\F^n$, where $n$ is a natural number.
We always denote vectors with lowercase bold
Roman ($\vct{a}, \vct{b}, \vct{u}, \vct{v}$)
or Greek ($\vct{\alpha}, \vct{\beta}$) letters.
We write $\vct{0}$ for the zero vector
and $\vct{1}$ for the vector of ones.
The standard basis vectors are denoted as
$\vct{\delta}_1, \dots, \vct{\delta}_n$.
The dimensions of these special vectors are
determined by context.

A general matrix is an element of $\F^{m \times n}$,
where $m, n$ are natural numbers.
We always denote matrices with
uppercase bold Roman ($\mtx{A}, \mtx{B}, \mtx{C}$)
or Greek ($\mtx{\Delta}, \mtx{\Lambda}$) letters.
We write $\mtx{0}$ for the zero matrix and $\Id$ for the identity matrix;
their dimensions are determined by a subscript or by context.

The parenthesis notation is used for indexing into vectors
and matrices: $(\vct{a})_i$ is the $i$th coordinate of vector $\vct{a}$,
while $(\mtx{A})_{ij}$ is the $(i, j)$th coordinate of matrix $\mtx{A}$.
In some cases it is more convenient to invoke the functional
form of indexing.  For example, $\mtx{A}(i, j)$ also refers
to the $(i, j)$th coordinate of the matrix $\mtx{A}$.

The colon notation is used to specify ranges of coordinates.
For example, $(\vct{a})_{1:i}$ and $\vct{a}(1:i)$ refer
to the vector comprising the first $i$ coordinates of $\vct{a}$.
The colon by itself refers to the entire range of coordinates.
For instance, $(\mtx{A})_{i:}$ denotes the $i$th row of $\mtx{A}$,
while $(\mtx{A})_{:j}$ denotes the $j$th column.

The symbol ${}^*$ is reserved for the (conjugate) transpose
of a matrix of vector.  A matrix that satisfies $\mtx{A} = \mtx{A}^*$
is said to be self-adjoint.
It is convenient to distinguish the space $\Sym_n$
of self-adjoint $n \times n$ matrices over
the scalar field.  We may write $\Sym_n(\F)$ if it
is necessary to specify the field.

The operator ${}^\pinv$ extracts the Moore--Penrose
pseudoinverse of a matrix.  More precisely,
for $\mtx{A} \in \F^{m \times n}$,
the pseudoinverse $\mtx{A}^\pinv \in \F^{n \times m}$
is the unique matrix that satisfies the following:

\begin{enumerate}
\item	$\mtx{AA}^\pinv$ is self-adjoint.
\item	$\mtx{A}^\pinv \mtx{A}$ is self-adjoint.
\item	$\mtx{AA}^\pinv \mtx{A} = \mtx{A}$.
\item	$\mtx{A}^\pinv \mtx{A} \mtx{A}^\pinv = \mtx{A}^\pinv$.
\end{enumerate}

\noindent
If $\mtx{A}$ has full column rank, then $\mtx{A}^\pinv = (\mtx{A}^* \mtx{A})^{-1} \mtx{A}^*$,
where $(\cdot)^{-1}$ denotes the ordinary matrix inverse.

\subsection{Eigenvalues and singular values}

A positive semidefinite matrix is a self-adjoint
matrix with nonnegative eigenvalues.  We will generally
abbreviate positive semidefinite to PSD.  Likewise,
a positive definite (PD) matrix is a self-adjoint
matrix with positive eigenvalues.

The symbol $\psdle$ denotes the semidefinite order on
self-adjoint matrices.  The relation $\mtx{A} \psdle \mtx{B}$
means that $\mtx{B} - \mtx{A}$ is psd.

We write $\lambda_1 \geq \lambda_2 \geq \dots$ for the
eigenvalues of a self-adjoint matrix.
We write $\sigma_1 \geq \sigma_2 \geq \dots$ for the singular
values of a general matrix.  If the matrix is not clear
from context, we may include it in the notation so that
$\sigma_{j}(\mtx{A})$ is the $j$th singular value of $\mtx{A}$.

Let $f : \RR \to \RR$ be a function on the real line.
We can extend $f$ to a spectral function  $f : \Sym_n \to \Sym_n$ on (conjugate)
symmetric matrices.
Indeed, for a matrix $\mtx{A} \in \Sym_n$ with eigenvalue decomposition
$$
\mtx{A} = \sum_{i=1}^n \lambda_i \, \vct{u}_i \vct{u}_i^*,
\quad\text{we define}\quad
f(\mtx{A}) := \sum_{i=1}^n f(\lambda_i) \, \vct{u}_i \vct{u}_i^*.
$$
The Pascal notation for definitions ($:=$ and $=:$) is used sparingly,
when we need to emphasize that the definition is taking place.

\subsection{Inner product geometry}

We equip $\F^n$ with the standard inner product
and the associated $\ell_2$ norm.
For all vectors $\vct{a}, \vct{b} \in \F^{n}$,
$$
\ip{ \vct{a} }{ \vct{b} } := \vct{a} \cdot \vct{b} := \sum_{i=1}^n (\vct{a})_i^* (\vct{b})_i
\quad\text{and}\quad
\norm{ \vct{a} }^2 := \ip{ \vct{a} }{ \vct{a} }.
$$
We write $\mathbb{S}^{n-1}$ for the set of vectors
in $\F^n$ with unit $\ell_2$ norm.  If needed, we
may specify the field: $\mathbb{S}^{n-1}(\F)$.

The trace of a square matrix is the sum of its diagonal entries:
$$
\trace(\mtx{A}) := \sum_{i=1}^n (\mtx{A})_{ii}
\quad\text{for $\mtx{A} \in \F^{n \times n}$.}
$$
Nonlinear functions bind before the trace.
We equip $\F^{m \times n}$ with the standard
trace inner product and the Frobenius norm.
For all matrices $\mtx{A}, \mtx{B} \in \F^{m \times n}$,
$$
\ip{ \mtx{A} }{ \mtx{B} } := %
	\trace(\mtx{A}^* \mtx{B})
\quad\text{and}\quad
\fnorm{ \mtx{A} }^2 := \ip{ \mtx{A} }{ \mtx{A} }.
$$
For vectors, these definitions coincide with the
ones in the last paragraph.

We say that a matrix $\mtx{U}$ is \emph{orthonormal} when
its columns are orthonormal with respect to the standard
inner product.  That is, $\mtx{U}^* \mtx{U} = \Id$.
If $\mtx{U}$ is also square, we say instead that
$\mtx{U}$ is \emph{orthogonal} ($\mathbb{F} = \mathbb{R}$) or
\emph{unitary} ($\mathbb{F} = \mathbb{C}$).

\subsection{Norms on matrices}

Several different norms on matrices arise during this survey.
We use consistent notation for these norms.

\begin{itemize}
\item	The unadorned norm $\norm{ \cdot }$ refers to the
spectral norm of a matrix, also known as the $\ell_2$ operator norm.
It reports the maximum singular value of its argument.
For vectors, it coincides with the $\ell_2$ norm.

\item	The norm $\norm{ \cdot }_{*}$ is the nuclear norm
of a matrix, which is the dual of the spectral norm.
It reports the sum of the singular values of its argument.

\item	The symbol $\fnorm{\cdot}$ refers to the Frobenius norm,
defined in the last subsection.  The Frobenius norm coincides
with the $\ell_2$ norm of the singular values of its argument.

\item	The notation $\norm{ \cdot }_{p}$
denotes the Schatten $p$-norm for each $p \in [1, \infty]$.
The Schatten $p$-norm is the $\ell_p$ norm of the
singular values of its argument.
Special cases with their own notation include the nuclear norm (Schatten $1$),
the Frobenius norm (Schatten $2$), and the spectral norm (Schatten ${\infty}$).
\end{itemize}

Occasionally, other norms may arise, and we will define them
explicitly when they do.

\subsection{Approximation in the spectral norm}

Throughout this survey, we will almost exclusively
use the spectral norm to measure the error in
matrix computations.
Let us recall some of the implications that follow from spectral norm bounds.

Suppose that $\mtx{A} \in \F^{m \times n}$ is a matrix,
and $\widehat{\mtx{A}} \in \F^{m \times n}$ is an approximation.
If the approximation satisfies the spectral norm error bound
$$
\norm{ \mtx{A} - \widehat{\mtx{A}} } \leq \eps,
$$
then we can transfer the following information:

\begin{itemize} \setlength{\itemsep}{1mm}
\item	\textbf{Linear functionals:}
$\abs{ \ip{\mtx{F}}{\mtx{A}} - \ip{\mtx{F}}{\widehat{\mtx{A}}} } \leq \eps \norm{\mtx{F}}_{*}$
for every matrix $\mtx{F} \in \F^{m \times n}$

\item	\textbf{Singular values:} $\abs{ \sigma_j(\mtx{A}) - \sigma_j(\widehat{\mtx{A}}) } \leq \eps$
for each index $j$.

\item	\textbf{Singular vectors:}
if the $j$th singular value $\sigma_j(\mtx{A})$ is well separated
from the other singular values, then the $j$th right singular vector of $\mtx{A}$ is well approximated by the $j$th
right singular vector of $\widehat{\mtx{A}}$;
a similar statement holds for the left singular vectors.

\end{itemize}

\noindent
Detailed statements about the singular vectors are complicated,
so we refer the reader to~\citeasnoun[Chaps.~VII, X]{Bha97:Matrix-Analysis}
for his treatment of perturbation of spectral subspaces.

For one-pass and streaming data models, it may not be possible
to obtain good error bounds in the spectral norm.  In this case,
we may retrench to Frobenius norm or nuclear norm error bounds.
These estimates give weaker information about linear functionals,
singular values, and singular vectors.

\begin{remark}[Frobenius norm approximation] \label{rem:frob}
In the literature on randomized NLA, some authors prefer
to bound errors with respect to the Frobenius norm because the
arguments are technically simpler.  In many instances, these
bounds are less valuable because
the error can have the same scale
as the matrix that we wish to approximate.

For example, let us consider a variant of the spiked covariance
model that is common in statistics applications~\cite{Joh01:Distribution-Largest}.
Suppose we need to approximate a rank-one matrix contaminated with additive noise:
$\mtx{A} = \vct{uu}^* + \eps \mtx{G} \in \RR^{n \times n}$,
where $\norm{\mtx{u}} = 1$ and $\mtx{G} \in \RR^{n \times n}$ has independent $\textsc{normal}(0, n^{-1})$ entries.
It is well known that $\norm{\mtx{G}} \approx 2$, while  $\fnorm{\mtx{G}} \approx \sqrt{n}$.
With respect to the Frobenius norm, the zero matrix is almost as good an approximation
of $\mtx{A}$ as the rank-one matrix $\vct{uu}^*$:
$$
\Expect \fnorm{ \mtx{A} - \vct{uu}^* }^2 = \eps^2 n
\quad\text{and}\quad
\Expect \fnorm{ \mtx{A} - \mtx{0} }^2 = \eps^2 n + 1.
$$
The difference is visible only when the size of the perturbation $\eps \approx n^{-1/2}$.
In contrast, the spectral norm error can easily distinguish between the
good approximation $\vct{uu}^*$ and the vacuous approximation $\mtx{0}$,
even when $\eps = O(1)$.

For additional discussion, see \citeasnoun[Sec.~6.2.3]{Tro15:Introduction-Matrix}
and \citeasnoun[App.]{LLS+17:Algorithm-971}.
\end{remark}

\subsection{Intrinsic dimension and stable rank}

Let $\mtx{A} \in \Sym_n$ be a psd matrix.
We define its \emph{intrinsic dimension}:
\begin{equation} \label{eqn:intdim}
\intdim(\mtx{A}) := \frac{\trace(\mtx{A})}{\norm{\mtx{A}}}.
\end{equation}
The intrinsic dimension of a nonzero matrix
satisfies the inequalities
$1 \leq \intdim(\mtx{A}) \leq \rank(\mtx{A})$;
the upper bound is saturated when $\mtx{A}$ is
an orthogonal projector.
We can interpret the intrinsic dimension
as a continuous measure of the rank,
or the number of energetic dimensions
in the matrix.

Let $\mtx{B} \in \F^{m \times n}$ be a rectangular matrix.
Its \emph{stable rank} is
\begin{equation} \label{eqn:stable-rank}
\srank(\mtx{B}) := \intdim(\mtx{B}^*\mtx{B}) = \frac{\fnorm{\mtx{B}}^2}{\norm{\mtx{B}}^2}.
\end{equation}
Similar to the intrinsic dimension, the stable rank provides
a continuous measure of the rank of $\mtx{B}$.

\subsection{Schur complements}

Schur complements arise from partial Gaussian elimination
and partial least-squares.  They also play a key role in
several parts of randomized NLA.  We give the
basic definitions here, referring to~\citeasnoun{Zha05:Schur-Complement}
for a more complete treatment.

Let $\mtx{A} \in \F^{n \times n}$ be a psd matrix,
and let $\mtx{X} \in \F^{n \times k}$ be a fixed matrix.
First, define the psd matrix
\begin{equation} \label{eqn:nys-def}
\mtx{A}\nys{\mtx{X}}
	:= (\mtx{AX})(\mtx{X}^* \mtx{A} \mtx{X})^\pinv (\mtx{AX})^*.
\end{equation}
The \emph{Schur complement} of $\mtx{A}$ with respect to
$\mtx{X}$ is the psd matrix
\begin{equation} \label{eqn:schur-complement}
\mtx{A} / \mtx{X} := \mtx{A} - \mtx{A}\langle \mtx{X} \rangle.
\end{equation}
The matrices $\mtx{A}\nys{\mtx{X}}$ and $\mtx{A}/\mtx{X}$
depend on $\mtx{X}$ only through its range.  They also enjoy
geometric interpretations in terms of orthogonal projections
with respect to the $\mtx{A}$ semi-inner-product.

\subsection{Miscellaneous}

We use big-$O$ notation following standard computer science convention.
For instance, we say that
a method has (arithmetic)
complexity $O(n^{\omega})$ if there is a finite $C$ for which
the number of floating point operations (flops) expended is bounded by
$Cn^{\omega}$ as the problem size $n\rightarrow \infty$.

We use MATLAB-inspired syntax in summarizing algorithms. For instance, the task of
computing an SVD $\mtx{A} = \mtx{U\Sigma V}^{*}$ of a given matrix $\mtx{A}$
is written as $[\mtx{U},\mtx{\Sigma},\mtx{V}]=\texttt{svd}(\mtx{A})$.
We have taken
the liberty to modify the syntax when we believe that this improves clarity. For
instance, we write $[\mtx{Q},\mtx{R}] = \texttt{qr\_econ}(\mtx{A})$ to denote the
\textit{economy-size} QR factorization where the matrix $\mtx{Q}$ has size
$m\times \min(m,n)$ for an input matrix $\mtx{A} \in \F^{m\times n}$.
Arguments that are not needed are replaced by ``$\sim$'', so that, for example,
$[\mtx{Q},\sim] = \texttt{qr\_econ}(\mtx{A})$ returns only the matrix $\mtx{Q}$
whose columns form an ON basis for the range of $\mtx{A}$.

\section{Probability preliminaries}
\label{sec:prob}

This section summarizes the key definitions and background
from probability and high-dimensional probability.
Later in the survey, we will present more complete
statements of foundational results, as they are needed.

\citeasnoun{GS01:Probability-Random} provide
an accessible overview of applied probability.
\citeasnoun{Ver18:High-Dimensional-Probability}
introduces the field of
high-dimensional probability.
For more mathematical presentations,
see the classic book of \citeasnoun{LT91:Probability-Banach}
or the lecture notes
of \citeasnoun{VH16:Probability-High}.

\subsection{Basics}

We work in a master probability space that is rich enough
to support all of the random variables that are defined.
We will not comment further about the underlying model.

In this paper, the unqualified term \emph{random variable}
encompasses random scalars, vectors, and matrices.
Scalar-valued random variables are usually (but not always)
denoted by uppercase italic Roman letters ($X, Y, Z$).
A random vector is denoted by a lowercase bold letter
($\vct{x}, \vct{\omega}$).
A random matrix is denoted by an uppercase bold letter ($\mtx{X}, \mtx{Y}, \mtx{\Gamma}, \mtx{\Omega}$).
This notation works in concert with the notation for
deterministic vectors and matrices.

The map $\mathbb{P}(E)$ returns the probability of an
event $E$.  We usually specify the event using the compact
set builder notation that is standard in probability.
For example, $\Prob{ X > t }$ is the probability that
the scalar random variable $X$ exceeds a level $t$.

The operator $\Expect$ returns the expectation of a random
variable.  For vectors and matrices, the expectation can
be computed coordinate by coordinate.  The expectation
is linear, which justifies relations like
$$
\Expect[ \mtx{AX} ] = \mtx{A} \Expect[ \mtx{X} ]
\quad\text{when $\mtx{A}$ is deterministic and $\mtx{X}$ is random.}
$$
We use the convention that nonlinear functions bind
before the expectation; for instance, $\Expect X^2 = \Expect[ X^2 ]$.
The operator $\Var[\cdot]$ returns the variance of a scalar random
variable.

We say that a random variable is \emph{centered} when its
expectation equals zero.  A random vector
$\vct{x}$ is \emph{isotropic} when $\Expect[ \vct{xx}^* ] = \Id$.
A random vector is \emph{standardized} when
it is both centered and isotropic.  In particular,
a scalar random variable is standardized when
it has expectation zero and variance one.

When referring to independent random variables, we often
include the qualification ``statistically independent''
to make a distinction with ``linearly independent.''
We abbreviate the term (statistically)
``independent and identically distributed'' as i.i.d.

\subsection{Distributions}

To refer to a named distribution, we use small capitals.
In this context, the symbol $\sim$ means ``has the same distribution as.''

We write $\textsc{unif}$ for the uniform distribution
over a finite set (with counting measure).
In particular, a scalar Rademacher random variable
has the distribution $\textsc{unif}\{ \pm 1 \}$.
A Rademacher random vector has iid coordinates,
each distributed as a scalar Rademacher random
variable.
We sometimes require the uniform distribution
over a Borel subset of $\F^n$, equipped with
Lebesgue measure.

We write $\textsc{normal}(\vct{\mu}, \mtx{C})$
for the normal distribution on $\F^n$ with
expectation $\vct{\mu} \in \F^n$ and
psd covariance matrix $\mtx{C} \in \Sym_n(\F)$.
A \emph{standard normal} random variable or random
vector has expectation zero and covariance matrix
equal to the identity.  We often use the term
\emph{Gaussian} to refer to normal distributions.

\subsection{Concentration inequalities}

Concentration inequalities provide bounds on the probability
that a random variable is close to its expectation.
A good reference for the scalar case is
the book by \citeasnoun{BLM13:Concentration-Inequalities}.
In the matrix setting, closeness is measured in the spectral norm.
For an introduction to matrix concentration,
see \cite{Tro15:Introduction-Matrix,Tro19:Matrix-Concentration-LN}.
These results play an important role in randomized linear algebra.

\subsection{Gaussian random matrix theory}

On several occasions, we use comparison principles to
study the action of a random matrix with iid Gaussian entries.
In particular, these methods can be used
to control the largest and smallest singular values.
The main classical comparison theorems
are associated with the names
Slepian, Chevet, and Gordon.
For accounts of this,
see \citeasnoun[Sec.~3.3]{LT91:Probability-Banach}
or \citeasnoun[Sec.~2.3]{DS01:Local-Operator}
or \citeasnoun[Secs.~7.2--7.3]{Ver18:High-Dimensional-Probability}.
More recently, it has been observed that Gordon's
inequality can be reversed in certain settings~\cite{TOH14:Gaussian-Minmax}.

In several instances, we require more detailed
information about Gaussian random matrices.
General resources include
\citeasnoun{Mui82:Aspects-Multivariate}
and \citeasnoun{BS10:Spectral-Analysis}.
Most of the specific results we need are presented
in~\citeasnoun{HMT11:Finding-Structure}.

\section{Trace estimation by sampling}
\label{sec:trace-est}

We commence with a treatment of matrix trace estimation problems.
These questions stand among the simplest
linear algebra problems because the desired result
is just a scalar.  Even so, the algorithms have a vast sweep
of applications, ranging from computational statistics
to quantum chemistry.  They also serve as building
blocks for more complicated randomized NLA algorithms.
We have chosen to begin our presentation here because
many of the techniques that drive algorithms
for more difficult problems already appear in
a nascent---and especially pellucid---form in this section.

Randomized methods for trace
estimation depend on a natural technical idea:
One may construct an unbiased estimator for the trace
and then average independent copies to reduce the
variance of the estimate.  Algorithms of this
type are often called \emph{Monte Carlo methods}.
We describe how to use standard methods
from probability and statistics to develop
\emph{a priori} and \emph{a posteriori}
guarantees for Monte Carlo trace estimators.
We show how to use structured random distributions
to improve the computational profile of the estimators.
Last, we demonstrate that trace estimators also yield
approximations for the Frobenius norm and
the Schatten 4-norm of a general matrix.

In Section~\ref{sec:schatten-p}, we present
more involved Monte Carlo methods that
are required to estimate Schatten $p$-norms for larger
values of $p$, which give better approximations for the spectral norm.
Section~\ref{sec:max-eig} describes iterative
algorithms that lead to much higher accuracy than Monte Carlo methods.
In Section~\ref{sec:slq}, we touch on related
probabilistic techniques for evaluating trace functions.

\subsection{Overview}

We will focus on the problem of estimating the trace
of a nonzero psd matrix $\mtx{A} \in \Sym_n$. Our goal is to
to produce an approximation of $\trace(\mtx{A})$, along with a measure of quality.

Trace estimation is easy in the case where we have
inexpensive access to the entries of the matrix $\mtx{A}$
because we can simply read off the $n$ diagonal entries.
But there are many environments where the primitive
operation is the matrix--vector product
$\vct{u} \mapsto \mtx{A}\vct{u}$.
For example, $\vct{u} \mapsto \mtx{A}\vct{u}$ might be the solution
of a (discretized) linear differential equation with initial condition $\vct{u}$,
implemented by some computer program.
In this case, we would really prefer to avoid $n$ applications
of the primitive.  (We can obviously compute the
trace by applying the primitive to each standard
basis vector $\vct{\delta}_i$.)

The methods in this section all use linear information
about the matrix $\mtx{A}$.  In other words, we will extract data
from the input matrix by computing the product $\mtx{Y} = \mtx{A\Omega}$,
where $\mtx{\Omega} \in \F^{n \times k}$ is a (random) test matrix.
All subsequent operations involve only the sample matrix $\mtx{Y}$
and the test matrix $\mtx{\Omega}$.  Since the data
collection process is linear, we can apply randomized
trace estimators in the one-pass or streaming environments.
Moreover, parts of these algorithms are trivially parallelizable.

The original application of randomized trace estimation was
to perform \emph{a posteriori} error estimation for large
least-squares computations. More specifically, it was used
to accelerate cross-validation procedures for estimating
the optimal regularization parameter in a smoothing
spline~\cite{Gir89:Fast-Monte-Carlo,Hut90:Stochastic-Estimator}.
See~\citeasnoun{FORF18:Improved-Stochastic} for a list of
contemporary applications in machine learning,
uncertainty quantification, and other fields.

\subsection{Trace estimation by randomized sampling}
\label{sec:trace-est-basic}

Randomized trace estimation is based on the insight
that it is easy to construct a random variable
whose expectation equals the trace of the input matrix.

Consider a random {test vector} $\vct{\omega} \in \F^n$
that is isotropic: %
$\Expect[ \vct{\omega} \vct{\omega}^* ] = \Id$.
By the cyclicity of the trace and by linearity,
\begin{equation} \label{eqn:trace-est}
X = \vct{\omega}^* (\mtx{A} \vct{\omega})
\quad\text{satisfies}\quad
\Expect X = \trace(\mtx{A}).
\end{equation}
In other words, the random variable $X$ is an unbiased
estimator of the trace.  Note that the distribution of
$X$ depends on the unknown matrix $\mtx{A}$.

A single sample of $X$ is rarely adequate
because its variance, $\Var[X]$, will be large.
The most common mechanism for reducing the variance is to average
$k$ independent copies of $X$.  For $k \in \N$, define
\begin{equation} \label{eqn:trace-est-avg}
\bar{X}_k = \frac{1}{k} \sum\nolimits_{i=1}^k X_i
\quad\text{where $X_i \sim X$ are iid.}
\end{equation}
By linearity, $\bar{X}_k$ is also an unbiased estimator
of the trace.  The individual samples are statistically
independent, so the variance decreases.  Indeed,
$$
\Expect[ \bar{X}_k ] = \trace(\mtx{A})
\quad\text{and}\quad
\Var[ \bar{X}_k ] = \frac{1}{k} \Var[X].
$$
The estimator~\eqref{eqn:trace-est-avg} can be regarded as the most elementary method
in randomized linear algebra.  See Algorithm~\ref{alg:trace-est}.

To compute $\bar{X}_k$, we must simulate
$k$ independent copies of the random vector $\vct{\omega} \in \F^n$
and perform $k$ matrix--vector products with $\mtx{A}$,
plus $O(kn)$ additional arithmetic.

\begin{example}[\citeasnoun{Gir89:Fast-Monte-Carlo}]
Consider a standard normal random vector $\vct{\omega} \sim \normal(\vct{0}, \Id)$.
The variance of the resulting trace estimator~\eqref{eqn:trace-est}--\eqref{eqn:trace-est-avg} satisfies
\begin{equation} \label{eqn:trace-est-girard-var}
\Var[ \bar{X}_k ] = \frac{2}{k} \sum_{i,j = 1}^n \abs{ (\mtx{A})_{ij} }^2 = \frac{2}{k} \fnorm{\mtx{A}}^2
	\leq \frac{2}{k} \norm{\mtx{A}} \trace(\mtx{A}).
\end{equation}
The rotational invariance of the standard normal distribution
allows us to characterize the behavior of this estimator in full detail.
\end{example}

\begin{example}[\citeasnoun{Hut90:Stochastic-Estimator}]
Consider a Rademacher random vector $\vct{\omega} \sim \unif\{\pm 1\}^n$.
The variance of the resulting trace estimator~\eqref{eqn:trace-est}--\eqref{eqn:trace-est-avg}
satisfies
$$
\Var[ \bar{X}_k ] =
\frac{4}{k} \sum_{1 \leq i < j \leq n} \abs{ (\mtx{A})_{ij} }^2 <
\frac{2}{k} \fnorm{\mtx{A}}^2 \leq
\frac{2}{k}  \norm{\mtx{A}} \trace(\mtx{A}).
$$
This is the minimum variance trace estimator generated by
an isotropic random vector $\vct{\omega}$ with statistically independent coordinates.
It also avoids the simulation of normal variables.
\end{example}

\citeasnoun{Gir89:Fast-Monte-Carlo} also studied the estimator
obtained by drawing $\vct{\omega} \in \F^n$ uniformly at random from
the sphere $\sqrt{n} \, \mathbb{S}^{n-1}(\F)$ for $\F=\mathbb{R}$. %
When $\F = \CC$, this approach has the minimax variance among all
trace estimators of the form~\eqref{eqn:trace-est}--\eqref{eqn:trace-est-avg}.
We return to this example in Section~\ref{sec:near-isotropic}.

\begin{algorithm}[t]
\begin{algorithmic}[1]
\caption{\textit{Trace estimation by random sampling.} \newline
See Section~\ref{sec:trace-est-basic}.}
\label{alg:trace-est}

\Require	Psd input matrix $\mtx{A} \in \Sym_n$, number $k$ of samples
\Ensure		Trace estimate $\bar{X}_k$ and sample variance $S_k$
\Statex

\Function{TraceEstimate}{$\mtx{A}$, $k$}

\For{$i = 1, \dots, k$}
	\Comment Compute trace samples
\State	Draw isotropic test vector $\vct{\omega}_i \in \F^n$
\State	Compute $X_i = \vct{\omega}_i^* (\mtx{A} \vct{\omega}_i)$
\EndFor

\State	Form trace estimator: $\bar{X}_k = k^{-1} \sum_{i=1}^k X_i$

\State	Form sample variance: $S_k = (k-1)^{-1} \sum_{i=1}^k (X_i - \bar{X}_k)^2$

\Statex

\Comment	Use compensated summation techniques for large $k$!

\EndFunction
\end{algorithmic}
\end{algorithm}

\begin{remark}[General matrices]
The assumption that $\mtx{A}$ is psd allows us
to conclude that the standard deviation
of the randomized trace estimate is smaller
than the trace of the matrix.
The same methods allow us to estimate the
trace of a general square matrix, but the
variance of the estimator may no longer
be comparable with the trace.
\end{remark}

\subsection{{A priori} error estimates}

We can use theoretical analysis to obtain prior guarantees
on the performance of the trace estimator.  These
results illuminate what features of the input matrix
affect the quality of the trace estimate, and they
tell us how many samples $k$ suffice to achieve a
given error tolerance.  Note, however, that these
bounds depend on properties of the input matrix
that are often unknown to the user of the trace estimator.

Regardless of the distribution of the isotropic test vector
$\vct{\omega}$, Chebyshev's inequality delivers a simple
probability bound for the trace estimator:
\begin{equation} \label{eqn:trace-est-chebyshev}
\Prob{ \abs{ \bar{X}_k - \trace(\mtx{A}) } \geq t }
	\leq \frac{ \Var[X] }{k t^2}
	\quad\text{for $t > 0$.}
\end{equation}
We can specialize this result to specific trace estimators
by inserting the variance.

\begin{example}[Girard trace estimator]
If the test vector $\vct{\omega}$ is standard normal,
the trace estimator $\bar{X}_k$ satisfies
$$
\Prob{ \abs{ \bar{X}_k - \trace(\mtx{A}) } \geq t \cdot \trace(\mtx{A}) }
	\leq \frac{ 2 }{k \intdim(\mtx{A}) \, t^2}.
$$
The bound follows from~\eqref{eqn:trace-est-girard-var},~\eqref{eqn:trace-est-chebyshev},
and~\eqref{eqn:intdim}.
In words, the trace estimator achieves a relative error bound
that is sharpest when the intrinsic dimension~\eqref{eqn:intdim}
of $\mtx{A}$ is large.
\end{example}

For specific distributions of the random test vector $\vct{\omega}$,
we can obtain much stronger probability bounds for the
resulting trace estimator using exponential concentration inequalities.
Here is a recent analysis for Girard's estimator based on
fine properties of the standard normal distribution.

\begin{theorem}[\citeasnoun{GT18:Improved-Bounds}] \label{thm:trace-est-exp}
Let $\mtx{A} \in \Sym_n(\RR)$ be a nonzero psd matrix.  Consider the trace
estimator~\eqref{eqn:trace-est}--\eqref{eqn:trace-est-avg}
obtained from a standard normal test vector $\vct{\omega} \in \RR^n$.
For $\tau > 1$ and $k \leq n$,
$$
\begin{aligned}
\Prob{ \bar{X}_k \geq \tau \trace(\mtx{A}) } &\leq \exp\left( -\tfrac{1}{2} k \intdim(\mtx{A}) \big(\sqrt{\tau} - 1 \big)^2 \right); \\
\Prob{ \bar{X}_k \leq \tau^{-1} \trace(\mtx{A}) } &\leq \exp\left( -\tfrac{1}{4} k \intdim(\mtx{A}) \big(\tau^{-1} - 1 \big)^2 \right). \\
\end{aligned}
$$
When $\mtx{A} \in \Sym_n(\CC)$ is psd and $\vct{\omega} \in \CC^n$ is complex standard normal,
the same bounds hold with an extra factor two in the exponent.  (So the estimator works better
in the complex setting.)
\end{theorem}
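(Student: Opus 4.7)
The plan is to reduce the problem to a weighted sum of independent chi-squared random variables and then apply a Chernoff/Laplace argument with a careful choice of the exponential parameter. Diagonalize $\mtx{A} = \sum_{j=1}^n \lambda_j \vct{u}_j \vct{u}_j^*$ with $\lambda_j \geq 0$. By the orthogonal invariance of the standard normal distribution, $(\vct{u}_j^* \vct{\omega}_i)_{i,j}$ are independent $\normal(0,1)$ scalars, so
\begin{equation*}
k \bar{X}_k = \sum_{i=1}^k \vct{\omega}_i^* \mtx{A}\vct{\omega}_i = \sum_{j=1}^n \lambda_j Y_j, \quad Y_j \overset{\mathrm{iid}}{\sim} \chi^2_k.
\end{equation*}
In particular, the moment generating function is available in closed form: $\Expect \exp(\theta k \bar{X}_k) = \prod_j (1 - 2\theta \lambda_j)^{-k/2}$ whenever $2\theta \norm{\mtx{A}} < 1$.

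For the upper tail I would apply Markov's inequality to the MGF and take logarithms, obtaining
\begin{equation*}
\log \Prob{\bar{X}_k \geq \tau \trace(\mtx{A})} \leq -\theta k \tau \trace(\mtx{A}) - \tfrac{k}{2}\sum_{j=1}^n \log(1 - 2\theta \lambda_j).
\end{equation*}
The key observation is that on $[0,s]$ the convex function $x \mapsto -\log(1-x)$ satisfies $-\log(1-x) \leq -x \log(1-s)/s$, because $f(x)/x$ is increasing for convex $f$ with $f(0)=0$. Applied with $s = 2\theta \norm{\mtx{A}}$ and $x = 2\theta \lambda_j \leq s$, summing over $j$ and using $\sum_j \lambda_j = \trace(\mtx{A})$ yields a bound depending on $\mtx{A}$ only through $\intdim(\mtx{A}) = \trace(\mtx{A})/\norm{\mtx{A}}$. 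After substituting $s = 2\theta\norm{\mtx{A}}$ and optimizing the one-dimensional function $s \mapsto s\tau + \log(1-s)$ (maximum at $s = 1 - 1/\tau$), one arrives at
\begin{equation*}
\log \Prob{\bar{X}_k \geq \tau \trace(\mtx{A})} \leq -\tfrac{k}{2}\intdim(\mtx{A})\, (\tau - 1 - \log \tau).
\end{equation*}
The first displayed bound then follows from the elementary inequality $\tau - 1 - \log \tau \geq (\sqrt{\tau}-1)^2$ for $\tau > 1$, which in turn reduces, via $u = \sqrt\tau$, to the standard fact $u - 1 \geq \log u$.

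The lower tail is handled symmetrically by applying Markov's inequality to $\exp(-\theta k \bar X_k)$ with $\theta > 0$; now the MGF is finite for all $\theta > 0$ (so no side condition on $\theta$), and the companion inequality $\log(1+x) \geq x \log(1+a)/a$ on $[0,a]$ (valid because $\log(1+x)/x$ is decreasing) yields, after the same substitution and optimization at $s = \tau - 1$,
\begin{equation*}
\log \Prob{\bar{X}_k \leq \tau^{-1} \trace(\mtx{A})} \leq -\tfrac{k}{2} \intdim(\mtx{A})\,(\tau^{-1} + \log \tau - 1).
\end{equation*}
The stated bound with $\tfrac14$ in the exponent then follows from $\tau^{-1} + \log \tau - 1 \geq \tfrac12(\tau^{-1}-1)^2$ for $\tau > 1$, which can be checked by differentiating twice in $v = 1/\tau$. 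For the complex case, a complex standard normal vector decomposes into two independent real Gaussian vectors (each with variance $1/2$), which doubles the chi-squared degrees of freedom per sample from $k$ to $2k$; carrying this factor of $2$ through the Chernoff bounds above produces the claimed factor of $2$ in the exponent. The main subtlety is the choice of the convexity inequality and the concluding scalar inequalities that convert $\tau - 1 - \log\tau$ and its dual into the cleaner quadratic forms $(\sqrt{\tau}-1)^2$ and $(\tau^{-1}-1)^2$; everything else is a mechanical Chernoff computation.
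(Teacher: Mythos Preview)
Your proof is correct and follows precisely the approach the paper sketches: compute the moment generating function of $\bar{X}_k$ (via the chi-squared representation) and apply the Cram\'er--Chernoff method, with the convexity trick to make the bound depend only on $\intdim(\mtx{A})$. The paper gives only a one-line sketch, so your writeup is in fact a faithful and complete instantiation of it.
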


\begin{proof}(Sketch)
Carefully estimate the moment generating function of the random variable $X$,
and use the Cram{\'e}r--Chernoff method to obtain the probability inequalities.
\end{proof}

\subsection{Universality}

Empirically, for a large sample, the performance of the trace estimator $\bar{X}_k$
only depends on the distribution of the test vector $\vct{\omega}$
through the variance of the resulting sample $X$.
In a word, the estimator exhibits \emph{universality}.
As a consequence, we can select the distribution
that is most convenient for computational purposes.

Classical probability theory furnishes justification for these claims.
The strong law of large numbers tells us that
$$
\bar{X}_k \to \trace(\mtx{A})
\quad\text{almost surely as $k \to \infty$.}
$$
Concentration inequalities~\cite{BLM13:Concentration-Inequalities}
allow us to derive rates of convergence akin to Theorem~\ref{thm:trace-est-exp}.

To understand the sampling distribution of the estimator
$\bar{X}_k$, we can invoke the central limit theorem:
$$
\sqrt{k} ( \bar{X}_k - \trace(\mtx{A}) )
	\to \normal(0, \Var[X])
	\quad\text{in distribution as $k \to \infty$.}
$$
We can obtain estimates for the rate of convergence to normality using
the Berry--Ess{\'e}en theorem and its
variants~\cite{Ros11:Fundamentals-Steins,CGS11:Normal-Approximation}.

Owing to the universality phenomenon, we can formally use the
normal limit to obtain heuristic error estimates and insights
for trace estimators constructed with test vectors
from any distribution.
This %
strategy becomes even more valuable
when the linear algebra problem is more complicated.

\begin{warning}[CLT]
Estimators based on averaging independent samples
cannot overcome the central limit theorem.
Their accuracy will always be limited
by fluctuations on the scale of $\sqrt{\Var[X]}$.
In other words, we must extract $\eps^{-2}$
samples to reduce the error to $\eps \sqrt{\Var[X]}$
for small $\eps > 0$.
This is the curse of Monte Carlo.
\end{warning}

\subsection{{A posteriori} error estimates}
\label{sec:trace-est-post}

In practice, we rarely have access to all the information
required to activate \emph{a priori} error bounds.
It is wiser to assess the quality of the estimate
from the information that we actually collect.
Since we have full knowledge of the
random process that generates the trace estimate, we
can confidently use approaches from classical statistics.

At the most basic level,
the sample variance is an unbiased estimator
for the variance of the individual samples:
$$
S_k = \frac{1}{k-1} \sum_{i=1}^k (X_i - \bar{X}_k)^2
\quad\text{satisfies}\quad
\Expect[ S_k ] = \Var[X].
$$
The variance of $S_k$ depends on the fourth moment of
the random variable $X$.  A standard estimate is
$$
\Var[S_k] \leq \frac{1}{k} \Expect[ (X - \Expect X)^4 ].
$$
Bounds and empirical estimates for the variance of $S_k$ can also be obtained using
the Efron--Stein inequality~\cite[Sec.~3.1]{BLM13:Concentration-Inequalities}.

For $\alpha \in (0, 1/2)$, we can construct the (symmetric, Student's $t$)
confidence interval at level $1 - 2\alpha$:
$$
\trace(\mtx{A}) \in \bar{X}_k \pm t_{\alpha, k-1} \sqrt{S_k}
$$
where $t_{\alpha, k-1}$ is the $\alpha$ quantile of the
Student's $t$-distribution with $k-1$ degrees of freedom.
We interpret this result as saying that $\trace(\mtx{A})$ lies in
the specified interval with probability roughly $1 - 2\alpha$
(over the randomness in the trace estimator).
The usual rule of thumb is that the sample size should be moderate (say, $k \geq 30$),
while $\alpha$ cannot be too small (say, $\alpha \geq 0.025$).

\subsection{Bootstrapping the sampling distribution}
\label{sec:trace-est-boot}

Miles Lopes has proposed a sweeping program
that uses the bootstrap to construct
data-driven confidence sets for randomized
NLA algorithms~\cite{Lop19:Estimating-Algorithmic}.
For trace estimation, this approach is straightforward to describe
and implement; see Algorithm~\ref{alg:trace-est-boot}.

Let $\mathcal{X} = ( X_1, \dots, X_k )$ be the empirical sample from~\eqref{eqn:trace-est-avg}.
The bootstrap draws further random samples from $\mathcal{X}$
to elicit more information about the sampling distribution of the trace estimator $X$,
such as confidence sets.

\begin{enumerate}
\item	For each $b = 1, \dots, B$,
\begin{enumerate}
\item	Draw a bootstrap replicate $( X_1^*, \dots, X_k^* )$ uniformly from $\mathcal{X}$ with replacement.
\item	Compute the error estimate $e_b^* = \bar{X}_k^* - \bar{X}_k$, where $\bar{X}_k^*$
is the sample average of the bootstrap replicate.
\end{enumerate}
\item	Compute quantiles $q_\alpha$ and $q_{1-\alpha}$ of the error distribution $( e_1^*, \dots, e_B^* )$.
\item	Report the $1-2\alpha$ confidence set $[\bar{X}_k + q_{\alpha}, \bar{X}_k + q_{1 - \alpha}]$.
\end{enumerate}

Typical values are $k \geq 30$ samples and $B \geq 1000$ bootstrap replicates
when $\alpha \geq 0.025$. %
This method is effective for a wide range of distributions on the test vector,
and it extends to other problems.  See~\citeasnoun{Efr82:Jackknife-Bootstrap}
for an introduction to resampling methods.

\begin{algorithm}[t]
\begin{algorithmic}[1]
\caption{\textit{Bootstrap confidence interval for trace estimation.} \newline
See Section~\ref{sec:trace-est-boot}.}
\label{alg:trace-est-boot}

\Require	Psd input matrix $\mtx{A} \in \Sym_n$, number $k$ of trace samples, number $B$ of bootstrap replicates, parameter $\alpha$ for level of confidence
\Ensure		Confidence interval $[\bar{X}_k + q_{\alpha}, \bar{X}_k + q_{1-\alpha}]$ at level $1-2\alpha$
\Statex

\Function{BootstrapTraceEstimate}{$\mtx{A}$, $k$, $b$, $\alpha$}

\For{$i = 1, \dots, k$}
	\Comment Compute trace estimators
\State	Draw isotropic test vector $\vct{\omega}_i \in \F^n$
\State	Form $X_i = \vct{\omega}_i^* (\mtx{A} \vct{\omega}_i)$
\EndFor

\State	$\mathcal{X} = ( X_1, \dots, X_k )$
	\Comment	Collate sample
\State	$\bar{X}_k = k^{-1} \sum_{i=1}^k X_i$
	\Comment	Trace estimate
	
\For{$b = 1, \dots, B$}
	\Comment Bootstrap replicates
\State	Draw $(X_1^*, \dots, X_k^*)$ from $\mathcal{X}$ with replacement
\State	Compute $e_b^* = (k^{-1} \sum_{i=1}^k X_i^*) - \bar{X}_k$
\EndFor

\State	Find $q_\alpha$ and $q_{1-\alpha}$ quantiles of errors $(e_1^*, \dots, e_b^*)$

\EndFunction
\end{algorithmic}
\end{algorithm}

\subsection{Structured distributions for test vectors}

As discussed, there is a lot of flexibility in designing the
distribution of the test vector.  We can exploit this freedom
to achieve additional computational goals. For example,
we might:

\begin{itemize} \setlength{\itemsep}{1mm}
\item	minimize the variance $\Var[X]$ of each sample;
\item	minimize the number of random bits required to construct $\vct{\omega}$;
\item	design a test vector $\vct{\omega}$ that is ``compatible''
with the input matrix $\mtx{A}$ to facilitate the matrix--vector product.
For example, if $\mtx{A}$ has a tensor product structure,
we might require $\vct{\omega}$ to share the tensor structure.
\end{itemize}

Let us describe a general construction for test vectors
that can help achieve these desiderata.
The ideas come from frame theory and quantum information theory.
The approach here extends the work in
\citeasnoun{FORF18:Improved-Stochastic}.

\subsubsection{Optimal measurement systems}
\label{sec:near-isotropic}

In this section, we work in the complex field.
Consider a discrete set $\mathcal{U} := \{ \vct{u}_1, \dots, \vct{u}_m \} \subset \CC^n$
of vectors, each with unit $\ell_2$ norm.  We say that
the $\mathcal{U}$ is an \emph{optimal measurement system} when
\begin{equation} \label{eqn:near-isotropic}
\frac{1}{m} \sum\nolimits_{i=1}^m (\vct{u}_i^*\mtx{M} \vct{u}_i) \, \vct{u}_i \vct{u}_i^*
	= \frac{1}{(n+1) n} \left[ \mtx{M} + \trace(\mtx{M}) \, \Id \right]
\end{equation}
for all $\mtx{M} \in \Sym_n(\CC)$.  The reproducing property~\eqref{eqn:near-isotropic}
shows that the system of vectors acquires enough information
to reconstruct an arbitrary self-adjoint matrix.
A similar definition is valid for an infinite system of unit vectors,
provided we replace the sum in~\eqref{eqn:near-isotropic} with an integral.

Now, suppose that we draw a random test vector $\vct{\omega} = \sqrt{n} \, \vct{u}$,
where $\vct{u}$ is drawn uniformly at random from an optimal measurement system $\mathcal{U}$.
Then the resulting trace estimator is unbiased:
\begin{equation}
\label{eq:unbi1}
X = \vct{\omega}^* (\mtx{A} \vct{\omega})
	\quad\text{satisfies}\quad
	\Expect X = \trace(\mtx{A}).
\end{equation}
The variance of this trace estimator satisfies
\begin{equation}
\label{eq:unbi2}
\Var[X] = \frac{n}{n+1} \left[ \fnorm{\mtx{A}}^2 %
	- \frac{1}{n} \trace(\mtx{A})^2 \right].
\end{equation}
The identities (\ref{eq:unbi1}) and (\ref{eq:unbi2}) follow quickly from~\eqref{eqn:near-isotropic}.
As it happens, this is the minimax variance achievable
for a best isotropic distribution on test vectors~\cite{Kue19:2-Designs-Minimize}

\subsubsection{Examples}

Optimal measurement systems arise in quantum information theory
(as near-isotropic measurement systems),
in approximation theory (as projective 2-designs), and in frame
theory (as tight fusion frames).  The core examples are as follows:

\begin{enumerate} \setlength{\itemsep}{1mm}
\item	A set of $n^2$ equiangular lines in $\CC^n$, each spanned by a unit vector
in $\{\vct{u}_1, \dots, \vct{u}_{n^2}\}$, gives an optimal
measurement system.  In this case, equiangularity means that $\abs{\ip{\vct{u}_i}{\vct{u}_j}}^2 = (d+1)^{-1}$
whenever $i \neq j$.  It is conjectured that these systems exist for every
natural number $n$.

\item	The columns of a family of $(n + 1)$ mutually unbiased bases
in $\F^n$ compose an optimal measurement system with $n(n+1)$ unit vectors.
For reference, a pair $(\mtx{U}, \mtx{V})$ of $n \times n$ unitary matrices
is called \emph{mutually unbiased} if $\vct{\delta}_i^* (\mtx{U}^* \mtx{V}) \vct{\delta}_j = n^{-1}$
for all $i, j$.  (For instance, consider the identity matrix and the discrete Fourier transform matrix.)
These systems exist whenever $n$ is a power of a prime number
\cite{WF89:Optimal-State-Determination}.

\item	The $\ell_2$ unit sphere $\mathbb{S}^{n-1}(\CC)$ in $\CC^n$, equipped with
the uniform measure, is a continuous optimal measurement system.
The real case was studied by \citeasnoun{Gir89:Fast-Monte-Carlo},
but the complex case is actually more natural.
\end{enumerate}

See~\citeasnoun[Lec.~3]{Tro19:Matrix-Concentration-LN} for more discussion
and an application to quantum state tomography.
\citeasnoun{Wal18:Introduction-Finite}
provides a good survey of what is currently known about finite optimal
measurement systems.

\subsection{Extension: The Frobenius norm and the Schatten 4-norm}
\label{sec:schatten-2-4}

The randomized trace estimators developed in this section
can also be deployed to estimate a couple of matrix norms.

Consider a rectangular matrix $\mtx{B} \in \F^{m \times n}$,
accessed via the matrix--vector product $\vct{u} \mapsto \mtx{B} \vct{u}$.
Let us demonstrate how to estimate the Frobenius norm (i.e., Schatten 2-norm)
and the Schatten 4-norm of $\mtx{B}$.

For concreteness, suppose that we extract test vectors from the standard normal distribution.
Draw a standard normal matrix $\mtx{\Omega} \in \F^{n \times k}$
with columns $\vct{\omega}_i$.  Construct the random variable
$$
\bar{X}_k := \frac{1}{k} \fnorm{ \mtx{B} \mtx{\Omega} }^2
	= \frac{1}{k} \sum_{i=1}^k \vct{\omega}_i^* (\mtx{B}^* \mtx{B}) \vct{\omega}_i
	=: \frac{1}{k} \sum_{i=1}^k X_i.
$$
We compute $\bar{X}_k$ by simulating $nk$ standard normal variables,
taking $k$ matrix--vector products with $\mtx{B}$,
and performing $\bigO(km)$ additional arithmetic.

To analyze $\bar{X}_k$, note that it is an instance of the randomized trace estimator,
where $\mtx{A} = \mtx{B}^* \mtx{B}$.
In particular, its statistics are
$$
\Expect[ \bar{X}_k ] = \fnorm{\mtx{B}}^2
\quad\text{and}\quad
\Var[ \bar{X}_k ] = \frac{2}{k} \norm{\mtx{B}}_{4}^4.
$$
We see that $\bar{X}_k$ provides an unbiased estimate
for the squared Frobenius norm of the matrix $\mtx{B}$.
Meanwhile, by rescaling the sample variance $S_k^2$ of the data
$( X_1, \dots, X_k)$, we obtain an unbiased estimate for the
fourth power of the Schatten 4-norm of $\mtx{B}$.

Our discussion shows how we can obtain \emph{a priori} guarantees and
\emph{a posteriori} error estimates for these norm computations;
the results can be expressed in terms of the stable
rank~\eqref{eqn:stable-rank} of $\mtx{B}$.  We can also
obtain norm estimators that are more computationally efficient
using structured distributions for the test vectors,
such as elements from an optimal measurement system.

\section{Schatten $p$-norm estimation by sampling}
\label{sec:schatten-p}

As we saw in Section~\ref{sec:schatten-2-4}, we can easily construct
unbiased estimators for the Schatten $2$-norm and the Schatten $4$-norm
of a matrix by randomized sampling.  What about the other Schatten norms?
This type of computation can be used to obtain better approximations of the
spectral norm, or it can be combined with the method of moments to
approximate the spectral density of the input matrix \cite{KV17:Spectrum-Estimation}.

In this section, we show that it is possible to
use randomized sampling to construct unbiased
estimators for the Schatten $2p$-norm
for each natural number $p \in \mathbb{N}$.
In contrast with the case $p \in \{1, 2\}$,
estimators for $p \geq 3$ are combinatorial.
They may also require a large number of samples
to ensure that the variance of the estimator is
controlled.

In the next section, we explain how to use iterative methods
to approximate the spectral norm (i.e., the Schatten $\infty$-norm).
Iterative algorithms also lead to much more reliable estimators
for general Schatten norms.

\subsection{Overview}

Consider a general matrix $\mtx{B} \in \F^{m \times n}$,
accessed via the matrix--vector product
$\vct{u} \mapsto \mtx{B}\vct{u}$.
For a sample size $k$, let $\mtx{\Omega} \in \F^{n \times k}$
be a (random) test matrix that does not depend on $\mtx{B}$.
For a natural number $p \geq 3$,
we consider the problem of estimating the Schatten $2p$-norm
$\norm{\mtx{B}}_{2p}$ from the sample matrix
$\mtx{Y} = \mtx{B}\mtx{\Omega}$.  The key
idea is to cleverly process the sample
matrix $\mtx{Y}$ to form an unbiased
estimator for the $2p$-th power of the norm.
Since the methods in this section use linear information,
they can be parallelized, and
they are applicable in the one-pass
and streaming environments.

Of course,
if we are given a singular value decomposition (SVD) of $\mtx{B}$,
it is straightforward to extract the Schatten $2p$-norm.
We are interested in methods that are much less expensive than the
$O(\min\{mn^2, nm^2\})$ cost of computing an SVD
with a classical direct algorithm.  Randomized SVD or URV
algorithms (Sections~\ref{sec:rsvd}, \ref{sec:singlepass}, and \ref{sec:full})
can also be used for Schatten norm estimation,
but this approach is typically overkill.

\subsection{Interlude: Lower bounds}

How large a sample size $k$ is required to estimate
$\norm{\mtx{B}}_{2p}$ up to a fixed constant factor
with 75\% probability over the randomness in $\mtx{\Omega}$?
The answer, unfortunately, turns out to be $k \gtrsim \min\{m,n\}^{1 - 2/p}$.
In other words, for a general matrix,
we cannot approximate the Schatten $2p$-norm
for any $p > 2$ unless the sample size $k$
grows polynomially with the dimension.

A more detailed version of this statement appears
in \citeasnoun[Thm.~3.2]{LNW14:Sketching-Matrix}.
The authors exhibit a particularly difficult
type of input matrix (a standard normal matrix with
a rank-one spike, as in Remark \ref{rem:frob}) to arrive at the negative conclusion.

If you have a more optimistic nature, you can
also take the inspiration that other types of
input matrices might be easier to handle.
For example, it is possible to use a small
random sample to compute the Schatten norm
of a matrix that enjoys some decay in the
singular value spectrum.

\subsection{Estimating Schatten norms the hard way}

First, let us describe a technique from classical statistics
that leads to an unbiased estimator of $\norm{\mtx{B}}_{2p}^{2p}$.
This estimator is both highly variable and computationally
expensive, so we must proceed with caution.

For the rest of this section,
we assume that the random test matrix $\mtx{\Omega} \in \F^{n \times k}$
has isotropic columns $\vct{\omega}_i$ that are iid.
Form the sample matrix $\mtx{Y} = \mtx{B}\mtx{\Omega}$.
Abbreviate $\mtx{A} = \mtx{B}^*\mtx{B}$ and $\mtx{X} = \mtx{Y}*\mtx{Y}$.
Observe that
$$
(\mtx{X})_{ij} = (\mtx{Y}^* \mtx{Y})_{ij} = \vct{\omega}_i^* \mtx{A} \vct{\omega}_j.
$$
Therefore, for any natural numbers that satisfy $1 \leq i_1, \dots, i_p \leq k$,
$$
(\mtx{X})_{i_1 i_2} (\mtx{X})_{i_2 i_3} \dots (\mtx{X})_{i_p i_1}
	= \trace\big[ \vct{\omega}_{i_1} \vct{\omega}_{i_1}^* \mtx{A} \dots \vct{\omega}_{i_p} \vct{\omega}_{i_p}^* \mtx{A} \big].
$$
If we assume that $i_1, \dots, i_p$ are distinct, we can use independence and isotropy
to compute the expectation:
$$
\Expect[ (\mtx{X})_{i_1 i_2} (\mtx{X})_{i_2 i_3} \dots (\mtx{X})_{i_p i_1} ]
	= \trace[ \mtx{A}^p ]
	= \norm{ \mtx{B} }_{2p}^{2p}
$$
By averaging over all sequences of distinct indices, we obtain an unbiased estimator:
$$
U_p = \frac{(k-p)!}{k!} \sum_{1 \leq i_1, \dots, i_p \leq k}^{\circ} (\mtx{X})_{i_1 i_2} (\mtx{X})_{i_2 i_3} \dots (\mtx{X})_{i_p i_1}.
$$
The circle over the sum indicates that the indices must be distinct.
Since $\Expect U_p = \norm{\mtx{B}}_{2p}^{2p}$, our hope is that
$$
U_p^{1/(2p)} \approx \norm{\mtx{B}}_{2p}. %
$$
To ensure that the approximation is precise,
the standard deviation of $U_p$ should be somewhat
smaller than the mean of $U_p$.

To understand the statistic $U_p$,
we can use tools from the theory of $U$-statistics~\cite{KB94:Theory-U-Statistics}.
For instance, when $p$ is fixed, we have the limit %
$$
k \Var[ U_p ] \to p^2 \Var[ \vct{\omega}^* \mtx{A}^p \vct{\omega} ]
	\quad\text{as $k \to \infty$.}
$$
In particular, if the test vector $\vct{\omega} \sim \normal(\vct{0}, \Id)$, then
$$
k \Var[ U_p ] \to 2p^2 \norm{ \mtx{B} }_{4p}^{4p}
	\quad\text{as $k \to \infty$.}
$$
We can reduce the variance further
by using test vectors from an optimal measurement
system.

Unfortunately, it is quite expensive to compute the statistic $U_p$
because it involves almost $k^p$ summands.  When $p$ is a small
constant (say, $p = 4$ or $p = 5$), it is not too onerous to form
$U_p$.
On the other hand, in the worst case,
we cannot beat the lower bound
$k \gtrsim \min\{m,n\}^{1-2/p}$,
where computation of $U_p$ requires $O(\min\{m,n\}^p)$ operations.
\citeasnoun{Goo77:New-Formula} proposes
some small economies in this computation.

\subsection{Estimating Schatten norms the easy way}
\label{sec:valiant-kong}

Next, we describe a more recent approach that was proposed
by \citeasnoun{KV17:Spectrum-Estimation}.  This method suffers
from even higher variance, but it is computationally efficient.
As a consequence, we can target larger values of $p$
and exploit a larger sample size $k$.

Superficially, the Kong \& Valiant estimator appears similar to the statistic $U_p$.
With the same notation, they restrict their attention to
\emph{increasing} sequences $i_1 < i_2 < \dots < i_p$ of indices.
In other words,
$$
V_p = \binom{k}{p}^{-1} \sum_{1 \leq i_1 < \dots < i_p \leq k}
	(\mtx{X})_{i_1 i_2} (\mtx{X})_{i_2 i_3} \dots (\mtx{X})_{i_p i_1}.
$$
Much as before, $V_p$ gives an unbiased estimator for $\norm{\mtx{B}}_{2p}^{2p}$.

Although $V_p$ still appears to be combinatorial, the restriction
to increasing sequences allows for a linear algebraic reformulation
of the statistic.
Let $\mathcal{T} : \Sym_k \to \F^{k \times k}$ be the linear
map that reports the strict upper triangle of a (conjugate) symmetric matrix.
Then
$$
V_p = \binom{k}{p}^{-1} \trace[ \mathcal{T}(\mtx{X})^{p-1} \mtx{X} ].
$$
The cost of computing $V_p$ is usually dominated by the $\bigO(k^2 n)$
arithmetic required to form $\mtx{X}$ given $\mtx{Y}$.
See Algorithm~\ref{alg:valiant-kong} for the procedure.

\citeasnoun{KV17:Spectrum-Estimation} obtain bounds for the variance
of the estimator $V_p$ to justify its employment when the number
$k$ of samples satisfies $k \gtrsim \min\{m,n\}^{1 - 2/p}$.  This bound is
probably substantially pessimistic for matrices that exhibit
spectral decay, but these theoretical and computational questions remain open.

\begin{algorithm}[t]
\begin{algorithmic}[1]
\caption{\textit{Schatten $2p$-norm estimation by random sampling.} \newline
See Section~\ref{sec:valiant-kong}.}
\label{alg:valiant-kong}

\Require	Input matrix $\mtx{B} \in \F^{m \times n}$, order $p$ of norm, number $k$ of samples
\Ensure		Schatten $2p$-norm estimate $V_p$
\Statex

\Function{SchattenEstimate}{$\mtx{B}$, $p$, $k$}

\State	Draw test matrix $\mtx{\Omega} \in \F^{n \times k}$ with iid isotropic columns
\State	Compute the sample matrix $\mtx{Y} = \mtx{B\Omega}$

\State	Form the Gram matrix $\mtx{X} = \mtx{Y}^* \mtx{Y} \in \F^{k \times k}$
\State	Extract the strict upper triangle $\mtx{T} = \mathcal{T}(\mtx{X})$
\State	Compute $\mtx{T}^{p-1}$ by repeated squaring
\State	Return $V_p = \trace( \mtx{T}^{p-1} \mtx{X} )$

\EndFunction
\end{algorithmic}
\end{algorithm}

\subsection{Bootstrapping the sampling distribution}

Given the lack of prior guarantees for the estimators $U_p$
and $V_p$ described in this section, we recommend that users
apply resampling methods to obtain empirical information about
the sampling distribution.  See \citeasnoun{AG92:Bootstrap-U-V}
for reliable bootstrap procedures for $U$-statistics.

\subsection{Extension: Estimating the spectral norm by random sampling}

Recall that the spectral norm of $\mtx{B}$
is comparable with the Schatten $2p$-norm of $\mtx{B}$
for an appropriate choice of $p$.  Indeed,
$$
\norm{ \mtx{B} }_{2p} \geq \norm{ \mtx{B} }
\geq \min\{m,n\}^{-1/(2p)} \norm{\mtx{B}}_{2p}
\quad\text{for $p \geq 1/2$.}
$$
Thus, the Schatten $2p$-norm is equivalent
to the spectral norm when $p \gtrsim \log \min\{m,n\}$.
In fact, when the matrix $\mtx{B}$ has a decaying
singular value spectrum, the Schatten $2p$-norm
may already be comparable with the spectral norm
for much smaller values of $p$.

Thus, we can try to approximate the spectral norm by estimating
the Schatten $2p$-norm for a sufficiently large value of $p$.
Resampling techniques can help ensure that the estimate
is reliable.  Nevertheless, this method should be used with caution.

\section{Maximum eigenvalues and trace functions}
\label{sec:max-eig}

Our discussion of estimating the spectral norm from a random sample
indicates that there is no straightforward way to construct
an unbiased estimator for the maximum eigenvalue of a PSD matrix.
Instead, we turn to Krylov methods, which repeatedly apply the matrix
to appropriate vectors to extract more information.

The power method and the Lanczos method are two
classic algorithms of this species.  Historically,
these algorithms have been initialized with a random
vector to ensure that the starting vector has
a component in the direction of a maximum eigenvector.
Later, researchers recognized that the randomness has
ancillary benefits.  In particular, randomized algorithms
can produce reliable estimates of the maximum eigenvalue,
even when it is not separated from the rest of the
spectrum \cite{Dix83:Estimating-Extremal,KW92:Estimating-Largest}.

In this section, we summarize theoretical results
on the randomized power method and randomized Krylov
methods for computing the maximum eigenvalue of a psd matrix.
These results also have implications for estimating
the minimum eigenvalue of a psd matrix
and the spectral norm of a general matrix.
Last, we explain how the Lanczos method leads
to accurate estimates for trace functions.

\subsection{Overview}

Consider a psd matrix $\mtx{A} \in \Sym_n$ %
with decreasingly ordered eigenvalues $\lambda_1 \geq \lambda_2 \geq \dots \geq \lambda_n \geq 0$.
We are interested in the problem of estimating the maximum eigenvalue $\lambda_1$.
As in the last two sections, we assume access to $\mtx{A}$ via the matrix--vector product
$\vct{u} \mapsto \mtx{A} \vct{u}$.

In contrast to the methods in Sections~\ref{sec:trace-est} and~\ref{sec:schatten-p},
the algorithms in this section require sequential applications
of the matrix--vector product.  In other words, we now demand
nonlinear information about the input matrix $\mtx{A}$.
As a consequence, these algorithms resist parallelization,
and they cannot be used in the one-pass or streaming environments.

The theoretical treatment in this section also covers the case of estimating
the spectral norm $\norm{\mtx{B}}$ of a rectangular
matrix $\mtx{B} \in \F^{m \times n}$.  Indeed, we can
simply pass to the psd matrix $\mtx{A} = \mtx{B}^* \mtx{B}$.
From an applied point of view, however, it is
important to develop separate algorithms
that avoid squaring the matrix $\mtx{B}$.
For brevity,
we omit a discussion about estimating spectral norms;
see \citeasnoun[Sec.~10.4]{GVL13:Matrix-Computations-4ed}.

\subsection{The randomized power method}
\label{sec:rand-power}

The randomized power method is a simple iterative algorithm
for estimating the maximum eigenvalue.

\subsubsection{Procedure}

First, draw a random test vector $\vct{\omega} \in \F^n$.
Since the maximum eigenvalue is unitarily invariant, it is
most natural to draw the test vector $\vct{\omega}$ from
a rotationally invariant distribution, such as
$\vct{\omega} \sim \normal(\vct{0}, \Id)$.
The power method iteratively constructs the sequence
$$
\vct{y}_0 = \frac{\vct{\omega}}{\norm{\vct{\omega}}}
\quad\text{and}\quad
\vct{y}_q = \frac{\mtx{A} \vct{y}_{q-1}}{\norm{\mtx{A} \vct{y}_{q-1}}}
\quad\text{for $q \geq 1$.}
$$
At each step, we obtain an eigenvalue estimate
$$
\xi_q = \vct{y}_q^* \mtx{A} \vct{y}_q
	= \frac{\vct{\omega}^* \mtx{A}^{2q+1} \vct{\omega}}{\vct{\omega}^* \mtx{A}^{2q} \vct{\omega}}
\quad\text{for $q \geq 0$.}
$$
The randomized power method requires simulation of a single
random test vector $\vct{\omega}$.
To perform $q$ iterations, it takes
$q$ sequential matrix--vector products with $\mtx{A}$
and lower-order arithmetic.
It operates with storage $O(n)$.
See Algorithm~\ref{alg:rand-power} for pseudocode.

\begin{algorithm}[t]
\begin{algorithmic}[1]
\caption{\textit{Randomized power method.} \newline
See Section~\ref{sec:rand-power}.}
\label{alg:rand-power}

\Require	Psd input matrix $\mtx{A} \in \Sym_n$, maximum number $q$ of iterations,
stopping tolerance $\eps$
\Ensure		Estimate $\xi$ of maximum eigenvalue of $\mtx{A}$
\Statex

\Function{RandomizedPower}{$\mtx{A}$, $q$, $\eps$}

\State	$\vct{\omega} = \texttt{randn}(n,1)$
	\Comment	Starting vector is random
\State	$\vct{y}_0 = \vct{\omega} / \norm{\vct{\omega}}$

\For{$i = 1, \dots, q$}
\State	$\vct{y}_{i} = \mtx{A} \vct{y}_{i-1}$
\State	$\xi_{i-1} = \vct{y}_{i-1}^* \vct{y}_i$
\State	$\vct{y}_{i} = \vct{y}_{i} / \norm{\vct{y}_i}$
\If		{$\abs{\xi_{i-1} - \xi_{i-2}} \leq \eps \xi_{i-1}$} \textbf{break}
	\Comment	[opt] Stopping rule
\EndIf
\EndFor

\State	\Return $\xi_{i-1}$

\EndFunction
\end{algorithmic}
\end{algorithm}

\subsubsection{Analysis}

The question is how many iterations $q$
suffice to make $\xi_q$ close to the
maximum eigenvalue $\lambda_1$.
More precisely, we aim to control the relative error $e_q$
in the eigenvalue estimate $\xi_q$:
$$
e_q = \frac{\lambda_1 - \xi_q}{\lambda_1}.
$$
The error $e_q$ is always nonnegative because of the Rayleigh theorem.
Note that the computed vector $\vct{y}_q$ always has a substantial
component in the invariant subspace associated
with eigenvalues larger than $\xi_q$, but it
may not be close to any maximum eigenvector,
even when $\xi_q \approx \lambda_1$.

\citeasnoun{KW92:Estimating-Largest} have established
several remarkable results about the evolution of the error.

\begin{theorem}[Randomized power method] \label{thm:rand-power}
Let $\mtx{A} \in \Sym_n(\RR)$ be a real psd matrix.  Draw a real test
vector $\vct{\omega} \sim \normal(\vct{0}, \Id)$.  After
$q$ iterations of the randomized power method,
the error $e_q$ in the maximum eigenvalue estimate $\xi_q$ satisfies
\begin{equation} \label{eqn:rand-power-nogap}
\Expect e_q \leq 0.871 \cdot \frac{\log n}{q - 1}
\quad\text{for $q \geq 2$.}
\end{equation}
Furthermore, if $\gamma = (\lambda_1 - \lambda_2) / \lambda_1$
is the relative spectral gap, then
\begin{equation} \label{eqn:rand-power-gap}
\Expect e_q \leq 1.254 \cdot \sqrt{n} \, \gamma \econst^{-q \gamma}
\quad\text{for $q \geq 1$.}
\end{equation}
\end{theorem}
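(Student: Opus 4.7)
The plan is to reduce to a computation in the eigenbasis of $\mtx{A}$ and then bound a ratio of quadratic forms in iid standard Gaussians. Writing $\mtx{A} = \sum_i \lambda_i \vct{u}_i \vct{u}_i^*$ and setting $g_i := \vct{u}_i^* \vct{\omega}$, rotational invariance of the standard normal makes $(g_i)$ iid $\normal(0,1)$. With $\mu_i := \lambda_i/\lambda_1 \in [0,1]$, direct expansion of the Rayleigh quotient gives
$$
e_q = \frac{W}{V}, \quad W := \sum_{i=2}^n g_i^2 \mu_i^{2q}(1-\mu_i), \quad V := g_1^2 + \sum_{i=2}^n g_i^2 \mu_i^{2q}.
$$
Since $V - W = g_1^2 + \sum_{i \ge 2} g_i^2 \mu_i^{2q+1} \ge g_1^2$, the event $\{e_q \ge t\}$ is contained in $\{g_1^2 \le (1-t)W/t\}$, so the whole analysis reduces to understanding how small $g_1^2$ can be relative to $W$.

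Next I would control $W$ pointwise via the scalar polynomials $\mu^{2q}(1-\mu)$ and $\mu^{2q}[(1-\mu)-t]$. Calculus supplies two sharp estimates: on $[0,1-\gamma]$ with $\gamma \ge 1/(2q+1)$, the maximum of $\mu^{2q}(1-\mu)$ is attained at the right endpoint with value $\gamma(1-\gamma)^{2q}$; on $[0,1]$, the shifted polynomial $\mu^{2q}[(1-\mu)-t]$ peaks at $\mu^* = 2q(1-t)/(2q+1)$ with value at most $(1-t)^{2q+1}/(e(2q+1))$ up to a $1+o(1)$ factor. For the gap bound \eqref{eqn:rand-power-gap}, the first estimate yields $W \le \gamma(1-\gamma)^{2q} R$ with $R := \sum_{i \ge 2} g_i^2 \sim \chi^2_{n-1}$. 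Combining with the monotonicity of $w \mapsto w/(g_1^2+w)$ and the Gaussian identity
$$
\Expect_g\!\left[\frac{c}{g^2+c}\right] = \sqrt{\frac{2c}{\pi}} \int_0^\infty \frac{e^{-cu^2/2}}{1+u^2}\, du \le \sqrt{\frac{\pi c}{2}} \quad\text{for $g \sim \normal(0,1)$,}
$$
applied conditionally on $R$ and followed by $\Expect\sqrt R \le \sqrt{n-1}$, would produce $\Expect e_q \le \sqrt{\pi n \gamma/2}\,(1-\gamma)^q$, matching the stated exponential decay with constant $\sqrt{\pi/2} \approx 1.253$.

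For the no-gap bound \eqref{eqn:rand-power-nogap}, the second polynomial estimate converts the containment above into a tail bound of the form $\Prob{e_q \ge t} \le C\sqrt n\,(1-t)^{q+1/2}$: bound $W \le (1-t)^{2q+1} R/((2q+1)e)$, apply $\Prob{|g_1| \le x} \le x \sqrt{2/\pi}$ conditional on $R$, then take Jensen on $\sqrt R$. This tail is trivial for $t$ below a cutoff $\eps^* \asymp \log n /q$ and decays exponentially above it, so the split
$$
\Expect e_q = \int_0^1 \Prob{e_q > t}\, dt \le \eps^* + \int_{\eps^*}^1 C\sqrt n \,(1-t)^{q+1/2}\, dt
$$
is dominated by the first term under the optimal choice of $\eps^*$, yielding the advertised $O(\log n /(q-1))$ rate.

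The main obstacles are twofold. First, matching the gap bound with a factor of $\gamma$ rather than the $\sqrt\gamma$ above will require avoiding the $\sqrt{\cdot}$ step of Jensen; one strategy is to compute $\Expect[cR/(g_1^2+cR)]$ exactly against the joint chi-squared/Gaussian density, replacing the Jensen step by a closed-form identity. Second, the specific constants $0.871$ and $1.254$ come from the sharpest possible forms of these Gaussian and chi-squared integrals, which must be carried out carefully as in \citeasnoun{KW92:Estimating-Largest}.
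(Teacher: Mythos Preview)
The paper does not supply a proof here: it attributes \eqref{eqn:rand-power-nogap} to \citeasnoun{KW92:Estimating-Largest} and says \eqref{eqn:rand-power-gap} ``follows from related ideas'' in \citeasnoun{Tro18:Analysis-Randomized}. Your eigenbasis reduction and the representation $e_q=W/V$ are exactly the Kuczy\'nski--Wo\'zniakowski starting point, so the overall strategy is right.

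There is a real gap in your treatment of \eqref{eqn:rand-power-gap}, and your diagnosis of it is off. The $\sqrt\gamma$ in your bound does not come from the Jensen step $\Expect\sqrt R\le\sqrt{n-1}$; it comes from the Gaussian integral itself, because $\Expect_g[c/(g^2+c)]\sim\sqrt{\pi c/2}$ as $c\to 0$ is already the exact leading order. Computing $\Expect[cR/(g_1^2+cR)]$ exactly against the joint density will still return $\Theta(\sqrt{cn})$ with $c=\gamma(1-\gamma)^{2q}$, so $\sqrt\gamma$ persists. The chain $e_q\le W/(g_1^2+W)$, $W\le\gamma(1-\gamma)^{2q}R$ therefore cannot reach the linear $\gamma$ in the statement. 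To see what is required, look at the extremal spectrum $\mu_2=\cdots=\mu_n=1-\gamma$: there $e_q=\gamma\cdot S/(g_1^2+S)$ with $S=(1-\gamma)^{2q}R$, so $\gamma$ factors out \emph{before} the Gaussian integral, and the latter contributes only $\sqrt{\pi/2}\,\sqrt{n}\,(1-\gamma)^q$, matching the constant $1.254$. For general spectra you need an argument that achieves this same factoring, which the crude monotonicity bound does not provide.

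A smaller slip in the no-gap argument: the containment $\{e_q\ge t\}\subset\{g_1^2\le(1-t)W/t\}$ involves $W$, which is independent of $t$, so you cannot then invoke the \emph{shifted} polynomial bound ``$W\le(1-t)^{2q+1}R/((2q+1)e)$.'' The clean derivation uses the equivalent form $\{e_q\ge t\}=\{tg_1^2\le\sum_{i\ge2}g_i^2\mu_i^{2q}(1-\mu_i-t)\}$ and bounds the right-hand side by $R\max_\mu\mu^{2q}(1-\mu-t)_+$; that is where your shifted polynomial actually enters, and it produces a tail of the form $C\sqrt{n/t}\,(1-t)^{q+1/2}$. With this correction your split-and-integrate step does yield the $O(\log n/q)$ rate.
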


The second result~\eqref{eqn:rand-power-gap}
does not appear explicitly in \citeasnoun{KW92:Estimating-Largest},
but it follows from related ideas~\cite{Tro18:Analysis-Randomized}.

\subsubsection{Discussion}

Theorem~\ref{thm:rand-power} makes two different claims.  First,
the power method can exhibit a burn-in period of $q \approx \log n$
iterations before it produces a nontrivial estimate of the maximum
eigenvalue; after this point, it always decreases the error in
proportion to the number $q$ of iterations.  The second claim
concerns the situation where the
matrix has a spectral gap $\gamma$ bounded away from zero.
In the latter case, after the burn-in period,
the error decreases at each iteration by a constant factor
that depends on the spectral gap.
The burn-in period of $q \approx \log n$ iterations is necessary
for \emph{any} algorithm that estimates the maximum eigenvalue
of $\mtx{A}$ from $q$ matrix--vector products
with the matrix~\cite{SER17:Gap-Strict-Saddles}.

Whereas classical analyses of the power method depend
on the spectral gap $\gamma$,
Theorem~\ref{thm:rand-power} comprehends
that we can estimate the maximum eigenvalue
even when $\gamma \approx 0$.
On the other hand, it is generally not possible to obtain a reliable
estimate of the maximum eigenvector in this extreme~\cite{LW98:Estimating-Largest}.

Theorem~\ref{thm:rand-power} can be improved in several respects.
First, we can develop variants where the dimension $n$ of the
matrix $\mtx{A}$ is replaced by its intrinsic dimension~\eqref{eqn:intdim}, %
or by smaller quantities that reflect spectral decay.
Second, when the maximum eigenvalue has multiplicity greater
than one, the power method estimates the maximum eigenvalue faster.
Third, the result can be extended to the complex setting.
See~\citeasnoun{Tro18:Analysis-Randomized}
for further discussion.

Although the power method is often deprecated
because it converges slowly, it is numerically
stable, and it enjoys the (minimal) storage cost
of $\bigO(n)$.

\subsection{Randomized Krylov methods}

The power method uses the $q$th power of the matrix to estimate
the maximum eigenvalue.  A more sophisticated approach %
allows \emph{any} polynomial with degree $q$.  Algorithms
based on this general technique are often referred to
as \emph{Krylov subspace methods}.  The most famous instantiation
is the \emph{Lanczos method}, which is an efficient implementation
of a Krylov subspace method for estimating the eigenvalues of
a self-adjoint matrix.

\subsubsection{Abstract procedure}

Draw a random test vector $\vct{\omega} \in \F^n$.  It is natural
to use a rotationally invariant distribution, such as
$\vct{\omega} \sim \normal(\vct{0}, \Id)$.  For a depth parameter $q \in \mathbb{N}$,
a randomized Krylov subspace method
implicitly constructs the subspace
$$
K_{q+1} %
	:= \lspan \{ \vct{\omega}, \mtx{A}\vct{\omega}, \dots, \mtx{A}^q \vct{\omega} \}.
$$
We can estimate the maximum eigenvalue of $\mtx{A}$ as
$$
\xi_q = \max_{\vct{u} \in K_{q+1}} \frac{\vct{u}^* \mtx{A} \vct{u}}{\vct{u}^*\vct{u}}
	= \max_{\deg p \leq q} \frac{\vct{\omega}^* \mtx{A} p^2(\mtx{A}) \vct{\omega}}{\norm{ p(\mtx{A}) \vct{\omega} }^2}.
$$
The maximum occurs over polynomials $p$ with coefficients in $\F$
and with degree at most $q$.
The notation $p(\mtx{A})$ refers to the spectral function induced by the polynomial $p$.
We will discuss implementations of Krylov methods below
in Section~\ref{sec:rand-lanczos}.

\subsubsection{Analysis}

Since the Krylov subspace is invariant to shifts in the spectrum
of $\mtx{A}$, it is more natural to compute
the error relative to the spectral range of the matrix:
$$
f_q = \frac{\lambda_1 - \xi_q}{\lambda_1 - \lambda_n}.
$$
The error $f_q$ is always nonnegative because it is a
Rayleigh quotient.

\citeasnoun{KW92:Estimating-Largest} have established striking
results for the maximum eigenvalue estimate obtained via a
randomized Krylov subspace method.

\begin{theorem}[Randomized Krylov method] \label{thm:rand-krylov}
Let $\mtx{A} \in \Sym_n(\RR)$ be a real psd matrix.  Draw a real test
vector $\vct{\omega} \sim \normal(\vct{0}, \Id)$.  After
$q$ iterations of the randomized Krylov method,
the error $f_q$ in the maximum eigenvalue estimate $\xi_q$ satisfies
\begin{equation} \label{eqn:rand-krylov-nogap}
\Expect f_q \leq 2.575 \cdot \left(\frac{\log n}{q - 1} \right)^2
\quad\text{for $q \geq 4$.}
\end{equation}
Furthermore, if $\gamma = (\lambda_1 - \lambda_2) / \lambda_1$
is the relative spectral gap, then
\begin{equation} \label{eqn:rand-krylov-gap}
\Expect f_q \leq 2.589 \cdot \sqrt{n} \, \econst^{-2(q-1) \sqrt{\gamma}}
\quad\text{for $q \geq 1$.}
\end{equation}
\end{theorem}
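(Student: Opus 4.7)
The plan is to reduce both bounds to two ingredients: a deterministic polynomial approximation estimate and a probabilistic estimate on the alignment of $\vct{\omega}$ with the leading eigenspace. First I would normalize so that $\lambda_n = 0$ and $\lambda_1 = 1$, which leaves both $f_q$ and $\gamma$ unchanged. Expanding $\vct{\omega} = \sum_i \beta_i \vct{u}_i$ in the eigenbasis of $\mtx{A}$ and substituting the trial vector $\vct{u} = p(\mtx{A})\vct{\omega} \in K_{q+1}$ into the Rayleigh maximum defining $\xi_q$, every polynomial $p$ with $\deg p \leq q$ yields
$$
f_q \;\leq\; \frac{\sum_{i\geq 2}\beta_i^2\,(1-\lambda_i)\,p(\lambda_i)^2}{\beta_1^2\,p(1)^2}
\;\leq\; \frac{\max_{x\in[0,\lambda_2]} p(x)^2}{p(1)^2} \cdot \frac{\sum_{i\geq 2}\beta_i^2}{\beta_1^2}.
$$
Call the first factor $M$ (deterministic, to be optimized over $p$) and the second factor $S$ (random, depending only on $\vct{\omega}$).

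For the gap case, the natural choice of $p$ is the Chebyshev polynomial $T_q$ shifted and scaled to map $[0,1-\gamma]$ onto $[-1,1]$; standard identities for $T_q$ on $(1,\infty)$ then yield $M \leq 4\,\econst^{-4q\sqrt{\gamma}}$. For the no-gap case, I would apply the same construction with an artificial cut-off $\epsilon$, absorbing eigenvalues in $(1-\epsilon, 1)$ through the trivial bound $1-\lambda_i < \epsilon$ and tuning $\epsilon$ at scale $(\log n/q)^2$. The random factor $S$ is then controlled by rotational invariance of the Gaussian: $\beta_1^2/\norm{\vct{\omega}}^2$ has the Beta$(\tfrac12,(n-1)/2)$ distribution, whose density near zero is of order $\sqrt{n}$, giving the small-ball inequality $\Prob{\beta_1^2/\norm{\vct{\omega}}^2 < s} \leq \sqrt{2ns/\pi}$. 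Since $f_q$ is scale-invariant in $\vct{\omega}$, the layer-cake formula applied to $\Prob{MS > t}$ produces $\Expect f_q \lesssim \sqrt{nM}$, and combining with the gap-case estimate for $M$ delivers~\eqref{eqn:rand-krylov-gap} up to absolute constants.

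The hard part will be the no-gap bound~\eqref{eqn:rand-krylov-nogap}, whose absence of any $\sqrt{n}$ factor is incompatible with the crude $\sqrt{nM}$ estimate derived above. The required refinement, which is the technical heart of \citeasnoun{KW92:Estimating-Largest}, is to retain the full denominator $\sum_i \beta_i^2 p(\lambda_i)^2$ rather than dropping to the single-term surrogate $\beta_1^2 p(1)^2$: for a Chebyshev-like polynomial that places appreciable mass on an entire window of spectrum near $x = 1$, the additional content in the denominator compensates the $\sqrt{n}$ small-ball penalty in expectation. Optimizing the cut-off at $\epsilon \asymp (\log n/(q-1))^2$ then produces the quadratic rate~\eqref{eqn:rand-krylov-nogap}.
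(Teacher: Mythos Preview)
The paper does not actually prove this theorem: it attributes both bounds to \citeasnoun{KW92:Estimating-Largest}, remarking only that~\eqref{eqn:rand-krylov-gap} is a direct consequence of a more detailed formula there. Your sketch is the Kuczy\'nski--Wo\'zniakowski argument (as you yourself note), so there is no independent proof in the paper to compare against.

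One small slip worth flagging: the affine normalization $\lambda_n = 0$, $\lambda_1 = 1$ does \emph{not} leave $\gamma = (\lambda_1 - \lambda_2)/\lambda_1$ unchanged, since $\gamma$ is scale-invariant but not shift-invariant. The slip is harmless---shifting a psd matrix down to $\lambda_n = 0$ can only increase $\gamma$, and the bound~\eqref{eqn:rand-krylov-gap} is monotone decreasing in $\gamma$; alternatively, just scale to $\lambda_1 = 1$ without shifting, since the inequality $1-\lambda_i \le 1$ you use already holds for psd matrices. Apart from this, your outline of the gap bound via the shifted Chebyshev polynomial and the Beta small-ball estimate is sound, and your diagnosis of the no-gap bound---that the crude surrogate $\beta_1^2 p(1)^2$ for the denominator must be replaced by the full sum $\sum_i \beta_i^2 p(\lambda_i)^2$, letting a whole window of spectrum near $\lambda_1$ contribute and thereby absorbing the $\sqrt{n}$ penalty---is exactly the mechanism in \citeasnoun{KW92:Estimating-Largest}.
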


The second result~\eqref{eqn:rand-krylov-gap} is a direct consequence
of a more detailed formula reported in \citeasnoun{KW92:Estimating-Largest}.

\subsubsection{Discussion}

Like the randomized power method, a randomized Krylov method can
also exhibit a burn-in period of $q \approx \log n$ steps.  Afterwards,
the result~\eqref{eqn:rand-krylov-nogap} shows that the error
decreases in proportion to $1/q^2$, which is much faster than the
$1/q$ rate achieved by the power method.  Furthermore,
the result~\eqref{eqn:rand-krylov-gap} shows that each
iteration decreases the error by a constant factor
$\econst^{-2\sqrt{\gamma}}$ where $\gamma$ is the spectral gap.
In contrast, the power method only decreases the error by a
constant factor $\econst^{-\gamma}$.

Theorem~\ref{thm:rand-krylov} admits the same kind of refinements
as Theorem~\ref{thm:rand-power}.  In particular, we can replace
the dimension $n$ with measures that reflect the spectral decay
of the input matrix.  See \citeasnoun{Tro18:Analysis-Randomized}
for details.

\subsubsection{Implementing Krylov methods}
\label{sec:rand-lanczos}

What do we have to pay for the superior performance of the
randomized Krylov method?  If we only need an estimate
of the maximum eigenvalue, without an associated eigenvector estimate,
the cost is almost the same as for the randomized power method!
On the other hand, if we desire the eigenvector estimate,
it is common practice to store a basis for the
Krylov subspace $K_q$.  This is a classic example
of a time--data tradeoff in computation.

We present pseudocode for the randomized Lanczos method,
which is an efficient formulation of the Krylov method.
Algorithm~\ref{alg:rand-lanczos} is a direct
implementation of the Lanczos recursion, but
it exhibits complicated performance in floating-point arithmetic.
Algorithm~\ref{alg:rand-lanczos} includes the option to
add full reorthogonalization; this step removes the
numerical shortcomings at a substantial price in arithmetic (and storage).

If we use the Lanczos method without orthogonalization,
then $q$ iterations require $q$ matrix--vector multiplies
with $\mtx{A}$ plus $O(qn)$ additional arithmetic.
The orthogonalization step adds a total of
$O(q^2 n)$ additional arithmetic.
Computing the maximum
eigenvalue (and eigenvector) of the tridiagonal matrix $\mtx{T}$
can be performed with $O(q)$ arithmetic~\cite[Sec.~8.4]{GVL13:Matrix-Computations-4ed}.

If we do not require the maximum eigenvector, the Lanczos
method without orthogonalization operates with storage
$O(n)$.  If we need the maximum eigenvector or we
add orthogonalization, the storage cost grows to
$O(qn)$.
It is possible to avoid the extra storage by recomputing the Lanczos vectors,
but this approach requires great care.
One of the main thrusts in the literature
on Krylov methods is to reduce these storage costs
while maintaining rapid convergence.

\begin{warning}[Lanczos method]
Algorithm~\ref{alg:rand-lanczos} should be used with caution.
For a proper discussion about designing Krylov methods,
we recommend the books
\cite{Par98:Symmetric-Eigenvalue,BDD+00:Templates-Solution,GVL13:Matrix-Computations-4ed}.
There is also some recent theoretical work on the numerical stability
of Lanczos methods~\cite{MMS18:Stability-Lanczos,CDST19:Rank-1-Sketch}.
\end{warning}

\begin{algorithm}[t]
\begin{algorithmic}[1]
\caption{\textit{Randomized Lanczos method (with full reorthogonalization).} \newline
\textbf{Use with caution!}  See Section~\ref{sec:rand-lanczos}.}
\label{alg:rand-lanczos}

\Require	Psd input matrix $\mtx{A} \in \Sym_n$, maximum number $q$ of iterations
\Ensure		Estimate $(\xi, \vct{y})$ for a maximum eigenpair of $\mtx{A}$
\Statex

\Function{RandomizedLanczos}{$\mtx{A}$, $q$}
	
\State	$q = \min(q, n-1)$
\State	$\mtx{Q}(:,1) = \texttt{randn}(n, 1)$
	\Comment	Starting vector $\vct{\omega}$ is random      %
\State	$\mtx{Q}(:,1) = \mtx{Q}(:,1) / \norm{\mtx{Q}(:,1)}$

\For	{$i = 1, \dots, q$}
\State	$\mtx{Q}(:, i+1) = \mtx{A} \mtx{Q}(:, i)$				%
\State	$\alpha_i = \real(\mtx{Q}(:, i)^* \mtx{Q}(:, i+1))$		%

\If		{$i = 1$}                     %
\State		$\mtx{Q}(:, i+1) = \mtx{Q}(:, i+1) - \alpha_i \mtx{Q}(:, i)$
\Else
\State	$\mtx{Q}(:, i+1) = \mtx{Q}(:, i+1) - \alpha_i \mtx{Q}(:, i) - \beta_{i-1} \mtx{Q}(:, i-1)$
\EndIf

\Statex
\Statex
\Comment	[opt] Reorthogonalize via double Gram--Schmidt

\State	$\mtx{Q}(:, i+1) = \mtx{Q}(:, i+1) - \mtx{Q}(:, 1:i) (\mtx{Q}(:, 1:i)^*\mtx{Q}(:, i+1))$
\State	$\mtx{Q}(:, i+1) = \mtx{Q}(:, i+1) - \mtx{Q}(:, 1:i) (\mtx{Q}(:, 1:i)^*\mtx{Q}(:, i+1))$

\Statex
\State  $\beta_i = \norm{ \mtx{Q}(:, i+1) }$            %

\If     {$\beta_i < \mu \sqrt{n}$} \textbf{break}
	\Comment	$\mu$ is machine precision
\EndIf

\State  $\mtx{Q}(:, i+1) = \mtx{Q}(:, i+1) / \beta_i$        %
\EndFor

\State	$\mtx{T} = \texttt{tridiag}(\beta(1:i-1), \alpha(1:i), \beta(1:i-1))$

\State	$[\mtx{V}, \mtx{D}] = \texttt{eig}(\mtx{T})$
\State	$[\xi, \texttt{ind}] = \min(\diag(\mtx{D}))$
\State	$\vct{y} = \mtx{Q}(:, 1:i) \, \mtx{V}(:, \texttt{ind})$
	\Comment	[opt] Estimate max eigenvector

\EndFunction
\end{algorithmic}
\end{algorithm}

\subsection{The minimum eigenvalue}

The randomized power method and the randomized Krylov method
can be used to estimate the minimum
eigenvalue $\lambda_{n}$ of the psd matrix $\mtx{A} \in \Sym_n$.

The first approach is to apply the randomized power
method to the shifted matrix $\nu \Id - \mtx{A}$,
where the shift is chosen so that $\nu \geq \lambda_1$.
In this case, the algorithm produces an approximation
for $\nu - \lambda_n$.  Note that the error in
Theorem~\ref{thm:rand-power} is relative to
$\nu - \lambda_n$, rather than $\lambda_n$.

The second approach begins with the computation
of the Krylov subspace $K_{q+1}$.  Instead of maximizing
the Rayleigh quotient over the Krylov subspace,
we minimize it:
$$
\zeta_q = \min_{\vct{u} \in K_{q+1}} \frac{ \vct{u}^*\mtx{A}\vct{u} }{\vct{u}^*\vct{u}}.
$$
This approach directly produces an estimate $\zeta_q$
for $\lambda_n$.  The Krylov subspace is invariant
under affine transformations of the spectrum of $\mtx{A}$,
so we can obtain an error bound for $\zeta_q$
by applying Theorem~\ref{thm:rand-krylov}
formally to $\lambda_n \Id - \mtx{A}$.

\begin{remark}[Inverses]
If we can apply the matrix inverse $\mtx{A}^{-1}$
to vectors, then we gain access to a wider class
of algorithms for computing the minimum eigenvalue,
including (shifted) inverse iteration and the
Rayleigh quotient iteration.
See \cite{Par98:Symmetric-Eigenvalue} and \cite{GVL13:Matrix-Computations-4ed}.
\end{remark}

\subsection{Block methods}

The basic power method and Krylov method can be
extended by applying the iteration simultaneously to a larger
number of (random) test vectors.  The resulting algorithms
are called \emph{subspace iteration} and
\emph{block Krylov methods}, respectively.
Historically, the reason for developing block
methods was to resolve repeated or clustered eigenvalues. %

In randomized linear algebra,
we discover additional motivations for developing
block methods.
When the test vectors are drawn at random,
block methods may converge slightly faster,
and they succeed with much higher probability.
On modern computer architectures, the cost
of a block method may be comparable with the
cost of a simple vector iteration, which
makes this modification appealing.
We will treat this class of algorithm more
thoroughly in Section~\ref{sec:random-rangefinder},
so we postpone a full discussion.
See also~\cite{Tro18:Analysis-Randomized}.

\subsection{Estimating trace functions}
\label{sec:slq}

Finally, we turn to the problem of estimating the trace
of a spectral function of a psd matrix.

\subsubsection{Overview}

Consider a psd matrix $\mtx{A} \in \Sym_n$
with eigenpairs $(\lambda_j, \vct{u}_j)$ for $j = 1, \dots, n$.
Let $f : \RR_+ \to \RR$ be a function,
and suppose that we wish to approximate
$$
\trace f(\mtx{A}) = \sum\nolimits_{j=1}^n f(\lambda_j).
$$
We outline an incredible approach to this problem, called
\emph{stochastic Lanczos quadrature} (SLQ), that marries
the randomized trace estimator (Section~\ref{sec:trace-est})
to the Lanczos iteration (Algorithm~\ref{alg:rand-lanczos}).

This algorithm was devised by~\cite{GM94:Matrices-Moments}.
Our presentation is based on~\citeasnoun[Sec.~10.2]{GVL13:Matrix-Computations-4ed}
and \citeasnoun{UCS17:Fast-Estimation}.
\citeasnoun{GM10:Matrices-Moments} provide a more complete
treatment, and \citeasnoun{MMS18:Stability-Lanczos}
give a theoretical discussion about stability.

Related ideas can be used to estimate the trace of a spectral
function of a rectangular matrix; that is, the sum of
a function of each singular value of the matrix.
For brevity, we omit all details on the rectangular case.

\subsubsection{Examples}

Computing the trace of a spectral function is a ubiquitous problem with
a huge number of applications.  Let us mention some of the key examples.

\begin{enumerate} \setlength{\itemsep}{1mm}
\item	For $f(\lambda) = \lambda^{-1}$, the resulting
trace function is the trace of the matrix inverse.
This computation arises in electronic structure calculations.

\item	For $f(t) = \log t$, the associated trace function
is the log-determinant.  This computation arises in
Gaussian process regression.

\item	For $f(t) = t^p$ with $p \geq 1$, the trace function
is the $p$th power of the Schatten $p$-norm.  SLQ offers a more powerful alternative
to the methods in Section~\ref{sec:schatten-p}.
\end{enumerate}

\noindent
We refer to~\citeasnoun{UCS17:Fast-Estimation} for additional discussion.

\subsubsection{Procedure}

Let us summarize the mathematical ideas that lead to SLQ.
As usual, we draw an isotropic random vector $\vct{\omega} \in \F^n$. %
Then the random variable
$$
X = \vct{\omega}^* f(\mtx{A}) \vct{\omega}
\quad\text{satisfies}\quad
\Expect X = \trace f(\mtx{A}).
$$
Using the spectral resolution of $\mtx{A}$,
we can rewrite $X$ in the form
$$
X = \sum\nolimits_{j=1}^n f(\lambda_j) \, \abs{\vct{u}_j^* \vct{\omega}}^2
	= \int_{\RR_+} f(\lambda) \, {\nu}(\diff\lambda)
$$
for an appropriate measure $\nu$ on $\RR_+$ that depends on $\mtx{A}$ and $\vct{\omega}$.
Although we cannot generally compute the integral directly, we can approximate
it by using a numerical quadrature rule:
$$
X \approx \sum\nolimits_{\ell=1}^{q+1} \tau_\ell^2 f(\theta_\ell) =: Z.
$$
What is truly amazing is that the weights $\tau_\ell^2$ and the nodes $\theta_\ell$
for the quadrature rule can be extracted from the tridiagonal matrix $\mtx{T} \in \Sym_{q+1}$
produced by $q$ iterations of the Lanczos iteration with starting
vector $\vct{\omega}$.
This point is not obvious, but a full explanation exceeds our scope.

SLQ approximates the trace function by averaging independent
copies of the simple approximation:
$$
\trace f(\mtx{A}) \approx \frac{1}{k} \sum\nolimits_{i=1}^k Z_i
\quad\text{where}\quad
\text{$Z_i \sim Z$ are iid.}
$$
The analysis of the SLQ approximation requires heavy machinery
from approximation theory.
See~\citeasnoun{GM10:Matrices-Moments}
and~\citeasnoun{UCS17:Fast-Estimation} for more details.

Algorithm~\ref{alg:slq} contains pseudocode for SLQ.
The dominant cost is
$O(kq)$ matrix--vector multiplies with $\mtx{A}$,
plus $O(k q^2)$ additional arithmetic.  We recommend
using structured random test vectors to reduce the variance
of the resulting approximation.  The storage cost is
$O(qn)$ numbers.

\begin{algorithm}[t]
\begin{algorithmic}[1]
\caption{\textit{Stochastic Lanczos quadrature.} \newline
See Section~\ref{sec:slq}.}
\label{alg:slq}

\Require	Psd input matrix $\mtx{A} \in \Sym_n$, function $f$, number $k$ of samples, number $q$ of Lanczos iterations
\Ensure		Estimate $\bar{Z}_k$ for $\trace f(\mtx{A})$.
\Statex

\Function{StochasticLanczosQuadrature}{$\mtx{A}$, $f$, $k$, $q$}
	
\For	{$i = 1, \dots, k$}
	\Comment	Extract $k$ independent samples
\State	Draw a random isotropic vector $\vct{\omega}_i \in \F^n$

\State	Form $\mtx{T} = \textsc{RandomizedLanczos}(\mtx{A}, \vct{\omega}_i, q)$ %
\Statex
	\Comment	Apply $q$ steps of Lanczos with starting vector $\vct{\omega}_i$ %
	
\State	$[\mtx{V}, \mtx{\Theta}] = \texttt{eig}(\mtx{T})$
	\Comment	Tridiagonal eigenproblem

\State	Extract nodes $\mtx{\Theta} = \diag(\theta_1, \dots, \theta_{q+1})$
\State	Extract weights $\vct{\delta}_1^* \mtx{V} = (\tau_1, \dots, \tau_{q+1})$

\State	Form the approximation $Z_i = \sum_{\ell=1}^{q+1} \tau_\ell^2 f(\theta_\ell)$

\EndFor

\State	Return $\bar{Z}_k = k^{-1} \sum_{i=1}^k Z_i$

\EndFunction
\end{algorithmic}
\end{algorithm}

\section{Matrix approximation by sampling}
\label{sec:matrix-mc}

In Section~\ref{sec:trace-est},
we have seen that it is easy to form an unbiased
estimator for the trace of a matrix.
By averaging multiple copies of the simple estimator,
we can improve the quality of the estimate.
The same idea applies in the context of
matrix approximation.  In this setting, the
goal is to produce a matrix approximation
that has more ``structure'' than
the target matrix.  The basic approach is
to find a structured unbiased estimator
for the matrix and to average multiple copies
of the simple estimator to improve the approximation quality.

This section outlines two instances of
this methodology.
First, we develop a toy algorithm
for approximate multiplication of matrices.
Second, we show how to
approximate a dense graph Laplacian
by a sparse graph Laplacian;
this construction plays a role in the
\textsc{SparseCholesky} algorithm
presented in Section~\ref{sec:sparse-cholesky}.
Section~\ref{sec:kernel} contains
another example, the method of
random features in kernel learning.

The material in this section is summarized from
the treatments of matrix concentration in
\cite{Tro15:Introduction-Matrix,Tro19:Matrix-Concentration-LN}.

\subsection{Empirical approximation}

We begin with a high-level discussion of the method
of empirical approximation of a matrix.  The examples
in this section are all instances of this general idea.

Let $\mtx{B} \in \F^{m \times n}$ be a target matrix
that we wish to approximate by a more ``structured'' matrix.
Suppose that we can express the matrix $\mtx{B}$ as
a sum of ``simple'' matrices:
\begin{equation} \label{eqn:B-simple-sum}
\mtx{B} = \sum\nolimits_{i=1}^I \mtx{B}_i.
\end{equation}
In the cases we will study, the summands $\mtx{B}_i$
will be sparse or low-rank.

Next, consider a probability distribution $\{ p_i : i = 1, \dots, I \}$
over the indices in the sum~\eqref{eqn:B-simple-sum}.  For now,
we treat this distribution as given.  Construct a random matrix
$\mtx{X} \in \F^{m \times n}$ that takes values
$$
\mtx{X} = p_i^{-1} \mtx{B}_i
\quad\text{with probability $p_i$ for each $i = 1, \dots, I$.}
$$
(Enforce the convention that $0/0 = 0$.)
It is clear that $\mtx{X}$ is an unbiased estimator for
$\mtx{B}$:
$$
\Expect \mtx{X} = \sum\nolimits_{i=1}^I (p_i^{-1} \mtx{B}_i) p_i
	= \mtx{B}.
$$
The random matrix $\mtx{X}$ enjoys the same kind of structure
as the summands $\mtx{B}_i$.  On the other hand,
a single draw of the matrix $\mtx{X}$ is rarely a good approximation
of the matrix $\mtx{B}$.

To obtain a better estimator for $\mtx{B}$, we average independent
copies of the initial estimator:
$$
\bar{\mtx{X}}_k = \frac{1}{k} \sum\nolimits_{i=1}^k \mtx{X}_i
\quad\text{where $\mtx{X}_i \sim \mtx{X}$ are iid.}
$$
By linearity of expectation, $\bar{\mtx{X}}_k$ is also unbiased:
$$
\Expect \bar{\mtx{X}}_k = \mtx{B}.
$$
If the parameter $k$ remains small, then $\bar{\mtx{X}}_k$
inherits some of the structure from the $\mtx{B}_i$.
The question is how many samples $k$ we need to ensure that
$\bar{\mtx{X}}_k$ also approximates $\mtx{B}$ well.

\begin{remark}[History]
Empirical approximation was first developed by Maurey
in unpublished work from the late 1970s on the geometry
of Banach spaces.  The idea was first broadcast by
\citeasnoun{Car85:Inequalities-Bernstein-Jackson},
in a paper on approximation theory.  Applications to randomized linear algebra
were proposed by \citeasnoun{FKV04:Fast-Monte-Carlo}
and by
\cite{DKM06:Fast-Monte-Carlo-I,DKM06:Fast-Monte-Carlo-II,DKM06:Fast-Monte-Carlo-III}.
More refined analyses of empirical matrix approximation were obtained in~\cite{RV07:Sampling-Large,Tro15:Introduction-Matrix}.
Many other papers consider specific applications of the same methodology.
\end{remark}

\subsection{The matrix Bernstein inequality}

The main tool for analyzing the sample average estimator $\bar{\mtx{X}}_k$
from the last section is a variant of the matrix Bernstein inequality.
The following result is drawn from \citeasnoun{Tro15:Introduction-Matrix}.

\begin{theorem}[Matrix Monte Carlo] \label{thm:mtx-sampling}
Let $\mtx{B} \in \F^{m \times n}$ be a fixed matrix.  Construct a
random matrix $\mtx{X} \in \F^{m \times n}$ that satisfies
$$
\Expect \mtx{X} = \mtx{B}
\quad\text{and}\quad
\norm{\mtx{X}} \leq R.
$$
Define the per-sample second moment:
$$
v(\mtx{X}) := \max\{ \norm{ \Expect[\mtx{X} \mtx{X}^*] }, \norm{\Expect[\mtx{X}^*\mtx{X}]} \}.
$$
Form the matrix sampling estimator
$$
\bar{\mtx{X}}_k = \frac{1}{k} \sum\nolimits_{i=1}^k \mtx{X}_i
\quad\text{where $\mtx{X}_i \sim \mtx{X}$ are iid.}
$$
Then
$$
\Expect \norm{ \bar{\mtx{X}}_k - \mtx{B} } \leq \sqrt{\frac{2 v(\mtx{X}) \log(m+n)}{k}} + \frac{2R \log(m+n)}{3k}.
$$
Furthermore, for all $t \geq 0$,
$$
\Prob{ \norm{ \bar{\mtx{X}}_k - \mtx{B} } \geq t } \leq (m + n) \exp\left( \frac{-kt^2/2}{v(\mtx{X}) + 2Rt/3} \right).
$$
\end{theorem}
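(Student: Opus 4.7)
My plan is to reduce the claim to a direct application of the standard matrix Bernstein inequality for independent, centered, bounded random matrices, which is part of the matrix concentration toolbox referenced in Section~\ref{sec:prob}. The key is to recenter the empirical estimator and identify the right per-sample scale and per-sample second moment so that the resulting bounds collapse onto the stated ones.

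First, I would center the summands. Set $\mtx{Z}_i := k^{-1}(\mtx{X}_i - \mtx{B})$, so that $\mtx{Z}_1,\dots,\mtx{Z}_k$ are iid, mean-zero, and
$$
\bar{\mtx{X}}_k - \mtx{B} \;=\; \sum\nolimits_{i=1}^k \mtx{Z}_i.
$$
Next I would verify two ingredients needed by matrix Bernstein. For the uniform bound, Jensen's inequality applied to the spectral norm gives $\norm{\mtx{B}} = \norm{\Expect\mtx{X}} \leq \Expect\norm{\mtx{X}} \leq R$, so $\norm{\mtx{Z}_i} \leq (1/k)(\norm{\mtx{X}_i} + \norm{\mtx{B}}) \leq 2R/k$ almost surely. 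For the matrix variance, I would compute
$$
\sum_{i=1}^k \Expect[\mtx{Z}_i \mtx{Z}_i^*] \;=\; \frac{1}{k}\,\Expect\bigl[(\mtx{X}-\mtx{B})(\mtx{X}-\mtx{B})^*\bigr] \;=\; \frac{1}{k}\bigl(\Expect[\mtx{X}\mtx{X}^*] - \mtx{B}\mtx{B}^*\bigr).
$$
Since $\mtx{B}\mtx{B}^* \psdge \mtx{0}$, the matrix on the right is PSD and bounded in spectral norm by $k^{-1}\norm{\Expect[\mtx{X}\mtx{X}^*]} \leq k^{-1} v(\mtx{X})$. The analogous computation for $\sum_i \Expect[\mtx{Z}_i^* \mtx{Z}_i]$ yields the same bound, so the total matrix variance of the sum is at most $v(\mtx{X})/k$.

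With these two ingredients in hand, I would invoke the (rectangular) matrix Bernstein inequality for the sum $\sum_i \mtx{Z}_i$ with uniform bound $L = 2R/k$ and variance proxy $v = v(\mtx{X})/k$. Its tail form reads
$$
\Prob{\bigl\|\sum\nolimits_i \mtx{Z}_i\bigr\| \geq t} \;\leq\; (m+n)\exp\!\left(\frac{-t^2/2}{v + Lt/3}\right),
$$
and substituting $L$ and $v$ yields exactly the stated tail bound. For the expectation bound, I would use the companion consequence of matrix Bernstein, $\Expect\bigl\|\sum_i \mtx{Z}_i\bigr\| \leq \sqrt{2v\log(m+n)} + (L/3)\log(m+n)$, and the same substitution recovers the advertised inequality.

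The only mildly nontrivial step is the bookkeeping that bounds $\norm{\Expect[(\mtx{X}-\mtx{B})(\mtx{X}-\mtx{B})^*]}$ by $\norm{\Expect[\mtx{X}\mtx{X}^*]}$; this uses the semidefinite ordering together with the monotonicity of the spectral norm on the PSD cone. Everything else is direct substitution. The deeper obstacle, which I am sidestepping by appealing to matrix Bernstein as a black box, is of course the matrix Laplace transform method and Lieb's concavity theorem underlying that inequality; these are developed in \citeasnoun{Tro15:Introduction-Matrix} and can be cited without reproduction.
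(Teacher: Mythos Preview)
Your proposal is correct and is exactly the standard derivation: center and rescale to reduce to the rectangular matrix Bernstein inequality, bounding the variance of $\mtx{X}-\mtx{B}$ by the raw second moment via the PSD order. The paper does not give a proof of this statement at all---it simply imports the result from \citeasnoun{Tro15:Introduction-Matrix}, where the argument is precisely the reduction you outline---so there is nothing further to compare.
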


To explain the meaning of this result, let us determine how large $k$ should be to
ensure that the expected approximation error lies below a positive threshold
$\eps$.  The bound
$$
k \geq \max\left\{ \frac{2v(\mtx{X})\log(m+n)}{\eps^2},\ \frac{2R \log(m+n)}{3\eps} \right\}
$$
implies that $\Expect \norm{\bar{\mtx{X}}_k - \mtx{B} } \leq \eps+\eps^{2}$.
In other words, the number $k$ of samples should be proportional to the larger
of the second moment $v(\mtx{X})$ and the upper bound $R$.

This fact points toward a disappointing feature of empirical matrix approximation:
To make $\eps$ small, the number $k$ of samples must increase with $\eps^{-2}$
and often with $\log (m + n)$ as well.
This phenomenon is an unavoidable consequence of the central limit theorem and
the geometry induced by the spectral norm.  It means that matrix sampling estimators are not
suitable for achieving high-precision approximations.  See~\cite[Sec.~6.2.3]{Tro15:Introduction-Matrix}
for further discussion.  The logarithmic terms are also necessary in the worst case.

\begin{remark}[History]
The matrix Bernstein inequality has a long history, outlined
in~\citeasnoun{Tro15:Introduction-Matrix}.  The earliest
related results concern uniform smoothness estimates \cite{TJ74:Moduli-Smoothness}
and Khintchine inequalities \cite{LP86:Inegalites-Khintchine}
for the Schatten classes.
A first application to statistics appears in~\citeasnoun{Rud99:Random-Vectors}.
Modern approaches are based on the matrix Laplace transform
method~\cite{AW02:Strong-Converse,Oli09:Concentration-Adjacency,Tro12:User-Friendly}
or on the method of exchangeable pairs~\cite{MJCFT14:Matrix-Concentration,Tro16:Expected-Norm}.
\end{remark}

\subsection{Approximate matrix multiplication}
\label{sec:approx-mtx-mult}

A first application of empirical matrix approximation is to approximate
the product of two matrices: $\mtx{M} = \mtx{BC}$ where $\mtx{B} \in \F^{m \times I}$
and $\mtx{C} \in \F^{I \times n}$.  Computing the product by direct
matrix--matrix multiplication requires $O(mnI)$ arithmetic operations.
When the inner dimension $I$ is very large as compared with $m$ and $n$,
we might try to reduce the cost by sampling.

In our own work, we have not encountered situations
where approximate matrix multiplication
is practical because the quality of the approximation is very low.
Nevertheless, the theory serves a dual purpose as the foundation
for subspace embedding by discrete sampling (Section~\ref{sec:coord-embed}).

\subsubsection{Matrix multiplication by sampling}

To simplify the analysis, let us pre-scale the matrices $\mtx{B}$ and $\mtx{C}$
so that each one has spectral norm equal to one:
$$
\norm{\mtx{B}} = \norm{\mtx{C}} = 1.
$$
This step can be performed efficiently using the spectral
norm estimators outlined in Section~\ref{sec:max-eig}.
This normalization will remain in force for the rest of Section \ref{sec:approx-mtx-mult}.

Observe that the matrix--matrix product satisfies
$$
\mtx{BC} = \sum\nolimits_{i=1}^I (\mtx{B})_{:i} (\mtx{C})_{i:},
$$
where $(\vct{B})_{:i}$ is the $i$th column of $\mtx{B}$ and
$(\mtx{C})_{i:}$ is the $i$th row of $\mtx{C}$.  This expression
motivates us to approximate the product by sampling terms
at random from the sum.

Let $\{ p_i : i = 1, \dots, I \}$ be a sampling distribution,
to be specified later.
Form a random rank-one unbiased estimator for the product:
$$
\mtx{X} = p_i^{-1} \cdot (\mtx{B})_{:i} (\mtx{C})_{i:}
\quad\text{with probability $p_i$ for each $i = 1, \dots, I$.}
$$
By construction, $\Expect \mtx{X} = \mtx{BC}$.
We can average $k$ independent copies of $\mtx{X}$
to obtain a better approximation $\bar{\mtx{X}}_k$
to the matrix product.  The cost of computing the
estimator $\bar{\mtx{X}}_k$ of the matrix product
explicitly is only $O(mnk)$
operations, so it is more efficient than the
full multiplication when $k \ll I$.

Theorem~\ref{thm:mtx-sampling} yields an easy analysis of this approach.
The conclusion depends heavily on the choice of sampling distribution.
Regardless, we can never expect to attain a high-accuracy approximation of the
product by sampling because the number $k$ of samples must
scale proportionally with the inverse square $\eps^{-2}$
of the accuracy parameter $\eps$.

\subsubsection{Uniform sampling}
\label{sec:mtx-mult-unif}

The easiest way to approximate matrix multiplication
is to choose uniform sampling probabilities:
$p_i = 1/I$ for each $i = 1, \dots, I$.  To analyze this case,
we introduce the \emph{coherence} parameter:
$$
\mu(\mtx{B}) := I \cdot \max_{i=1, \dots, I} \norm{(\mtx{B})_{:i}}^2.
$$
Up to scaling, this is the maximum squared norm of a column of $\mtx{B}$.
Since $\mtx{B} \in \F^{m \times I}$ and $\norm{\mtx{B}}=1$,
the coherence lies in the range $[m, I]$.
The difficulty of approximating matrix multiplication by uniform sampling
increases with the coherence of $\mtx{B}$ and $\mtx{C}^*$.

To invoke Theorem~\ref{thm:mtx-sampling}, observe that the per-sample second moment and the
spectral norm of the estimator $\mtx{X}$ satisfy the bound
$$
\max\{ v(\mtx{X}), \norm{\mtx{X}} \} \leq \max\{\mu(\mtx{B}), \mu(\mtx{C}^*)\}.
$$
Let $\eps \in (0, 1]$ be an accuracy parameter.  If
$$
k \geq 2 \eps^{-2} \max\{ \mu(\mtx{B}), \mu(\mtx{C}^*) \} \log(m + n),
$$
then
$$
\Expect \norm{ \bar{\mtx{X}}_k - \mtx{BC} } \leq 2\eps.
$$
In other words, the number $k$ of rank-one factors we need to
obtain a relative approximation of the matrix product
is proportional to the maximum coherence of $\mtx{B}$
and $\mtx{C}^*$.  In the best scenario, the number of
samples is proportional to $\max\{m,n\} \log(m+n)$;
in the worst case, the sample complexity can be as large as $I$.

\subsubsection{Importance sampling}
\label{sec:mtx-mult-import}

If the norms of the columns of the factors $\mtx{B}$ and $\mtx{C}^*$
vary wildly, we may need to use importance sampling to obtain
a nontrivial approximation bound.  Define the sampling probabilities
$$
p_i = \frac{\norm{(\mtx{B})_{:i}}^2 + \norm{(\mtx{C})_{:i}}^2}{\fnorm{\mtx{B}}^2 + \fnorm{\mtx{C}}^2}
\quad\text{for $i = 1, \dots, I$.}
$$
These probabilities are designed to balance terms arising from Theorem~\ref{thm:mtx-sampling}.
In most cases, we compute the sampling distribution by directly evaluating the formula,
at the cost of $O((m + n)I)$ operations.

With the importance sampling distribution,
the per-sample second moment and the spectral norm of the estimator
$\mtx{X}$ satisfy
$$
\max\{ v(\mtx{X}), \norm{\mtx{X}} \} \leq \frac{1}{2} (\srank(\mtx{B}) + \srank(\mtx{C})).
$$
The stable rank is defined in~\eqref{eqn:stable-rank}.
Let $\eps \in (0, 1]$ be an accuracy parameter.  If
$$
k \geq \eps^{-2} \left( \srank(\mtx{B}) + \srank(\mtx{C}) \right) \log(m + n),
$$
then
$$
\Expect \norm{ \bar{\mtx{X}}_k - \mtx{BC} } \leq 2 \eps.
$$
In other words, the number of rank-one factors we need to approximate
the matrix product by importance sampling is $\log(m+n)$ times
the total stable rank of the matrices.

The sample complexity bound for importance sampling always improves
over the bound for uniform sampling.  Indeed, $\srank(\mtx{B}) \leq \mu(\mtx{B})$
under the normalization $\norm{\mtx{B}}=1$.  There are also many
situations where the stable rank is smaller than either dimension of
the matrix.  These are the cases where one might consider using
approximate matrix multiplication by importance sampling.

\subsubsection{History}

Randomized matrix multiplication was proposed in \citeasnoun{CL99:Approximating-Matrix}.
It is implicit in~\citeasnoun{FKV04:Fast-Monte-Carlo},
while \citeasnoun{DKM06:Fast-Monte-Carlo-I} give a more explicit treatment.
The analysis here has its origins in~\citeasnoun{RV07:Sampling-Large},
and the detailed presentation is adapted from~\citeasnoun{Zou13:Randomized-Primitives}.
Another interesting approach to randomized matrix multiplication
appears in \cite{Pag11:Compressed-Matrix}.
See~\citeasnoun[Chap.~6 Notes]{Tro15:Introduction-Matrix} for further references.

\subsection{Approximating a graph by a sparse graph}
\label{sec:graph-sparsification}

As a second application of empirical matrix approximation,
we will show %
how to take a dense graph and find a sparse
graph that serves as a proxy.  This procedure operates by
replacing the Laplacian matrix of the graph with a sparse
Laplacian matrix.  Beyond its intrinsic interest as a fact about
graphs, this technique plays a central role in randomized solvers
for Laplacian linear systems (Section~\ref{sec:sparse-cholesky}).

\subsubsection{Graphs and Laplacians}

We will consider weighted, loop-free, undirected graphs
on the vertex set $V = \{1, \dots, n\}$.
We can specify a weighted graph $G$ on $V$ by means of
a nonnegative weight function $w : V \times V \to \RR_+$.
The graph is \emph{loop-free} when $w_{ii} = 0$ for each vertex $i \in V$.
The graph is \emph{undirected} if and only if
$w_{ij} = w_{ji}$ for each pair $(i, j)$.
The \emph{sparsity} of an undirected graph is the number
of strictly positive weights $w_{ij}$ with $i \leq j$.

Alternatively, we can work with graph Laplacians.  %
The \emph{elementary Laplacian} $\mtx{\Delta}_{ij}$ on a vertex pair
$(i, j) \in V \times V$ is the psd matrix
$$
\mtx{\Delta}_{ij} = (\vct{\delta}_{i} - \vct{\delta}_j)(\vct{\delta}_i - \vct{\delta}_j)^* \in \Sym_n(\RR).
$$
Let $w$ be the weight function of a loop-free, undirected graph $G$.
The Laplacian associated with the graph $G$ is the matrix
\begin{equation} \label{eqn:graph-laplacian}
\mtx{L}_{G} = \sum\nolimits_{1 \leq i < j \leq n} w_{ij} \mtx{\Delta}_{ij} \in \Sym_n(\RR).
\end{equation}
The Laplacian $\mtx{L}_G$ is psd because it is a nonnegative
linear combination of psd matrices.

The Laplacian of a graph is analogous to the Laplacian differential
operator.  You can think about $\mtx{L}_G$ as an analog of the
heat kernel that models the diffusion of a particle on the graph.
The Poisson problem $\mtx{L}_G \vct{x} = \vct{f}$ serves
as a primitive for answering a wide range of questions involving
undirected graphs~\cite{Ten10:Laplacian-Paradigm}.
Applications include clustering and partitioning data,
studying random walks on graphs, and solving finite-element
discretizations of elliptic PDEs.

\subsubsection{Spectral approximation}

Fix a parameter $\eps \in (0, 1)$.
We say that a graph $H$ is an $\eps$-spectral approximation of a graph $G$
if their Laplacians are comparable in the psd order:
\begin{equation} \label{eqn:spectral-approximation}
(1 - \eps) \, \mtx{L}_G \psdle \mtx{L}_H \psdle (1+\eps) \, \mtx{L}_G.
\end{equation}
The relation~\eqref{eqn:spectral-approximation} ensures that the
graph $H$ and the graph $G$ are close cousins.

In particular, under~\eqref{eqn:spectral-approximation},
the matrix $\mtx{L}_H$ serves as an excellent
preconditioner for the Laplacian $\mtx{L}_G$.  In other words,
if we can easily solve (consistent) linear systems of the form
$\mtx{L}_H \vct{y} = \vct{b}$, then we can just as easily solve
the Poisson problem $\mtx{L}_G \vct{x} = \vct{f}$.

For an arbitrary input graph $G$, we will demonstrate that
there is a \emph{sparse} graph $H$ that is a good spectral
approximation of $G$.  For several reasons, this construction
does not immediately lead to effective methods for
designing preconditioners. %
Nevertheless, related ideas have resulted in practical, fast
solvers for Poisson problems on undirected graphs.
We will make this connection in
Section~\ref{sec:sparse-cholesky}.

\subsubsection{The normalizing map}

It is convenient to present a few more concepts from spectral graph theory.
Let us introduce the \emph{normalizing map}
$$
\mtx{K}_G(\mtx{A}) := (\mtx{L}_G^{\pinv})^{1/2} \mtx{A} (\mtx{L}_G^{\pinv})^{1/2}
	\quad\text{for $\mtx{A} \in \Sym_n(\RR)$.}
$$
As usual, $(\cdot)^\pinv$ is the pseudoinverse and $(\cdot)^{1/2}$
is the psd square root of a psd matrix.
We use the normalizing map to compare graph Laplacians.
Indeed,
\begin{equation} \label{eqn:graph-comparison}
\norm{ \mtx{K}_G(\mtx{L}_H - \mtx{L}_G) } \leq \eps
\end{equation}
implies that the graph $H$ is an $\eps$-spectral approximation
of the graph $G$.
This claim follows easily from~\eqref{eqn:spectral-approximation}.

\subsubsection{Effective resistance}

Next, define the \emph{effective resistance} $\varrho_{ij}$
of a vertex pair $(i, j)$ in the graph $G$ as
\begin{equation} \label{eqn:effective-resistance}
\varrho_{ij} %
	:= \trace[ \mtx{K}_G(\mtx{\Delta}_{ij}) ] \geq 0
	\quad\text{for each $1 \leq i < j \leq n$.}
\end{equation}
We can compute the family of effective resistances in time
$O(n^3)$ by means of a Cholesky factorization
of $\mtx{L}_G$, although faster algorithms are now available
\cite{Kyn17:Approximate-Gaussian}.

To understand the terminology, let us regard the graph $G$
as an electrical network where $w_{ij}$ is the
conductivity of the wire connecting the vertex pair $(i, j)$.
The effective resistance $\varrho_{ij}$ is the resistance of the
entire electrical network $G$ against passing a unit
of current from vertex $i$ to vertex $j$.

\subsubsection{Sparsification by sampling}

We are now prepared to construct a sparse approximation
of a loop-free, undirected graph $G$ specified by the weight function $w$.
The representation~\eqref{eqn:graph-laplacian} of the graph
Laplacian immediately suggests that we can apply the empirical
approximation paradigm.  To do so, we must design an
appropriate sampling distribution.

Define the sampling probabilities
$$
p_{ij} = \frac{w_{ij} \, \varrho_{ij}}{\rank(\mtx{L}_G)}
\quad\text{for each $1 \leq i < j \leq n$.}
$$
It is clear that $p_{ij} \geq 0$.  With some basic matrix algebra,
we can also confirm that $\sum_{i < j} p_{ij} = 1$.

Following our standard approach, we construct the random matrix
$$
\mtx{X} = \frac{w_{ij}}{p_{ij}} \, \mtx{\Delta}_{ij}
\quad\text{with probability $p_{ij}$ for each $i < j$.}
$$
Next, we average $k$ independent copies of the estimator:
$$
\bar{\mtx{X}}_k = \frac{1}{k} \sum\nolimits_{i=1}^k \mtx{X}_i
\quad\text{where $\mtx{X}_i \sim \mtx{X}$ are iid.}
$$
By construction, the estimator is unbiased: $\Expect \bar{\mtx{X}}_k = \mtx{L}_G$.
Moreover, $\bar{\mtx{X}}_k$ is itself the graph Laplacian
of a (random) graph $H$ with sparsity at most $k$.
We just need to determine the number $k$ of samples
that are sufficient to make $H$ a good spectral approximation of $G$.

Given the effective resistances, computation of the sampling probabilities
requires $O(s)$ time, where $s$ is the sparsity of the graph.
The cost of sampling $k$ copies of $\mtx{X}$ is $\tilde{O}(s + k)$.
It is natural to represent $\bar{\mtx{X}}_k$ using a sparse matrix
data structure, with storage cost $O(k \log n)$.

\subsubsection{Analysis}

The analysis involves a small twist.
In view of~\eqref{eqn:graph-comparison}, we need to
demonstrate that $\mtx{K}_G(\mtx{L}_H) \approx \mtx{K}_G(\mtx{L}_G)$.
Therefore, instead of considering
the random matrix $\bar{\mtx{X}}_k$, we pass to the random matrix
$\mtx{K}_G(\bar{\mtx{X}}_k)$, which is an unbiased estimator
for $\mtx{K}_G(\mtx{L}_G)$.

Note that the random matrix $\mtx{K}_G(\mtx{X})$ satisfies %
$$
\max\{ v( \mtx{K}_G(\mtx{X}) ), \norm{ \mtx{K}_G(\mtx{X}) }_2 \} \leq \rank(\mtx{L}_G) < n.
$$
Suppose that we choose
$$
k \geq 3 \eps^{-2} n \log(2n)
\quad\text{where $\eps \in (0, 1)$.}
$$
Then Theorem~\ref{thm:mtx-sampling} implies
$$
\Expect \norm{ \mtx{K}_G( \bar{\mtx{X}}_k - \mtx{L}_G ) }_2 \leq \eps.
$$
We conclude that the random graph $H$ with Laplacian $\mtx{L}_H = \bar{\mtx{X}}_k$
has at most $k$ nonzero weights, and it is an $\eps$-spectral
approximation to the graph $G$.

In other words, every graph on $n$ vertices has a $(1/2)$-spectral
approximation with at most $12 n \log n$ nonzero weights.
Modulo the precise constant, this is the tightest result that can be
obtained if we form the graph $H$ via random sampling.

\subsubsection{History}

The idea of approximating a matrix in the spectral norm
by means of random sampling of entries was proposed by \citeasnoun{AM01:Fast-Computation}
and \citeasnoun{AM07:Fast-Computation}.
This work initiated a line of literature on matrix sparsification
in the randomized NLA community;
see \citeasnoun{Tro15:Introduction-Matrix} for more references.
Let us emphasize that these general approaches achieve much
weaker approximation guarantees than~\eqref{eqn:spectral-approximation}.

The idea of sparsifying a graph by randomly sampling
edges in proportion to the effective resistances
was developed by~\citeasnoun{SS11:Graph-Sparsification};
the analysis above is drawn from~\citeasnoun{Tro19:Matrix-Concentration-LN}.
A deterministic method for graph sparsification, with superior guarantees,
appears in~\citeasnoun{BSS14:Twice-Ramanujan}.

\section{Randomized embeddings}
\label{sec:gauss}

One of the core tools in randomized linear algebra
is \emph{randomized linear embedding},
often assigned the misnomer
\emph{random projection}.
The application of randomized embeddings
is often referred to as \emph{sketching}.

This section begins with a formal definition
of a randomized embedding.  Then we introduce
the Gaussian embedding, which is the simplest construction,
and we summarize its analysis. %
Randomized embeddings have a wide range
of applications in randomized linear algebra.
Some implications of this theory include the %
Johnson--Lindenstrauss lemma and a
simple construction of a subspace embedding.
We also explain why results for
Gaussians transfer to a wider setting.
Last, we give a short description of
random partial isometries,
a close cousin of Gaussian embeddings.

\subsection{What is a random embedding?}

Let $E \subseteq \F^n$ be a set,
and let $\eps \in (0, 1)$ be a distortion parameter.
We say that a linear map $\mtx{S} : \F^{n} \to \F^d$
is an ($\ell_2$) \emph{embedding} of $E$ with distortion $\eps$
when
\begin{equation} \label{eqn:embed-abstract}
(1 - \eps) \, \norm{\vct{x}} \leq \norm{ \mtx{S} \vct{x} } \leq (1 + \eps) \, \norm{ \vct{x} }
\quad\text{for all $\vct{x} \in E$.}
\end{equation}
It is sometimes convenient to abbreviate this kind of two-sided
inequality as $\norm{\mtx{S}\vct{x}} = (1 \pm \eps) \norm{\vct{x}}$.

We usually think about the case where $d \ll n$,
so the map $\mtx{S}$ enacts a dimension reduction. %
In other words, $\mtx{S}$
transfers data from the high-dimensional space $\F^n$
to the low-dimensional space $\F^d$.
As we will discuss, the low-dimensional representation of the data
can be used to obtain fast, approximate solutions
to computational problems.

The relation~\eqref{eqn:embed-abstract} expresses
the idea that the embedding $\mtx{S}$ should preserve the geometry
of the set $E$.  Unfortunately, we do not always know the set
$E$ in advance.  Moreover, we would like the map $\mtx{S}$ to
be easy to construct, and it should be computationally
efficient to apply $\mtx{S}$ to the data.
These goals may be in tension.

We can resolve this dilemma by drawing
the embedding $\mtx{S}$ from
a probability distribution.  Many
types of probability distributions serve.
In particular, we can use highly structured
random matrices (Section~\ref{sec:dimension-reduction}) that are easy to build,
to store, and to apply to vectors.
Section~\ref{sec:overdet-ls} presents a
case study about how random embeddings can be
applied to solve overdetermined least-squares
problems.

\subsection{Restricted singular values}

Our initial goal is to understand something
about the theoretical behavior of randomized embeddings.
To that end, let us introduce quantities that measure how
much an embedding distorts a set.
Let $\mtx{S} \in \F^{d \times n}$ be a linear map,
and let $E \subseteq \mathbb{S}^{n-1}(\F)$ be an arbitrary
subset of the unit sphere in $\F^n$.
The \emph{minimum} %
and \emph{maximum restricted singular value}
are, respectively, defined as
\begin{equation} \label{eqn:rsv}
\sigma_{\min}(\mtx{S}; E) := \min_{\vct{x} \in E}\ \norm{ \mtx{S} \vct{x} }
\quad\text{and}\quad
\sigma_{\max}(\mtx{S}; E) := \max_{\vct{x} \in E}\ \norm{ \mtx{S} \vct{x} }.
\end{equation}
If $E$ composes the entire unit sphere, then these quantities coincide
with the ordinary minimum and maximum singular value of $\mtx{S}$.
More generally, the restricted
singular values describe how much the linear map $\mtx{S}$
can contract or expand a point in $E$.

\begin{remark}[General sets]
In this treatment, we require $E$ to be a subset of the unit
sphere.  Related, but more involved, results hold when $E$ is a general set.
See~\citeasnoun{TOH14:Gaussian-Minmax} and \citeasnoun{OT18:Universality-Laws}
for more results and applications.
\end{remark}

\subsection{Gaussian embeddings}

Our theoretical treatment of random embeddings focuses on
the most highly structured case. %
A Gaussian embedding is a random matrix of the form
$$
\mtx{\Gamma} \in \F^{d \times n}
\quad\text{with iid entries $(\mtx{\Gamma})_{ij} \sim \textsc{normal}(0, d^{-1})$.}
$$
The cost of explicitly storing a Gaussian embedding is $O(dn)$,
and the cost of applying it to a vector is $O(dn)$.

The scaling of the matrix ensures that
$$
\Expect \norm{ \mtx{\Gamma} \vct{x} }^2 = \norm{ \vct{x} }^2
\quad\text{for each $\vct{x} \in \F^n$.}
$$
We wish to understand how large to choose the embedding
dimension $d$ so that the map $\mtx{\Gamma}$ approximately
preserves the norms of all points in a given set $E$.
We can do so by obtaining bounds for the restricted
singular values.  For a Gaussian embedding,
we will see that $\sigma_{\min}(\mtx{\Gamma}; E)$
and $\sigma_{\max}(\mtx{\Gamma}; E)$ are controlled
by the geometry of the set $E$. %

\begin{remark}[Why Gaussians?]
Gaussian embeddings admit a simple and beautiful analysis.
In our computational experience, many other embeddings exhibit
the same (universal) behavior as a Gaussian map.
In spite of that, the rigorous analysis of other types
of embeddings tends to be difficult,
even while it yields rather imprecise results.
The confluence of these facts motivates us to argue
that the Gaussian analysis provides enough insight
for many practical purposes.
\end{remark}

\subsection{The Gaussian width}

For the remainder of Section~\ref{sec:gauss}, we will work in the
real field ($\F = \RR$).
Given a set $E \subseteq \mathbb{S}^{n-1}(\RR)$,
define the \emph{Gaussian width} $w(E)$ via
$$
w(E) := \Expect \sup_{\vct{x} \in E}\ \ip{ \vct{g} }{ \vct{x} }
\quad\text{where $\vct{g} \in \RR^n$ is standard normal.}
$$
The Gaussian width is a measure of the content of the
set $E$. %
It plays a fundamental role
in the performance of randomized embeddings.

Here are some basic properties of the Gaussian width.

\begin{itemize} \setlength{\itemsep}{1mm}
\item	The width is invariant under rotations: $w(\mtx{Q}E) = w(E)$
for each orthogonal matrix $\mtx{Q}$.

\item	The width is increasing with respect to set inclusion: $E \subseteq F$
implies that $w(E) \leq w(F)$.

\item	The width lies in the range $0 \leq w(E) \leq \Expect \norm{\vct{g}} < \sqrt{n}$.
\end{itemize}

The width can be calculated accurately for many sets of interest.
In particular, if $L$ is an arbitrary $k$-dimensional subspace of $\RR^n$,
then
\begin{equation} \label{eqn:width-subspace}
\sqrt{k-1} < w( L \cap \mathbb{S}^{n-1} ) < \sqrt{k}.
\end{equation}
Indeed, it is productive to think about the \emph{squared}
width $w^2(E)$ as a measure of the ``dimension'' of the set $E$.

\begin{remark}[Statistical dimension]
The \emph{statistical dimension} is another measure
of content that is closely related to the squared Gaussian width.
The statistical dimension has additional geometric properties
that make it easier to work with in some contexts.
See~\cite{ALMT14:Living-Edge,MT13:Achievable-Performance,MT14:Steiner-Formulas}
and \cite{GNP17:Gaussian-Phase}
for more information.
\end{remark}

\subsection{Restricted singular values of Gaussian matrices}
\label{sec:rsv-gauss}

In a classic work on Banach space geometry,
\citeasnoun{Gor88:Milmans-Inequality} showed that the
Gaussian width controls both the minimum and maximum
restricted singular values of a subset of the sphere.

\begin{theorem}[Restricted singular values: Gaussian matrix] \label{thm:rsv-gauss}
Fix a subset $E \subseteq \mathbb{S}^{n-1}(\RR)$
of the unit sphere.
Draw a Gaussian matrix $\mtx{\Gamma} \in \RR^{d \times n}$
whose entries are iid $\textsc{normal}(0, d^{-1})$.
For all $t > 0$,
$$
\begin{aligned}
&\Prob{ \sigma_{\min}(\mtx{\Gamma}; E ) \leq 1 - \frac{w(E) + 1}{\sqrt{d}} - t }
	&\leq \econst^{-dt^2/2}; \\
&\Prob{ \sigma_{\max}(\mtx{\Gamma}; E) \geq 1 + \frac{w(E)}{\sqrt{d}} + t }
	&\leq \econst^{-dt^2/2}.
\end{aligned}
$$
\end{theorem}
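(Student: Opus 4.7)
The plan is to combine two classical ingredients: a Gaussian comparison theorem (Sudakov--Fernique for $\sigma_{\max}$, Gordon's min--max inequality for $\sigma_{\min}$) to bound the expected restricted singular values, and then Gaussian Lipschitz concentration to pass from means to tails.

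First I would rescale by writing $\mtx{\Gamma} = d^{-1/2} \mtx{G}$, where $\mtx{G}$ has iid $\textsc{normal}(0,1)$ entries, and represent the norm variationally as $\norm{\mtx{G}\vct{x}} = \max_{\vct{u} \in \mathbb{S}^{d-1}} \vct{u}^* \mtx{G} \vct{x}$. On the product index $E \times \mathbb{S}^{d-1}$ I would introduce two centered Gaussian processes
$$
X_{\vct{x},\vct{u}} := \vct{g}^* \vct{x} + \vct{h}^* \vct{u}, \qquad Y_{\vct{x},\vct{u}} := \vct{u}^* \mtx{G} \vct{x},
$$
with $\vct{g} \in \RR^n$ and $\vct{h} \in \RR^d$ independent standard normal vectors. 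A direct calculation of increment variances gives, for all $(\vct{x},\vct{u}), (\vct{x}',\vct{u}') \in E \times \mathbb{S}^{d-1}$,
$$
\Expect (X_{\vct{x},\vct{u}} - X_{\vct{x}',\vct{u}'})^2 - \Expect (Y_{\vct{x},\vct{u}} - Y_{\vct{x}',\vct{u}'})^2 = 2 (1 - \vct{x}^* \vct{x}')(1 - \vct{u}^* \vct{u}') \geq 0,
$$
with equality whenever $\vct{x} = \vct{x}'$. These are exactly the increment hypotheses of Sudakov--Fernique (over the combined index) and of Gordon's min--max inequality (with $\vct{u}$ as the inner/$\sup$ variable and $\vct{x}$ as the outer/$\inf$ variable).

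Sudakov--Fernique then yields $\Expect \max_{(\vct{x}, \vct{u})} Y \leq \Expect \max_{(\vct{x}, \vct{u})} X$, and independence of $\vct{g}$ and $\vct{h}$ decouples the right-hand side to give
$$
\Expect \max_{\vct{x} \in E} \norm{\mtx{G} \vct{x}} \leq w(E) + \Expect \norm{\vct{h}} \leq w(E) + \sqrt{d}.
$$
Gordon's inequality yields $\Expect \min_{\vct{x}} \max_{\vct{u}} Y \geq \Expect \min_{\vct{x}} \max_{\vct{u}} X$, which decouples to
$$
\Expect \min_{\vct{x} \in E} \norm{\mtx{G} \vct{x}} \geq \Expect \norm{\vct{h}} + \Expect \min_{\vct{x} \in E} \vct{g}^* \vct{x} = \Expect \norm{\vct{h}} - w(E) \geq \sqrt{d - 1} - w(E),
$$
using the symmetry identity $\Expect \min_{\vct{x} \in E} \vct{g}^* \vct{x} = - w(E)$ (because $-\vct{g} \sim \vct{g}$) together with the Gaussian Poincar\'e bound $\Var \norm{\vct{h}} \leq 1$ and $\Expect \norm{\vct{h}}^2 = d$. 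Dividing both bounds by $\sqrt{d}$ and invoking the elementary inequality $\sqrt{1 - 1/d} \geq 1 - 1/\sqrt{d}$ (valid for $d \geq 1$) produces $\Expect \sigma_{\max}(\mtx{\Gamma}; E) \leq 1 + w(E)/\sqrt{d}$ and $\Expect \sigma_{\min}(\mtx{\Gamma}; E) \geq 1 - (w(E)+1)/\sqrt{d}$.

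To upgrade these mean bounds to tail bounds, I would observe that for each unit vector $\vct{x}$ the map $\mtx{\Gamma} \mapsto \norm{\mtx{\Gamma} \vct{x}}$ is $1$-Lipschitz in the Frobenius norm; since the inf and sup of $1$-Lipschitz functions are again $1$-Lipschitz, both $\sigma_{\min}(\cdot; E)$ and $\sigma_{\max}(\cdot; E)$ are $1$-Lipschitz in $\mtx{\Gamma}$. The entries of $\mtx{\Gamma}$ have variance $1/d$, so the Gaussian concentration inequality (Borell's inequality) delivers one-sided deviations from the mean of order $\econst^{-d t^2 / 2}$, which combine with the expectation bounds above to give the claimed inequalities. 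The main technical obstacle is the increment computation itself and keeping the direction of Gordon's inequality straight; once those are secured, the remainder is bookkeeping.
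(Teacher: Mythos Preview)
Your proof is correct and follows essentially the same approach the paper sketches: Gordon's minimax inequality for the $\sigma_{\min}$ bound, Slepian/Sudakov--Fernique (the paper names Chevet's theorem, which is exactly the specialization you derive) for the $\sigma_{\max}$ bound, followed by Gaussian Lipschitz concentration. You have simply filled in the comparison processes, the increment computation, and the $\Expect\norm{\vct{h}}\geq\sqrt{d-1}$ estimate that the paper leaves to the references.
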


\begin{proof}(Sketch)
The first inequality is a consequence of Gordon's minimax
theorem and Gaussian concentration.
The second inequality is essentially Chevet's theorem,
which follows from Slepian's lemma.
See~\citeasnoun[Chap.~3.3]{LT91:Probability-Banach}
for an overview of these ideas.
\end{proof}

Theorem~\ref{thm:rsv-gauss} yields the relations
$$
1 - \frac{w(E) + 1}{\sqrt{d}} \lessapprox \sigma_{\min}(\mtx{\Gamma}; E) \leq \sigma_{\max}(\mtx{\Gamma}; E) \lessapprox 1 + \frac{w(E)}{\sqrt{d}}.
$$
In other words, the embedding dimension should satisfy $d > (w(E) + 1)^2$
to ensure that the map $\mtx{\Gamma}$ is unlikely to annihilate any
point in $E$.   For this choice of $d$, the random embedding
is unlikely to dilate any point in $E$ by more than a factor of two.

As a consequence, we have reduced the problem of computing embedding
dimensions for Gaussian maps to the problem of computing Gaussian widths.
In the next two subsections, we work out
two important examples.

\begin{remark}[Optimality]
The statements in Theorem~\ref{thm:rsv-gauss} are nearly optimal.
One way to see this is to consider the set $E = \mathbb{S}^{n-1}(\RR)$,
for which the theorem implies that
$$
1 - \sqrt{n/d} \lessapprox \Expect \sigma_{\min}(\mtx{\Gamma})
	\leq \Expect \sigma_{\max}(\mtx{\Gamma}) \leq 1 + \sqrt{n/d}.
$$
The Bai--Yin law~\cite[Sec.~5.2]{BS10:Spectral-Analysis} confirms
that the first and last inequality are sharp %
as $n, d \to \infty$ with $n/d \to \mathrm{const} \in [0,1]$.

Moreover, if $E$ is spherically convex (i.e., the intersection of a convex
cone with the unit sphere), then the minimum restricted singular
value satisfies the reverse inequality
$$
\Prob{ \sigma_{\min}(\mtx{\Gamma}; E) \geq 1 - \frac{w(E)}{\sqrt{d}} + t }
	\leq 2\econst^{-dt^2/2}.
$$
This result is adapted from~\cite{TOH14:Gaussian-Minmax}.

In addition, fifteen years of computational experiments have also shown that
the predictions from Theorem~\ref{thm:rsv-gauss} are frequently sharp.
See~\citeasnoun{OT18:Universality-Laws} for some examples and references.
\end{remark}

\begin{remark}[History]
The application of Gaussian comparison theorems
in numerical analysis can be traced to work
in mathematical signal processing.
\citeasnoun{RV08:Sparse-Reconstruction}
used a corollary of Gordon's minimax theorem
to study $\ell_1$ minimization problems.
Significant extensions and improvements of this argument were made
by \citeasnoun{Sto10:l1-Optimization} and \citeasnoun{CRPW12:Convex-Geometry}.
\citeasnoun[Rem.~2.9]{ALMT14:Living-Edge} seem to have been the first
to recognize that Gordon's minimax theorem can be reversed
in the presence of convexity.  A substantial refinement
of this observation appeared in~\citeasnoun{TOH14:Gaussian-Minmax}.
There is a long series of follow-up works by Babak Hassibi's
group that apply this insight to other problems in
signal processing and communications.
\end{remark}

\subsection{Example: Johnson--Lindenstrauss}

As a first application of Theorem~\ref{thm:rsv-gauss}, let us explain how it
implies the classic dimension reduction result of~\citeasnoun{JL84:Extensions-Lipschitz}.

\subsubsection{Overview}

Let $\{ \vct{a}_1, \dots, \vct{a}_N \} \subset \RR^n$ be a discrete point set.
We would like to know when a Gaussian embedding
$\mtx{\Gamma} \in \RR^{d \times n}$ approximately preserves
all the pairwise distances between these points:
\begin{equation} \label{eqn:jl}
1 - \eps \leq \frac{\norm{ \mtx{\Gamma}(\vct{a}_i - \vct{a}_j) }}{ \norm{ \vct{a}_i - \vct{a}_j } }
	\leq 1 + \eps
	\quad\text{for all $i \neq j$.}
\end{equation}
The question is how large we must set the embedding dimension $d$
to achieve distortion $\eps \in (0, 1)$.

\subsubsection{Analysis}

We can solve this problem using the machinery described in Section~\ref{sec:rsv-gauss}.
Consider the set $E$ of normalized chords:
$$
E = \left\{ \frac{\vct{a}_i - \vct{a}_j}{\norm{\vct{a}_i - \vct{a}_j}} : 1 \leq i < j \leq N \right\}.
$$
By the definition~\eqref{eqn:rsv} of the restricted singular values,
$$
\sigma_{\min}(\mtx{\Gamma}; E) \leq \frac{\norm{ \mtx{\Gamma}(\vct{a}_i - \vct{a}_j) }}{ \norm{ \vct{a}_i - \vct{a}_j } }
	\leq \sigma_{\max}(\mtx{\Gamma}; E).
$$
Therefore, we can invoke Theorem~\ref{thm:rsv-gauss} to determine
how the embedding dimension controls the distortion.

Let us summarize the argument.
First, observe that the Gaussian width of the set $E$ satisfies
$$
w(E) = \Expect \max_{\vct{x} \in E}\ \ip{ \vct{g} }{ \vct{x} }
	\leq \sqrt{2 \log \# E}
	< 2 \sqrt{\log (N/2)}.
$$
As a consequence,
$$
\begin{aligned}
&\Prob{ \sigma_{\min}(\mtx{\Gamma}; E) \leq 1 - (1 + 2 \sqrt{\log(N/2)})/\sqrt{d} - t } &\leq \econst^{-d t^2 /2}; \\
&\Prob{ \sigma_{\max}(\mtx{\Gamma}; E) \geq 1 + 2 \sqrt{\log(N/2)}/\sqrt{d} + t } &\leq \econst^{-d t^2 /2}.
\end{aligned}
$$
To achieve distortion $\eps$ with high probability, it is sufficient to choose
$$
d \geq 8 \eps^{-2} \log N.
$$
In other words, the embedding dimension only needs to be \emph{logarithmic} in the cardinality $N$ of the point set.
With some additional calculation, we can also extract precise failure probabilities from this analysis.

\subsubsection{Discussion}

Let us close this example with a few comments.
In spite of its prominence, the Johnson--Lindenstrauss embedding lemma
is somewhat impractical.  Indeed, since the embedding
dimension $d$ is proportional to $\eps^{-2}$, it is a challenge to achieve
small distortions.  Even if we consider the setting where $\eps \approx 1$,
the uniform bound~\eqref{eqn:jl} may require the embedding dimension
to be prohibitively large.

As a step toward more applicable results, note
that the bound on the \emph{minimum} restricted singular value is more crucial
than the bound on the maximum restricted singular value, because
the former ensures that no two points coalesce after the random embedding.
Similarly, it is often more valuable to preserve the distances between
nearby points than between far-flung points.  This observation is the
starting point for the theory of locality sensitive
hashing~\cite{GIM99:Similarity-Search}.

\subsubsection{History}

\citeasnoun{JL84:Extensions-Lipschitz} were concerned with a problem in Banach space geometry,
namely the prospect of extending a Lipschitz function from a finite metric space into
a Hilbert space.  The famous lemma from their paper took on a life of its own
when~\citeasnoun{LLR95:Geometry-Graphs} used it to design efficient
approximation algorithms for some graph problems.
\citeasnoun{IM98:Approximate-Nearest} used random embeddings
to develop new algorithms for the approximate nearest neighbor problem.
\cite{AMS99:Space-Complexity,AGMS02:Tracking-Join} introduced the term
\emph{sketching}, and they showed how to use sketches to track streaming data.
Soon afterwards, \citeasnoun{PRTV00:Latent-Semantic} and \citeasnoun{FKV04:Fast-Monte-Carlo}
proposed using random embeddings and matrix sampling for low-rank matrix approximation,
bringing these ideas into the realm of computational linear algebra.

\subsection{Example: Subspace embedding}
\label{sec:subspace-embedding}

Next, we consider a question at the heart of
randomized linear algebra.  Can we embed an unknown
subspace into a lower-dimensional space?

\subsubsection{Overview}

Suppose that $L$ is a $k$-dimensional subspace in $\RR^n$.
We say that a dimension reduction map $\mtx{S}$
is a \emph{subspace embedding} for $L$ with distortion $\eps \in (0, 1)$ if
\begin{equation} \label{eqn:subspace-embedding}
 (1 - \eps) \, \norm{ \vct{x} }
	\leq \norm{ \mtx{S}\vct{x} }
	\leq (1 + \eps) \, \norm{ \vct{x} }
	\quad\text{for every $\vct{x} \in L$.}
\end{equation}
We say that $\mtx{S}$ is \emph{oblivious} if it can
be constructed without knowledge of the subspace $L$,
except for its dimension.

Two questions arise.  First, what types of dimension
reduction maps yield (oblivious) subspace embeddings?
Second, how large must we choose the embedding dimension
to achieve this outcome?

\subsubsection{Analysis}

Gaussian dimension reduction maps yield very
good oblivious subspace embeddings.  Theorem~\ref{thm:rsv-gauss}
easily furnishes the justification.
Consider the unit sphere in the subspace: $E = L \cap \mathbb{S}^{n-1}(\RR)$.
Then construct the Gaussian dimension reduction map $\mtx{\Gamma} \in \RR^{d \times n}$. %
In view of~\eqref{eqn:width-subspace}, we have
$$
\begin{aligned}
&\Prob{ \sigma_{\min}( \mtx{\Gamma}; E ) \leq 1 - (1+\sqrt{k})/\sqrt{d} - t } &\leq \econst^{-d t^2/2}; \\
&\Prob{ \sigma_{\max}( \mtx{\Gamma}; E ) \geq 1 + \sqrt{k}/\sqrt{d} + t } &\leq \econst^{-d t^2/2}. \\
\end{aligned}
$$
As a specific example, we can set the embedding dimension $d = 2k$
to ensure that $\norm{\mtx{\Gamma}\vct{x}} = (1 \pm 0.8) \norm{\vct{x}}$
simultaneously for all points $\vct{x} \in L$,
except with probability $\econst^{-\mathrm{c} k}$.
In some applications of subspace embeddings,
we can even choose the %
dimension as small as $d = k + 5$ or $d = k + 10$.

Many theoretical papers on randomized NLA use
subspace embeddings as a primitive for designing
algorithms for other linear algebra problems.
For example, Section~\ref{sec:overdet-ls}
describes several ways to use subspace embeddings to solve
overdetermined least-squares problems.

\subsubsection{History}

Subspace embeddings were explicitly introduced by~\citeasnoun{Sar06:Improved-Approximation};
see also \citeasnoun{DMM06:Subspace-Sampling}.  As work on randomized NLA
accelerated, researchers became interested in more structured types of
subspace embeddings; an early reference is~\citeasnoun{WLRT08:Fast-Randomized}.
Section~\ref{sec:dimension-reduction} covers these extensions.
See~\citeasnoun{2014_woodruff_sketching} for a theoretical perspective on
randomized NLA where subspace embeddings take pride of place.

\subsection{Universality of the minimum restricted singular value}

We have seen how to apply Gaussian dimension reduction
for embedding discrete point sets and for embedding subspaces.
Theorem~\ref{thm:rsv-gauss} contains precise theoretical
results on the behavior of Gaussian maps in terms of the
Gaussian width.  To what extent can we transfer this analysis
to other types of random embeddings?

The following theorem \cite[Thm.~9.1]{OT18:Universality-Laws}
shows that the bound on the \emph{minimum} restricted singular value
in Theorem~\ref{thm:rsv-gauss} is universal for a large class of random embeddings.
In particular, this class includes sparse random matrices,
whose nonzero entries compose a vanishing proportion of the total.

\begin{theorem}[Universality] \label{thm:universality}
Fix a set $E \subseteq \mathbb{S}^{n-1}$.  Let $\mtx{S} \in \RR^{d \times n}$
be a random matrix whose entries are independent random variables
that satisfy
$$
\Expect[ (\mtx{S})_{ij} ] = 0, \quad
\Expect[ (\mtx{S})_{ij}^2 ] = d^{-1}, \quad
\Expect[ (\mtx{S})_{ij}^5 ] \leq R. %
$$
When $d \leq n$, with high probability,
$$
\sigma_{\min}(\mtx{S}; E)
	\geq 1 - \frac{w(E)}{\sqrt{d}} - o(\sqrt{n/d}).
$$
The constant in $o(\sqrt{n/d})$ depends only on $R$.
A matching lower bound for $\sigma_{\min}(\mtx{S}; E)$
holds when $E$ is spherically convex.
\end{theorem}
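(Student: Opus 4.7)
The plan is to reduce the universality claim to the Gaussian bound in Theorem~\ref{thm:rsv-gauss} via an invariance principle that swaps out the entries of $\mtx{S}$ for independent Gaussians of matching first two moments. To set this up, I would first express the minimum restricted singular value in saddle-point form,
\[
\sigma_{\min}(\mtx{S}; E) = \min_{\vct{x} \in E}\ \max_{\vct{u} \in \mathbb{S}^{d-1}}\ \ip{\vct{u}}{\mtx{S}\vct{x}},
\]
so that the object of interest is a min--max of a bilinear functional of $\mtx{S}$. Since the outer min and max are not smooth, I would approximate them by soft-min and soft-max functionals $F_\beta(\mtx{S})$ at inverse temperature $\beta > 0$; these are real-analytic in the entries of $\mtx{S}$, they converge to the true min--max as $\beta \to \infty$, and their mixed partial derivatives up to any fixed order admit explicit bounds in terms of $\beta$ and the radii of $\mathbb{S}^{d-1}$ and $E$.

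With the smoothing in place, I would run a Lindeberg exchange on $F_\beta$: enumerate the entries of $\mtx{S}$ and replace each $(\mtx{S})_{ij}$ one at a time by the Gaussian entry $(\mtx{\Gamma})_{ij}$. A Taylor expansion in the coordinate being swapped gives a first-order term that vanishes in expectation (both entries are centered) and a second-order term that is the same on both sides (matching variance $d^{-1}$); the third-order term need not vanish, but it is controlled by the third absolute moment of the entries. I would then use the fifth-moment hypothesis both to truncate each entry at a scale $T$ such that $\Prob{ \abs{(\mtx{S})_{ij}} > T } \leq R/T^{5}$ and to bound the higher-order Taylor remainders of $F_\beta$ on the truncated matrix, yielding
\[
\bigl| \Expect F_\beta(\mtx{S}) - \Expect F_\beta(\mtx{\Gamma}) \bigr| = o(\sqrt{n/d})
\]
for a suitable tuning of $\beta$ and $T$. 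A bounded-differences concentration argument for $F_\beta(\mtx{S})$, applied to the truncated matrix, would lift this from expectation to a high-probability statement.

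Once the transfer is in place, I would invoke Theorem~\ref{thm:rsv-gauss} to conclude that $F_\beta(\mtx{\Gamma})$, and thence $\sigma_{\min}(\mtx{S}; E)$, is at least $1 - w(E)/\sqrt{d} - o(\sqrt{n/d})$ with high probability, absorbing the extra constant and the softmax gap into the $o(\sqrt{n/d})$ term. For the matching bound under spherical convexity, I would rerun the same invariance transfer on top of the reverse Gordon inequality stated in the remark after Theorem~\ref{thm:rsv-gauss}, which is available precisely because $E$ is the intersection of a convex cone with the sphere.

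The delicate step will be choosing $\beta$, the truncation level $T$, and the order of the Taylor expansion so that every error contribution accumulates to at most $o(\sqrt{n/d})$. Increasing $\beta$ tightens the soft-min/soft-max approximation but inflates the higher-order derivatives of $F_\beta$ that multiply the moment factor $R$; increasing $T$ shrinks the Taylor remainder but worsens the truncation bias. Threading this needle, while showing the hidden constants depend only on $R$ as asserted, is where the technical work concentrates. A secondary subtlety is that the Gaussian width $w(E)$ must appear \emph{bare} in the final bound, without any spurious logarithmic factor from a union bound over a net of $E$, which is exactly why the saddle-point plus invariance route is preferable to a direct covering argument.
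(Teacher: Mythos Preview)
The paper does not prove this theorem; it simply quotes the result from \citeasnoun[Thm.~9.1]{OT18:Universality-Laws} without supplying an argument. Your plan---Lindeberg exchange on a soft-min/soft-max surrogate of the bilinear saddle, moment truncation driven by the fifth-moment hypothesis, and transfer to the Gaussian case handled by Gordon (and its reverse for spherically convex $E$)---is precisely the strategy that reference employs, so your proposal is aligned with the source the paper defers to.
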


In other words, if $E$ is a moderately large set,
the distribution of the entries of the random map
$\mtx{S}$ does not have an impact on the
embedding dimension $d$ sufficient to ensure no
point in $E$ is annihilated.

Theorem~\ref{thm:universality} is confirmed by extensive numerical
experiments~\cite{OT18:Universality-Laws},
which demonstrate that dimension reduction maps with
independent, standardized entries have identical performance for a wide range of examples.

It is perhaps surprising that the bound on the \emph{maximum}
restricted singular value from Theorem~\ref{thm:rsv-gauss} is not universal.
For some sets $E$, the quantity $\sigma_{\max}(\mtx{S}; E)$
depends heavily on the distribution of the entries of $\mtx{S}$.

\begin{remark}[Universality for least-squares]
\citeasnoun{DL18:Asymptotics-Sketching} give some asymptotic universality
results for random embeddings in the context of least-squares problems.
\end{remark}

\subsection{Random partial isometries}

Last, we consider a variant of Gaussian embedding that
is more suitable when the embedding dimension $d$
is close to the ambient dimension $n$.  In this
section, we allow the field $\F$ to be real or complex.

First, suppose that $d \leq n$, and let $\mtx{\Gamma} \in \F^{d \times n}$
be a Gaussian embedding.  Almost surely,
the co-range of $\mtx{\Gamma}$ is a uniformly random
$d$-dimensional subspace of $\F^n$.
Construct an embedding $\mtx{S} \in \F^{d \times n}$
with orthonormal rows that span the co-range
of $\mtx{\Gamma}$, for example by QR factorization.

Similarly, we can consider a Gaussian embedding $\mtx{\Gamma} \in \F^{d \times n}$
with $d \geq n$.  In this case, the range of $\mtx{\Gamma}$ is almost surely a uniformly
random $n$-dimensional subspace of $\F^d$.  Construct
an embedding $\mtx{S} \in \F^{d \times n}$ with orthonormal
columns that span the range of $\mtx{\Gamma}$, for example
by QR factorization.

In each case, we call $\mtx{S}$ a \emph{random partial isometry}.
The cost of storing a random partial isometry is $O(dn)$,
and the cost of applying it to a vector is $O(dn)$.
(We should warn the punctilious reader that QR factorization
of $\mtx{\Gamma}$ may not produce a matrix $\mtx{S}$ that is Haar-distributed
on the Stiefel manifold.  To achieve this guarantee, use the
algorithms from \citeasnoun{Mez07:How-to-Generate}.)

When $d \approx n$, random partial isometries are better embeddings
than Gaussian maps (because the nonzero singular values of a partial isometry are all equal).
When $d$ and $n$ are significantly different, the two
models are quite similar to each other.

\citeasnoun{TH15:Isotropically-Random} have established some
theoretical results on the embedding behavior of real
partial isometries $(\F = \RR)$.  Unfortunately,
the situation is more complicated
than in Theorem~\ref{thm:rsv-gauss}.
More relations between Gaussian matrices and partial
isometries follow from the Marcus--Pisier comparison
theorem~\cite{MP81:Random-Fourier};
see also~\citeasnoun{Tro12:Comparison-Principle}.

\section{Structured random embeddings}
\label{sec:dimension-reduction}

Gaussian embeddings and random partial isometries work extremely well.
But they are not suitable for all practical
applications because they are expensive to construct, to store,
and to apply to vectors.  Instead, we may prefer to implement
more structured embedding matrices that alleviate these burdens.

This section summarizes a number of constructions that
have appeared in the literature, with a focus on methods that
have been useful in applications.  Except as noted, these
approaches have the same practical performance as either a
Gaussian embedding or a random partial isometry.

Although many of these approaches are supported by theoretical analysis,
the results are far less precise than for Gaussian embeddings.
As such, we will not give detailed
mathematical statements about structured embeddings.
See Section~\ref{sec:how-in-theory} for a short discussion about how to
manage the lack of theoretical guarantees.

\subsection{General techniques}

Many types of structured random embeddings operate
on the same principle, articulated in \cite{AC09:Fast-Johnson-Lindenstrauss}.
When we apply the random embedding to a fixed vector,
it should homogenize (``mix'') the coordinates so that each one carries
about the same amount of energy.
Then the embedding can sample coordinates at random to extract a
lower-dimensional vector whose norm is proportional to the
norm of the original vector and has low variance.
Random embeddings differ in how they perform the initial mixing step.
Regardless of how it is done, mixing is very important for obtaining
embeddings that work well in practice.

With this intuition at hand, let us introduce a number
of pre- and post-processing transforms
that help us design effective random embeddings.
These approaches are used in many of the constructions below.

We say that a random variable is a \emph{random sign}
if it is $\textsc{uniform}\{ z \in \F : \abs{z} = 1 \}$.
A random matrix $\mtx{E} \in \F^{n \times n}$ is called
a \emph{random sign flip} if it is diagonal,
and the diagonal entries are iid random sign variables.

A \emph{random permutation} $\mtx{\Pi} \in \F^{n \times n}$ is
a matrix drawn uniformly at random from the set of permutation
matrices.  That is, each row and column of $\mtx{\Pi}$
has a single nonzero entry, which equals one, and all such
matrices are equally likely.

For $d \leq n$, a random matrix $\mtx{R} \in \F^{d \times n}$
is called a \emph{random restriction} if it selects $d$ uniformly
random entries from its input.  With an abuse of terminology,
we extend the definition to the case $d \geq n$ by making
$\mtx{R} \in \F^{d \times n}$ the matrix that embeds its input
into the first $n$ coordinates of the output.
That is, $(\mtx{R})_{ij} = 1$ when $i = j$ and
zero otherwise.

Random sign flips and permutations are useful for preconditioning
the input to a random embedding.  Random restrictions are
useful for reducing the dimension of a vector that has
already been homogenized.

\subsection{Sparse sign matrices}
\label{sec:sparse-map}

Among the earliest proposals for non-Gaussian embedding
is to use a sparse random matrix whose entries are
random signs.

Here is an effective construction of a sparse sign matrix
$\mtx{S} \in \F^{d \times n}$.  Fix a sparsity parameter
$\zeta$ in the range $2 \leq \zeta \leq d$.  The random
embedding takes the form
$$
\mtx{S} = \sqrt{\frac{n}{\zeta}} \begin{bmatrix} \vct{s}_1 & \dots & \vct{s}_n \end{bmatrix} \in \F^{d \times n}.
$$
The columns $\vct{s}_i \in \F^d$ are iid random vectors.  To construct each column,
we draw $\zeta$ iid random signs, and we situate them in $\zeta$ uniformly
random coordinates.  \citeasnoun{TYUC19:Streaming-Low-Rank}
recommend choosing $\zeta = \min\{d, 8\}$ in practice.

We can store a sparse embedding using about $O(\zeta n \log d)$
numbers.  We can apply it to a vector in $\F^n$ with $O(\zeta n)$
arithmetic operations.  The main disadvantage is that we must use sparse
data structures and arithmetic to achieve these benefits.  Sparse sign
matrices have similar performance to Gaussian embeddings.

\citeasnoun{Coh16:Nearly-Tight-Oblivious} has shown that a sparse sign
matrix serves as an oblivious subspace embedding with constant distortion
for an arbitrary $k$-dimensional subspace of $\RR^n$ when
the embedding dimension $d = O(k \log k)$ and
the per-column sparsity $\zeta = O(\log k)$.
It is conjectured that improvements are still possible.

\begin{remark}[History]
Sparse random embeddings emerged from the work of
\citeasnoun{Ach03:Database-Friendly-Random}
and \citeasnoun{CCF04:Finding-Frequent}.
For randomized linear algebra applications, sparse embeddings
were promoted in~\cite{CW13:Low-Rank-Approximation,MM13:Low-Distortion-Subspace,NN13:OSNAP-Faster}
and~\cite{Ura13:Fast-Randomized}.
Analyses of the embedding behavior of a sparse map appear in \cite{BDN15:Toward-Unified} and~\cite{Coh16:Nearly-Tight-Oblivious}.
\end{remark}

\subsection{Subsampled trigonometric transforms}
\label{sec:srtt}

Another type of structured randomized embeddings is designed
to mimic the performance of a random partial isometry.
One important class of examples consists of the subsampled
randomized trigonometric transforms (SRTTs).

To construct a random embedding $\mtx{S} \in \F^{d \times n}$
with $d \leq n$, we select a unitary trigonometric transform
$\mtx{F} \in \F^{n \times n}$.  Then we form
$$
\mtx{S} = \sqrt{\frac{n}{d}} \mtx{RFE\Pi},
$$
where $\mtx{R} \in \F^{d\times n}$ is a random restriction,
$\mtx{E} \in \F^{n \times n}$ is a random sign flip,
and $\mtx{\Pi} \in \F^{n \times n}$ is a random permutation.
Note that $\mtx{S}$ is a partial isometry.

The trigonometric transform $\mtx{F}$ can be any one
of the usual suspects.  In the complex case ($\F = \CC$),
we often use a discrete Fourier transform.
In the real case ($\F = \RR$), common choices
are the discrete cosine transform (DCT2) or
the discrete Hartley transform (DHT).
When $n$ is a power of two, we can consider a Walsh--Hadamard transform (WHT).
The paper~\cite{2010_avron_BLENDENPIK}
reports that the DHT is the best option
in the real case.

The cost of storing a SRTT is $O(n \log n)$,
and it can be applied to a vector in $O(n \log d)$
operations using a fast subsampled trigonometric transform algorithm.
The main disadvantage is that it requires
a good implementation of the fast transform.

\citeasnoun{Tro11:Improved-Analysis} has shown that
an SRTT serves as an oblivious subspace embedding
with constant distortion
for an arbitrary $k$-dimensional subspace of $\F^n$
provided that $d = O(k \log k)$.
This paper focuses on the Walsh--Hadamard transform,
but the analysis extends to other SRTTs.
In practice, it often suffices to choose $d = O(k)$,
but no rigorous justification is available.

\begin{remark}[Rerandomization]
It is also common to repeat the randomization and trigonometric transformations:
$$
\mtx{S} = \sqrt{\frac{n}{d}} \mtx{R}\mtx{F}\mtx{E}'\mtx{\Pi}'\mtx{FE\Pi},
$$
with an independent sign flip $\mtx{E}'$ and an independent permutation $\mtx{\Pi}'$.
This enhancement can make the embedding more robust, although it is not always necessary.
\end{remark}

\begin{remark}[History]
\citeasnoun{1995_parker_randombutterfly}
proposed the use of randomized trigonometric
transforms to precondition linear systems.
The idea of applying an SRTT for dimension
reduction appears in
\cite{2006_ailon_chazelle_FJLT} and \cite{AC09:Fast-Johnson-Lindenstrauss}.
\citeasnoun{WLRT08:Fast-Randomized}
develop algorithms for low-rank matrix
approximation based on SRTTs.
Embedding properties of an SRTT for general sets
follow from \cite{RV08:Sparse-Reconstruction} and \cite{KW11:New-Improved};
see \citeasnoun[Chap.~12]{FR13:Mathematical-Introduction}
or~\citeasnoun{PW15:Randomized-Sketches}.
\end{remark}

\subsection{Tensor random projections}
\label{sec:trp}

Next, we describe a class of random embeddings that are useful
for very large linear algebra and multilinear algebra problems.
This approach invokes tensor products to form
a random embedding for a high-dimensional space from
a family of random embeddings for lower-dimensional spaces.

Let $\mtx{S}_1 \in \F^{d \times m_1}$ and $\mtx{S}_2 \in \F^{d \times m_2}$
be statistically independent random embeddings.  We define
the \emph{tensor random embedding}
$$
\mtx{S} := \mtx{S}_1 \odot \mtx{S}_2 \in \F^{d \times n}
\quad\text{where}\quad
n = m_1 m_2
$$
to be the Khatri--Rao product of $\mtx{S}_1$ and $\mtx{S}_2$.  That is,
the $i$th row of $\mtx{S}$ is
$$
(\mtx{S})_{i:} = \begin{bmatrix} (\mtx{S}_1)_{i1} (\mtx{S}_2)_{i:} & \dots & (\mtx{S}_1)_{im} (\mtx{S}_2)_{i:} \end{bmatrix}
	\quad\text{for $i=1,\dots,d$.}
$$
Under moderate assumptions on the component embeddings
$\mtx{S}_1$ and $\mtx{S}_2$, the tensor random embedding
$\mtx{S}$ preserves the squared Euclidean norm of
an arbitrary vector in $\F^{n}$.

A natural extension of this idea is to draw many component embeddings $\mtx{S}_i \in \F^{d \times m_i}$
for $i = 1, \dots, k$ and to form the tensor random embedding
$$
\mtx{S} := \mtx{S}_1 \odot \mtx{S}_2 \odot \dots \odot \mtx{S}_k \in \F^{d \times n}
\quad\text{where}\quad
n = \prod\nolimits_{i=1}^k m_i.
$$
This embedding also inherits nice properties from its components.

The striking thing about this construction is that the tensor product
embedding operates on a \emph{much} larger space than the component embeddings.
The storage cost for the component embeddings is $O(d (\sum_{i=1}^k m_i))$,
or less.  We can apply the tensor random embedding to a vector directly
with $O(dn)$ arithmetic.
We can accelerate the process by using component embeddings that have fast transforms,
and we can obtain improvements for vectors that have a compatible tensor product structure.
Some theoretical analysis is available, but results are not yet complete.

\begin{remark}[History]
Tensor random embeddings were introduced by~\citeasnoun{KRSU10:Price-Privately}
on differential privacy.  They were first analyzed by~\citeasnoun{Rud12:Row-Products}.
The paper~\cite{SGTU18:Tensor-Random} proposed the application
of tensor random embeddings for randomized linear algebra;
some extensions appear in~\cite{JKW19:Faster-Johnson-Lindenstrauss} and \cite{MB19:Guarantees-Kronecker}.
See~\cite{BV19:Polynomial-Threshold} and \cite{Ver19:Concentration-Inequalities}
for related theoretical results.
\end{remark}

\subsection{Other types of structured random embeddings}

We have described the random embeddings that have received the
most attention in the NLA literature.  Yet there are other
types of random embeddings that may be useful in
special circumstances.  Some examples include
random filters \cite{TWDBB06:Random-Filters,KW11:New-Improved,RRT12:Restricted-Isometries,MRW18:Improved-Bounds},
the Kac random walk~\cite{Kac56:Foundations-Kinetic,Ros94:Random-Rotations,Oli09:Convergence-Equilibrium,PS17:Kacs-Walk},
and sequences of random reflections~\cite{Slo83:Encrypting-Random,Por96:Cutoff-Phenomenon}.
\citeasnoun{2009_edo_liberty_dissertation} discusses a number of other instances.

\subsection{Coordinate sampling}
\label{sec:coord-embed}

So far, we have discussed random embeddings that
mix up the coordinates of a vector.
It is sometimes possible to construct embeddings just
by sampling coordinates at random.
Coordinate sampling can be appealing in specialized situations
(e.g., kernel computations), where we only have access to individual entries
of the data.  On the other hand, this approach requires
strong assumptions, and it is far less reliable
than random embeddings that mix coordinates.
In this section, we summarize the basic facts about
subspace embedding via random coordinate sampling.

A note on terminology: we will use the term
\emph{coordinate sampling} to distinguish these
maps from random embeddings that mix coordinates.

\subsubsection{Coherence and leverage}
\label{sec:coherence_leverage}

Let $L \subset \F^{n}$ be a $k$-dimensional subspace.
The \emph{coherence} $\mu(L)$ of the subspace with respect
to the standard basis is
$$
\mu(L) := n \cdot \max_{i = 1, \dots, n} \norm{ \mtx{P}_L \vct{\delta}_i }^2,
$$
where $\mtx{P}_L \in \Sym_n$ is the orthogonal projector onto $L$
and $\vct{\delta}_i$ is the $i$th standard basis vector.
The coherence $\mu(L)$ lies in the range $[k, n]$.  The behavior
of coordinate sampling methods degrades as the coherence increases.

Next, define the (subspace) \emph{leverage score} distribution
with respect to the standard coordinate basis:
$$
p_i = \frac{1}{k} \norm{ \mtx{P}_L \vct{\delta}_i }^2
\quad\text{for $i = 1, \dots, n$.}
$$
It is straightforward to verify that
$(p_1, \dots, p_n)$ is a probability distribution.
In most applications, it is expensive to compute
or estimate subspace leverage scores because we
typically do not have a basis for the subspace $L$ at hand.

\subsubsection{Uniform sampling}

We can construct an embedding $\mtx{S} \in \F^{d \times n}$
by sampling each output coordinate uniformly at random.
That is, the rows of $\mtx{S}$ are iid, and each row
takes values $\vct{\delta}_i / \sqrt{d}$, each with probability $1/n$.
(We can also sample coordinates uniformly \emph{without} replacement; this
approach performs slightly better but requires more work to analyze.)

The embedding dimension $d$ must be chosen to ensure that
\begin{equation} \label{eqn:sample-embed}
\norm{\mtx{S} \vct{x} }^2 = (1 \pm \eps) \norm{ \vct{x} }^2
\quad\text{for all $\vct{x} \in L$.}
\end{equation}
To achieve this goal, it suffices that
$$
d \geq 2 \eps^{-2} \mu(L) \log(2k).
$$
In other words, the embedding dimension is proportional
to the coherence of the subspace, up to a logarithmic factor.
We expect uniform sampling to work well precisely when the
coherence is small ($\mu(L) \approx k$).

To prove this result, let $\mtx{U} \in \F^{n \times k}$ be an orthonormal
basis for the subspace $L$.  We can approximate the product $\Id_k = \mtx{U}^*\mtx{U}$
by sampling columns of $\mtx{U}$ uniformly at random.  The analysis in
Section~\ref{sec:mtx-mult-unif} furnishes the conclusion.

\subsubsection{Leverage score sampling}

When the coherence is large, it seems more natural to sample
with respect to the leverage score distribution $(p_1, \dots, p_n)$
described above.  That is, the embedding $\mtx{S} \in \F^{d \times n}$
has iid rows, and each row takes value $\vct{\delta}_i / \sqrt{d}$
with probability $p_i$.

To achieve the embedding guarantee~\eqref{eqn:sample-embed}
with this sampling distribution, we should choose the embedding dimension
$$
d \geq 2 \eps^{-2} k \log(2k).
$$
In other words, it suffices that the embedding dimension is proportional
to the dimension $k$ of the subspace, up to the logarithmic factor.
This result follows from the analysis of matrix multiplication
by importance sampling (Section~\ref{sec:mtx-mult-import}).

\subsubsection{Discussion}

Uniform sampling leads to an oblivious subspace embedding,
although it is not obvious how to select the embedding dimension
in advance because the coherence is usually not available.
Leverage score sampling is definitely not oblivious,
because we need to compute the sampling probabilities
(potentially at great cost).

In practice, uniform sampling works better than one might anticipate,
and it has an appealing computational profile.
As a consequence, it has become a workhorse for large-scale kernel computation;
see~\cite{KMT12:Sampling-Methods,2013_bach_sharp} and \cite{RCR17:FALKON-Optimal}.

Our experience suggests that leverage score sampling is rarely
a competitive method for constructing subspace embeddings,
especially once we take account of the effort required to
compute the sampling probabilities. We recommend using other
types of random embeddings (Gaussians, sparse maps, SRTTs)
in lieu of coordinate sampling whenever possible.

Although coordinate sampling may seem like a natural approach
to construct matrix approximations involving rows or columns,
we can obtain better algorithms for this problem by
using (mixing) random embeddings.  See Section~\ref{sec:natural} for details.

Coordinate sampling can be a compelling choice in situations where
other types of random embeddings are simply unaffordable. For instance,
a variant of leverage score sampling leads to effective
algorithms for kernel ridge regression;
see~\citeasnoun{RCCR18:Fast-Leverage} for evidence.
Indeed, in the context of kernel computations,
coordinate sampling and random features
may be the only tractable methods for
extracting information from the kernel matrix.
We discuss these ideas in Section~\ref{sec:kernel}.

An emerging research direction uses coordinate sampling for
solving certain kinds of continuous problems, such as function interpolation.
In this setting, sampling corresponds to function evaluation,
while mixing embeddings may lead to operations that are impossible
to implement in the continuous space.  For some examples, see
\cite{RW12:Sparse-Legendre,CDL13:Stability-Accuracy,HD15:Coherence-Motivated,RW16:Interpolation-Weighted,CM17:Optimal-Weighted,ABC19:Sequential-Sampling,AKM+19:Universal-Sampling} and \cite{CP19:Active-Regression}.

\subsubsection{History}

Most of the early theoretical computer science papers on randomized NLA rely on
coordinate sampling methods.  These approaches typically
construct an importance sampling distribution using the norms
of the rows or columns of a matrix.
For examples, see~\cite{FKV04:Fast-Monte-Carlo,2005_drineas_nystrom} and~\cite{DKM06:Fast-Monte-Carlo-I,DKM06:Fast-Monte-Carlo-II,DKM06:Fast-Monte-Carlo-III}.
These papers measure errors in the Frobenius norm. %
The first spectral norm analysis of coordinate sampling
appears in~\citeasnoun{RV07:Sampling-Large}.

Leverage scores are a classical tool in statistical regression,
used to identify influential data points.  %
\citeasnoun{2008_drineas_relative_error_CUR} proposed
the subspace leverage scores as a sampling distribution
for constructing low-rank matrix approximations.
\citeasnoun{2009_mahoney_CUR} identified the connection
with regression.  \citeasnoun{2011_mahoney_survey}
made a theoretical case for using leverage scores
as the basis for randomized NLA algorithms.
\citeasnoun{AM15:Fast-Randomized} introduced an alternative
definition of leverage scores for kernel ridge regression.

It is somewhat harder to trace the application of uniform sampling in randomized NLA. %
Several authors have studied the behavior of uniform
sampling in the context of Nystr{\"o}m approximation;
see \cite{WS01:Using-Nystrom,KMT12:Sampling-Methods} and \cite{Git13:Topics-Randomized}.
An analysis of uniform coordinate sampling is implicit in the theory on SRTTs;
see~\citeasnoun[Lemma 3.4]{Tro11:Improved-Analysis}.
See \citeasnoun{2017_kannan_vempala_acta} for more discussion about
sampling methods in NLA.

\subsection{But how does it work in theory?}
\label{sec:how-in-theory}

Structured random embeddings and random coordinate sampling
lack the precise guarantees that we can attribute
to Gaussian embedding matrices.  So how can we apply them with confidence?

First, we advocate using \textit{a posteriori} error estimators to assess
the quality of the output of a randomized linear algebra computation.
These error estimators are often quite cheap, yet they can give
(statistical) evidence that the computation was performed correctly.
We also recommend adaptive algorithms that can detect when the
accuracy is insufficient and make refinements.
With this approach, it is not pressing to produce theory that
justifies all of the internal choices (e.g., the specific type of random embedding)
in the NLA algorithm.  See Section~\ref{sec:error-est} for further discussion.

Even so, we would like to have \textit{a priori} predictions about
how our algorithms will behave.  Beyond that, we need reliable methods
for selecting algorithm parameters, especially in the streaming
setting where we cannot review the data and repeat the computation.

Here is one answer to these concerns.
As a practical matter, we can simply invoke the lessons from the Gaussian theory,
even when we are using a different type of random embedding.
The universality result, Theorem~\ref{thm:universality},
gives a rationale for this approach in one special case.
We also recommend undertaking computational experiments
to verify that the Gaussian theory gives an adequate description of
the observed behavior of an algorithm.

\begin{warning}[Coordinate sampling]
Mixing random embeddings perform similarly to Gaussian embeddings,
but coordinate sampling methods typically exhibit behavior that
is markedly worse.
\end{warning}

\section{How to use random embeddings}
\label{sec:overdet-ls}

Algorithm designers have employed random embeddings
for many tasks in linear algebra, optimization, and
related areas.
Methods based on random embedding
fall into three rough categories:
(1) sketch and solve, (2) iterative sketching,
and (3) sketch and precondition.  To draw distinctions
among these paradigms, we use each one to derive
an algorithm for solving an overdetermined least-squares
problem.

\subsection{Overdetermined least-squares}

Overdetermined least-squares problems
sometimes arise in statistics and data-analysis applications.
We may imagine that some of the data in these problems is redundant.
As such, it seems plausible that we could reduce the size of
the problem to accelerate computation without too much
loss in accuracy.

Consider a matrix $\mtx{A} \in \F^{m \times n}$ with $m \gg n$
and a vector $\vct{b} \in \F^m$.
An overdetermined least-squares problem has the form
\begin{equation} \label{eqn:overdet-ls}
\underset{\vct{x} \in \F^n}{\text{minimize}} \
	\frac{1}{2} \norm{ \mtx{A} \vct{x} - \vct{b} }^2.
\end{equation}
Following \citeasnoun{PW15:Randomized-Sketches}, we can also rewrite
the least-squares problem to emphasize the role of the matrix:
\begin{equation} \label{eqn:overdet-ls2}
\underset{\vct{x} \in \F^n}{\text{minimize}} \
	\frac{1}{2} \norm{ \mtx{A} \vct{x} }^2 - \ip{ \vct{x} }{ \mtx{A}^* \vct{b} }.
\end{equation}
Write $\vct{x}_{\star}$ for an arbitrary solution
to the problem~\eqref{eqn:overdet-ls}.

To make a clear comparison among algorithm design templates,
we will assume that $\mtx{A}$ is dense and unstructured.
In this case, the classical approach to solving~\eqref{eqn:overdet-ls}
is based on factorization of the coefficient matrix
(such as QR or SVD) at a cost of $O(mn^2)$ arithmetic operations.

When $\mtx{A}$ is sparse, we would typically use
iterative methods (such as CG), which have a different
computational profile.  For sparse matrices, we would
also make different design choices in a sketch-based algorithm.
Nevertheless, for simplicity, we will not discuss
the sparse case.

\subsection{Subspace embeddings for least-squares}

To design a sketching algorithm for the overdetermined
least-squares problem~\eqref{eqn:overdet-ls},
we need to construct a subspace embedding $\mtx{S} \in \F^{d \times m}$
that preserves the geometry of the range
of the matrix $\mtx{A} \in \F^{m \times n}$.
In some cases, we may also need the embedding to
preserve the range of the bordered matrix $\begin{bmatrix} \mtx{A} & \vct{b} \end{bmatrix} \in \F^{m \times (n+1)}$.

Since the matrix $\mtx{A}$ is dense and unstructured,
we will work with a structured subspace embedding,
such as an SRTT (Section~\ref{sec:srtt}).
More precisely, we will assume that evaluating
the product $\mtx{SA}$ costs only
$O(mn \log d)$ arithmetic operations.
The best theoretical results for these
structured sketches require that
the embedding dimension $d \sim n \log(n) / \eps^2$
to achieve distortion $\eps$,
although the logarithmic factor seems to be
unnecessary in practice.

Throughout this section, we use the heuristic
notation $\sim$ to indicate quantities that
are proportional.
We also write $\ll$ to mean
``much smaller than.''

\subsection{Sketch and solve}
\label{sec:sketchandsolve}

The sketch-and-solve paradigm maps the overdetermined least-squares
problem \eqref{eqn:overdet-ls} into a smaller space.
Then it uses the solution to the reduced problem
as a proxy for the solution to the original problem.
This approach can be very fast, and we only need
one view of the matrix $\mtx{A}$.  On the other hand,
the results tend to be very inaccurate.

Let $\mtx{S} \in \F^{d \times m}$ be a subspace embedding
for the range of $\begin{bmatrix} \mtx{A} & \vct{b} \end{bmatrix}$
with distortion $\eps$.
Consider the compressed least-squares problem
\begin{equation} \label{eqn:sketch-solve-ls}
\underset{\vct{x} \in \RR^n}{\text{minimize}} \
	\frac{1}{2} \norm{ \mtx{S} (\mtx{A} \vct{x} - \vct{b}) }^2.
\end{equation}
Since $\mtx{S}$ preserves geometry,
we may hope that the solution $\widehat{\vct{x}}$ to the sketched problem~\eqref{eqn:sketch-solve-ls}
can replace the solution $\vct{x}_{\star}$
to the original problem~\eqref{eqn:overdet-ls2}.
A typical theoretical bound is %
$$
\norm{ \mtx{A} \widehat{\vct{x}} - \vct{b} } \leq (1 + \eps) \norm{ \mtx{A} \vct{x}_{\star} - \vct{b} }
\quad\text{when $d \sim n \log(n) / \eps^2$.}
$$
See~\citeasnoun{Sar06:Improved-Approximation}.
Although the residuals are comparable, it need \emph{not}
be the case that $\widehat{\vct{x}} \approx \vct{x}_{\star}$,
even when the solution to~\eqref{eqn:overdet-ls} is unique.

The sketch-and-solve paradigm requires us to form the matrix $\mtx{SA}$
at a cost of $O(mn \log d)$ operations.
We would typically solve the (dense) reduced problem
with a direct method, using $O(dn^2)$ operations.
Assuming $d \sim n \log(n)/\eps^2$, the total arithmetic cost
is $O(mn \log(n/\eps^2) + n^3 \log(n)/\eps^2)$.

In summary, we witness an improvement in computational cost
over classical methods if $\log n \ll n \ll m/\log n$
and $\eps$ is constant.  But we must also be
willing to accept large errors, because we cannot make $\eps$ small.

\begin{remark}[History]
The sketch-and-solve paradigm is attributed
to \citeasnoun{Sar06:Improved-Approximation}.
It plays a major role in the theoretical
algorithms literature; see~\citeasnoun{2014_woodruff_sketching}
for advocacy.  It has also been proposed
for enormous problems that might otherwise
be entirely hopeless~\cite{2017_weare_randomized_iteration}.
\end{remark}

\subsection{Iterative sketching}
Iterative sketching attempts to remediate the poor accuracy of the sketch-and-solve paradigm
by applying it repeatedly to reduce the residual error.

First, we construct an initial solution $\vct{x}_0 \in \F^n$ using the sketch-and-solve paradigm
with a constant distortion embedding.  For each iteration $i$,
draw a fresh random subspace embedding $\mtx{S}_i \in \F^{d\times m}$ for $\range(\mtx{A})$,
with constant distortion.  We can solve
a sequence of least-squares problems
\begin{equation} \label{eqn:iterative-sketch-ls}
\underset{\vct{x} \in \RR^n}{\text{minimize}} \
	\frac{1}{2} \norm{ \mtx{S}_i \mtx{A}(\vct{x} - \vct{x}_{i-1}) }^2
	+ \ip{ \vct{x} - \vct{x}_{i-1} }{ \mtx{A}^* (\vct{b} - \mtx{A}\vct{x}_{i-1}) }.
\end{equation}
The solution $\vct{x}_i$ to this subproblem is fed into the next subproblem.
Without the sketch, each subproblem is equivalent to solving~\eqref{eqn:overdet-ls2}
with $\vct{b}$ replaced by the residual $\vct{r}_{i-1} = \vct{b} - \mtx{A} \vct{x}_{i-1}$.
The sketch $\mtx{S}_i$ preserves the geometry while reducing the problem size.
A typical theoretical error bound would be
$$
\norm{ \mtx{A}\vct{x}_j - \vct{b} } \leq (1 + \eps) \norm{\mtx{A}\vct{x}_{\star} - \vct{b}}
\quad\text{when $j \sim \log(1/\eps)$ and $d \sim n \log n$.}
$$
See~\cite{PW16:Iterative-Hessian} for related results.

In each iteration, the iterative sketching approach requires us to form $\mtx{S}_i \mtx{A}$
at a cost of $O(mn \log d)$.  We compute $\mtx{A}^* \vct{r}_{i-1}$ at a cost of $O(mn)$.
Although it is unnecessary to solve each subproblem accurately, we cannot obtain reliable
behavior without using a dense method at a cost of $O(dn^2)$ per iteration.  With
the theoretical parameter choices, the total arithmetic is
$O((mn + n^3) \log(n) \log(1/\eps))$ to achieve relative error $\eps$.

The interesting parameter regime is $\log n \ll n \ll m / \log n$, but we can now
allow $\eps$ to be tiny.  In this setting, iterative sketching costs slightly
more than the sketch-and-solve paradigm to achieve constant relative error,
while it is faster than the classical approach.  At the same time, it can produce
errors as small as traditional least-squares algorithms.  A shortcoming is that
this method requires repeated sketches of the matrix $\mtx{A}$.

For overdetermined least-squares, we can short-circuit the iterative sketching approach.
In this setting, we can sketch the input matrix just once and factorize it.
We can use the same factorized sketch in each iteration to solve the subproblems faster.
For problems more general than least-squares, it may be necessary to extract
a fresh sketch at each iteration, as we have done here.

\begin{remark}[History]
Iterative sketching can be viewed as an extension
of stochastic approximation methods from optimization,
for example stochastic gradient descent \cite{Bot10:Large-Scale-Machine}.
In the context of randomized NLA, these algorithms
first appeared in the guise of the randomized Kaczmarz
iteration~\cite{SV09:Randomized-Kaczmarz};
see Section~\ref{sec:rk}.
\citeasnoun{GR15:Randomized-Iterative} reinterpreted
randomized Kaczmarz as an iterative sketching method
and developed generalizations.
\citeasnoun{PW16:Iterative-Hessian} proposed a
similar method for solving overdetermined
least-squares problems with constraints;
they observed that better numerical performance
is obtained by sketching~\eqref{eqn:overdet-ls2}
instead of~\eqref{eqn:overdet-ls}.
\end{remark}

\subsection{Sketch and precondition}
\label{sec:sketchandprecond}

The sketch-and-precondition paradigm uses random embedding to find
a proxy for the input matrix.  We can use this proxy to precondition
a classical iterative algorithm so it converges in a minimal number
of iterations.

Let $\mtx{S} \in \F^{d \times m}$ be a subspace embedding for $\range(\mtx{A})$
with constant distortion.  Compress
the input matrix $\mtx{A}$, and then compute a (pivoted) QR factorization:
$$
\mtx{Y} = \mtx{SA}
\quad\text{and}\quad
\mtx{Y} = \mtx{QR}.
$$
Since $\mtx{S}$ preserves the range of $\mtx{A}$ when $d \sim n \log n$,
we anticipate that $\mtx{Y}^* \mtx{Y} \approx \mtx{A}^*\mtx{A}$.
As a consequence, $\mtx{A}\mtx{R}^\pinv$ should be
close to an isometry.
Thus, we can pass to the preconditioned problem
\begin{equation} \label{eqn:sketch-precond-ls}
\underset{\vct{x} \in \RR^n}{\text{minimize}} \
	\frac{1}{2} \norm{ (\mtx{A}\mtx{R}^\pinv)(\mtx{R}\vct{x}) - \vct{b} }^2.
\end{equation}
Construct an initial solution $\vct{x}_0 \in \F^n$ using the sketch-and-solve paradigm
with the embedding $\mtx{S}$.  From this starting point, we solve~\eqref{eqn:sketch-precond-ls}
using preconditioned LQSR.  The $j$th iterate satisfies
$$
\norm{ \mtx{A} \vct{x}_j - \vct{b} } \leq (1 + \eps) \norm{ \mtx{A} \vct{x}_{\star} - \vct{b} }
\quad\text{when $j \sim \log(1/\eps)$.} %
$$
This statement is a reinterpretation of the theory in~\citeasnoun{RT08:Fast-Randomized}.

The cost of sketching the input matrix and performing
the QR decomposition is $O(mn \log d + dn^2)$.
Afterwards, we pay $O(mn)$ for each iteration
of PCG.  With the theoretical parameter settings, the
total cost is $O(mn \log(nB/\eps) + n^3 \log n)$
operations.

Once again, the interesting regime is $\log n \ll n \ll m / \log n$,
and the value of $\eps$ can be very small.
For overdetermined least-squares,
this approach is faster than both the sketch-and-solve
paradigm and the iterative sketching paradigm.
The sketch-and-precondition approach leads to errors
that are comparable with classical linear algebra algorithms,
but it may be a factor of $n / \log(n)$ faster.  On the
other hand, it requires repeated applications of the
matrix $\mtx{A}$.

\begin{remark}[History]
The randomized preconditioning idea was proposed by~\citeasnoun{RT08:Fast-Randomized}.
\citeasnoun{2010_avron_BLENDENPIK} demonstrate that least-squares algorithms
based on randomized preconditioning can beat the highly engineered
software in LAPACK.  The same method drives the algorithms
in~\citeasnoun{MSM14:LSRN-Parallel}.  \citeasnoun{Avr18:Randomized-Riemannian}
contains a recent summary of existing randomized preconditioning methods.
\end{remark}

\subsection{Comparisons}

If we seek a high-precision solution to
a dense, unstructured, overdetermined least-squares problem,
randomized preconditioning leads to the most efficient
existing algorithm.
For the same problem, if we can only view the input matrix once,
then the sketch-and-solve paradigm still allows us to obtain
a low-accuracy solution.
Although iterative sketching is less efficient
than its competitors in this setting,
it remains useful for solving
constrained least-squares problems,
and it has further connections with optimization.

\subsection{Summary}

From the perspective of a numerical analyst,
randomized preconditioning and iterative sketching
should be the preferred methods for designing
sketching algorithms
because they allow for high precision.
The sketch-and-solve approach is appropriate
only when data access is severely constrained.

In spite of this fact, a majority of the literature
on randomized NLA develops algorithms based on the
sketch-and-solve paradigm.  There are far
fewer works on randomized preconditioning
or iterative sketching.  This discrepancy
points to an opportunity for further research.

\section{The randomized rangefinder}
\label{sec:random-rangefinder}

A core challenge in linear algebra is to find a subspace
that captures a lot of the action of a matrix.
We call this the \emph{rangefinder} problem.
As motivation for considering this problem, we will
use the rangefinder %
to derive the randomized SVD algorithm.
Then we will introduce several randomized algorithms for computing
the rangefinder primitive, along with theoretical guarantees for these methods.
These algorithms all make use of random embeddings, but their
performance depends on more subtle features than the basic
subspace embedding property.

In Section~\ref{sec:error-est}, we will complement the
algorithmic discussion with details about error estimation
and adaptivity for the rangefinder primitive.
In Sections~\ref{sec:natural}--\ref{sec:full},
we will see that the subspace produced by the rangefinder
can be used as a primitive for
other linear algebra computations. %

Most of the material in this section is adapted
from our papers~\cite{HMT11:Finding-Structure}
and~\cite{HMST11:Algorithm-Principal}.
We have also incorporated more recent perspectives.

\subsection{The rangefinder: Problem statement}
\label{sec:rrf-overview}

Let $\mtx{B} \in \F^{m \times n}$ be an input matrix,
and let $\ell \leq \min\{m, n\}$ be the subspace dimension.
The goal of the rangefinder problem is to produce an orthonormal
matrix $\mtx{Q} \in \F^{m \times \ell}$ whose range
aligns with the dominant left singular vectors of $\mtx{B}$.

To measure the quality of $\mtx{Q}$,
we use the spectral norm error
\begin{equation} \label{eqn:rrf-error}
\norm{ \mtx{B} - \mtx{QQ}^* \mtx{B} }
	= \norm{ (\Id - \mtx{QQ}^*) \mtx{B} }.
\end{equation}
If the error measure~\eqref{eqn:rrf-error} is small,
then the rank-$\ell$ matrix $\widehat{\mtx{B}} = \mtx{QQ}^* \mtx{B}$
can serve as a proxy for $\mtx{B}$.
See Section~\ref{sec:rsvd} for an important application.

\begin{algorithm}[t]
\begin{algorithmic}[1]
\caption{\textit{The randomized rangefinder.} \newline
Implements the procedure from Section~\ref{sec:rrf-pseudo}.}
\label{alg:random-rangefinder}

\Require	Input matrix $\mtx{B} \in \F^{m \times n}$, subspace dimension $\ell$
\Ensure		Orthonormal matrix $\mtx{Q} \in \F^{m \times \ell}$
\Statex

\Function{RandomRangefinder}{$\mtx{B}$, $\ell$}

\State	Draw a random matrix $\mtx{\Omega} \in \F^{n \times \ell}$
\State	Form $\mtx{Y} = \mtx{B\Omega}$
\State	Compute $[\mtx{Q}, \sim] = \texttt{qr\_econ}(\mtx{Y})$

\EndFunction
\end{algorithmic}
\end{algorithm}

\subsubsection{The randomized rangefinder: A pseudoalgorithm}
\label{sec:rrf-pseudo}

Using randomized methods, it is remarkably easy to find
an initial solution to the rangefinder problem.
We simply multiply the target matrix by a random embedding
and then orthogonalize the resulting matrix.

More rigorously: consider a target matrix $\mtx{B} \in \F^{m \times n}$
and a subspace dimension $\ell$.  We draw a random test matrix
$\mtx{\Omega} \in \F^{n \times \ell}$, where $\mtx{\Omega}^*$ is a mixing
random embedding.  We form the product $\mtx{Y} = \mtx{B}\mtx{\Omega} \in \F^{m \times \ell}$.
Then we compute an orthonormal basis $\mtx{Q} \in \F^{m \times \ell}$
for the range of $\mtx{Y}$ using a QR factorization method.
See Algorithm~\ref{alg:random-rangefinder} for pseudocode.

In a general setting, the arithmetic cost of this procedure is dominated by
$\bigO(mn\ell)$ operations for the matrix--matrix
multiplication.  The QR factorization of $\mtx{Y}$ requires $\bigO(m\ell^2)$
arithmetic, and we also need to simulate the $n \times \ell$
random matrix $\mtx{\Omega}$.  Economies are possible when
either $\mtx{B}$ or $\mtx{\Omega}$ admits fast multiplication.

\subsubsection{Practicalities}

To implement Algorithm~\ref{alg:random-rangefinder} effectively,
several computational aspects require attention.

\begin{itemize} \setlength{\itemsep}{1mm}
\item	\textbf{How do we choose the subspace dimension?}  If we have advance knowledge of the ``effective rank''
$r$ of the target matrix $\mtx{B}$, the theory (Theorem~\ref{thm:rrf-gauss} and Corollary~\ref{cor:rrf-power-gauss})
indicates that we can select the subspace dimension $\ell$ to be just slightly larger, say,
$\ell = r + p$ where $p = 5$ or $p = 10$.  The value $p$ is called the \emph{oversampling} parameter.
Alternatively, we can use an error estimator to decide
when the computed subspace $\mtx{Q}$ is sufficiently accurate; see Section~\ref{sec:rrf-error}.

\item	\textbf{What kind of random matrix?}
We can use most types of mixing random embeddings to implement
Algorithm~\ref{alg:random-rangefinder}.  We highly recommend Gaussians and random partial isometries (Section~\ref{sec:gauss}).
Sparse maps, SRTTs, and tensor random embeddings (Section~\ref{sec:dimension-reduction}) also work very well.
In practice, all these approaches exhibit similar behavior;
see Section~\ref{sec:rrf-dimred} for more discussion.
We present analysis only for Gaussian dimension reduction
because it is both simple and precise.

\item	\textbf{Matrix multiplication.}  The randomized rangefinder is powerful
because most of the computation takes place in the matrix multiplication step,
which is a highly optimized primitive on most computer systems.
When the target matrix admits fast matrix--vector multiplications
(e.g., due to sparsity), the rangefinder can exploit this property.

\item	\textbf{Powering.}  As we will discuss in Sections~\ref{sec:rrf-subspace}
and~\ref{sec:rrf-krylov}, it is often beneficial to enhance Algorithm~\ref{alg:random-rangefinder}
by means of powering or Krylov subspace techniques.

\item	\textbf{Orthogonalization.}  The columns of the matrix $\mtx{Y}$ tend to be
strongly aligned, so it is important to use a numerically stable
orthogonalization procedure~\cite[Chap.~5]{GVL13:Matrix-Computations-4ed}, such
as Householder reflectors, double Gram--Schmidt, or rank-revealing QR.
The rangefinder algorithm is also a natural place to invoke a
TSQR algorithm~\cite{DGHL12:Communication-Optimal-Parallel}.
\end{itemize}

\noindent
See~\citeasnoun{HMT11:Finding-Structure} for much more information.

\subsection{The randomized singular value decomposition (RSVD)}
\label{sec:rsvd}

Before we continue with our discussion of the rangefinder,
let us summarize one of the key applications: the randomized
SVD algorithm.

Low-rank approximation problems often arise %
when a user seeks an incomplete matrix factorization that exposes structure,
such as a truncated eigenvalue decomposition or a partial QR factorization.
The randomized rangefinder, described in Section \ref{sec:rrf-pseudo},
can be used to perform the heavy lifting in these computations.
Afterwards, we perform some light post-processing to reach the desired
factorization.

To illustrate how this works,
suppose that %
we want to compute an approximate rank-$\ell$ truncated singular value decomposition
of the input matrix $\mtx{B} \in \F^{m \times n}$.  That is,
$$
\mtx{B} \approx \mtx{U}\mtx{\Sigma}\mtx{V}^{*},
$$
where $\mtx{U} \in \F^{m \times \ell}$ and $\mtx{V} \in \F^{n \times \ell}$ are orthonormal matrices %
and $\mtx{\Sigma} \in \F^{\ell \times \ell}$ is a diagonal matrix whose diagonal entries
approximate the largest singular values of $\mtx{B}$.

Choose a target rank $\ell$, and suppose that $\mtx{Q} \in \F^{m \times \ell}$
is a computed solution to the rangefinder problem.
The rangefinder furnishes an approximate rank-$\ell$ factorization
of the input matrix: $\mtx{B} \approx \mtx{Q}\bigl(\mtx{Q}^{*}\mtx{A}\bigr)$.
To convert this representation into a truncated SVD,
we just compute an economy-size SVD of the matrix $\mtx{C} := \mtx{Q}^{*}\mtx{A} \in \F^{\ell\times n}$
and consolidate the factors.

In symbols, once $\mtx{Q}$ is available, the computation proceeds as follows:
\begin{align*}
\mtx{B} \approx&\ \mtx{Q}\mtx{Q}^{*}\mtx{B}  &&\{\mbox{matrix--matrix multiplication: }\mtx{C} = \mtx{Q}^{*}\mtx{B}\} \\
=&\ \mtx{Q}\mtx{C}  &&\{\mbox{economy-size SVD: }\mtx{C} = \widehat{\mtx{U}}\mtx{\Sigma}\mtx{V}^{*}\} \\
=&\ \mtx{Q}\widehat{\mtx{U}}\mtx{\Sigma}\mtx{V}^{*}  &&\{\mbox{matrix--matrix multiplication: }\mtx{U} = \mtx{Q}\widehat{\mtx{U}}\} \\
=&\ \mtx{U}\mtx{\Sigma}\mtx{V}^{*}.
\end{align*}
After the rangefinder step,
the remaining computations are all exact (modulo floating-point arithmetic errors).  Therefore,
$$
\|\mtx{B} - \mtx{U}\mtx{\Sigma}\mtx{V}^{*}\| = \|\mtx{B} - \mtx{Q}\mtx{Q}^{*}\mtx{B}\|.
$$
In words, the accuracy of the approximate SVD is determined entirely
by the error in the rangefinder computation!

Empirically, the smallest singular values and singular vectors
of the approximate SVD contribute to the accuracy of the approximation,
but they are not good estimates for the true singular values
and vectors of the matrix.
Therefore, it can be valuable to truncate the rank by zeroing out the
smallest computed singular values.  We omit the details.
See \cite{HMT11:Finding-Structure,Gu15:Subspace-Iteration} and \cite{TYUC19:Streaming-Low-Rank}
for further discussion.

Algorithm~\ref{alg:rsvd} contains pseudocode for the randomized SVD.
In a general setting, the dominant cost after the rangefinder step
is the matrix--matrix multiply, which requires $\bigO(mn\ell)$
operations.  The storage requirements are $\bigO((m+n)\ell)$
numbers.

\begin{algorithm}[t]
\begin{algorithmic}[1]
\caption{\textit{Randomized singular value decomposition (RSVD).} \newline
Implements the procedure from Section~\ref{sec:rsvd}.}
\label{alg:rsvd}

\Require	Input matrix $\mtx{B} \in \F^{m \times n}$, factorization rank $\ell$
\Ensure		Orthonormal matrices $\mtx{U} \in \F^{m \times \ell}$, $\mtx{V} \in \F^{n \times \ell}$ and a diagonal matrix $\mtx{\Sigma} \in \F^{\ell \times \ell}$
such that $\mtx{B} \approx \mtx{U}\mtx{\Sigma}\mtx{V}^{*}$.
\Statex

\Function{RSVD}{$\mtx{B}$, $\ell$}

\State	$\mtx{Q} = \textsc{RandomRangefinder}(\mtx{B}, \ell)$
	\Comment Algorithm \ref{alg:random-rangefinder}
\State	$\mtx{C} = \mtx{Q}^{*}\mtx{B}$
\State	$[\widehat{\mtx{U}}, \mtx{\Sigma}, \mtx{V}] = \texttt{svd\_econ}(\mtx{C})$
\State $\mtx{U} = \mtx{Q}\widehat{\mtx{U}}$
\State	[optional] Truncate the factorization to rank $r \leq \ell$

\EndFunction
\end{algorithmic}
\end{algorithm}

The rangefinder primitive allows us to perform other matrix computations as well.
For example, in Section~\ref{sec:natural}, we explain how to use the rangefinder
to construct %
matrix factorizations where a subset of the rows/columns are picked to form a basis for the row/column spaces.
This approach gives far better results than the more obvious randomized algorithms based
on coordinate sampling.

\subsection{The rangefinder and Schur complements}

Why does the randomized rangefinder work?
We will demonstrate that the procedure has its most natural
expression in the language of Schur complements.
This point is implicit in the
analysis in \citeasnoun{HMT11:Finding-Structure},
and it occasionally appears more
overtly in the literature (e.g., in~\citeasnoun{Git13:Topics-Randomized} and~\citeasnoun{TYUC17:Fixed-Rank-Approximation}).
Nevertheless, this connection has not
been explored in a systematic way.

\begin{proposition}[Rangefinder: Schur complements] \label{prop:rrf-schur}
Let $\mtx{Y} = \mtx{BX}$ for an arbitrary test matrix $\mtx{X} \in \F^{n \times \ell}$,
and let $\mtx{P}_{\mtx{Y}}$ be the orthogonal projector onto the range of $\mtx{Y}$.
Define the approximation error as
$$
\mtx{E} := \mtx{E}(\mtx{B}, \mtx{X}) := (\Id - \mtx{P}_{\mtx{Y}}) \mtx{B}.
$$
Then the squared error can be written as a Schur complement~\eqref{eqn:schur-complement}:
$$
\abs{\mtx{E}}^2 := \mtx{E}^* \mtx{E} %
	= (\mtx{B}^* \mtx{B}) / \mtx{X}.
$$
We emphasize that $\abs{\mtx{E}}^2$ is a psd matrix, not a scalar.
\end{proposition}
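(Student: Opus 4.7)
The plan is to unfold both sides of the claimed identity using the definitions and show they coincide. Since $\mtx{E} = (\Id - \mtx{P}_{\mtx{Y}})\mtx{B}$ and $\mtx{P}_{\mtx{Y}}$ is an orthogonal projector (so it is self-adjoint and $(\Id - \mtx{P}_{\mtx{Y}})^2 = \Id - \mtx{P}_{\mtx{Y}}$), the left-hand side collapses to
$$
\mtx{E}^*\mtx{E} = \mtx{B}^*(\Id - \mtx{P}_{\mtx{Y}})\mtx{B} = \mtx{B}^*\mtx{B} - \mtx{B}^*\mtx{P}_{\mtx{Y}}\mtx{B}.
$$
So the task reduces to showing that $\mtx{B}^*\mtx{P}_{\mtx{Y}}\mtx{B}$ equals the ``nys'' term $(\mtx{B}^*\mtx{B})\nys{\mtx{X}}$ from the definition \eqref{eqn:nys-def}.

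Now I would apply the definition with $\mtx{A} = \mtx{B}^*\mtx{B}$, which gives
$$
(\mtx{B}^*\mtx{B})\nys{\mtx{X}} = (\mtx{B}^*\mtx{B}\mtx{X})(\mtx{X}^*\mtx{B}^*\mtx{B}\mtx{X})^\pinv(\mtx{B}^*\mtx{B}\mtx{X})^* = \mtx{B}^*\bigl[\mtx{Y}(\mtx{Y}^*\mtx{Y})^\pinv\mtx{Y}^*\bigr]\mtx{B},
$$
after substituting $\mtx{Y} = \mtx{B}\mtx{X}$ and pulling the outer $\mtx{B}^*$ and $\mtx{B}$ through. The key identity needed at this step is $\mtx{Y}(\mtx{Y}^*\mtx{Y})^\pinv\mtx{Y}^* = \mtx{P}_{\mtx{Y}}$, which is a standard consequence of the defining properties of the Moore--Penrose pseudoinverse listed in Section~\ref{sec:lin-alg}: indeed, $\mtx{Y}\mtx{Y}^\pinv$ is the orthogonal projector onto $\range(\mtx{Y})$, and one verifies directly from the four defining properties that $\mtx{Y}^\pinv = (\mtx{Y}^*\mtx{Y})^\pinv\mtx{Y}^*$. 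Combining these two steps yields
$$
(\mtx{B}^*\mtx{B})/\mtx{X} = \mtx{B}^*\mtx{B} - \mtx{B}^*\mtx{P}_{\mtx{Y}}\mtx{B} = \mtx{E}^*\mtx{E},
$$
as required.

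The argument is essentially bookkeeping; the only mildly delicate point is the pseudoinverse identity $\mtx{Y}(\mtx{Y}^*\mtx{Y})^\pinv\mtx{Y}^* = \mtx{P}_{\mtx{Y}}$, which holds even when $\mtx{Y}$ is rank-deficient (e.g.\ when the columns of $\mtx{X}$ become linearly dependent after multiplication by $\mtx{B}$). I would either cite this as a standard fact or verify it in one line using the SVD of $\mtx{Y}$. No assumption on the rank of $\mtx{X}$ or $\mtx{B}$ is needed, which is part of the reason the Schur-complement formulation is so clean.
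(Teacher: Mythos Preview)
Your proof is correct and follows essentially the same route as the paper: both arguments write $\mtx{P}_{\mtx{Y}} = \mtx{Y}(\mtx{Y}^*\mtx{Y})^\pinv\mtx{Y}^*$, substitute $\mtx{Y} = \mtx{BX}$, and recognize $\mtx{B}^*(\Id - \mtx{P}_{\mtx{Y}})\mtx{B}$ as the Schur complement $(\mtx{B}^*\mtx{B})/\mtx{X}$. Your version is slightly more explicit about the idempotence/self-adjointness step and the pseudoinverse identity, but the substance is identical.
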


\begin{proof}
This result follows from a short calculation.
We can write the orthogonal projector $\mtx{P}_{\mtx{Y}}$ in the form
$$
\mtx{P}_{\mtx{Y}} = (\mtx{BX})((\mtx{B}\mtx{X})^* (\mtx{BX}))^\pinv (\mtx{BX})^*.
$$
Abbreviating $\mtx{A} = \mtx{B}^* \mtx{B}$, we have
$$
\mtx{E}^* \mtx{E} = \mtx{B}^* (\Id - \mtx{P}_{\mtx{Y}}) \mtx{B}
	= \mtx{A} - (\mtx{A} \mtx{X})(\mtx{X}^* \mtx{A} \mtx{X})^\pinv (\mtx{AX})^*.
$$
This is precisely the definition~\eqref{eqn:schur-complement} of the Schur complement $\mtx{A} / \mtx{X}$.
\end{proof}

Proposition~\ref{prop:rrf-schur} gives us access to the deep theory
of Schur complements~\cite{Zha05:Schur-Complement}.  In particular,
we have a beautiful monotonicity property that follows instantly
from~\citeasnoun[Thm.~5.3]{And05:Schur-Complements}.

\begin{corollary}[Monotonicity] \label{cor:rrf-monotone}
Suppose that $\mtx{B}^* \mtx{B} \psdle \mtx{C}^* \mtx{C}$
with respect to the semidefinite order $\psdle$.
For each fixed test matrix $\mtx{X}$,
$$
\begin{aligned}
\abs{ \mtx{E}(\mtx{B}, \mtx{X}) }^2
	&= (\mtx{B}^* \mtx{B}) / \mtx{X} \\
	&\psdle (\mtx{C}^* \mtx{C}) / \mtx{X}
	= \abs{ \mtx{E}(\mtx{C}, \mtx{X}) }^2
\end{aligned}
$$
\end{corollary}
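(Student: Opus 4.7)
The plan is to observe that the corollary reduces to a purely analytic statement about Schur complements, namely that $\mtx{A} \mapsto \mtx{A}/\mtx{X}$ is monotone in the semidefinite order on psd matrices, and then to supply this fact via a variational characterization.

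First I would apply Proposition~\ref{prop:rrf-schur} twice to recognize
$$
\abs{\mtx{E}(\mtx{B}, \mtx{X})}^2 = (\mtx{B}^*\mtx{B})/\mtx{X}
\quad\text{and}\quad
\abs{\mtx{E}(\mtx{C}, \mtx{X})}^2 = (\mtx{C}^*\mtx{C})/\mtx{X}.
$$
Writing $\mtx{A} := \mtx{B}^*\mtx{B}$ and $\mtx{A}' := \mtx{C}^*\mtx{C}$, the corollary is equivalent to the implication $\mtx{A} \psdle \mtx{A}' \Longrightarrow \mtx{A}/\mtx{X} \psdle \mtx{A}'/\mtx{X}$ for psd matrices $\mtx{A}, \mtx{A}'$.

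The key tool is the variational identity
$$
\vct{y}^*(\mtx{A}/\mtx{X})\vct{y}
= \inf_{\vct{z}} (\vct{y} - \mtx{X}\vct{z})^*\mtx{A}(\vct{y} - \mtx{X}\vct{z}),
$$
valid for every $\vct{y}$. I would verify this identity by expanding the right-hand side into a quadratic in $\vct{z}$ with Hessian $\mtx{X}^*\mtx{AX}$ and linear term $\mtx{X}^*\mtx{A}\vct{y}$, completing the square, and identifying the minimum value with the Schur complement formula in~\eqref{eqn:schur-complement}. Given this identity, monotonicity is immediate: if $\mtx{A} \psdle \mtx{A}'$, then the scalar minimand is pointwise monotone in $\mtx{A}$, so the infimum is too, whence $\vct{y}^*(\mtx{A}/\mtx{X})\vct{y} \leq \vct{y}^*(\mtx{A}'/\mtx{X})\vct{y}$ for all $\vct{y}$.

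The main technical nuisance will be handling the pseudoinverse when $\mtx{X}^*\mtx{AX}$ (or $\mtx{X}^*\mtx{A}'\mtx{X}$) is singular. The infimum in $\vct{z}$ is always finite because the quadratic form $(\vct{y} - \mtx{X}\vct{z})^*\mtx{A}(\vct{y} - \mtx{X}\vct{z})$ is nonnegative and bounded below by zero; this forces $\mtx{X}^*\mtx{A}\vct{y} \in \range(\mtx{X}^*\mtx{AX})$, so the minimum is attained on the Moore--Penrose solution and matches~\eqref{eqn:nys-def}. Once this is dispatched, the whole proof fits in a couple of lines, which is consistent with the claim that the result follows instantly from Anderson's theorem.
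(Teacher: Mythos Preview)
Your proposal is correct. The paper's proof is a one-line citation: it invokes Proposition~\ref{prop:rrf-schur} (as you do) and then appeals to Anderson's theorem \cite[Thm.~5.3]{And05:Schur-Complements} for the monotonicity of the Schur complement map $\mtx{A} \mapsto \mtx{A}/\mtx{X}$ on psd matrices. You reduce to the same statement but then \emph{prove} it via the variational identity $\vct{y}^*(\mtx{A}/\mtx{X})\vct{y} = \inf_{\vct{z}} (\vct{y}-\mtx{X}\vct{z})^*\mtx{A}(\vct{y}-\mtx{X}\vct{z})$, which makes your argument self-contained and is in fact one of the standard routes to Anderson's result. Your handling of the pseudoinverse case is correct: since $\mtx{A}$ is psd, $\mtx{X}^*\mtx{A}\vct{y} \in \range((\mtx{A}^{1/2}\mtx{X})^*) = \range(\mtx{X}^*\mtx{AX})$, so the infimum is attained and matches the Schur complement formula. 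The two approaches are mathematically equivalent; yours trades a citation for a short direct argument.
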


In particular, the error increases if we increase any singular value
of $\mtx{B}$ while retaining the same right singular vectors;
the error decreases if we decrease any singular value of $\mtx{B}$.
The left singular vectors do not play a role here.
This observation allows us to identify which target matrices are
hardest to approximate.

\begin{example}[Extremals] \label{ex:extremal}
Consider the parameterized matrix $\mtx{B}(\vct{\sigma}) = \mtx{U} \diag(\vct{\sigma}) \mtx{V}^* \in \F^{n \times n}$,
where $\mtx{U}, \mtx{V}$ are unitary.
Suppose that we fix $\sigma_1$ and $\sigma_{k+1}$.
For each test matrix $\mtx{X}$, the error $\abs{\mtx{E}(\mtx{B}(\vct{\sigma}), \mtx{X})}^2$
is maximal in the semidefinite order when
$$
\vct{\sigma} = (\underbrace{\sigma_1, \dots, \sigma_1}_k, \underbrace{\sigma_{k+1}, \dots, \sigma_{k+1}}_{n-k}).
$$
\end{example}

It has long been appreciated that Example~\ref{ex:extremal}
is the hardest matrix to approximate; cf.~\citeasnoun[Sec.~5, Ex.~4, 5]{2006_martinsson_random1_orig}.
The justification of this insight is new.

\subsection{{A priori} error bounds}

Proposition~\ref{prop:rrf-schur} shows that the error in the rangefinder
procedure can be written as a Schur complement.  Incredibly, the Schur
complement of a psd matrix with respect to a random subspace tends to be
quite small.  In this section, we summarize a theoretical analysis,
due to~\citeasnoun{HMT11:Finding-Structure}, that explains why this
claim is true.

\subsubsection{Master error bound}

First, we present a deterministic upper bound on the error incurred
by the rangefinder procedure.  This requires some notation.

Without loss of generality, we may assume that $m = n$ by extending $\mtx{B}$ with zeros.
For any $k \leq \ell$, construct a partitioned SVD of the target matrix:
$$
\mtx{B} = \mtx{U} %
	\begin{bmatrix} \mtx{\Sigma}_1 & \\ & \mtx{\Sigma}_{2} \end{bmatrix} %
	\begin{bmatrix} \mtx{V}_1 & \mtx{V}_{2} \end{bmatrix}^*
	\quad\text{with $\mtx{\Sigma}_1 \in \RR^{k \times k}$ and
	$\mtx{V}_1 \in \F^{n \times k}$.}
$$
The factors $\mtx{U}, \mtx{\Sigma}, \mtx{V}$ are all square matrices.
As usual, the entries of $\mtx{\Sigma} = \diag(\sigma_1, \sigma_2, \dots)$
are arranged in weakly decreasing order.  So $\mtx{\Sigma}_1$ lists the first $k$ singular
values, and $\mtx{\Sigma}_2$ lists the remaining $n - k$ singular values.
The matrix $\mtx{V}_1$ contains the first $k$ right singular vectors;
the matrix $\mtx{V}_2$ contains the remaining $n - k$ right singular vectors.

For any test matrix $\mtx{X} \in \F^{n \times \ell}$, define
$$
\mtx{X}_1 = \mtx{V}_1^* \mtx{X}
\quad\text{and}\quad
\mtx{X}_{2} = \mtx{V}_{2}^* \mtx{X}.
$$
These matrices reflect the alignment of the test matrix $\mtx{X}$
with the matrix $\mtx{V}_1$ of dominant right singular
vectors of $\mtx{B}$.  We assume $\mtx{X}_1$ has full row rank.

With this notation, we can present a strong deterministic bound
on the error in Algorithm~\ref{alg:random-rangefinder}.

\begin{theorem}[Rangefinder: Deterministic bound] \label{thm:rrf-deterministic}
Let $\mtx{Y} = \mtx{BX}$ be the sample matrix obtained by testing
$\mtx{B}$ with $\mtx{X}$.  With the notation and assumptions above,
for all $k \leq \ell$,
\begin{equation} \label{eqn:rrf-determ}
\norm{ (\Id - \mtx{P}_{\mtx{Y}}) \mtx{B} }
	\leq \sigma_{k+1} + \norm{ \mtx{\Sigma}_{2} \mtx{X}_{2} \mtx{X}_1^\pinv }.
\end{equation}
A related inequality holds for every
quadratic unitarily invariant norm.
\end{theorem}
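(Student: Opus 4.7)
My plan is to replace $\mtx{P}_\mtx{Y}$ with a projector onto an explicit sub-subspace $\range(\mtx{Z}) \subseteq \range(\mtx{Y})$ for which the residual can be bounded directly. The reduction is justified by a monotonicity principle: if $\range(\mtx{Z}) \subseteq \range(\mtx{Y})$, then $\mtx{P}_\mtx{Z} \psdle \mtx{P}_\mtx{Y}$, so $\mtx{B}^*(\Id - \mtx{P}_\mtx{Y})\mtx{B} \psdle \mtx{B}^*(\Id - \mtx{P}_\mtx{Z})\mtx{B}$. Combining this with the identity $\norm{(\Id - \mtx{P})\mtx{B}}^2 = \norm{\mtx{B}^*(\Id - \mtx{P})\mtx{B}}$ and the monotonicity of the spectral norm on the psd cone, it suffices to bound $\norm{(\Id - \mtx{P}_\mtx{Z})\mtx{B}}$ for any convenient $\mtx{Z}$---essentially the content of Proposition \ref{prop:rrf-schur} together with Corollary \ref{cor:rrf-monotone}, used on the ``right-hand side.''

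I will choose $\mtx{Z} := \mtx{Y}\mtx{X}_1^\pinv$, which is well-defined because $\mtx{X}_1$ has full row rank. Substituting the partitioned SVD of $\mtx{B}$ and using $\mtx{X}_1\mtx{X}_1^\pinv = \Id_k$, I compute $\mtx{U}^*\mtx{Z} = \begin{bmatrix}\mtx{\Sigma}_1 \\ \mtx{F}\end{bmatrix}$ where $\mtx{F} := \mtx{\Sigma}_2\mtx{X}_2\mtx{X}_1^\pinv$. Unitary invariance of the spectral norm then reduces the problem to bounding $\norm{(\Id - \mtx{P}_{\mtx{U}^*\mtx{Z}})\mtx{\Sigma}}$, where $\mtx{\Sigma} = \diag(\mtx{\Sigma}_1, \mtx{\Sigma}_2)$ is block diagonal with the same partition. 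The key manipulation is a block rearrangement: the first block column of $\mtx{\Sigma}$ can be rewritten as $\begin{bmatrix}\mtx{\Sigma}_1\\\mtx{0}\end{bmatrix} = \mtx{U}^*\mtx{Z} - \begin{bmatrix}\mtx{0}\\\mtx{F}\end{bmatrix}$, and the first summand is annihilated by $\Id - \mtx{P}_{\mtx{U}^*\mtx{Z}}$. Therefore,
\begin{equation*}
(\Id - \mtx{P}_{\mtx{U}^*\mtx{Z}})\mtx{\Sigma}
 = (\Id - \mtx{P}_{\mtx{U}^*\mtx{Z}})\begin{bmatrix}\mtx{0} & \mtx{0}\\ -\mtx{F} & \mtx{\Sigma}_2\end{bmatrix}.
\end{equation*}
Since the projector has spectral norm at most one, the right-hand side is bounded above by $\norm{\begin{bmatrix}-\mtx{F} & \mtx{\Sigma}_2\end{bmatrix}} \leq \norm{\mtx{F}} + \norm{\mtx{\Sigma}_2} = \norm{\mtx{\Sigma}_2\mtx{X}_2\mtx{X}_1^\pinv} + \sigma_{k+1}$ by the triangle inequality.

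The main obstacle I foresee is the final sentence of the theorem, extending the bound to every quadratic unitarily invariant norm. The monotonicity step relies on $\norm{\cdot}^2 = \norm{(\cdot)^*(\cdot)}$, which is exactly what characterizes such norms; meanwhile, the projector-contraction and triangle-inequality steps transfer verbatim to any unitarily invariant norm. The care needed is to apply the square-root form of the norm bound consistently so that powers match at the final assembly---in particular, verifying that the monotonicity step does not lose a constant when squared quantities are related back to the original norm.
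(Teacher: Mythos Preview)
Your proof is correct and shares the paper's central move: replacing $\mtx{X}$ by the smaller test matrix $\widetilde{\mtx{X}} = \mtx{X}\mtx{X}_1^\pinv$ (equivalently, replacing $\mtx{Y}$ by $\mtx{Z} = \mtx{Y}\mtx{X}_1^\pinv$), justified by monotonicity of the residual under shrinking the range. The executions then diverge. The paper works in Schur-complement language (Proposition~\ref{prop:rrf-schur}), invokes Corollary~\ref{cor:rrf-monotone} to pass to the worst case $\sigma_1 = \cdots = \sigma_k = 1$, and then writes the Schur complement $\widetilde{\mtx{S}}$ as an explicit $2\times 2$ block matrix whose blocks it bounds in the psd order; this yields the slightly sharper estimate $\norm{(\Id - \mtx{P}_\mtx{Y})\mtx{B}}^2 \leq \sigma_{k+1}^2 + \norm{\mtx{F}}^2$ before relaxing to the stated sum. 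Your route stays with the projector, skips the extremal normalization entirely, and uses the column-rewriting identity $(\Id - \mtx{P}_{\mtx{U}^*\mtx{Z}})\mtx{\Sigma} = (\Id - \mtx{P}_{\mtx{U}^*\mtx{Z}})\bigl[\begin{smallmatrix}\mtx{0}&\mtx{0}\\-\mtx{F}&\mtx{\Sigma}_2\end{smallmatrix}\bigr]$ followed by the contraction bound and the triangle inequality. Your argument is closer to the original proof in \citeasnoun{HMT11:Finding-Structure}, whereas the paper's version is reorganized to showcase the Schur-complement viewpoint; the latter buys the sharper squared bound and ties in with Corollary~\ref{cor:rrf-monotone} and Example~\ref{ex:extremal}, while yours is shorter and self-contained.
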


The result and its proof are drawn
from~\citeasnoun[Thm.~9.1]{HMT11:Finding-Structure}.
The same bound was obtained independently
in~\citeasnoun{BMD09:Improved-Approximation}
by means of a different technique.

Theorem~\ref{thm:rrf-deterministic} leads to sharp bounds on the performance of the
rangefinder in most situations of practical interest.
Let us present a sketch of the argument.
Our approach can be modified to obtain
matching lower and upper bounds,
but they do not give any additional
insight into the performance.

\begin{proof}
In view of Proposition~\ref{prop:rrf-schur},
we want to bound the spectral norm of
$$
\abs{(\Id - \mtx{P}_{\mtx{Y}}) \mtx{B}}^2
	= (\mtx{B}^*\mtx{B}) / \mtx{X}.
$$
First, change coordinates so that the right singular vectors
of $\mtx{B}$ are the identity: $\mtx{V} = \Id$.
In particular, $\mtx{B}^* \mtx{B} = \mtx{\Sigma}^2$ is diagonal.
By homogeneity of~\eqref{eqn:rrf-determ}, we may assume that $\sigma_1 = 1$.
Next, using Corollary~\ref{cor:rrf-monotone},
we may also assume that $\sigma_1 = \dots = \sigma_k = 1$,
which leads to the worst-case error.  %
Thus, it suffices to bound the spectral norm of the psd matrix
$$
\mtx{S} := \begin{bmatrix} \Id_k & \mtx{0} \\ \mtx{0} & \mtx{\Sigma}_{2}^2\end{bmatrix} / \mtx{X}.
$$

To accomplish this task, we may as well take the Schur complement of the diagonal matrix with respect
to a test matrix that has a smaller range than $\mtx{X}$; see~\citeasnoun[Thm.~5.9(iv)]{And05:Schur-Complements}.
Define $\widetilde{\mtx{X}} := \mtx{X} \mtx{X}_{1}^\pinv = [\Id_k; \mtx{X}_{2}\mtx{X}_1^\pinv]$.
Since $\range(\widetilde{\mtx{X}}) \subseteq \range(\mtx{X})$,
$$
\mtx{S} \psdle \begin{bmatrix} \Id_k & \mtx{0} \\ \mtx{0} & \mtx{\Sigma}_{2}^2\end{bmatrix}  / \widetilde{\mtx{X}}
	=: \widetilde{\mtx{S}}.
$$
Using the definition of the Schur complement, we can write out
the matrix on the right-hand side in block form.  With the abbreviation
$\mtx{F} := \mtx{\Sigma}_{2} \mtx{X}_{2} \mtx{X}_1^\pinv$,
$$
\widetilde{\mtx{S}}
	= \begin{bmatrix} \Id - (\Id + \mtx{FF}^*)^{-1} & \star \\ \star & \mtx{\Sigma}_{2}^2 - \mtx{F}(\Id + \mtx{FF}^*)^{-1} \mtx{F}^* \end{bmatrix}.
$$
The $\star$ symbol denotes matrices that do not play a role in the rest
of the argument.  We can bound the block matrix above in the psd order:
$$
\widetilde{\mtx{S}} \psdle \begin{bmatrix} \mtx{FF}^* & \star \\ \star & \mtx{\Sigma}_{2}^2 \end{bmatrix}
$$
The inequality for the top-left block holds because $1 - (1+a)^{-1} \leq a$ for all numbers $a \geq 0$.
Last, take the spectral norm:
$$
\norm{ \mtx{S} }
	\leq \norm{ \widetilde{\mtx{S}} }
	\leq \lnorm{ \begin{bmatrix} \mtx{FF}^* & \star \\ \star & \mtx{\Sigma}_{2}^2 \end{bmatrix} }
	\leq \norm{\mtx{FF}^*} + \norm{\mtx{\Sigma}_{2}^2}.
$$
This bound is stronger than the stated result.
\end{proof}

\subsubsection{Gaussian test matrices}

We can obtain precise results for the behavior
of the randomized rangefinder when the test
matrix is (real) standard normal.
Let us present a variant of~\citeasnoun[Thm.~10.1]{HMT11:Finding-Structure}.

\begin{theorem}[Rangefinder: Gaussian analysis] \label{thm:rrf-gauss}
Fix a matrix $\mtx{B} \in \RR^{m \times n}$ with singular values
$\sigma_1 \geq \sigma_2 \geq \dots$.
Draw a standard normal test matrix $\mtx{\Omega} \in \F^{n \times \ell}$,
and construct the sample matrix $\mtx{Y} = \mtx{B\Omega}$.
Choose $k < \ell - 1$, and introduce the random variable
$$
Z = \norm{ \mtx{\Gamma}^\pinv }
\quad\text{where $\mtx{\Gamma} \in \RR^{k \times \ell}$ is standard normal.}
$$
Then the expected error in the random rangefinder satisfies
$$
\Expect \norm{(\Id - \mtx{P}_{\mtx{Y}}) \mtx{B}}
	\leq \left( 1 + \sqrt{\frac{k}{\ell-k-1}} \right) \sigma_{k+1}
		+ (\Expect Z) \left(\sum\nolimits_{j > k} \sigma_{j}^2\right)^{1/2}.
$$
\end{theorem}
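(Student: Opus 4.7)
The plan is to reduce the probabilistic statement to the deterministic bound of Theorem~\ref{thm:rrf-deterministic}, and then to estimate the moments of the random quantity that appears on the right-hand side. Taking expectations in~\eqref{eqn:rrf-determ},
\[
\Expect \norm{(\Id - \mtx{P}_{\mtx{Y}}) \mtx{B}} \;\leq\; \sigma_{k+1} + \Expect \norm{\mtx{\Sigma}_{2}\, \mtx{\Omega}_{2}\, \mtx{\Omega}_{1}^{\pinv}},
\]
where $\mtx{\Omega}_1 = \mtx{V}_1^{*}\mtx{\Omega} \in \RR^{k\times\ell}$ and $\mtx{\Omega}_{2} = \mtx{V}_{2}^{*}\mtx{\Omega} \in \RR^{(n-k)\times\ell}$. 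The first move is to exploit rotational invariance: because $\mtx{\Omega}$ is standard normal and $[\mtx{V}_1 \ \mtx{V}_2]$ is orthogonal, the blocks $\mtx{\Omega}_1$ and $\mtx{\Omega}_2$ are \emph{independent} standard normal matrices. This is the crucial probabilistic structure that makes the analysis tractable.

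Next, I would condition on $\mtx{\Omega}_1$ and handle the randomness in $\mtx{\Omega}_2$ first. The inner expectation requires a bound of the form
\[
\Expect \norm{\mtx{S}\,\mtx{G}\,\mtx{T}} \;\leq\; \norm{\mtx{S}}\, \fnorm{\mtx{T}} + \fnorm{\mtx{S}}\,\norm{\mtx{T}}
\]
for deterministic $\mtx{S}, \mtx{T}$ and a standard normal $\mtx{G}$. This is a standard consequence of Chevet's inequality (a corollary of Slepian's lemma), which can be invoked from the Gaussian random matrix toolbox cited in Section~\ref{sec:prob}. Applying it with $\mtx{S} = \mtx{\Sigma}_2$, $\mtx{G} = \mtx{\Omega}_2$, and (conditionally deterministic) $\mtx{T} = \mtx{\Omega}_1^{\pinv}$ yields
\[
\Expect\!\left[\,\norm{\mtx{\Sigma}_2 \mtx{\Omega}_2 \mtx{\Omega}_1^{\pinv}} \,\middle|\, \mtx{\Omega}_1\right]
\;\leq\; \sigma_{k+1}\, \fnorm{\mtx{\Omega}_1^{\pinv}} + \fnorm{\mtx{\Sigma}_2}\,\norm{\mtx{\Omega}_1^{\pinv}}.
\]

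The final step is to take expectations over $\mtx{\Omega}_1$. By Jensen's inequality and a classical identity for inverse Wishart matrices,
\[
\Expect \fnorm{\mtx{\Omega}_1^{\pinv}} \;\leq\; \bigl(\Expect \fnorm{\mtx{\Omega}_1^{\pinv}}^2\bigr)^{1/2} \;=\; \sqrt{\tfrac{k}{\ell - k - 1}},
\]
which requires $\ell > k+1$ (this is where the hypothesis $k < \ell - 1$ enters). The remaining term $\Expect \norm{\mtx{\Omega}_1^{\pinv}}$ is precisely $\Expect Z$ by definition. Assembling the pieces and using $\fnorm{\mtx{\Sigma}_2}^2 = \sum_{j > k} \sigma_j^2$ gives the claimed inequality.

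The main obstacle is verifying the Chevet-style bound for $\Expect \norm{\mtx{S G T}}$ with the exact constants that produce a clean additive decomposition; the expectation of $\fnorm{\mtx{\Omega}_1^{\pinv}}^2$ is a straightforward computation once one recalls that the entries of $(\mtx{\Omega}_1 \mtx{\Omega}_1^{*})^{-1}$ have known moments, but it must be checked that $\ell - k - 1 > 0$ is enough to make this moment finite. A secondary concern is that the Chevet bound is typically stated for rectangular Gaussian matrices acted on from both sides; one must confirm that it applies with conditional deterministic factors, which is routine via Fubini.
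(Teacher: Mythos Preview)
Your proposal is correct and follows essentially the same route as the paper's proof: start from the deterministic bound of Theorem~\ref{thm:rrf-deterministic}, use rotational invariance to make $\mtx{\Omega}_1$ and $\mtx{\Omega}_2$ independent standard normals, condition on $\mtx{\Omega}_1$ and apply the Chevet-type inequality $\Expect\norm{\mtx{S}\mtx{G}\mtx{T}} \le \norm{\mtx{S}}\fnorm{\mtx{T}} + \fnorm{\mtx{S}}\norm{\mtx{T}}$, and finish with the inverse-Wishart moment $\Expect\fnorm{\mtx{\Omega}_1^{\pinv}}^2 = k/(\ell-k-1)$ together with the definition of $Z$. The paper cites these two ingredients as \cite[Props.~10.1 and~10.2]{HMT11:Finding-Structure}.
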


In other words, the randomized rangefinder computes
an $\ell$-dimensional subspace that captures as much of the
action of the matrix $\mtx{B}$ as the best $k$-dimensional
subspace.  If we think about $k$ as fixed and $\ell$ as the variable,
we only need to choose $\ell$ slightly larger than $k$ to enjoy this outcome.

The error is comparable with $\sigma_{k+1}$, the error in the best rank-$k$
approximation, provided that the tail singular values
$\sigma_{j}$ for $j > k$ have small $\ell_2$ norm.
This situation occurs, for example, when $\mtx{B}$
has a rapidly decaying spectrum.

\begin{proof}
Here is a sketch of the argument.
Since the test matrix $\mtx{\Omega}$ is standard normal, the matrices
$\mtx{\Omega}_{1} := \mtx{V}_1^* \mtx{\Omega}$ and $\mtx{\Omega}_{2} := \mtx{V}_2^*\mtx{\Omega}$
are independent standard normal matrices because $\mtx{V}_1$ and $\mtx{V}_2$ are orthonormal
and mutually orthogonal.
Using Chevet's theorem~\cite[Prop.~10.1]{HMT11:Finding-Structure},
$$
\begin{aligned}
\Expect \norm{ \mtx{\Sigma}_{2} \mtx{\Omega}_{2} \mtx{\Omega}_1^\pinv }
	&= \Expect_{\mtx{\Omega}_{1}} \Expect_{\mtx{\Omega}_{2}} \big[ \norm{ \mtx{\Sigma}_{2} \mtx{\Omega}_{2} \mtx{\Omega}_1^\pinv } \big] \\
	&\leq \Expect \big[ \norm{\mtx{\Sigma}_{2}} \fnorm{\mtx{\Omega}_1^\pinv} + \fnorm{\mtx{\Sigma}_2} \norm{\mtx{\Omega}_1^\pinv} \big] \\
	&\leq \sqrt{\frac{k}{\ell-k-1}} \norm{ \mtx{\Sigma}_{2} } + (\Expect Z) \fnorm{\mtx{\Sigma}_{2}}.
\end{aligned}
$$
The last inequality involves a well-known estimate for the trace of an
inverted Wishart matrix~\cite[Prop.~10.2]{HMT11:Finding-Structure}.
\end{proof}

To make use of the result, we simply insert estimates
for the expectation of the random variable $Z$.  For instance,
$$
\begin{aligned}
\Expect Z \leq \frac{\econst\sqrt{\ell}}{\ell - k}
\quad\text{when $2 \leq k < \ell$}
\quad\text{and}\quad
\Expect Z \approx \frac{1}{\sqrt{\ell} - \sqrt{k}}
\quad\text{for $k \ll \ell$.}
\end{aligned}
$$
These estimates lead to very accurate performance bounds
across a wide selection of matrices and parameters.

The rangefinder also operates in the regime
$k \in \{ \ell - 1, \ell \}$.  In this case,
it attains significantly larger errors.
A heuristic is
$$
\norm{(\Id - \mtx{P}_{\mtx{Y}}) \mtx{B}}
	\lessapprox (1 + k) \sigma_{k+1} + \sqrt{k} \left( \sum\nolimits_{j > k} \sigma_{j}^2 \right)^{1/2}.
$$
This point follows because $\fnorm{\mtx{\Omega}_1^\pinv} \approx k$ and $\norm{\mtx{\Omega}_1^\pinv} \approx \sqrt{k}$
when $k \approx \ell$.

For relevant results about Gaussian random matrices,
we refer the reader
to~\cite{Ede89:Eigenvalues-Condition,DS01:Local-Operator,CD05:Condition-Numbers,BS10:Spectral-Analysis}
and~\cite{HMT11:Finding-Structure}.

\subsection{Other test matrices}
\label{sec:rrf-dimred}

In many cases, it is too expensive to use Gaussian test matrices to implement
Algorithm~\ref{alg:random-rangefinder}.  Instead, we may prefer to apply
(the adjoint of) one of the structured
random embeddings discussed in Section~\ref{sec:dimension-reduction}.

\subsubsection{Random embeddings for the rangefinder}

Good alternatives to Gaussian test matrices include the following.

\begin{itemize} \setlength{\itemsep}{1mm}

\item	\textbf{Sparse maps.}  Sparse dimension reduction maps work well
in the rangefinder procedure, even if the input matrix is sparse.
The primary shortcoming is the need to use sparse data structures and arithmetic.
See Section~\ref{sec:sparse-map}.

\item	\textbf{SRTTs.}  In practice, subsampled randomized trigonometric transforms perform slightly
better than Gaussian maps.  The main difficulty is that the implementation requires  fast trigonometric
transforms.  See Section~\ref{sec:srtt}.

\item	\textbf{Tensor product maps.}  Emerging evidence suggests that tensor
product random projections are also effective in practice.  See Section~\ref{sec:trp}.
\end{itemize}

Some authors have proposed using random coordinate sampling
to solve the rangefinder problem.  We cannot recommend
this approach unless it is impossible to use one
of the random embeddings described above.
See Section~\ref{sec:coord-embed} for a discussion of
random coordinate sampling and situations where it
may be appropriate.

\subsubsection{Universality}

In practice, if the test matrix is a mixing random embedding, the error in the rangefinder
is somewhat insensitive to the precise distribution of the test matrix.
In this case, we can use the Gaussian theory to obtain good heuristics about
the performance of other types of embeddings.
Regardless, we always recommend using \textit{a posteriori} error estimates
to validate the performance of the rangefinder method, as well as downstream
matrix approximations; see Section~\ref{sec:error-est}.

\subsubsection{Aside: Subspace embeddings}

If we merely assume that the test matrix is a subspace embedding,
then we can still perform a theoretical analysis of the rangefinder algorithm.
Here is a typical result, adapted from~\citeasnoun[Thm.~11.2]{HMT11:Finding-Structure}.

\begin{theorem}[Rangefinder: SRTT] \label{thm:rrf-srtt}
Fix a matrix $\mtx{B} \in \F^{m \times n}$ with singular values $\sigma_1 \geq \sigma_2 \geq \dots$.
Choose a natural number $k$, and draw an SRTT embedding matrix $\mtx{\Omega} \in \F^{n \times \ell}$ where
$$
\ell \geq 8 (k + 8 \log(kn)) \log k.
$$
Construct the sample matrix $\mtx{Y} = \mtx{B\Omega}$.  Then
$$
\norm{ (\Id- \mtx{P}_{\mtx{Y}}) \mtx{B} } \leq (1 + 3 \sqrt{n / \ell}) \cdot \sigma_{k+1},
$$
with failure probability at most $\bigO(k^{-1})$.
\end{theorem}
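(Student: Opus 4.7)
The plan is to combine the deterministic error bound from Theorem~\ref{thm:rrf-deterministic} with the oblivious subspace embedding property of the SRTT and a routine spectral norm estimate for the test matrix. First, I fix the target rank $k$ from the theorem statement and form the partitioned SVD $\mtx{B} = \mtx{U}\diag(\mtx{\Sigma}_1,\mtx{\Sigma}_2)[\mtx{V}_1\ \mtx{V}_2]^*$ with $\mtx{V}_1 \in \F^{n\times k}$. Writing $\mtx{X}_1 = \mtx{V}_1^*\mtx{\Omega}$ and $\mtx{X}_2 = \mtx{V}_2^*\mtx{\Omega}$, Theorem~\ref{thm:rrf-deterministic} combined with submultiplicativity of the spectral norm gives
$$
\norm{(\Id - \mtx{P}_{\mtx{Y}})\mtx{B}} \leq \sigma_{k+1} + \norm{\mtx{\Sigma}_2 \mtx{X}_2}\cdot \norm{\mtx{X}_1^\pinv},
$$
so it suffices to bound each factor in the second term.

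For $\norm{\mtx{\Sigma}_2\mtx{X}_2}$, I would exploit the fact that an SRTT is a scaled partial isometry. Indeed, $\mtx{\Omega}^* = \sqrt{n/\ell}\,\mtx{R}\mtx{F}\mtx{E}\mtx{\Pi}$ has orthonormal rows up to the scaling factor, so $\mtx{\Omega}^*\mtx{\Omega} = (n/\ell)\Id_\ell$ and hence $\norm{\mtx{\Omega}} = \sqrt{n/\ell}$. Since $\mtx{\Sigma}_2\mtx{V}_2^*$ has spectral norm exactly $\sigma_{k+1}$, submultiplicativity yields the deterministic estimate
$$
\norm{\mtx{\Sigma}_2 \mtx{X}_2} = \norm{\mtx{\Sigma}_2 \mtx{V}_2^* \mtx{\Omega}} \leq \sigma_{k+1}\sqrt{n/\ell},
$$
which uses only the isometric structure of the SRTT and not the randomness at all.

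The bulk of the work is in controlling $\norm{\mtx{X}_1^\pinv}$, and this is where the probabilistic hypothesis enters. By the SRTT subspace embedding theorem of~\citeasnoun{Tro11:Improved-Analysis}, the hypothesis $\ell \geq 8(k + 8\log(kn))\log k$ is calibrated precisely so that $\mtx{\Omega}^*$ is a subspace embedding of $\range(\mtx{V}_1)$ with distortion at most $2/3$, except on a bad event of probability $\bigO(k^{-1})$. Off this bad event, $\sigma_{\min}(\mtx{\Omega}^*\mtx{V}_1) \geq 1/3$, and since $\mtx{X}_1^* = \mtx{\Omega}^*\mtx{V}_1$ has full column rank $k$, we conclude $\norm{\mtx{X}_1^\pinv} = \norm{(\mtx{\Omega}^*\mtx{V}_1)^\pinv} \leq 3$. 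Inserting the two bounds into the master inequality produces $\norm{(\Id-\mtx{P}_{\mtx{Y}})\mtx{B}} \leq (1 + 3\sqrt{n/\ell})\sigma_{k+1}$, as claimed.

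The main obstacle is the bookkeeping needed to invoke Tropp's subspace embedding result with the precise constants announced here: one has to track how the subspace dimension $k$, the ambient dimension $n$, and the target failure probability $\bigO(k^{-1})$ combine within the SRTT analysis to yield the specific form $\ell \geq 8(k + 8\log(kn))\log k$. Everything else is essentially a short deterministic calculation based on the Schur-complement identity behind Theorem~\ref{thm:rrf-deterministic} together with the elementary observation that the SRTT is a scaled partial isometry.
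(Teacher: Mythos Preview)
Your proposal is correct and follows essentially the same route as the paper's sketch: invoke the deterministic bound of Theorem~\ref{thm:rrf-deterministic}, control $\norm{\mtx{X}_1^\pinv}$ via the SRTT subspace embedding theorem of \citeasnoun{Tro11:Improved-Analysis}, and bound $\norm{\mtx{\Sigma}_2\mtx{X}_2}$ using the fact that $\sqrt{\ell/n}\,\mtx{\Omega}$ is orthonormal. The only cosmetic difference is that the paper factors the second term as $\sigma_{k+1}\norm{\mtx{\Omega}_2}\norm{\mtx{\Omega}_1^\pinv}$ rather than $\norm{\mtx{\Sigma}_2\mtx{X}_2}\norm{\mtx{X}_1^\pinv}$, which amounts to the same estimate.
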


\begin{proof}
(Sketch) %
By Theorem~\ref{thm:rrf-deterministic},
$$
\norm{ (\Id- \mtx{P}_{\mtx{Y}}) \mtx{B} }
	\leq \left[ 1 + \norm{ \mtx{\Omega}_{2} } \norm{ \mtx{\Omega}_1^\pinv } \right] \sigma_{k+1}.
$$
With the specified choice of $\ell$, the test matrix $\mtx{\Omega}$ is likely to be
an oblivious subspace embedding of the $k$-dimensional subspace $\mtx{V}_1$
with distortion $1/3$.  Thus, the matrix $\mtx{\Omega}_1^{\pinv}$ has spectral norm bounded by $3$.
The matrix $\sqrt{\ell/n} \, \mtx{\Omega}$ is orthonormal, so the spectral norm of $\mtx{\Omega}_{2}$
is bounded by $\sqrt{n/\ell}$.
\end{proof}

The lower bound in the subspace embedding property~\eqref{eqn:subspace-embedding}
is the primary fact about the SRTT used in the proof.
But this is only part of the reason that the rangefinder works.
Accordingly, the outcome of this ``soft'' analysis is qualitatively
weaker than the ``hard'' analysis in Theorem~\ref{thm:rrf-gauss}.
The resulting bound does not explain the actual (excellent) performance
of Algorithm~\ref{alg:random-rangefinder} when implemented with an SRTT.

\subsection{Subspace iteration}
\label{sec:rrf-subspace}

Theorem~\ref{thm:rrf-gauss} shows that the basic randomized rangefinder procedure,
Algorithm~\ref{alg:random-rangefinder}, can be effective for target matrices
$\mtx{B}$ with a rapidly decaying spectrum.  Nevertheless, in many applications,
we encounter matrices that do not meet this criterion.
As in the case of spectral norm estimation (Section~\ref{sec:max-eig}),
we can resolve the problem by powering the matrix.

\subsubsection{Rangefinder with powering}
\label{sec:poweringq}

Let $\mtx{B} \in \F^{m \times n}$ be a fixed input matrix,
and let $q$ be a natural number.  Let $\mtx{\Omega} \in \F^{m \times \ell}$
be a random test matrix.  We form the sample matrix
$$
\mtx{Y} = (\mtx{BB}^*)^q \mtx{\Omega}
$$
by repeated multiplication.  Then we compute an orthobasis $\mtx{Q} \in \F^{m \times \ell}$
for the range of $\mtx{Y}$ using a QR factorization method.

See Algorithm~\ref{alg:power-rangefinder} for pseudocode.  In general,
the arithmetic cost is dominated by the $\bigO(qmn\ell)$
cost of the matrix--matrix multiplications.
Economies are possible when $\mtx{B}$ admits fast multiplication.
Unfortunately, when powering is used, it is not possible to fundamentally accelerate
the computation by using a structured test matrix $\mtx{\Omega}$.

Algorithm~\ref{alg:power-rangefinder} coincides with the classic
subspace iteration algorithm with a random start.
Historically, subspace iteration was regarded as
a method for spectral computations. The block size $\ell$
was often chosen to be quite small, say $\ell = 3$ or $\ell = 4$,
because the intention was simply to resolve singular values with
multiplicity greater than one.

The randomized NLA literature contains several new insights about the
behavior of randomized subspace iteration.
It is now recognized that \emph{iteration is not required}.
In practice, $q = 2$ or $q = 3$ is entirely adequate to
solve the rangefinder problem to fairly high accuracy.
The modern perspective also emphasizes the value of
running subspace iteration with a very large block
size $\ell$ to obtain matrix approximations.

\begin{remark}[History]
\citeasnoun{2009_szlam_power} introduced the idea
of using randomized subspace iteration to obtain
matrix approximations by selecting a large block
size $\ell$ and a small power $q$.
\citeasnoun{HMT11:Finding-Structure} refactored
and simplified the algorithm, and they presented
a complete theoretical justification for the
approach.
Subsequent analysis appears in \citeasnoun{Gu15:Subspace-Iteration}.
\end{remark}

\subsubsection{Analysis}

The analysis of the powered rangefinder is an easy consequence
of the following lemma \cite[Prop.~8.6]{HMT11:Finding-Structure}.

\begin{lemma}[Powering] \label{lem:powering}
Let $\mtx{B} \in \F^{m \times n}$ be a fixed matrix,
and let $\mtx{P} \in \F^{m \times m}$ be an orthogonal projector.
For any number $q \geq 1$,
$$
\norm{ (\Id - \mtx{P}) \mtx{B} }^{2q}
	\leq \norm{ (\Id - \mtx{P}) (\mtx{BB}^*)^q }.
$$
\end{lemma}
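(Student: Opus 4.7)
The plan is to reduce the claim to a scalar Jensen inequality applied through the spectral theorem. Write $\mtx{M} := \Id - \mtx{P}$, which is itself an orthogonal projector, and set $\mtx{A} := \mtx{B}\mtx{B}^{*}$, which is psd. The first observation is the identity
$$
\norm{\mtx{M}\mtx{B}}^{2} \;=\; \norm{(\mtx{M}\mtx{B})(\mtx{M}\mtx{B})^{*}} \;=\; \norm{\mtx{M}\mtx{A}\mtx{M}},
$$
so the target inequality becomes $\norm{\mtx{M}\mtx{A}\mtx{M}}^{q} \leq \norm{\mtx{M}\mtx{A}^{q}}$.

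Since $\mtx{M}\mtx{A}\mtx{M}$ is psd, there exists a unit vector $\vct{u}$ attaining its spectral norm, namely $\norm{\mtx{M}\mtx{A}\mtx{M}} = \vct{u}^{*}\mtx{M}\mtx{A}\mtx{M}\vct{u} = \vct{v}^{*}\mtx{A}\vct{v}$ where $\vct{v} := \mtx{M}\vct{u}$ satisfies $\norm{\vct{v}} \leq 1$ because $\mtx{M}$ is a projector. The next step is to exploit convexity of $t \mapsto t^{q}$ on $\RR_{+}$ via the spectral decomposition of $\mtx{A}$: writing $\mtx{A} = \sum_{i}\lambda_{i}\vct{w}_{i}\vct{w}_{i}^{*}$ and letting $\alpha_{i} := \abs{\vct{w}_{i}^{*}\vct{v}}^{2}$, Jensen's inequality applied to the sub-probability measure $\{\alpha_{i}\}$ yields
$$
\bigl(\vct{v}^{*}\mtx{A}\vct{v}\bigr)^{q} \;=\; \Bigl(\sum\nolimits_{i}\alpha_{i}\lambda_{i}\Bigr)^{q} \;\leq\; \norm{\vct{v}}^{2(q-1)}\sum\nolimits_{i}\alpha_{i}\lambda_{i}^{q} \;\leq\; \vct{v}^{*}\mtx{A}^{q}\vct{v},
$$
where the final step uses $\norm{\vct{v}}\leq 1$ and $q \geq 1$.

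Finally, undo the substitution and control the result by a spectral norm:
$$
\vct{v}^{*}\mtx{A}^{q}\vct{v} \;=\; \vct{u}^{*}\mtx{M}\mtx{A}^{q}\mtx{M}\vct{u} \;\leq\; \norm{\mtx{M}\mtx{A}^{q}\mtx{M}} \;\leq\; \norm{\mtx{M}\mtx{A}^{q}}\cdot\norm{\mtx{M}} \;\leq\; \norm{\mtx{M}\mtx{A}^{q}}.
$$
Chaining these bounds gives $\norm{\mtx{M}\mtx{B}}^{2q} = (\vct{v}^{*}\mtx{A}\vct{v})^{q} \leq \norm{\mtx{M}\mtx{A}^{q}} = \norm{(\Id-\mtx{P})(\mtx{B}\mtx{B}^{*})^{q}}$, as desired.

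The only non-trivial ingredient is the scalar Jensen step, and it is essential that $\norm{\vct{v}}\leq 1$ (otherwise the exponent $2(q-1) \geq 0$ would work against us); I expect this to be the only place the argument could go wrong, and it is handled cleanly by the projector property $\norm{\mtx{M}}\leq 1$. Everything else is bookkeeping with spectral norms and adjoints.
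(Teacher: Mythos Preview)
Your proof is correct and takes a genuinely different route from the paper. The paper simply invokes the Araki--Lieb--Thirring inequality \cite[Thm.~IX.2.10]{Bha97:Matrix-Analysis} as a black box: in the form $\norm{(\mtx{M}^{1/2}\mtx{A}\mtx{M}^{1/2})^{q}} \leq \norm{\mtx{M}^{q/2}\mtx{A}^{q}\mtx{M}^{q/2}}$ for psd $\mtx{A},\mtx{M}$, the projector identity $\mtx{M}^{r}=\mtx{M}$ collapses this to $\norm{\mtx{MAM}}^{q} \leq \norm{\mtx{MA}^{q}\mtx{M}}$, which is exactly your target.

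Your argument instead gives a self-contained elementary proof via scalar Jensen, exploiting the projector property $\norm{\mtx{M}\vct{u}}\leq 1$ directly rather than going through the general ALT machinery. This is cleaner for the specific statement at hand and requires no external references, but it does not immediately extend to the case where $\mtx{M}$ is an arbitrary psd contraction (where ALT still applies). Since the lemma only needs the projector case, your approach is a nice economy; the paper's citation buys generality that is never used here.
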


\begin{proof}
This bound follows immediately from the Araki--Lieb--Thirring
inequality \citeasnoun[Thm.~IX.2.10]{Bha97:Matrix-Analysis}.
\end{proof}

Theorem~\ref{thm:rrf-gauss} gives us bounds for the right-hand
side of the inequality in Lemma~\ref{lem:powering}
when $\mtx{P}$ is the orthogonal projector onto the
subspace generated by the powered rangefinder.

\begin{corollary}[Powered rangefinder: Gaussian analysis] \label{cor:rrf-power-gauss}
Under the same conditions as Theorem~\ref{thm:rrf-gauss},
let $\mtx{Y} = (\mtx{BB}^*)^q \mtx{\Omega}$
be the sample matrix computed by Algorithm~\ref{alg:power-rangefinder}.
Then
\begin{multline*}
\Expect \norm{ (\Id - \mtx{P}_{\mtx{Y}}) \mtx{B} }
	\leq \left( \Expect \norm{ (\Id - \mtx{P}_{\mtx{Y}}) \mtx{B} }^{2q} \right)^{1/(2q)} \\
	\leq \left[ \left(1 + \sqrt{\frac{k}{\ell-k-1}} \right) \sigma_{k+1}^{2q}
		+ (\Expect Z) \left( \sum\nolimits_{j > k} \sigma_j^{4q} \right)^{1/2} \right]^{1/(2q)}.
\end{multline*}
\end{corollary}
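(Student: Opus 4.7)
The plan is to prove the two inequalities in the displayed bound separately, treating the first as a routine application of Jensen and the second as a clever ``reduction'' of the powered rangefinder to the basic rangefinder applied to the powered matrix.

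For the first inequality, I would observe that the map $t \mapsto t^{2q}$ is convex on $[0,\infty)$, so by Jensen's inequality (or equivalently, Lyapunov's moment inequality) applied to the nonnegative random variable $X = \norm{(\Id - \mtx{P}_{\mtx{Y}}) \mtx{B}}$,
$$
\Expect X = \Expect\bigl[ X \bigr] \leq \bigl( \Expect X^{2q} \bigr)^{1/(2q)}.
$$
This is a one-line observation and requires no structural input about $\mtx{B}$ or $\mtx{\Omega}$.

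For the second inequality, the main idea is to invoke Lemma~\ref{lem:powering} with $\mtx{P} = \mtx{P}_{\mtx{Y}}$ to transfer the problem to bounding the rangefinder error for the \emph{powered} matrix:
$$
\norm{(\Id - \mtx{P}_{\mtx{Y}}) \mtx{B}}^{2q} \leq \norm{(\Id - \mtx{P}_{\mtx{Y}})(\mtx{BB}^*)^q}.
$$
The key structural observation is that $\mtx{Y} = (\mtx{BB}^*)^q \mtx{\Omega}$ is precisely the sample matrix produced by the \emph{basic} randomized rangefinder (Algorithm~\ref{alg:random-rangefinder}) applied to the self-adjoint psd matrix $\mtx{C} := (\mtx{BB}^*)^q \in \RR^{m \times m}$, with the \emph{same} standard normal test matrix $\mtx{\Omega} \in \RR^{m \times \ell}$. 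Since $\mtx{C}$ has singular values $\sigma_j(\mtx{C}) = \sigma_j(\mtx{B})^{2q}$, Theorem~\ref{thm:rrf-gauss} applied to $\mtx{C}$ yields
$$
\Expect \norm{(\Id - \mtx{P}_{\mtx{Y}})(\mtx{BB}^*)^q} \leq \left(1 + \sqrt{\tfrac{k}{\ell-k-1}}\right)\sigma_{k+1}^{2q} + (\Expect Z)\left(\sum\nolimits_{j>k} \sigma_j^{4q}\right)^{1/2}.
$$
Taking the expectation of the pointwise inequality from Lemma~\ref{lem:powering}, substituting, and then extracting the $(2q)$-th root on both sides completes the argument.

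I do not anticipate a serious obstacle. The only conceptual step is recognizing that the powered rangefinder is just the basic rangefinder applied to $(\mtx{BB}^*)^q$, so that Theorem~\ref{thm:rrf-gauss} applies with the obvious substitution of singular values. The exponents $2q$ and $4q$ in the final bound arise directly from $\sigma_j(\mtx{C}) = \sigma_j(\mtx{B})^{2q}$ combined with the fact that Theorem~\ref{thm:rrf-gauss} features $\sigma_{k+1}(\mtx{C})$ in the leading term and $\sum_{j>k}\sigma_j(\mtx{C})^2$ in the tail term. One minor point worth a sentence in the writeup is that the inequality between $\Expect X$ and $(\Expect X^{2q})^{1/(2q)}$ is purely for presentational purposes: the genuine content of the result is the bound on $\Expect X^{2q}$, and the first inequality merely converts it to a bound on $\Expect X$.
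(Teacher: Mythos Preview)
Your proposal is correct and follows exactly the approach the paper intends: the paper presents the corollary as an immediate consequence of Lemma~\ref{lem:powering} combined with Theorem~\ref{thm:rrf-gauss} applied to the powered matrix $(\mtx{BB}^*)^q$, which is precisely your argument. Your write-up in fact spells out the details (Jensen for the first inequality, the substitution $\sigma_j(\mtx{C}) = \sigma_j^{2q}$) more explicitly than the paper does.
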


In other words, powering the matrix $\mtx{B}$ drives the error in the rangefinder
to $\sigma_{k+1}$ exponentially fast as the parameter $q$ increases.
In cases where the target matrix has some spectral decay, it suffices
to take $q = 2$ or $q = 3$ to achieve satisfactory results.
For matrices with a flat spectral tail, however,
we may need to set $q \approx \log \min\{m,n\}$
to make the error a constant multiple of $\sigma_{k+1}$.

\begin{algorithm}[t]
\begin{algorithmic}[1]
\caption{\textit{The powered randomized rangefinder.} \newline
Implements the procedure from Section~\ref{sec:rrf-subspace}.}
\label{alg:power-rangefinder}

\Require	Input matrix $\mtx{B} \in \F^{m \times n}$, target rank $\ell$, depth $q$.
\Ensure		Orthonormal matrix $\mtx{Q} \in \F^{m \times \ell}$
\Statex

\Function{PowerRangefinder}{$\mtx{B}$, $\ell$, $q$}

\State	Draw a random matrix $\mtx{\Omega} \in \F^{m \times \ell}$
\State	$\mtx{Y}_0 = \mtx{\Omega}$
\For{$i = 1, \dots, q$}
	\State	$[\mtx{Y}_{i-1}, \sim] = \texttt{qr\_econ}(\mtx{Y}_{i-1})$
	\State	$\mtx{Y}_i = \mtx{B} (\mtx{B}^* \mtx{Y}_{i-1})$
\EndFor
\State	$[\mtx{Q}, \sim] = \texttt{qr\_econ}(\mtx{Y}_q)$
\EndFunction
\end{algorithmic}
\end{algorithm}

\subsection{Block Krylov methods}
\label{sec:rrf-krylov}

As in the case of spectral norm estimation (Section~\ref{sec:max-eig}),
we can achieve more accurate results with Krylov subspace methods.
Nevertheless, this improvement comes at the cost of additional
storage and more complicated algorithms. %

\subsubsection{Rangefinder with a Krylov subspace}

Let $\mtx{B} \in \F^{m \times n}$ be a fixed input matrix,
and let $q$ be a natural number.  Let $\mtx{\Omega} \in \F^{m \times \ell}$
be a random test matrix.  We can form the extended sample matrix
$$
\mtx{Y} = \begin{bmatrix} \mtx{\Omega} & (\mtx{B}^*\mtx{B}) \mtx{\Omega} & \dots & (\mtx{B}^*\mtx{B})^q \mtx{\Omega} \end{bmatrix}.
$$
Then we compute an orthobasis $\mtx{Q} \in \F^{m \times (q+1)\ell}$ using
a QR factorization method.

See Algorithm~\ref{alg:krylov-rangefinder} for pseudocode.  This dominant
source of arithmetic is the $\bigO(qmn\ell)$ cost of matrix--matrix
multiplication.  The QR factorization now requires $\bigO(q^2 \ell^2 m)$
operations, a factor of $q^2$ more than the powered rangefinder.
We also need to store $\bigO(qm\ell)$ numbers,
which is roughly a factor $q$ more than the powered rangefinder.
Nevertheless, there is %
evidence that we can balance the values
of $\ell$ and $q$ to make the computational cost of the Krylov method
comparable with the cost of the power method---and still achieve higher accuracy. %

Historically, block Lanczos methods were used for spectral computations
and for SVD computations.  The block size $\ell$ was typically chosen
to be fairly small, say $\ell = 3$ or $\ell = 4$, with the goal of resolving singular values with
multiplicity greater than one.  The depth $q$ of the iteration was
usually chosen to be quite large.  There is theoretical and empirical
evidence that this parameter regime is the most efficient for resolving the
largest singular values to high accuracy~\cite{YGL18:Superlinear-Convergence}.

The randomized NLA literature has recognized that there are still potential
advantages to choosing the block size $\ell$ to be very large and to
choose the depth $q$ to be quite small \cite{HMST11:Algorithm-Principal,MM15:Randomized-Block}.
This parameter regime leads to algorithms that are more efficient on modern computer architectures,
and it still works extremely well for matrices with a modest amount of spectral
decay \cite{Tro18:Analysis-Randomized}.  The contemporary literature
also places a greater emphasis on the role of block Krylov methods
for computing matrix approximations.

\begin{remark}[History]
Block Lanczos methods, which are an efficient implementation
of the block Krylov method for a symmetric matrix,
were proposed by \citeasnoun{CD74:Block-Generalization} and by \citeasnoun{GU77:Block-Lanczos}.
The extension to the rectangular case appears in \citeasnoun{GLO81:Block-Lanczos}.
These algorithms have received renewed attention, beginning
with \cite{HMST11:Algorithm-Principal}.
The theoretical analysis of randomized block Krylov methods is much more difficult
than the analysis of randomized subspace iteration.
See \cite{MM15:Randomized-Block,YGL18:Superlinear-Convergence} and~\cite{Tro18:Analysis-Randomized}
for some results.
\end{remark}

\begin{algorithm}[t]
\begin{algorithmic}[1]
\caption{\textit{The Krylov randomized rangefinder.} \newline
Implements the procedure from Section~\ref{sec:rrf-krylov}. \newline
\textbf{Use with caution!} Unreliable in floating-point arithmetic.}
\label{alg:krylov-rangefinder}

\Require	Input matrix $\mtx{B} \in \F^{m \times n}$, target rank $\ell$, depth $q$
\Ensure		Orthonormal matrix $\mtx{Q} \in \F^{m \times 2(q+1)\ell}$
\Statex

\Function{KrylovRangefinder}{$\mtx{B}$, $\ell$, $q$}

\State	Draw a random matrix $\mtx{\Omega} \in \F^{n \times \ell}$
\State	$[\mtx{Y}_0,\sim] = \texttt{qr\_econ}(\mtx{\Omega})$
\For{$i = 1, \dots, q$}
	\State	$[\mtx{Y}_{i-1}, \sim] = \texttt{qr\_econ}(\mtx{Y}_{i-1})$
	\State	$\mtx{Y}_i = \mtx{B} (\mtx{B}^* \mtx{Y}_{i-1})$
\EndFor
\State	$[\mtx{Q}, \sim] = \texttt{qr\_econ}([\mtx{Y}_0, \dots, \mtx{Y}_q])$
\EndFunction
\end{algorithmic}
\end{algorithm}

\subsubsection{Alternative bases}

Algorithm~\ref{alg:krylov-rangefinder} computes a monomial basis for
the Krylov subspace, which is a poor choice numerically.  Let us mention
two more practical alternatives.

\begin{enumerate} \setlength{\itemsep}{1mm}
\item	\textbf{Block Lanczos.}  The classical approach uses the block Lanczos
iteration with reorthogonalization to compute a Lanczos-type basis for the Krylov subspace.
Algorithm~\ref{alg:rrf-lanczos} contains pseudocode for this approach adapted
from \citeasnoun{GLO81:Block-Lanczos}.  The basis computed by this algorithm
has the property that the blocks $\mtx{Q}_i \in \F^{m \times \ell}$ of the
sample matrix are mutually orthogonal.  The cost is similar to the cost
of computing the monomial basis, but there are advantages when the
subspace is used for spectral computations (Section~\ref{sec:max-eig}).
Indeed, we can approximate the largest singular values of $\mtx{A}$
by means of the largest singular values of the band matrix
$$
\mtx{R} = \begin{bmatrix}
\mtx{R}_1 & \mtx{R}_2^* \\
& \mtx{R}_3 & \mtx{R}_4^* \\
&& \ddots & \ddots \\
&&& \mtx{R}_{2q-1} & \mtx{R}_{2q}^* \\
&&&& \mtx{R}_{2q+1}
\end{bmatrix}.
$$
(The notation in this paragraph corresponds to the quantities computed in
Algorithm~\ref{alg:rrf-lanczos}.)

\item	\textbf{Chebyshev.}  Suppose that we have a good upper bound for
the spectral norm of the target matrix. (For example, we can obtain one using
techniques from Section~\ref{sec:max-eig}.)  %
Then we can compute a Chebyshev basis for the Krylov subspace.
The advantage of this approach is that we can postpone all normalization
and orthogonalization steps to the end of the computation,
which is beneficial for distributed computation.
This approach, which is being presented for the first time,
is inspired by~\citeasnoun{JC91:Parallelizable-Restarted}.
See Algorithm~\ref{alg:rrf-chebyshev} for pseudocode.
\end{enumerate}

\subsubsection{Analysis}

We are not aware of a direct analysis of Krylov subspace methods
for solving the rangefinder problem.  One may extract some bounds
from analysis of randomized SVD algorithms.
The following result is adapted from~\citeasnoun{Tro18:Analysis-Randomized}.

\begin{theorem}[Krylov rangefinder: Gaussian analysis]
Under the conditions in Theorem~\ref{thm:rrf-gauss},
let $\mtx{Y}$ be the sample matrix computed by Algorithm~\ref{alg:krylov-rangefinder}.
For $0 \leq \eps \leq 1/2$,
$$
\Expect \norm{ (\Id - \mtx{P}_{\mtx{Y}}) \mtx{B} }^2
	\leq \left[ 1 + 2\eps + \frac{9nk(\ell - k)}{\ell - k - 2} \cdot \econst^{-4q\sqrt{\eps}} \right] \sigma_{k+1}^2.
$$
\end{theorem}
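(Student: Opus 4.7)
The plan is to combine the deterministic master bound (Theorem \ref{thm:rrf-deterministic}) with a polynomial approximation argument that exploits the rich structure of the Krylov subspace.

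First, I would note the basic Krylov identity: the range of $\mtx{Y} = [\mtx{\Omega},\, (\mtx{B}^*\mtx{B})\mtx{\Omega},\, \dots,\, (\mtx{B}^*\mtx{B})^q\mtx{\Omega}]$ contains $\range(p(\mtx{B}^*\mtx{B})\mtx{\Omega})$ for every univariate polynomial $p$ of degree at most $q$. Because the residual $\norm{(\Id - \mtx{P}_S)\mtx{B}}$ is monotone decreasing as the subspace $S$ enlarges, we obtain
$$
\norm{(\Id - \mtx{P}_{\mtx{Y}})\mtx{B}}^2 \leq \norm{(\Id - \mtx{P}_{p(\mtx{B}^*\mtx{B})\mtx{\Omega}})\mtx{B}}^2
\quad\text{for every admissible $p$.}
$$
Thus we are free to optimize over $p$, which is the central analytical lever.

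Next, I would introduce the threshold $\tau := (1+2\eps)\sigma_{k+1}^2$ and let $r$ denote the number of squared singular values of $\mtx{B}$ that strictly exceed $\tau$; note $r \leq k$. Applying the deterministic decomposition from the proof of Theorem \ref{thm:rrf-deterministic} to the test matrix $\mtx{X}(p) := p(\mtx{B}^*\mtx{B})\mtx{\Omega}$ at rank $r$, one gets, with $\mtx{\Omega}_1 = \mtx{V}_1^*\mtx{\Omega}$ and $\mtx{\Omega}_2 = \mtx{V}_2^*\mtx{\Omega}$ independent standard normal blocks,
$$
\norm{(\Id - \mtx{P}_{\mtx{X}(p)})\mtx{B}}^2 \leq \sigma_{r+1}^2 + \norm{\mtx{\Sigma}_2 p(\mtx{\Sigma}_2^2)\mtx{\Omega}_2\, (p(\mtx{\Sigma}_1^2)\mtx{\Omega}_1)^{\pinv}}^2.
$$
By construction $\sigma_{r+1}^2 \leq \tau = (1+2\eps)\sigma_{k+1}^2$, which delivers the leading additive term in the advertised bound.

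For the second term I would invoke the classical Chebyshev extremal polynomial, choosing $p(x) = T_q(2x/\tau - 1)$. The extremal property gives $|p(x)| \leq 1$ on $[0,\tau]$, so $\norm{\mtx{\Sigma}_2 p(\mtx{\Sigma}_2^2)}^2 \leq \tau$; at the singular values $\sigma_j^2 > \tau$ for $j \leq r$, the standard growth estimate $T_q(1 + \delta) \geq \tfrac{1}{2}\econst^{2q\sqrt{\delta}}$ yields $\norm{p(\mtx{\Sigma}_1^2)^{-1}}^2 \lesssim \econst^{-4q\sqrt{\eps}}$. Submultiplicativity then gives
$$
\norm{\mtx{\Sigma}_2 p(\mtx{\Sigma}_2^2)\mtx{\Omega}_2 (p(\mtx{\Sigma}_1^2)\mtx{\Omega}_1)^{\pinv}}^2 \lesssim \tau \cdot \econst^{-4q\sqrt{\eps}}\cdot \norm{\mtx{\Omega}_2}^2\, \norm{\mtx{\Omega}_1^{\pinv}}^2.
$$
Taking expectations and using the independence of $\mtx{\Omega}_1 \in \RR^{r\times\ell}$ and $\mtx{\Omega}_2\in\RR^{(n-r)\times\ell}$, together with the Wishart moment identity $\Expect\fnorm{\mtx{\Omega}_1^{\pinv}}^2 = r/(\ell-r-1)$ and the bound $\Expect\norm{\mtx{\Omega}_2}^2 \lesssim n$, produces a factor of order $nr/(\ell-r-2) \leq nk/(\ell-k-2)$ (for the latter inequality I would use that $r \mapsto r(\ell-r)/(\ell-r-2)$ is monotone in the relevant range). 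Assembling these pieces yields the claim.

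The principal obstacle, in my view, is the bookkeeping that aligns the \emph{effective} spectral-split index $r$ with the \emph{nominal} index $k$ that appears in the statement: $r$ is defined by a spectral threshold whereas the final bound must be stated uniformly in $k$, and one must simultaneously ensure that the Chebyshev amplification is genuinely exponential (which requires $\sigma_r^2 > \tau$ with a nontrivial margin when $r \geq 1$, and a separate trivial argument when $r = 0$). Additional care is needed in bounding $\Expect\norm{\mtx{\Omega}_2\mtx{\Omega}_1^{\pinv}}^2$ in a form that yields exactly $9nk(\ell-k)/(\ell-k-2)$; the cleanest route is to condition on $\mtx{\Omega}_1$, use the rotational invariance of $\mtx{\Omega}_2$ to reduce to a Wishart computation, and then combine with standard inverse-Wishart moment bounds as in \citeasnoun[Prop.~10.2]{HMT11:Finding-Structure}.
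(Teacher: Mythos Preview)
The paper does not prove this theorem; it states the bound and defers to \citeasnoun{Tro18:Analysis-Randomized}. So there is no in-paper argument to compare against. Your overall architecture---exploit $\range(\mtx{Y})\supseteq\range(p(\mtx{B}^*\mtx{B})\mtx{\Omega})$ for every degree-$q$ polynomial $p$, split the spectrum at a threshold-dependent index $r\leq k$, apply the deterministic master bound at that split, choose $p$ to be a shifted Chebyshev polynomial, and close with Wishart moment estimates---is indeed the route taken in the cited reference.

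However, the Chebyshev step as you have written it does not go through. With the interval $[0,\tau]$, $\tau=(1+2\eps)\sigma_{k+1}^2$, and $r=\#\{j:\sigma_j^2>\tau\}$, nothing separates $\sigma_r^2$ from $\tau$: the argument $2\sigma_r^2/\tau-1$ can be arbitrarily close to $1$, so $p(\sigma_r^2)$ can be arbitrarily close to $1$, and your claim $\norm{p(\mtx{\Sigma}_1^2)^{-1}}^2\lesssim\econst^{-4q\sqrt{\eps}}$ is unjustified. You flag this as the ``principal obstacle'' but offer no mechanism to overcome it. The actual fix decouples the polynomial interval from the threshold: one scales $T_q$ to $[0,\sigma_{k+1}^2]$ so that $\sigma_j^2>\tau$ forces the Chebyshev argument to exceed $1+4\eps$, and then $\operatorname{arccosh}(1+4\eps)\geq 2\sqrt{\eps}$ for $\eps\leq 1/2$ delivers the factor $\econst^{-4q\sqrt{\eps}}$. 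But this relocates the difficulty to the ``middle'' singular values $\sigma_{r+1}^2,\dots,\sigma_k^2\in(\sigma_{k+1}^2,\tau]$, where $p$ is no longer bounded by $1$; a single submultiplicative estimate on $\norm{\mtx{\Sigma}_2 p(\mtx{\Sigma}_2^2)\mtx{\Omega}_2(p(\mtx{\Sigma}_1^2)\mtx{\Omega}_1)^{\pinv}}$ cannot handle both regimes simultaneously. One needs a finer error decomposition that treats the top-$r$ block and the tail separately (the tail being controlled directly since $\sigma_j^2\leq(1+2\eps)\sigma_{k+1}^2$ for all $j>r$), and that refinement---together with the second-moment computation producing the specific constant $9nk(\ell-k)/(\ell-k-2)$---is the substantive technical content of the cited paper, not mere bookkeeping.
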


In other words, the Krylov method can drive the error bound for
the rangefinder to $\bigO(\eps)$ by using
a Krylov subspace with depth  $q \approx \log (n/\eps) / \sqrt{\eps}$.
In contrast, the power method needs about
$q \approx (\log n) / \eps$ iterations to reach the same target.
The difference can be very substantial when $\eps$ is small.

\subsubsection{Spectral computations}

The Krylov rangefinder can also be used for highly accurate computation of the singular values
of a general matrix and the eigenvalues of a self adjoint matrix.
See~\cite{MM15:Randomized-Block,YGL18:Superlinear-Convergence} and~\cite{Tro18:Analysis-Randomized}
for discussion and analysis.

\begin{algorithm}[t]
\begin{algorithmic}[1]
\caption{\textit{The Lanczos randomized rangefinder.} \newline
Implements the Krylov rangefinder (Section~\ref{sec:rrf-krylov})
with block Lanczos bidiagonalization.}
\label{alg:rrf-lanczos}

\Require	Input matrix $\mtx{B} \in \F^{m \times n}$, target rank $\ell$, depth $q$
\Ensure		Orthonormal matrix $\mtx{Q} \in \F^{m \times (q+1)\ell}$
\Statex

\Function{LanczosRangefinder}{$\mtx{B}$, $\ell$, $q$}

\State	Draw a random matrix $\mtx{\Omega} \in \F^{m \times \ell}$

\State	$[\mtx{Q}_0, \sim] = \texttt{qr\_econ}(\mtx{\Omega})$
\State	$\mtx{W}_0 = \mtx{B}\mtx{Q}_0$
\State	$[\mtx{P}_0, \mtx{R}_1] = \texttt{qr\_econ}(\mtx{W}_0)$
\For{$i = 1, \dots, q$}
	\State	$\mtx{Z}_i = \mtx{B}^* \mtx{P}_{i-1} - \mtx{Q}_{i-1} \mtx{R}_{2i-1}^*$
		\Comment	Lanczos recursion, part 1
	\For{$j = 0, \dots, i-1$}
		\Comment	Double Gram--Schmidt
		\State	$\mtx{Z}_i = \mtx{Z}_i - \mtx{Q}_{j} (\mtx{Q}_{j}^* \mtx{Z}_i)$
		\State	$\mtx{Z}_i = \mtx{Z}_i - \mtx{Q}_{j} (\mtx{Q}_{j}^* \mtx{Z}_i)$
	\EndFor
	\State	$[\mtx{Q}_i, \mtx{R}_{2i}] = \texttt{qr\_econ}(\mtx{Z}_i)$
	\State	$\mtx{W}_i = \mtx{B}\mtx{Q}_i - \mtx{P}_{i-1} \mtx{R}_{2i}^*$
		\Comment	Lanczos recursion, part 2
	\For{$j = 0, \dots, i-1$}
		\Comment	Double Gram--Schmidt
		\State	$\mtx{W}_i = \mtx{W}_i - \mtx{P}_{j} (\mtx{P}_j^* \mtx{W}_i)$
		\State	$\mtx{W}_i = \mtx{W}_i - \mtx{P}_{j} (\mtx{P}_j^* \mtx{W}_i)$
	\EndFor
	\State	$[\mtx{P}_i, \mtx{R}_{2i+1}] = \texttt{qr\_econ}(\mtx{W}_i)$
\EndFor

\State	$\mtx{Q} = [ \mtx{Q}_0, \dots, \mtx{Q}_{q} ]$
\EndFunction
\end{algorithmic}
\end{algorithm}

\begin{algorithm}[t]
\begin{algorithmic}[1]
\caption{\textit{The Chebyshev randomized rangefinder.} \newline
Implements the Krylov rangefinder (Section~\ref{sec:rrf-krylov})
with a Chebyshev basis.} %
\label{alg:rrf-chebyshev}

\Require	Input matrix $\mtx{B} \in \F^{m \times n}$, target rank $\ell$, depth $q$, and norm bound $\norm{\mtx{B}} \leq \nu$
\Ensure		Orthonormal matrix $\mtx{Q} \in \F^{m \times (q+1)\ell}$
\Statex

\Function{ChebyshevRangefinder}{$\mtx{B}$, $\ell$, $q$}

\State	Draw a random matrix $\mtx{\Omega} \in \F^{m \times \ell}$

\State	$[\mtx{Y}_0, \sim] = \texttt{qr\_econ}(\mtx{\Omega})$
\State	$\mtx{Y}_1 = (2/\nu) \mtx{B} (\mtx{B}^*\mtx{Y}_0) - \mtx{Y}_0$

\For{$i = 2, \dots, q$}
	\State	$\mtx{Y}_i = (4/\nu)\mtx{B} (\mtx{B}^*\mtx{Y}_{i-1}) - 2\mtx{Y}_{i-1} - \mtx{Y}_{i-2}$
		\Comment	Chebyshev recursion
\EndFor

\State	$[\mtx{Q}, \sim] = \texttt{qr\_econ}([\mtx{Y}_0, \dots, \mtx{Y}_q])$
\EndFunction
\end{algorithmic}
\end{algorithm}

\section{Error estimation and adaptivity}
\label{sec:error-est}

The theoretical analysis of the randomized rangefinder in Section \ref{sec:random-rangefinder}
describes with great precision when the procedure is effective, and what errors to expect. From a
practical point of view, however, the usefulness of this analysis is limited by the fact that
we rarely have advance knowledge of the singular values of the matrix to be approximated. In this section,
we consider the more typical situation where we are given a matrix $\mtx{A} \in \F^{m\times n}$
and a tolerance $\varepsilon$, and it is our job to find a low-rank factorization of $\mtx{A}$
that is accurate to within precision $\varepsilon$. In other words, part of the task to be solved
is to determine the $\varepsilon$-rank of $\mtx{A}$.

To complete this job, we must equip the rangefinder with an \textit{a posteriori} error estimator:
given an orthonormal matrix $\mtx{Q} \in \F^{m\times \ell}$ whose columns form a putative basis for
the range of $\mtx{A}$, it estimates the corresponding approximation error $\|\mtx{A} - \mtx{Q}\mtx{Q}^{*}\mtx{A}\|$.

To solve the fixed-error approximation problem,
the idea is to start with a lowball ``guess'' at the rank, run the rangefinder, and then check to
see if we are within the requested tolerance. If the answer is negative, then there are several
strategies for how to proceed. Typically, we would just draw more samples to enrich the basis we
already have on hand.  But in some circumstances, it may be better to start over or to try an
alternative approach, such as increasing the amount of powering that is done.

\subsection{A posteriori error estimation}
\label{sec:rrf-error}

Let $\mtx{A} \in \F^{m \times n}$ be a target matrix,
and let $\mtx{Q} \in \F^{m \times \ell}$ be an orthonormal matrix
whose columns may or may not form a good basis for the range of $\mtx{A}$.
We typically think of $\mtx{Q}$ as being the output of one of the rangefinder
algorithms described in Section \ref{sec:random-rangefinder}.
Our goal is now to produce an inexpensive and reliable estimate of the error
$
\triplenorm{ (\Id - \mtx{QQ}^*) \mtx{A} }$
with respect to some norm $\triplenorm{\cdot}$.

To do so, we draw on the techniques for
norm estimation by sampling (Sections~\ref{sec:trace-est}--\ref{sec:schatten-p}).
The basic idea is to collect a (small) auxiliary sample
\begin{equation}
\label{eq:auxiliarysample}
\mtx{Z} = \mtx{A} \mtx{\Phi},
\end{equation}
where the test matrix $\mtx{\Phi} \in \F^{n \times s}$ is drawn from a
Gaussian distribution. We assume that $\mtx{\Phi}$ is statistically independent
from whatever process was used to compute $\mtx{Q}$.
Then
\begin{equation}
\label{eq:errorsample}
(\Id - \mtx{QQ}^*) \mtx{Z} = (\Id - \mtx{QQ}^*) \mtx{A \Phi} \in \F^{m \times s}.
\end{equation}
is a random sample of the error in the approximation.
We can now use any of the methods from Sections~\ref{sec:trace-est}--\ref{sec:schatten-p}
to estimate the error from this sample.  For example,
$$
\fnorm{ (\Id - \mtx{QQ}^*) \mtx{A} }^2
	\approx \frac{1}{s} \fnorm{(\Id - \mtx{QQ}^*) \mtx{Z}}^2.
$$
The theory in Section~\ref{sec:trace-est} gives precise tail
bounds for this estimator.
Similarly, we can approximate the Schatten $4$-norm by computing a sample variance.
Beyond that, the Valiant--Kong estimator (Section~\ref{sec:valiant-kong})
allows us to approximate higher-order Schatten norms. %

The cost of extracting the auxiliary sample (\ref{eq:auxiliarysample}) is almost
always much smaller than the cost of running the rangefinder itself;
it involves only $s$ additional matrix--vector multiplications,
where $s$  can be thought of as a small fixed number, say $s=10$.

\subsection{A certificate of accuracy for structured random matrices}
\label{sec:certificate}

The idea of drawing an auxiliary sample (\ref{eq:auxiliarysample}) for purposes of
error estimation is particularly appealing when the rangefinder implementation
involves a structured random test matrix (see~Section \ref{sec:rrf-dimred}). %
These structured random maps can be much faster than Gaussian random matrices,
while producing errors that are just as small \cite[Sec.~7.4]{HMT11:Finding-Structure}.
Their main weakness is that they come with far weaker \textit{a priori} error guarantees;
see Theorem~\ref{thm:rrf-srtt}.

Now, consider a situation where we use a structured random matrix to compute an
approximate basis $\mtx{Q}$ for the range of a matrix $\mtx{A}$
(via Algorithm \ref{alg:random-rangefinder}), and a small Gaussian
random matrix $\mtx{\Phi}$ to draw an auxiliary sample (\ref{eq:auxiliarysample}) from
$\mtx{A}$. The additional cost of extracting the ``extra'' sample is small, both in
terms of practical execution time and in terms of the asymptotic cost estimates (which
would generally remain unchanged). However, the computed output can now be relied on
with supreme confidence, since it is backed up by the strong theoretical results that govern Gaussian matrices.

One may even push these ideas further and use the ``certificate of accuracy'' to gain
confidence in randomized sampling methods based on heuristics or educated guesses about
the matrix being estimated.  For instance, one may observe that matrices that arise
in some application typically have low coherence (Section~\ref{sec:coord-embed}),
and one may implement a fast uniform sampling strategy that only works in this setting.
No matter how unsafe the sampling strategy, %
we can trust the \textit{a posteriori} error estimator when
it promises that the computed factorization is sufficiently accurate.

The general idea of observing the action of a residual on random vectors in order to get
an estimate for its magnitude can be traced back at least as far as \citeasnoun{Gir89:Fast-Monte-Carlo}.
Here, we followed the discussion in \citeasnoun[Sec.~14]{2018_PCMI_martinsson} and
\citeasnoun[Sec.~6]{TYUC19:Streaming-Low-Rank}.

\begin{remark}[Rank doubling]
Let us consider what should be done if the \textit{a posteriori} error estimator tells us that
a requested tolerance has not yet been met. Since many structured random matrices have the
unfortunate property that it is not an easy matter to recycle the sample already computed
in order to build a larger one, it often makes sense to simply start from scratch, but
doubling the number of columns in the test matrix. Such a strategy of doubling the rank at each
attempt typically does not change the order of the dominant term in the asymptotic cost. It may, however,
be somewhat wasteful from a practical point of view.
\end{remark}

\subsection{Adaptive rank determination using Gaussian test matrices}
\label{sec:adaptivebasic}

When a Gaussian test matrix is used, we can incorporate \textit{a posteriori} error estimation %
into the rangefinder algorithm with negligible increase in the amount of computation.

To illustrate, let us first consider a situation
where we are given a matrix $\mtx{A}$, and we use the randomized rangefinder (Algorithm \ref{alg:random-rangefinder})
with a Gaussian random matrix to find an orthonormal basis for its approximate range.
We do not know the numerical rank of $\mtx{A}$ in advance, so we use a number $\ell$ of samples
that we believe is likely to be more than enough.

In order to include \textit{a posteriori} error estimation, we \textit{conceptually} split
the test matrix so that
\begin{equation}
\label{eq:courteney1}
\mtx{\Omega} = \bigl[\mtx{\Omega}_{1}\quad\mtx{\Omega}_{2}\bigr],
\end{equation}
where $\mtx{\Omega}_{1}$ holds the first $\ell-s$ columns of $\mtx{\Omega}$. The thin
sliver $\mtx{\Omega}_{2}$ that holds the last $s$ columns will temporarily play the
part of the independent test matrix $\mtx{\Phi}$.
(Note that the matrices $\mtx{\Omega}_i$ defined here are different from those
in the proof of Theorem~\ref{thm:rrf-gauss}.)

The sample matrix inherits a corresponding
split
\begin{equation}
\label{eq:courteney2}
\mtx{Y} =
\mtx{A}\mtx{\Omega} =
\bigl[\mtx{A}\mtx{\Omega}_{1}\quad\mtx{A}\mtx{\Omega}_{2}\bigr] =:
\bigl[\mtx{Y}_{1}\quad\mtx{Y}_{2}\bigr].
\end{equation}
In order to orthonormalize the columns of $\mtx{Y}$ to form an approximate basis for the
column space, we perform an unpivoted QR factorization: %
$$
\mtx{Y} =
\bigl[\mtx{Q}_{1}\quad\mtx{Q}_{2}\bigr]\,
\left[\begin{array}{cc}
\mtx{R}_{11} & \mtx{R}_{12} \\
\mtx{0}      & \mtx{R}_{22}
\end{array}\right],
$$
where the partitioning conforms with (\ref{eq:courteney2}).  Thus,
$\mtx{Y}_{1} = \mtx{Q}_{1}\mtx{R}_{11}$ and $\mtx{Y}_{2} = \mtx{Q}_{1}\mtx{R}_{12} + \mtx{Q}_{2}\mtx{R}_{22}$.
Now, observe that
$$
\mtx{Q}_{2}\mtx{R}_{22} =
\mtx{Y}_{2} - \mtx{Q}_{1}\mtx{R}_{12} =
\mtx{Y}_{2} - \mtx{Q}_{1}\mtx{Q}_{1}^{*}\mtx{Y}_{2} =
\bigl(\mtx{I} - \mtx{Q}_{1}\mtx{Q}_{1}^{*})\mtx{A}\mtx{\Omega}_{2}.
$$
In other words, the matrix $\mtx{Q}_{2}\mtx{R}_{22}$ is a sample of the residual
error resulting from using $\mtx{\Omega}_{1}$ as the test matrix. We can analyze
the sample $\mtx{Q}_{2}\mtx{R}_{22}$ using the techniques described in Section
\ref{sec:rrf-error} to derive an estimate on the norm of the residual error.
If the resulting estimate is small enough, we can
confidently trust the computed factorization. (When the rangefinder is used as a
preliminary step towards computing a partial SVD of the matrix, we may as well use
the full orthonormal basis in $\mtx{Q}$ in any steps that follow.)

If the error estimate computed is larger than what is acceptable, then additional
work must be done, typically by drawing additional samples to enrich the basis already
computed, as described in the next section.

\subsection{An incremental algorithm based on Gaussian test matrices}
\label{sec:randQB}
The basic error estimation procedure outlined in Section \ref{sec:adaptivebasic} is
appropriate when it is not onerous to draw a large number $\ell$ of samples
and when the \textit{a posteriori} error merely serves as an
insurance policy against a rare situation where the singular
values decay more slowly than expected. In this section, we describe a technique that is
designed for  situations where we have no notion what the rank may
be in advance. The idea is to build the approximate basis incrementally by drawing
and processing one batch of samples at a time, while monitoring the errors as we go.
The upshot is that this computation can be organized in such a way that the total cost is
essentially the same as it would have been had we known the numerical rank in advance.

We frame the rangefinder problem as usual: for a given matrix $\mtx{A}$ and a given
tolerance $\tau$, we seek to build an orthonormal matrix $\mtx{Q}$ such that
$$
\|\mtx{A} - \mtx{Q}\mtx{Q}^{*}\mtx{A}\| \leq \tau.
$$
The procedure that we describe will be controlled by a tuning parameter
$b$ that specifies how many columns we process at a time. If we
choose $b$ too large, we may overshoot the numerical rank of $\mtx{A}$ and perform
more work than necessary. If $b$ is too small, computational efficiency may suffer;
see Section \ref{sec:blocking}.  In many environments, picking $b$ between $10$ and
$100$ would be about right.

While the \textit{a posteriori} error estimator signals that the error tolerance
has not been met, the incremental rangefinder successively draws blocks of $b$ Gaussian random vectors,
computes the corresponding samples, and adds them to the basis.  See Algorithm
\ref{alg:IncRangeFinder}.
To understand how the method works,
observe that, after line 8 has been executed, the matrix $\mtx{Y}$ holds the sample
\begin{equation}
\label{eq:logitech7}
\mtx{Y} = \bigl(\mtx{I}  - \mtx{Q}\mtx{Q}^{*}\bigr)\mtx{A}\mtx{\Omega}
\end{equation}
from the residual $\bigl(\mtx{I}  - \mtx{Q}\mtx{Q}^{*}\bigr)\mtx{A}$,
where $\mtx{Q}$ is the cumulative basis that has been built at that point
and where $\mtx{\Omega}$ is an $n\times b$ matrix drawn from a Gaussian distribution. Since
(\ref{eq:logitech7}) holds, we can estimate $\|(\mtx{I}  - \mtx{Q}\mtx{Q}^{*}\bigr)\mtx{A}\|$
using the techniques described in Section \ref{sec:rrf-error}.

\begin{algorithm}[t]
\begin{algorithmic}[1]
\caption{\textit{Incremental rangefinder.} \newline
Implements the first procedure from Section~\ref{sec:randQB}. \newline
This technique builds an orthonormal basis for a given matrix by
processing blocks of vectors at a time.
The method stops when an \textit{a posteriori}
error estimator indicates that a specified tolerance has been met.
}
\label{alg:IncRangeFinder}

\Require	Target matrix $\mtx{A} \in \F^{m \times n}$, tolerance $\tau \in \mathbb{R}_{+}$, block size $b$
\Ensure		Orthonormal matrix $\mtx{Q}$ such that $\|\mtx{A} - \mtx{Q}\mtx{Q}^{*}\mtx{A}\| \leq \tau$ with high probability
\Statex

\Function{IncrementalRangefinder}{$\mtx{A}$, $\tau$, b}

\State	$\mtx{Y} = \mtx{A}\mtx{\Omega}$\Comment Draw $\mtx{\Omega} \in \F^{n\times b}$ from a Gaussian distribution
\State	$[\mtx{Q}_{1},\sim] = \texttt{qr\_econ}(\mtx{Y})$
\State  $i=1$
\While{$\texttt{norm\_est}(\mtx{Y}) > \tau$} \Comment Norm estimator in Section \ref{sec:rrf-error}
  \State    $i = i+1$
  \State	$\mtx{Y} = \mtx{A}\mtx{\Omega}$\Comment Draw $\mtx{\Omega} \in \F^{n\times b}$ from a Gaussian distribution
  \State	$\mtx{Y} = \mtx{Y} - \sum_{j=1}^{i-1}\mtx{Q}_{j}\bigl(\mtx{Q}_{j}^{*}\mtx{Y}\bigr)$
  \label{line:Yafter}
  \State	$[\mtx{Q}_{i},\sim] = \texttt{qr\_econ}(\mtx{Y})$
\EndWhile
\State $\mtx{Q} = \bigl[\mtx{Q}_{1}\quad\mtx{Q}_{2}\quad \cdots \quad \mtx{Q}_{i-1}\bigr]$
\EndFunction
\end{algorithmic}
\end{algorithm}

In situations where the matrix $\mtx{A}$ is small enough to fit in RAM, it often
makes sense to explicitly update it after every step. The benefit to doing so is
that one can then determine the norm of the remainder matrix explicitly.
Algorithm \ref{alg:IncRangeFinderUpdating}
summarizes the resulting procedure. After line 9 of the algorithm has been executed,
the formula (\ref{eq:logitech7}) holds because, at this point in the computation,
$\mtx{A}$ has been overwritten by $\bigl(\mtx{I}  - \mtx{Q}\mtx{Q}^{*}\bigr)\mtx{A}$.
Algorithm \ref{alg:IncRangeFinderUpdating} relates to Algorithm \ref{alg:IncRangeFinder}
in the same way that modified Gram--Schmidt relates to classical Gram--Schmidt.

\begin{algorithm}[t]
\begin{algorithmic}[1]
\caption{\textit{Incremental rangefinder with updating.} \newline
Implements the second procedure from Section~\ref{sec:randQB}. \newline
This variation of Algorithm \ref{alg:IncRangeFinder} is suitable when the given
matrix $\mtx{A}$ is small enough that it can be updated explicitly. The method
builds an approximate factorization $\mtx{A} \approx \mtx{Q}\mtx{B}$ that is \textit{guaranteed}
to satisfy $\|\mtx{A} - \mtx{Q}\mtx{B}\| \leq \tau$ for a requested tolerance $\tau$.
}
\label{alg:IncRangeFinderUpdating}

\Require	Target matrix $\mtx{A} \in \F^{m \times n}$, tolerance $\tau \in \mathbb{R}_{+}$, block size $b$.
\Ensure		Orthonormal matrix $\mtx{Q}$ and a matrix $\mtx{B}$ such that $\|\mtx{A} - \mtx{Q}\mtx{B}\| \leq \tau$.
\Statex

\Function{IncrementalRangefinderWithUpdating}{$\mtx{A}$, $\tau$, b}

\State	$\mtx{Y} = \mtx{A}\mtx{\Omega}$\Comment Draw $\mtx{\Omega} \in \F^{n\times b}$ from a Gaussian distribution.
\State	$[\mtx{Q}_{1},\sim] = \texttt{qr\_econ}(\mtx{Y})$
\State  $\mtx{B}_{1} = \mtx{Q}_{1}^{*}\mtx{A}$
\State  $\mtx{A} = \mtx{A} - \mtx{Q}_{1}\mtx{B}_{1}$
\State  $i=1$
\While{$\|\mtx{A}\| > \tau$}\Comment Use an inexpensive norm such as Frobenius
  \State  $i           = i+1$
  \State  $\mtx{Y}     = \mtx{A}\mtx{\Omega}$\Comment Draw $\mtx{\Omega} \in \F^{n\times b}$ from a Gaussian distribution.
  \State  $[\mtx{Q}_{i},\sim] = \texttt{qr\_econ}(\mtx{Y})$
  \State  $\mtx{B}_{i} = \mtx{Q}_{i}^{*}\mtx{A}$
  \State  $\mtx{A}     = \mtx{A} - \mtx{Q}_{i}\mtx{B}_{i}$
\EndWhile
\State $\mtx{Q} = \bigl[\mtx{Q}_{1}\quad\mtx{Q}_{2}\quad \cdots \quad \mtx{Q}_{i}\bigr]$
\State $\mtx{B} = \bigl[\mtx{B}_{1}^{*}\quad\mtx{B}_{2}^{*}\quad \cdots \quad \mtx{B}_{i}^{*}\bigr]^{*}$
\EndFunction
\end{algorithmic}
\end{algorithm}

For matrices whose singular values decay slowly, incorporating a few steps of power iteration
(as described in Section \ref{sec:rrf-subspace}) is very beneficial. In this environment, it
is often necessary to incorporate additional reorthonormalizations to combat loss of orthogonality
due to round-off errors. Full descriptions of the resulting techniques can be found in
\citeasnoun{2015_martinsson_blocked}.

A version of Algorithm \ref{alg:IncRangeFinderUpdating} suitable for sparse matrices or matrices
stored out-of-core is described in \citeasnoun{2017_yu_singlepass_randQB}. This variant reorganizes
the computation to avoid the explicit updating step and to reduce the communication requirements overall.
Observe that it is still possible to evaluate the Frobenius norm of the residual exactly (without
randomized estimation) by using the identity %
\begin{multline*}
\fnorm{ \mtx{A} }^2 =
\fnorm{ \bigl(\mtx{I} - \mtx{Q}\mtx{Q}^{*}\bigr)\mtx{A} }^{2} +
\fnorm{ \mtx{Q}\mtx{Q}^{*}\mtx{A} }^{2} \\
=
\fnorm{ \bigl(\mtx{I} - \mtx{Q}\mtx{Q}^{*}\bigr)\mtx{A} }^{2} +
\fnorm{ \mtx{Q}\mtx{B} }^{2} =
\fnorm{ \bigl(\mtx{I} - \mtx{Q}\mtx{Q}^{*}\bigr)\mtx{A} }^{2} +
\fnorm{ \mtx{B} }^{2},
\end{multline*}
where the first equality holds since the column spaces of $\bigl(\mtx{I} - \mtx{Q}\mtx{Q}^{*}\bigr)\mtx{A}$
and $\mtx{Q}\mtx{Q}^{*}\mtx{A}$ are orthogonal and where the third equality holds since $\mtx{Q}$ is orthonormal.
For an error measure, we can use the resulting relationship:
$$
\fnorm{ \bigl(\mtx{I} - \mtx{Q}\mtx{Q}^{*}\bigr)\mtx{A} } =
\sqrt{ \fnorm{\mtx{A} }^{2} - \fnorm{\mtx{B} }^{2}},
$$
observing that both $\fnorm{ \mtx{A} }$ and $\fnorm{ \mtx{B} }$ can be computed explicitly.

\section{Finding natural bases: QR, ID, and CUR}
\label{sec:natural}

In Section \ref{sec:random-rangefinder},
we explored a number of efficient techniques
for building a tall thin matrix $\mtx{Q}$
whose columns form an approximate basis
for the range of an input matrix $\mtx{A}$
that is numerically rank-deficient.
The columns of $\mtx{Q}$ are orthonormal,
and they are formed as linear combinations
of many columns from the matrix $\mtx{A}$.

It is sometimes desirable to work with a basis
for the range that consists of a subset of
the columns of $\mtx{A}$ itself.  In this
case, one typically has to give up on the
requirement that the basis vectors be orthogonal.
We gain the advantage of a basis that shares
properties with the original matrix,
such as sparsity or nonnegativity.
Moreover, for purposes of data interpretation
and analysis, it can be very useful to identify
a subset of the columns that distills the
information in the matrix.

In this section, we start by describing some popular
matrix decompositions that use ``natural'' basis
vectors for the column space, for the row space,
or for both.  We show how these matrices can be
computed somewhat efficiently by means of
slight modifications to classical deterministic
techniques.  Then we describe how to combine
deterministic and randomized methods to
obtain algorithms with superior performance.

\subsection{The CUR decomposition, and three flavors of interpolative decompositions}
\label{sec:CURandIDdef}

To introduce the low-rank factorizations that we investigate in this section,
we describe how they can be used to represent an $m\times n$ matrix $\mtx{A}$ of
\textit{exact} rank $k$, where $k < \min(m,n)$. This is an artificial
setting, but it allows us to convey the key ideas using a minimum
of notational overhead.

A basic interpolative decomposition (ID) of a matrix $\mtx{A}$ with exact
rank $k$ takes the form
\begin{equation}
\label{eq:defID1}
\begin{array}{cccc}
\mtx{A} &=& \mtx{C} &\mtx{Z},\\
m\times n && m\times k & k\times n
\end{array}
\end{equation}
where the matrix $\mtx{C}$ is given by a subset of the columns of $\mtx{A}$
and where $\mtx{Z}$ is a matrix that contains the $k\times k$ identity matrix
as a submatrix.
The fact that the decomposition \eqref{eq:defID1} exists is an immediate consequence
of the definition of rank. A more significant observation is that there exists a
factorization of the form \eqref{eq:defID1} that is \textit{well-conditioned},
in the sense that no entry of $\mtx{Z}$ is larger than one in modulus.  This claim can be
established through an application of Cramer's rule.
See \cite{tyrt1997} and \cite{2006_martinsson_skeletonization}.

The factorization \eqref{eq:defID1} uses a subset of the columns of $\mtx{A}$ to
span its column space. Of course, there is an analog factorization that
uses a subset of the rows of $\mtx{A}$ to span the row space.
We write this as
\begin{equation}
\label{eq:defID2}
\begin{array}{cccc}
\mtx{A} &=& \mtx{X} &\mtx{R},\\
m\times n && m\times k & k\times n
\end{array}
\end{equation}
where $\mtx{R}$ is a matrix consisting of $k$ rows of $\mtx{A}$, and where
$\mtx{X}$ is a matrix that contains the $k\times k$ identity matrix.

For bookkeeping purposes, we introduce index vectors $J_{\rm s}$ and $I_{\rm s}$
that identify the columns and rows chosen in the factorizations \eqref{eq:defID1}
and \eqref{eq:defID2}. To be precise, let $J_{\rm s} \subset \{1,2,\dots,n\}$
denote the index vector of length $k$ such that
$$
\mtx{C} = \mtx{A}(:,J_{\rm s}).
$$
Analogously, we let $I_{\rm s} \subset \{1,2,\dots,m\}$ denote the index vector
for which
$$
\mtx{R} = \mtx{A}(I_{\rm s},:).
$$
The index vectors $I_{\rm s}$ and $J_{\rm s}$ are often referred to as \emph{skeleton} index vectors,
whence the subscript ``s''.  This terminology arises from the original literature about
these factorizations~\cite{tyrt1997}.

A related two-sided factorization is based on extracting a row/column submatrix.
In this case, the basis vectors for the row and column space are
less interpretable.  More precisely,
\begin{equation}
\label{eq:defID3}
\begin{array}{ccccc}
\mtx{A} &=& \mtx{X} & \mtx{A}_{\rm s} & \mtx{Z},\\
m\times n && m\times k & k\times k & k\times n
\end{array}
\end{equation}
where $\mtx{X}$ and $\mtx{Z}$ are the same matrices as those that appear in
\eqref{eq:defID1} and \eqref{eq:defID2}, and where $\mtx{A}_{\rm s}$ is the
$k\times k$ submatrix of $\mtx{A}$ given by
$$
\mtx{A}_{\rm s} = \mtx{A}(I_{\rm s},J_{\rm s}).
$$
To distinguish among these variants,
we refer to \eqref{eq:defID1} as a \textit{column ID},
to \eqref{eq:defID2} as a \textit{row ID},
and to \eqref{eq:defID3} as a \textit{double-sided ID}.

We introduce a fourth factorization, often called the %
\textit{CUR decomposition}.  For a matrix of exact rank $k$, it
takes the form
\begin{equation}
\label{eq:CUR}
\begin{array}{ccccc}
\mtx{A} &=& \mtx{C} & \mtx{U} & \mtx{R},\\
m\times n && m\times k & k\times k & k\times n
\end{array}
\end{equation}
where $\mtx{C}$ and $\mtx{R}$ are the matrices that appeared in
\eqref{eq:defID1} and \eqref{eq:defID2}, which consist of $k$ columns
and $k$ rows of $\mtx{A}$, and where $\mtx{U}$ is a
small matrix that links them together. In the present case, where
$\mtx{A}$ has exact rank $k$, the matrix $\mtx{U}$ must take the form
\begin{equation}
\label{eq:CURformulaeasy}
\mtx{U} = \bigl(\mtx{A}(I_{\rm s},J_{\rm s})\bigr)^{-1}.
\end{equation}
The factorizations \eqref{eq:defID3} and \eqref{eq:CUR} are related
through the formula
$$
\mtx{A} =
\mtx{X}\mtx{A}_{\rm s}\mtx{Z} =
\underbrace{\bigl(\mtx{X}\mtx{A}_{\rm s}\bigr)}_{=\mtx{C}}\,
\underbrace{\mtx{A}_{\rm s}^{-1}}_{=\mtx{U}}\,
\underbrace{\bigl(\mtx{A}_{\rm s}\mtx{Z}\bigr)}_{=\mtx{R}}.
$$

Comparing the two formats, we see that the CUR \eqref{eq:CUR} has
an advantage in that it requires very little storage.
As long as $\mtx{A}$ is stored explicitly (or is
easy to retrieve), the CUR factorization \eqref{eq:CUR} is determined
by the index vectors $I_{\rm s}$ and $J_{\rm s}$ and
the linking matrix $\mtx{U}$. If $\mtx{A}$ is not readily available,
then, in order to use the CUR, we need to evaluate and store the
matrices $\mtx{C}$ and $\mtx{R}$. When $\mtx{A}$ is sparse, the latter approach
can still be more efficient than storing the matrices $\mtx{X}$ and $\mtx{Z}$.

A disadvantage of the CUR factorization \eqref{eq:CUR} is that, when
the singular values of $\mtx{A}$ decay rapidly, the factorization \eqref{eq:CUR}
is typically numerically ill-conditioned. The reason is that,
whenever the factorization is a good representation of $\mtx{A}$, the
singular values of $\mtx{A}_{\rm s}$ should approximate the $k$
dominant singular values of $\mtx{A}$, so the singular values of
$\mtx{U}$ end up approximating the \textit{inverses} of these singular
values. This means that $\mtx{U}$ will have elements of magnitude $1/\sigma_{k}$,
which is clearly undesirable when $\sigma_{k}$ is small. In contrast, the
ID \eqref{eq:defID3} is numerically benign.

In the numerical analysis literature, what we refer to as an interpolative decomposition is
often called a \emph{skeleton factorization} of $\mtx{A}$. This term dates
back at least as far as \citeasnoun{tyrt1997}, where the term \emph{pseudo-skeleton} was
used for the CUR decomposition \eqref{eq:CUR}.

\begin{remark}[Storage-efficient ID]
We mentioned that the matrices $\mtx{X}$ and $\mtx{Z}$ that appear in the ID
are almost invariably dense, which appears to necessitate the storage of
$(m+n)k$ floating-point numbers for the double-sided ID. Observe, however,
that these matrices satisfy the relations
$$
\mtx{X} = \mtx{C}\mtx{A}_{\rm s}^{-1},
\qquad\mbox{and}\qquad
\mtx{Z} = \mtx{A}_{\rm s}^{-1}\mtx{R}.
$$
This means that as long as we store the index vectors $I_{\rm s}$
and $J_{\rm s}$, the matrices $\mtx{X}$ and $\mtx{Z}$ can be applied
on the fly whenever needed, and do not need to be explicitly formed.
\end{remark}

\subsection{Approximate rank}

In practical applications, the situation we considered in Section \ref{sec:CURandIDdef}
where a matrix has exact rank $k$ is rare. Instead, we typically work with a matrix
whose singular values decay fast enough that it is advantageous to form a
low-rank approximation.  Both the ID and the CUR can be used in this environment, but now
the discussion becomes slightly more involved.

To illustrate, let us consider a situation where we
are given a tolerance $\varepsilon$, and we seek to compute an approximation $\mtx{A}_{k}$
of rank $k$, with $k$ as small as possible, such that $\norm{ \mtx{A} - \mtx{A}_{k} } \leq \varepsilon$.
If $\mtx{A}_k$ is a truncated singular value decomposition, %
then the Eckart--Young theorem implies that the rank $k$ of the approximation $\mtx{A}_k$
will be minimal.
When we use an approximate algorithm, such as the RSVD (Section~\ref{sec:rsvd}),
we may not find the exact optimum, but we typically get very close.

What happens if we seek an ID $\mtx{A}_{k}$ that approximates $\mtx{A}$ to a fixed tolerance?
There is no guarantee that the rank $k$ (that is, the number of rows or columns involved)
will be close to the rank of the truncated SVD.  How close can we get in practice?

When the singular values of $\mtx{A}$ decay rapidly, then the minimal rank attainable
by an approximate ID is close to what is attainable with an SVD.  Moreover,
the algorithms we will describe for computing an ID produce an answer that is close
to the optimal one.

When the singular values decay slowly, however,
the difference in rank between the optimal ID and the optimal SVD
can be quite substantial \cite{gu1996}.
On top of that, the algorithms used to compute the ID can result in answers that
are still further away from the optimal value \cite{2005_martinsson_skel}.

When the CUR decomposition is used in an environment of approximate rank, standard
algorithms start by determining index sets $I_{\rm s}$ and $J_{\rm s}$ that
identify the spanning rows and columns, and then proceed to the problem of
finding a ``good'' linking matrix
$\mtx{U}_{\rm s}$. One could still use the formula \eqref{eq:CURformulaeasy}, but
this is rarely a good idea. The most obvious reason is that the matrix
$\mtx{A}(I_{\rm s},J_{\rm s})$ need not be invertible in this situation.
Indeed, when randomized sampling is used to find the index sets, it is common
practice to compute index vectors that hold substantially more elements than is
theoretically necessary, which can easily make $\mtx{A}(I_{\rm s},J_{\rm s})$
singular or, at the very least, highly ill-conditioned.
In this case, a better approximation is given by
\begin{equation}
\label{eq:CURformula}
\mtx{U} = \mtx{C}^{\dagger} \mtx{A} \mtx{R}^{\dagger},
\end{equation}
with $\mtx{C}^{\dagger}$ and $\mtx{R}^{\dagger}$ the pseudoinverses of $\mtx{C}$ and $\mtx{R}$.
(As always, pseudoinverses should be applied numerically by computing a QR or SVD factorization.)

\subsection{Deterministic methods, and the connection to column-pivoted QR}
\label{sec:detID}

A substantial amount of research effort has been dedicated to the question of
how to find a set of good spanning columns and/or rows of a given matrix. It
is known that the task of finding the absolutely optimal one is combinatorially
hard, but efficient algorithms exist that are guaranteed to produce
a close-to-optimal answer~\cite{gu1996}. In this subsection, we briefly discuss some
deterministic methods that work well for dense matrices of modest size.
In Section \ref{sec:randID}, we will show how these methods can be
combined with randomized techniques to arrive at algorithms that work well
for general matrices, whether they are small or huge, sparse or dense,
available explicitly or not, etc.

Perhaps the most obvious deterministic method for computing an ID
is the classical Gram--Schmidt process, which selects
the columns or rows in a greedy fashion. Say we are interested in the column
ID \eqref{eq:defID1} of a given matrix $\mtx{A}$. The Gram--Schmidt procedure
first grabs the largest column and places it in the first column of
$\mtx{C}$.  Then it projects the remaining columns onto the orthogonal complement
of the one that was picked.  It places the largest of the resulting columns
in the second column of $\mtx{C}$, and so on.

In the traditional numerical linear algebra literature, it is customary
to formulate the Gram--Schmidt process as a column-pivoted QR (CPQR) decomposition.
After $k$ steps, this factorization results in a partial decomposition
of $\mtx{A}$ such that
\begin{equation}
\label{eq:partialCPQR}
\begin{array}{cccccccccccccccc}
\mtx{A} & \mtx{\Pi} &=& \mtx{Q} & \mtx{S} & + & \mtx{E}, \\
m\times n & n\times n&& m\times k & k\times n && m\times n
\end{array}
\end{equation}
where the columns of $\mtx{Q}$ form an orthonormal basis for the space spanned
by the $k$ selected columns of $\mtx{A}$, where $\mtx{S}$ is upper-triangular,
where $\mtx{E}$ is a ``remainder matrix'' holding what remains of the $n-k$ columns
of $\mtx{A}$ that have not yet been picked, and where $\mtx{\Pi}$ is a permutation matrix
that reorders the columns of $\mtx{A}$ in such a way that the $k$ columns picked are
the first $k$ columns of $\mtx{A}\mtx{\Pi}$. (We use the letter \mtx{S} for the
upper-triangular factor in lieu of the more traditional \mtx{R} to avoid confusion with the
matrix $\mtx{R}$ holding spanning rows in \eqref{eq:defID2} and \eqref{eq:CUR}.)

In order to convert \eqref{eq:partialCPQR} into the ID \eqref{eq:defID1},
we split off the first $k$ columns of $\mtx{S}$ into a $k\times k$
upper-triangular matrix $\mtx{S}_{11}$, so that
$$
\mtx{S} =
\kbordermatrix{&
k & n-k\\
k &\mtx{S}_{11} & \mtx{S}_{12}}.
$$
Upon multiplying \eqref{eq:partialCPQR} by $\mtx{\Pi}^{*}$ from the right, we obtain
\begin{equation}
\label{eq:postCPQR}
\mtx{A} =
\underbrace{\mtx{Q}\mtx{S}_{11}}_{=:\mtx{C}}\,
\underbrace{\bigl[\mtx{I}_{k}\qquad \mtx{S}_{11}^{-1}\mtx{S}_{12}\bigr]\mtx{\Pi}^{*}}_{=:\mtx{Z}}
+
\mtx{E}\mtx{\Pi}^{*}.
\end{equation}
We recognize equation \eqref{eq:postCPQR} as the ID \eqref{eq:defID1}, with the only
difference that there is now a remainder term that results from the fact that $\mtx{A}$
is only approximately rank-deficient. (Observe that the remainder terms in
\eqref{eq:partialCPQR} and in \eqref{eq:postCPQR} are identical, up to a permutation of the columns.)

A row ID can obviously be computed by applying Gram--Schmidt to the rows of $\mtx{A}$
instead of the columns. Alternatively, one may express this as a column-pivoted QR
factorization of $\mtx{A}^{*}$ instead of $\mtx{A}$.

In order to build a double-sided ID,
one starts by computing a single-sided ID. If $m\geq n$, it is best to start
with a column ID of $\mtx{A}$ to determine $J_{\rm s}$ and $\mtx{Z}$.
Then we perform a row ID \textit{on the rows of $\mtx{A}(:,J_{\rm s})$} to determine
$I_{\rm s}$ and $\mtx{X}$.

Finally, in order to build a CUR factorization of $\mtx{A}$,
we can easily convert the double-sided ID to a CUR factorization using \eqref{eq:CURformulaeasy}
or \eqref{eq:CURformula}.

Detailed descriptions of all algorithms can be found in Sections 10 \& 11 of
\citeasnoun{2018_PCMI_martinsson}, while analysis and numerical results are
given in \citeasnoun{2014_martinsson_CUR}. A related set of deterministic techniques that
are efficient and often result in slightly higher quality spanning sets than column pivoting
are described in \citeasnoun{2016_sorensen_CUR}. Techniques based
on optimized spanning volumes of submatrices are described in
\cite{2010_oseledets_tyrtyshnikov_find_submatrix} and \cite{2012_thurau_deterministic_CUR}.

\begin{remark}[Quality of ID]
In this section, we have described simple methods based on the column-pivoted QR factorization
for computing a CUR decomposition, as well as all three flavors of interpolatory decompositions.
In discussing the quality of the resulting factorizations, we will address two questions:
(1) How close to minimal is the resulting approximation error? (2) How well-conditioned
are the basis matrices?

The NLA literature contains a detailed study of both questions.
This inquiry was instigated by Kahan's construction of
matrices for which CPQR performs very poorly \cite[Sec.~5]{1966_kahan_NLA}.
\citeasnoun{gu1996} provided a comprehensive analysis of the situation and
presented an algorithm whose asymptotic complexity in typical environments
is only slightly worse than that of CPQR and that is guaranteed to produce near-optimal results.

In practice, CPQR works well.  In almost all cases, it yields factorizations that are close to optimal.
Moreover, it gives well-conditioned factorizations as long as orthonormality
of the basis is scrupulously maintained \cite{2006_martinsson_skeletonization,2005_martinsson_skel}.

A more serious problem with the ID and the CUR is that these decompositions
can exhibit much larger approximation errors than the SVD when
the input matrix has slowly decaying singular values.  This issue persists even
when the optimal index sets are used.
\end{remark}

\subsection{Randomized methods for finding natural bases}
\label{sec:randID}
The deterministic techniques for computing an ID or a CUR decomposition in Section
\ref{sec:detID} work very well for small, dense matrices. In this section, we describe
randomized methods that work much better for matrices that are sparse or are just very
large.

To be concrete, we consider the problem of finding a vector $I_{\rm s}$
that identifies a set of rows that form a good basis for the row space of a given matrix $\mtx{A}$.
To do so, we use the randomized rangefinder to build a matrix $\mtx{Y}$ whose columns accurately
span the column space of $\mtx{A}$ as in Section \ref{sec:random-rangefinder}. Since $\mtx{Y}$ is far
smaller than $\mtx{A}$, we can use the deterministic methods in Section \ref{sec:detID} to
find a set $I_{\rm s}$ of rows of $\mtx{Y}$ that form a basis for the row space of $\mtx{Y}$.
Next, we establish a simple but perhaps non-obvious fact: the set $I_{\rm s}$ also identifies
a set of rows of $\mtx{A}$ that form a good basis for the row space of $\mtx{A}$.

To simplify the argument, let us first suppose
that we are given an $m\times n$ matrix $\mtx{A}$ of \textit{exact} rank $k$, and that
we have determined by some means (say, the randomized rangefinder) an $m\times k$
matrix $\mtx{Y}$ whose columns span the column space of $\mtx{A}$. Then $\mtx{A}$
admits by definition a factorization
\begin{equation}
\label{eq:YF}
\begin{array}{cccccccccc}
\mtx{A} &=& \mtx{Y} & \mtx{F}, \\
m\times n && m\times k & k\times n
\end{array}
\end{equation}
for some matrix $\mtx{F}$. Now compute a row ID of $\mtx{Y}$, by performing Gram--Schmidt
on its rows, as described in Section \ref{sec:detID}. The result is a matrix $\mtx{X}$ and
an index vector $I_{\rm s}$ such that
\begin{equation}
\label{eq:Yid}
\begin{array}{cccccccccc}
\mtx{Y} &=& \mtx{X} & \mtx{Y}(I_{\rm s},:). \\
m\times k && m\times k & k\times k
\end{array}
\end{equation}
The claim is now that $\{I_{\rm s},\mtx{X}\}$ is automatically a row ID of $\mtx{A}$ as well.
To prove this, observe that
\begin{align*}
\mtx{X}\mtx{A}(I_{\rm s},:) &= \mtx{X}\mtx{Y}(I_{\rm s},:)\mtx{F} &&\mbox{\{use \eqref{eq:YF} restricted to the rows in $I_{\rm s}$\}} \\
&= \mtx{Y}\mtx{F} && \mbox{\{use \eqref{eq:Yid}\}} \\
&= \mtx{A} &&\mbox{\{use \eqref{eq:YF}\}}
\end{align*}
The key insight here is simple and powerful:
\textit{In order to compute a row ID of a matrix $\mtx{A}$, the only
information needed is a matrix $\mtx{Y}$ whose columns span the column space
of $\mtx{A}$.}

The task of finding a matrix $\mtx{Y}$ such that \eqref{eq:YF} holds to high
accuracy is particularly well suited for the randomized rangefinder described in
Section \ref{sec:random-rangefinder}. Putting everything together, we obtain
Algorithm \ref{alg:randomizedID}. When a Gaussian random matrix
is used, the method has complexity $O(mnk)$.

\begin{remark}[$O(mn\log k)$ complexity methods]
An interesting thing happens if we replace the Gaussian random
matrix $\mtx{\Omega}$ in Algorithm \ref{alg:randomizedID} with a structured random matrix, as
described in Section \ref{sec:dimension-reduction}: Then $\mtx{Y}$ is computed
at cost $O(mn\log k)$, and every step after that has cost $O((m+n)k^{2})$ or less.
\end{remark}

\begin{algorithm}[t]
\begin{algorithmic}[1]
\caption{\textit{Randomized ID.} \newline
Implements the procedure from Section~\ref{sec:randID}. \newline
The function \texttt{ID\_row} refers to any algorithm for computing a row-ID,
so that given a matrix $\mtx{B}$ and a rank $k$, calling $[I_{\rm s},\mtx{X}] = \texttt{row\_ID}(\mtx{B},k)$
results in an approximate factorization $\mtx{B} \approx \mtx{X}\mtx{B}(I_{\rm s},\colon)$.
The techniques based on column-pivoted QR described in Section \ref{sec:detID} work well.
}
\label{alg:randomizedID}

\Require	Matrix $\mtx{A} \in \F^{m\times n}$, target rank $k$, oversampling parameter $p$.
\Ensure		An $m\times k$ interpolation matrix $\mtx{X}$ and an index vector $I_{\rm s}$ such that $\mtx{A} \approx \mtx{X}\mtx{A}(I_{\rm s},\colon)$.
\Statex

\Function{RandomizedID}{$\mtx{A}$, k, p}

\State Draw an $n\times (k+p)$ test matrix $\mtx{\Omega}$, e.g., from a Gaussian distribution
\State Form the sample matrix $\mtx{Y} = \mtx{A}\mtx{\Omega}$ \Comment Powering may be used
\State Form an ID of the $n\times (k+p)$ sample matrix: $[I_{\rm s},\mtx{X}] = \texttt{ID\_row}(\mtx{Y},k)$
\EndFunction
\end{algorithmic}
\end{algorithm}

\subsection{Techniques based on coordinate sampling}

To find natural bases for a matrix, it is tempting just to
sample coordinates from some probability distribution on
the full index vector.
Some advantages and disadvantages of this approach were discussed
in Section \ref{sec:coord-embed}.

In the current context, the main appeal of coordinate sampling
is that the cost is potentially lower than
the techniques described in this section---provided that
we do not need to expend much effort to compute the
sampling probabilities.
This advantage can be decisive
in applications where mixing
random embeddings are too expensive.

Coordinate sampling has several disadvantages in comparison
to using mixing random embeddings.  Coordinate sampling typically
results in worse approximations for a given budget of rows or
columns.  Moreover, the quality of the approximation obtained
from coordinate sampling tends to be highly variable.
These vulnerabilities are less pronounced when
the matrix has very low coherence, so that uniform sampling works well.
There are also a few specialized situations where we can compute
subspace leverage scores efficiently.
(Section \ref{sec:coherence_leverage} defines coherence and leverage scores.)

In certain applications, a hybrid approach can work well. First, form an
initial approximation by drawing a very large subset of columns using a cheap coordinate sampling
method.  Then slim it down using the techniques described here, based on mixing random embeddings.
An example of this methodology appears in \citeasnoun{LBKL15:Large-Scale-Nystrom}.

There is a distinct class of methods, based on coresets,
that explicitly takes advantage of coordinate structure
for computing matrix approximations.
For example, see~\citeasnoun{2016_feldman_dimensionality}.
These techniques can be useful for processing enormous
matrices that are very sparse.  On the other hand, they
may require larger sets of basis vectors to achieve the
same quality of approximation.

\section{Nystr{\"o}m approximation}
\label{sec:nystrom}

We continue our discussion of matrix approximation
with the problem of finding a low-rank approximation
of a positive semidefinite (PSD) matrix.  There is an
elegant randomized method for accomplishing this goal
that is related to our solution to the rangefinder problem.

\subsection{Low-rank PSD approximation}
\label{sec:nystrom-overview}

Let $\mtx{A} \in \Sym_n$ be a PSD matrix.
For a rank parameter $k$, the goal is to produce
a rank-$k$ PSD matrix $\widehat{\mtx{A}}_k \in \Sym_n$ %
that approximates $\mtx{A}$ nearly as well as the
best rank-$k$ matrix:
$$
\norm{ \mtx{A} - \widehat{\mtx{A}}_k } \lesssim \sigma_{k+1}.
$$
To obtain the approximation, we will adapt the
randomized rangefinder method
(Algorithm~\ref{alg:random-rangefinder}).

\subsection{The Nystr{\"o}m approximation}

The most natural way to construct a low-rank approximation
of a PSD matrix is via the Nystr{\"o}m method.
Let $\mtx{X} \in \F^{n \times \ell}$ be an
arbitrary test matrix.  The \emph{Nystr{\"o}m approximation}
of $\mtx{A}$ with respect to $\mtx{X}$
is the PSD matrix
\begin{equation} \label{eqn:nystrom-def}
\mtx{A}\nys{\mtx{X}}
	:= (\mtx{AX})(\mtx{X}^* \mtx{AX})^\pinv (\mtx{AX})^*.
\end{equation}
An alternative presentation of this formula is
$$
\mtx{A}\nys{\mtx{X}}
	= \mtx{A}^{1/2} \mtx{P}_{\mtx{A}^{1/2} \mtx{X}} \mtx{A}^{1/2},
$$
where $\mtx{P}_{\mtx{Y}}$ is the orthogonal projector onto
the range of $\mtx{Y}$.  In particular, the Nystr{\"o}m approximation
only depends on the range of the matrix $\mtx{X}$.

The Nystr{\"o}m approximation~\eqref{eqn:nystrom-def} is closely related to the
Schur complement~\eqref{eqn:schur-complement} of $\mtx{A}$ with respect to $\mtx{X}$.
Indeed,
$$
\mtx{A} / \mtx{X}
	= \mtx{A} - \mtx{A} \nys{\mtx{X}}
	= \mtx{A}^{1/2} (\Id - \mtx{P}_{\mtx{A}^{1/2} \mtx{X}}) \mtx{A}^{1/2}.
$$
That is, the Schur complement of $\mtx{A}$ with respect to $\mtx{X}$
is precisely the error in the Nystr{\"o}m approximation.

Proposition~\ref{prop:rrf-schur} indicates that the
Nystr{\"o}m decomposition is also connected with our
approach to solving the rangefinder problem.  We immediately
perceive the opportunity to use a random test matrix $\mtx{X}$
to form the Nystr{\"o}m approximation.
Let us describe how this choice leads to algorithms for
computing a near-optimal low-rank approximation of
the matrix $\mtx{A}$.

\subsection{Randomized Nystr{\"o}m approximation algorithms}
\label{sec:nystrom-alg}

Here is a simple and effective procedure for computing a rank-$k$
PSD approximation of the PSD matrix $\mtx{A} \in \Sym_n$. %

First, draw a random test matrix $\mtx{\Omega} \in \F^{n \times \ell}$,
where $\ell \geq k$.  Form the sample matrix $\mtx{Y} = \mtx{A\Omega} \in \F^{n \times \ell}$.
Then compute the Nystr{\"o}m approximation
\begin{equation} \label{eqn:Ahat-nys-bad}
\widehat{\mtx{A}} = \mtx{A}\nys{ \mtx{\Omega} } = \mtx{Y} (\mtx{\Omega}^* \mtx{Y})^\pinv \mtx{Y}^*.
\end{equation}
The initial approximation $\widehat{\mtx{A}}$ has rank $\ell$.  To truncate the rank to $k$,
we just report a best rank-$k$ approximation $\widehat{\mtx{A}}_k$
of the initial approximation $\widehat{\mtx{A}}$ with respect to the Frobenius norm;
see \cite{TYUC17:Fixed-Rank-Approximation,PB19:Improved-Fixed-Rank}
and \cite{WGM19:Scalable-k-Means}.

Let us warn the reader that the formula~\eqref{eqn:Ahat-nys-bad}
is not suitable for numerical computation.  See Algorithm~\ref{alg:random-nystrom}
for a numerically stable implementation adapted from~\cite{LLS+17:Algorithm-971}
and \cite{TYUC17:Fixed-Rank-Approximation}.
In general, the matrix--matrix multiply with $\mtx{A}$ dominates the cost,
with $\bigO(n^2 \ell)$ arithmetic operations; this expense can be reduced
if either $\mtx{A}$ or $\mtx{\Omega}$ admits fast multiplication.
The approximation steps involve $\bigO(n \ell^2)$ arithmetic.
Meanwhile, storage costs are $\bigO(n\ell)$.

Another interesting aspect of Algorithm~\ref{alg:random-nystrom} is that
it only uses linear information about the matrix $\mtx{A}$.  Therefore,
it can be implemented in the one-pass or the streaming data model.
Remark~\ref{rem:streaming} gives more details about matrix approximation
in the streaming model.
See Section~\ref{sec:streaming-kpca} for an application to kernel
principal component analysis.

\begin{algorithm}[t]
\begin{algorithmic}[1]
\caption{\textit{Randomized Nystr{\"o}m approximation.} \newline
Implements the procedure from Section~\ref{sec:nystrom-alg}.}
\label{alg:random-nystrom}

\Require	Psd target matrix $\mtx{A} \in \Sym_n(\F)$, rank $k$ for approximation,
number $\ell$ of samples
\Ensure		Rank-$k$ PSD approximation $\widehat{\mtx{A}}_k \in \Sym_n(\F)$ expressed in factored form
$\widehat{\mtx{A}}_k = \mtx{U \Lambda U}^*$ where $\mtx{U} \in \F^{n \times k}$ is orthonormal
and $\mtx{\Lambda} \in \Sym_k(\F)$ is nonnegative and diagonal.

\Statex

\Function{RandomNystr{\"o}m}{$\mtx{A}$, $k$, $\ell$}

\State	Draw random matrix $\mtx{\Omega} \in \F^{n \times \ell}$
\State	Form $\mtx{Y} = \mtx{A\Omega}$

\State	$\nu = \sqrt{n} \, \texttt{eps}(\texttt{norm}(\mtx{Y}))$
\Comment	Compute shift

\State	$\mtx{Y}_{\nu} = \mtx{Y} + \nu \mtx{\Omega}$
\Comment	Samples of shifted matrix

\State	$\mtx{C} = \texttt{chol}(\mtx{\Omega}^* \mtx{Y}_{\nu})$

\State	$\mtx{B} = \mtx{Y}_{\nu} \mtx{C}^{-1}$
\Comment	Triangular solve!

\State	$[\mtx{U}, \mtx{\Sigma}, \sim] = \texttt{svd}(\mtx{B})$
\Comment	Dense SVD

\State	$\mtx{\Lambda} = \max\{0, \mtx{\Sigma}^2 - \nu \Id\}$
\Comment	Remove shift

\State	$\mtx{U} = \mtx{U}(:, 1:k)$ and $\mtx{\Lambda} = \mtx{\Lambda}(1:k, 1:k)$
\Comment	Truncate rank to $k$

\EndFunction
\end{algorithmic}
\end{algorithm}

\subsection{Analysis}

The randomized Nystr{\"o}m method enjoys the same kind of guarantees as the randomized
rangefinder.  The following result~\cite[Thm.~4.1]{TYUC17:Fixed-Rank-Approximation}
extends earlier contributions from~\cite{HMT11:Finding-Structure} and
\cite{Git13:Topics-Randomized}. %

\begin{theorem}[Nystr{\"o}m: Gaussian analysis] \label{thm:rand-nys}
Fix a PSD matrix $\mtx{A} \in \Sym_n(\F)$ with eigenvalues $\lambda_1 \geq \lambda_2 \geq \dots$.
Let $1 \leq k < \ell \leq n$.
Draw a random test matrix $\mtx{\Omega} \in \F^{n \times \ell}$ that is standard normal.
Then the rank-$k$ PSD approximation $\widehat{\mtx{A}}_k$ computed by
Algorithm~\ref{alg:random-nystrom} satisfies
$$
\Expect \norm{ \mtx{A} - \widehat{\mtx{A}}_k }
	\leq \lambda_{k+1} + \frac{k}{\ell - k - 1} \left( \sum\nolimits_{j > k} \lambda_j \right).
$$
\end{theorem}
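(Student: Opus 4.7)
My plan is to reduce to the untruncated Nyström approximation $\widehat{\mtx{A}} = \mtx{A}\nys{\mtx{\Omega}}$, exploit the Schur-complement representation of its error, and then take the Gaussian expectation using conditional independence. The rank truncation $\widehat{\mtx{A}} \mapsto \widehat{\mtx{A}}_k$ needs a careful perturbation argument and will be the main obstacle.

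First, since the standard normal distribution is orthogonally invariant, I would diagonalize $\mtx{A} = \mtx{U}\mtx{\Lambda}\mtx{U}^*$ and assume without loss of generality that $\mtx{A} = \mtx{\Lambda} = \mathrm{diag}(\lambda_1, \lambda_2, \ldots)$. Partition $\mtx{\Lambda} = \mathrm{diag}(\mtx{\Lambda}_1, \mtx{\Lambda}_2)$ with $\mtx{\Lambda}_1 \in \RR^{k \times k}$, and split $\mtx{\Omega} = \begin{bmatrix} \mtx{\Omega}_1^* & \mtx{\Omega}_2^* \end{bmatrix}^*$ conformally. Because we split along coordinate blocks, $\mtx{\Omega}_1$ and $\mtx{\Omega}_2$ are statistically independent standard normal matrices, which is crucial for Step~3.

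Second, I would work initially with the untruncated error. By the identity $\mtx{A}\nys{\mtx{\Omega}} = \mtx{A}^{1/2} \mtx{P}_{\mtx{A}^{1/2}\mtx{\Omega}}\mtx{A}^{1/2}$ and Proposition~\ref{prop:rrf-schur} applied with $\mtx{B} = \mtx{A}^{1/2}$, the Nyström error is the Schur complement $\mtx{A} - \widehat{\mtx{A}} = \mtx{A}/\mtx{\Omega}$. Running the argument in the proof of Theorem~\ref{thm:rrf-deterministic} all the way through---but stopping at the intermediate psd block bound instead of taking the final spectral norm---I would extract the sharper deterministic inequality
$$
\|\mtx{A} - \widehat{\mtx{A}}\| \;\leq\; \lambda_{k+1} + \|\mtx{\Lambda}_2^{1/2}\mtx{\Omega}_2 \mtx{\Omega}_1^{\pinv}\|^2.
$$
Note that no factor multiplies $\lambda_{k+1}$ here, because the ``diagonal'' psd block in the Schur-complement proof contributes exactly $\mtx{\Lambda}_2$.

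Third, I would take expectations by bounding the spectral norm with the Frobenius norm, $\|\mtx{\Lambda}_2^{1/2}\mtx{\Omega}_2\mtx{\Omega}_1^\pinv\|^2 \leq \|\mtx{\Lambda}_2^{1/2}\mtx{\Omega}_2 \mtx{\Omega}_1^\pinv\|_{\mathrm{F}}^2$, and then conditioning on $\mtx{\Omega}_1$. Since the rows of $\mtx{\Omega}_2$ are iid standard normal and independent of $\mtx{\Omega}_1$, the identity $\Expect[\mtx{\Omega}_2^* \mtx{M} \mtx{\Omega}_2] = \trace(\mtx{M}) \Id$ yields $\Expect_{\mtx{\Omega}_2}[\|\mtx{\Lambda}_2^{1/2}\mtx{\Omega}_2 \mtx{\Omega}_1^\pinv\|_{\mathrm{F}}^2 \mid \mtx{\Omega}_1] = \trace(\mtx{\Lambda}_2)\,\|\mtx{\Omega}_1^{\pinv}\|_{\mathrm{F}}^2$. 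The Wishart moment identity from the proof of Theorem~\ref{thm:rrf-gauss} gives $\Expect \|\mtx{\Omega}_1^\pinv\|_{\mathrm{F}}^2 = k/(\ell - k - 1)$, so
$$
\Expect \|\mtx{A} - \widehat{\mtx{A}}\| \;\leq\; \lambda_{k+1} + \frac{k}{\ell - k - 1} \sum\nolimits_{j > k} \lambda_j.
$$

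Finally, I have to pass from the untruncated $\widehat{\mtx{A}}$ to the rank-$k$ truncation $\widehat{\mtx{A}}_k$, and this is the delicate step. Since the Nyström approximation satisfies $\widehat{\mtx{A}} \psdle \mtx{A}$, and since $\widehat{\mtx{A}}_k$ truncates $\widehat{\mtx{A}}$ further, the psd order gives $\mtx{A} - \widehat{\mtx{A}}_k \psdge \mtx{A} - \widehat{\mtx{A}}$, so truncation can only \emph{increase} the spectral-norm error. A crude triangle inequality combined with $\|\widehat{\mtx{A}} - \widehat{\mtx{A}}_k\| = \lambda_{k+1}(\widehat{\mtx{A}}) \leq \lambda_{k+1}(\mtx{A})$ costs an extra additive $\lambda_{k+1}$, which is the principal technical gap in this sketch. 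To recover the stated bound without that extra factor, I would follow \citeasnoun{TYUC17:Fixed-Rank-Approximation} and analyze the eigenvalues of $\mtx{A} - \widehat{\mtx{A}}_k$ directly in the eigenbasis of $\widehat{\mtx{A}}$: the perturbation of the top-$k$ eigenspace is controlled by the same quantity $\|\mtx{\Lambda}_2^{1/2}\mtx{\Omega}_2\mtx{\Omega}_1^\pinv\|^2$ appearing in the deterministic bound, so the truncation cost can be absorbed into the existing tail term rather than contributing a separate $\lambda_{k+1}$.
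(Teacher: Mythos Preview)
The paper does not give a proof here; it simply cites \citeasnoun[Thm.~4.1]{TYUC17:Fixed-Rank-Approximation}. Your Steps~1--3 are correct and, as you observe, already yield the stated inequality for the \emph{untruncated} Nystr\"om approximation $\widehat{\mtx{A}}$; this part follows the arguments of \citeasnoun{HMT11:Finding-Structure} and \citeasnoun{Git13:Topics-Randomized}, and the intermediate psd block inequality you invoke is exactly the one appearing inside the proof of Theorem~\ref{thm:rrf-deterministic}.

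The gap you flag in Step~4 is real, and your proposed resolution does not close it. The triangle inequality together with $\lVert\widehat{\mtx{A}}-\widehat{\mtx{A}}_k\rVert = \lambda_{k+1}(\widehat{\mtx{A}}) \leq \lambda_{k+1}(\mtx{A})$, combined with your untruncated bound, produces $2\lambda_{k+1}$ in place of $\lambda_{k+1}$. Your final assertion---that the extra $\lambda_{k+1}$ can be ``absorbed into the existing tail term''---cannot hold as stated: fix $k$ and send $\ell \to \infty$; the tail coefficient $k/(\ell-k-1)\to 0$ while the surplus $\lambda_{k+1}$ remains, so there is nothing left to absorb it. To recover the constant exactly as written one needs the sharper truncation argument in \citeasnoun{TYUC17:Fixed-Rank-Approximation}, which does not route through the untruncated error; your pointer to that reference is correct, but the mechanism you sketch is not the one that works.
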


In other words, the computed rank-$k$ approximation $\widehat{\mtx{A}}_k$ achieves almost
the error $\lambda_{k+1}$ in the optimal rank-$k$ approximation of $\mtx{A}$.  The error
declines as the number $\ell$ of samples increases and as the $\ell_1$ norm of the
tail eigenvalues decreases.   %

When the target matrix $\mtx{A}$ has a sharply decaying spectrum,
Theorem~\ref{thm:rand-nys} can be pessimistic.
See~\citeasnoun[Sec.~4]{TYUC17:Fixed-Rank-Approximation} for additional theoretical results.

\subsection{Powering}
\label{sec:nystrom-power}

We can reduce the error in the randomized
Nystr{\"o}m approximation by powering the input matrix,
much as subspace iteration improves the
performance in the randomized rangefinder (Section~\ref{sec:rrf-subspace}).

Let $\mtx{A} \in \Sym_n$ be a PSD matrix.  Draw a random test matrix
$\mtx{\Omega} \in \F^{n \times \ell}$.  For a natural number $q$,
we compute $\mtx{Y} = \mtx{A}^q \mtx{\Omega}$ by repeated multiplication.
Then the Nystr{\"o}m approximation of the input matrix takes the form
$$
\widehat{\mtx{A}} = \big[ (\mtx{A}^q) \nys{ \mtx{\Omega} } \big]^{1/q}
	= \big[ \mtx{Y} (\mtx{\Omega}^* \mtx{Y})^\pinv \mtx{Y}^* \big]^{1/q}.
$$
This approach requires very careful numerical implementation.
As in the case of subspace iteration, it drives down the
error exponentially fast as $q$ increases.  We omit the details.

\subsection{History}

The Nystr{\"o}m approximation was developed in the context of
integral equations~\cite{Nys30:Uber-Praktische}.  It has had a substantial
impact in machine learning, beginning with the work of
~\citeasnoun{WS01:Using-Nystrom} on randomized low-rank approximation
of kernel matrices.
Section~\ref{sec:kernel} contains a discussion of this literature.
Note that the Nystr{\"o}m approximation of a kernel matrix is almost always
computed with respect to a random coordinate subspace,
in contrast to the uniformly random subspace induced by a Gaussian test matrix.

Algorithmic and theoretical results on Nystr{\"o}m approximation
with respect to general test matrices have appeared in a number of papers,
including \cite{HMT11:Finding-Structure,Git13:Topics-Randomized,LLS+17:Algorithm-971}
and \cite{2017_tropp_practical_sketching}.

\section{Single-view algorithms}
\label{sec:singlepass}

In this section, we will describe a remarkable class of algorithms that
are capable of computing a low-rank approximation of a matrix that is
so large that it cannot be stored at all.

We will consider the specific problem of computing an approximate
singular value decomposition of a matrix $\mtx{A} \in \F^{m\times n}$
under the assumption that we are allowed to view each entry of $\mtx{A}$ only once
and that we cannot specify the order in which they are viewed.
To the best of our knowledge, no deterministic techniques
can carry off such a computation without \textit{a priori}
information about the singular vectors of the matrix.

For the case where $\mtx{A}$ is psd, we have already seen a single-view algorithm:
the Nystr\"om technique of Algorithm~\ref{alg:random-nystrom}. Here, we concentrate on the more difficult
case of general matrices.  This presentation is adapted from \citeasnoun[Sec.~5.5]{HMT11:Finding-Structure}
and the papers \cite{TYUC17:Practical-Sketching} and
\cite{TYUC19:Streaming-Low-Rank}.

\subsection{Algorithms}
\label{sec:singleviewalgorithms}

In the basic RSVD algorithm (Section \ref{sec:rsvd}),
we view each element of the given matrix $\mtx{A}$ at least twice.
In the first view, we form
a sample matrix $\mtx{Y} = \mtx{A}\mtx{\Omega}$ for a given test matrix $\mtx{\Omega}$. We
orthonormalize the columns of $\mtx{Y}$ to form the matrix $\mtx{Q}$ and then visit $\mtx{A}$
again to form a second sample $\mtx{C} = \mtx{Q}^{*}\mtx{A}$. The columns of $\mtx{Y}$ form
an approximate basis for the column space of $\mtx{A}$, and the columns of $\mtx{C}$ form an
approximate basis for the row space.

In the single-view framework, we can only visit $\mtx{A}$
once, which means that we must sample both the row and the column space simultaneously. To this
end, let us draw tall thin random matrices
\begin{equation}
\label{eq:single1}
\mtx{\Upsilon} \in \F^{m\times \ell}
\qquad\mbox{and}\qquad
\mtx{\Omega} \in \F^{n\times \ell}
\end{equation}
and then form the two corresponding sample matrices
\begin{equation}
\label{eq:single2}
\mtx{X} = \mtx{A}^{*}\mtx{\Upsilon} \in \F^{n\times \ell}
\qquad\mbox{and}\qquad
\mtx{Y} = \mtx{A}\mtx{\Omega} \in \F^{m\times \ell}.
\end{equation}
In (\ref{eq:single1}), we draw a number $\ell$ of samples that is slightly larger than
the rank $k$ of the low-rank approximation that we seek. (Section \ref{sec:singleapriori}
gives details about how to choose $\ell$.) Observe that both $\mtx{X}$ and $\mtx{Y}$ can be
formed in a single pass over the matrix $\mtx{A}$.

Once we have seen the entire matrix, the next step is to orthonormalize the columns
of $\mtx{X}$ and $\mtx{Y}$ to obtain orthonormal matrices
\begin{equation}
\label{eq:single3}
[\mtx{P},\sim] = \texttt{qr\_econ}(\mtx{X}),
\qquad\mbox{and}\qquad
[\mtx{Q},\sim] = \texttt{qr\_econ}(\mtx{Y}).
\end{equation}
At this point, $\mtx{P}$ and $\mtx{Q}$ hold approximate bases for the row and column spaces
of $\mtx{A}$, so we anticipate that
\begin{equation}
\label{eq:single4}
\mtx{A} \approx \mtx{Q}\mtx{Q}^{*}\mtx{A}\mtx{P}\mtx{P}^{*} = \mtx{Q}\mtx{C}\mtx{P}^{*},
\end{equation}
where we defined the ``core'' matrix
\begin{equation}
\label{eq:single5}
\mtx{C} := \mtx{Q}^{*}\mtx{A}\mtx{P} \in \F^{\ell\times \ell}.
\end{equation}
Unfortunately, since we cannot revisit $\mtx{A}$,
we are not allowed to form $\mtx{C}$ directly by applying formula \eqref{eq:single5}.

Instead, we develop a relation that $\mtx{C}$ must satisfy approximately,
which allows us to estimate $\mtx{C}$ from the quantities we have on hand.
To do so, right-multiply the definition \eqref{eq:single5}
by $\mtx{P}^{*}\mtx{\Omega}$ to obtain %
\begin{equation}
\label{eq:single6}
\mtx{C}\bigl(\mtx{P}^{*}\mtx{\Omega}\bigr) = \mtx{Q}^{*}\mtx{A}\mtx{P}\bigl(\mtx{P}^{*}\mtx{\Omega}\bigr).
\end{equation}
Inserting the approximation $\mtx{A}\mtx{P}\mtx{P}^{*} \approx \mtx{A}$ into (\ref{eq:single6}), we find that
\begin{equation}
\label{eq:single7}
\mtx{C}\bigl(\mtx{P}^{*}\mtx{\Omega}\bigr) \approx \mtx{Q}^{*}\mtx{A}\mtx{\Omega} = \mtx{Q}^{*}\mtx{Y}.
\end{equation}
In (\ref{eq:single7}), all quantities except $\mtx{C}$ are known explicitly, which means that we can
solve it, in the least-squares sense, to arrive at an estimate
\begin{equation}
\label{eq:single8}
\mtx{C}_{\rm approx} = \bigl(\mtx{Q}^{*}\mtx{Y}\bigr)\bigl(\mtx{P}^{*}\mtx{\Omega}\bigr)^{\pinv},
\end{equation}
where $\pinv$ denotes the Moore-Penrose pseudoinverse.  As always, the pseudoinverse is applied
by means of an orthogonal factorization.  Once $\mtx{C}_{\rm approx}$ has been
computed via (\ref{eq:single8}), we obtain the rank-$\ell$ approximation
\begin{equation}
\label{eq:single8b}
\mtx{A} \approx \mtx{Q}\mtx{C}_{\rm approx}\mtx{P}^{*},
\end{equation}
which we can convert into an approximate SVD using the standard postprocessing steps. For additional
implementation details, see \citeasnoun[Sec.~5.5]{HMT11:Finding-Structure}
and \citeasnoun[Sec.~5]{2018_PCMI_martinsson}.  Extensions of this approach,
with theoretical analysis, appear in \citeasnoun{2017_tropp_practical_sketching}.

Recently, \citeasnoun{TYUC19:Streaming-Low-Rank} have demonstrated that the numerical performance of the single-view algorithm
can be improved by extracting a third sketch of $\mtx{A}$ that is independent from $\mtx{X}$ and $\mtx{Y}$.
The idea is to draw tall thin random matrices
$$
\mtx{\Phi} \in \F^{m\times s},
\qquad\mbox{and}\qquad
\mtx{\Psi} \in \F^{n\times s},
$$
where $s$ is another oversampling parameter.  Then we form a ``core sketch''
\begin{equation}
\label{eq:single9}
\mtx{Z} = \mtx{\Phi}^{*}\mtx{A}\mtx{\Psi} \in \F^{s\times s}.
\end{equation}
This extra data allows us to derive an alternative equation for the core matrix $\mtx{C}$.
We left- and right-multiply the definition \eqref{eq:single5} by $\mtx{\Phi}^{*}\mtx{Q}$
and $\mtx{P}^{*}\mtx{\Psi}$ to obtain the relation
\begin{equation}
\label{eq:single10}
\bigl(\mtx{\Phi}^{*}\mtx{Q}\bigr)\mtx{C}\bigl(\mtx{P}^{*}\mtx{\Psi}\bigr) =
\mtx{\Phi}^{*}\mtx{Q}\mtx{Q}^{*}\mtx{A}\mtx{P}\mtx{P}^{*}\mtx{\Psi}.
\end{equation}
Inserting the approximation $\mtx{A} \approx \mtx{Q}\mtx{Q}^{*}\mtx{A}\mtx{P}\mtx{P}^{*}$ into
\eqref{eq:single10}, we find that
\begin{equation}
\label{eq:single11}
\bigl(\mtx{\Phi}^{*}\mtx{Q}\bigr)\mtx{C}\bigl(\mtx{P}^{*}\mtx{\Psi}\bigr) \approx
\mtx{\Phi}^{*}\mtx{A}\mtx{\Psi} = \mtx{Z}.
\end{equation}
An improved approximation to the core matrix $\mtx{C}$ results by solving
\eqref{eq:single11} in a least-squares sense; to wit,
$\mtx{C} = (\mtx{\Phi}^{*}\mtx{Q})^{\pinv}\mtx{Z}(\mtx{P}^{*}\mtx{\Psi})^{\pinv}$.

\begin{remark}[Streaming algorithms] \label{rem:streaming}
Single-view algorithms are related to the streaming
model of computation \cite{2005_muthukrishnan_stream}.
\citeasnoun{2009_clarkson_woodruff} were the first
to explicitly study matrix computations in streaming
data models.

One important streaming model poses the assumption that the input matrix
$\mtx{A}$ is presented as a sequence %
of innovations:
$$
\mtx{A} = \mtx{H}_{1} + \mtx{H}_{2} + \mtx{H}_{3} + \cdots
$$
Typically, each update $\mtx{H}_{i}$ is simple; for instance,
it may be sparse or low-rank.
The challenge is that the full matrix $\mtx{A}$ is too large to be stored.
Once an innovation $\mtx{H}_{i}$ has been processed, it cannot be retained.
This is called the ``turnstile'' model in the theoretical computer science literature.

The algorithms described in this section handle this difficulty by creating a
random linear transform $\mathcal{S}$ that maps $\mtx{A}$ down to a low-dimensional sketch
that is small enough to store.  What we actually retain in memory is the evolving sketch
of the input:
$$
\mathcal{S}(\mtx{A}) =
\mathcal{S}(\mtx{H}_{1}) +
\mathcal{S}(\mtx{H}_{2}) +
\mathcal{S}(\mtx{H}_{3}) + \cdots.
$$
In Algorithm \ref{alg:SingleViewSVD}, we instantiate $\mathcal{S}$ by drawing the
random matrices $\mtx{\Upsilon}$, $\mtx{\Omega}$, $\mtx{\Phi}$, and $\mtx{\Psi}$, and
then work with the sketch
$$
\mathcal{S}(\mtx{H}) =
\big(\mtx{\Upsilon}^{*}\mtx{H},\,\mtx{H}\mtx{\Omega},\,\mtx{\Phi}^{*}\mtx{H}\mtx{\Psi} \big).
$$
The fact that the sketch is a \textit{linear} map is essential here.
\citeasnoun{LNW14:Turnstile-Streaming} prove that randomized
linear embeddings are essentially the only kind of algorithm
for handling the turnstile model.
In contrast, the sketch implicit in the RSVD algorithm from Section \ref{sec:rsvd} is
a quadratic or higher-order polynomial in the input matrix.
\end{remark}

\begin{algorithm}[t]
\begin{algorithmic}[1]
\caption{\textit{Single-view SVD.} \newline
Implements the algorithm from Section~\ref{sec:singleviewalgorithms}. \newline
This algorithm computes an approximate partial singular value decomposition of a given matrix
$\mtx{A}$, under the constraint that each entry of $\mtx{A}$ may be viewed only once.
}
\label{alg:SingleViewSVD}

\Require	Target matrix $\mtx{A} \in \F^{m \times n}$, rank $k$, sampling sizes $\ell$ and $s$.
\Ensure		Orthonormal matrices $\mtx{U} \in \F^{m\times k}$ and $\mtx{V} = \F^{n\times k}$, and a diagonal matrix $\mtx{\Sigma} \in \F^{k\times k}$ such that
$\mtx{A} \approx \mtx{U}\mtx{\Sigma}\mtx{V}^{*}$.
\Statex

\Function{SingleViewSVD}{$\mtx{A}$, $k$, $\ell$, $s$}

\State	Draw test matrices $\mtx{\Upsilon} \in \F^{m\times \ell}$, $\mtx{\Omega} \in \F^{n\times \ell}$,
             $\mtx{\Phi} \in \F^{m\times s}$, $\mtx{\Psi} \in \F^{n\times s}$
\State  Form $\mtx{X} = \mtx{A}^{*}\mtx{\Upsilon}$, $\mtx{Y} = \mtx{A}\mtx{\Omega}$, $\mtx{Z} = \mtx{\Phi}^{*}\mtx{A}\mtx{\Psi}$
\Comment Viewing $\mtx{A}$ only once!
\State  $[\mtx{P},\sim] = \texttt{qr\_econ}(\mtx{X})$, $[\mtx{Q},\sim] = \texttt{qr\_econ}(\mtx{Y})$
\State  $\mtx{C} = \bigl(\mtx{\Phi}^{*}\mtx{Q}\bigr)^{\pinv}\,\mtx{Z}\,\bigl(\mtx{P}^{*}\mtx{\Psi}\bigr)^{\pinv}$
\Comment Execute using a least-squares solver
\State  $[\widehat{\mtx{U}},\widehat{\mtx{\Sigma}},\widehat{\mtx{V}}] = \texttt{svd}(\mtx{C})$
\Comment A full SVD
\State $\mtx{U} = \mtx{Q}\widehat{\mtx{U}}(\colon,1:k)$,
       $\mtx{V} = \mtx{P}\widehat{\mtx{V}}(\colon,1:k)$,
       $\mtx{\Sigma} = \widehat{\mtx{\Sigma}}(1:k,1:k)$
\EndFunction
\end{algorithmic}
\end{algorithm}

\begin{remark}[Single-view versus out-of-core algorithms]
In principle, the methods discussed in this section can also be used in situations where
a matrix is stored in slow memory, such as a spinning disk hard drive, or on a
distributed memory system.  However, one has to carefully weigh whether the decrease
in accuracy and increase in uncertainty that is inherent to single-view algorithms
is worth the cost savings. As a general matter, revisiting the matrix at least once
is advisable whenever it is possible.
\end{remark}

\subsection{Error estimation, parameter choices and truncation}
\label{sec:singleapriori}

In the single-view computing environment, one must choose sampling parameters before
the computation starts, and there is no way to revisit these choices after data
has been gathered. This constraint makes \textit{a priori} error analysis particularly
important, because we need guidance on how large to make the sketches given
some prior knowledge about the spectral decay of the input matrix.
To illustrate
how this may work, let us cite \citeasnoun[Thm.~5.1]{TYUC19:Streaming-Low-Rank}:

\begin{theorem}[Single-view SVD: Gaussian analysis]
\label{thm:singleview}
Suppose that Algorithm \ref{alg:SingleViewSVD} is executed for an input matrix
$\mtx{A} \in \mathbb{C}^{m\times n}$ and for sampling parameters $s$ and $\ell$
that satisfy $s \geq 2\ell$. When the test matrices are drawn from a standard
normal distribution, the computed matrices $\mtx{P}$, $\mtx{C}$, and $\mtx{Q}$ satisfy
$$
\Expect \fnorm{ \mtx{A} - \mtx{Q}\mtx{C}\mtx{P}^{*} }^{2}
\leq
\frac{s}{s-\ell}\,
\min_{k < \ell}\left(\frac{\ell+k}{\ell-k}\sum_{j=k+1}^{\min(m,n)}\sigma_{j}^{2}\right).
$$
As usual, $\sigma_j$ is the $j$th largest singular value of $\mtx{A}$.
A very similar bound holds for the real field.
\end{theorem}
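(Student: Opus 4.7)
The plan is to split the error into two orthogonal pieces---a ``range'' error coming from the column and row sketches $\mtx{X},\mtx{Y}$, and a ``core'' error coming from the approximation $\mtx{C}$ of the ideal core matrix $\mtx{C}_{\star}:=\mtx{Q}^{*}\mtx{A}\mtx{P}$---and to handle each piece by conditioning on the appropriate subset of the random test matrices. Throughout, let $\mtx{E} := \mtx{A} - \mtx{Q}\mtx{Q}^{*}\mtx{A}\mtx{P}\mtx{P}^{*}$, so that $\mtx{A}-\mtx{Q}\mtx{C}\mtx{P}^{*} = \mtx{E} + \mtx{Q}(\mtx{C}_{\star}-\mtx{C})\mtx{P}^{*}$. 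Because $\mtx{Q}(\mtx{C}_{\star}-\mtx{C})\mtx{P}^{*}$ lies in $\range(\mtx{Q})$ on the left and $\range(\mtx{P})$ on the right while $\mtx{E}$ is orthogonal to at least one of these on each term of its natural two-part decomposition, the Frobenius Pythagorean identity yields
$$
\fnorm{\mtx{A}-\mtx{Q}\mtx{C}\mtx{P}^{*}}^{2} = \fnorm{\mtx{E}}^{2} + \fnorm{\mtx{C}_{\star}-\mtx{C}}^{2}.
$$

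For the range term, I would further decompose $\mtx{E} = (\Id-\mtx{Q}\mtx{Q}^{*})\mtx{A} + \mtx{Q}\mtx{Q}^{*}\mtx{A}(\Id-\mtx{P}\mtx{P}^{*})$, observe that the two summands are Frobenius-orthogonal, and then bound $\fnorm{\mtx{E}}^{2}$ by $\fnorm{(\Id-\mtx{Q}\mtx{Q}^{*})\mtx{A}}^{2} + \fnorm{\mtx{A}(\Id-\mtx{P}\mtx{P}^{*})}^{2}$. Each summand is the error of a randomized rangefinder driven by a Gaussian test matrix of width $\ell$, so the Frobenius-norm version of the analysis underlying Theorem \ref{thm:rrf-gauss} (applied to $\mtx{A}$ and to $\mtx{A}^{*}$ respectively) gives, for every $k<\ell-1$, an expected bound of the form $(1+\tfrac{k}{\ell-k-1})\sum_{j>k}\sigma_{j}^{2}$. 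Summing the two contributions produces the $\frac{\ell+k}{\ell-k}\sum_{j>k}\sigma_{j}^{2}$ term in the statement, once one absorbs the $\pm 1$ shifts in the oversampling denominator.

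For the core term, substituting $\mtx{Z} = \mtx{\Phi}^{*}\mtx{A}\mtx{\Psi} = (\mtx{\Phi}^{*}\mtx{Q})\mtx{C}_{\star}(\mtx{P}^{*}\mtx{\Psi}) + \mtx{\Phi}^{*}\mtx{E}\mtx{\Psi}$ into the defining formula for $\mtx{C}$ yields
$$
\mtx{C} - \mtx{C}_{\star} = (\mtx{\Phi}^{*}\mtx{Q})^{\pinv}\,\mtx{\Phi}^{*}\mtx{E}\mtx{\Psi}\,(\mtx{P}^{*}\mtx{\Psi})^{\pinv},
$$
valid almost surely since $\mtx{\Phi}^{*}\mtx{Q}$ and $\mtx{P}^{*}\mtx{\Psi}$ have full rank when $s\geq 2\ell$. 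Now condition on $\mtx{\Upsilon},\mtx{\Omega}$, which fixes $\mtx{P}$, $\mtx{Q}$, and $\mtx{E}$. The matrices $\mtx{\Phi}^{*}\mtx{Q}$ and $\mtx{P}^{*}\mtx{\Psi}$ are then independent standard normal matrices (by rotational invariance of the Gaussian and the orthonormality of $\mtx{Q},\mtx{P}$), and they are independent of $\mtx{\Phi}^{*}\mtx{E}\mtx{\Psi}$ to the appropriate extent because $\mtx{\Phi}$ and $\mtx{\Psi}$ are independent of each other. I would use the standard Gaussian pseudoinverse identity $\Expect\,\mtx{G}^{\pinv}\mtx{M}\mtx{M}^{*}(\mtx{G}^{\pinv})^{*} = \frac{1}{s-\ell-1}\fnorm{\mtx{M}}^{2}\Id$ for an $s\times\ell$ Gaussian $\mtx{G}$ (cf.\ Proposition 10.2 in \citeasnoun{HMT11:Finding-Structure}), applied twice---once to $(\mtx{P}^{*}\mtx{\Psi})^{\pinv}$ and once to $(\mtx{\Phi}^{*}\mtx{Q})^{\pinv}$---to obtain
$$
\Expect_{\mtx{\Phi},\mtx{\Psi}}\fnorm{\mtx{C}-\mtx{C}_{\star}}^{2} \leq \frac{\ell}{s-\ell-1}\,\fnorm{\mtx{E}}^{2} \cdot (\text{factor from the second pseudoinverse}),
$$
which, after the two applications and using $s\geq 2\ell$, yields a constant multiple of $\tfrac{s}{s-\ell}\fnorm{\mtx{E}}^{2}$. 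Taking outer expectation over $\mtx{\Upsilon},\mtx{\Omega}$ reuses the range-term bound already established.

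Combining the two contributions, the overall bound acquires the prefactor $\tfrac{s}{s-\ell}$ times the rangefinder estimate, and optimizing over the free splitting parameter $k<\ell$ recovers the stated inequality. The main obstacle is the core step: one must use the independence of $\mtx{\Phi}$ and $\mtx{\Psi}$ and the rotational invariance carefully to untangle the two pseudoinverses from $\fnorm{\mtx{E}}^{2}$ without incurring cross-terms, and the inverse-Wishart moment $(s-\ell-1)^{-1}$ has to be tracked through both applications so that the final constant is exactly $s/(s-\ell)$ rather than something larger. Everything else---the orthogonal decomposition and the Gaussian rangefinder bound---is routine once this pivotal identity is in hand.
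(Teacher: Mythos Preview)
The paper does not prove Theorem~\ref{thm:singleview}; it is quoted verbatim from \citeasnoun[Thm.~5.1]{TYUC19:Streaming-Low-Rank} with no argument supplied. Your outline---the Pythagorean split $\fnorm{\mtx{A}-\mtx{Q}\mtx{C}\mtx{P}^{*}}^{2}=\fnorm{\mtx{E}}^{2}+\fnorm{\mtx{C}_{\star}-\mtx{C}}^{2}$, the identity $\mtx{C}-\mtx{C}_{\star}=(\mtx{\Phi}^{*}\mtx{Q})^{\pinv}\mtx{\Phi}^{*}\mtx{E}\mtx{\Psi}(\mtx{P}^{*}\mtx{\Psi})^{\pinv}$, and the reduction of the core term to inverse-Wishart moments conditioned on $(\mtx{\Upsilon},\mtx{\Omega})$---does match the architecture of the proof in that reference.

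There is, however, a quantitative gap in your range-error step. You bound $\fnorm{\mtx{E}}^{2}$ by $\fnorm{(\Id-\mtx{QQ}^{*})\mtx{A}}^{2}+\fnorm{\mtx{A}(\Id-\mtx{PP}^{*})}^{2}$ and then apply the Frobenius rangefinder estimate to each summand. In the complex case each summand contributes $\tfrac{\ell}{\ell-k}\sum_{j>k}\sigma_{j}^{2}$, so the sum is $\tfrac{2\ell}{\ell-k}\sum_{j>k}\sigma_{j}^{2}$, \emph{not} $\tfrac{\ell+k}{\ell-k}\sum_{j>k}\sigma_{j}^{2}$; these differ by a factor approaching $2$ when $k\ll\ell$, and no ``absorbing $\pm 1$ shifts'' closes it. The sharper constant in the cited proof comes from the exact identity
\[
\fnorm{\mtx{E}}^{2}=\fnorm{(\Id-\mtx{QQ}^{*})\mtx{A}}^{2}+\fnorm{\mtx{A}(\Id-\mtx{PP}^{*})}^{2}-\fnorm{(\Id-\mtx{QQ}^{*})\mtx{A}(\Id-\mtx{PP}^{*})}^{2},
\]
together with a more careful joint treatment of the three orthogonal pieces of $\mtx{E}$ when the core-error and range-error contributions are combined. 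Your route, executed as written, proves the theorem with $\tfrac{2\ell}{\ell-k}$ in place of $\tfrac{\ell+k}{\ell-k}$---correct but strictly weaker than the stated bound.
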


This result suggests that more aggressive oversampling is
called for in the single-view setting, as compared to the basic rangefinder problem.
For instance, if we aim for an approximation error
that is comparable to the best possible approximation with rank $k$,
then we might choose $\ell = 4k$ and $s=8k$ to obtain
$$
\mathbb{E}\|\mtx{A} - \mtx{Q}\mtx{C}\mtx{P}^{*}\|_{\rm F}^{2}
\leq
\frac{10}{3}\sum_{j=k+1}^{\min(m,n)}\sigma_{j}^{2} =
\frac{10}{3}\|\mtx{A} - \mtx{A}_{k}\|_{\rm F}^{2},
$$
where $\mtx{A}_{k}$ is the best possible rank-$k$ approximation of $\mtx{A}$.
As usual, the likelihood of large deviations from the expectation is negligible.
(For contrast, recall that the basic rangefinder algorithm often works well
when we select $\ell = k+5$ or $\ell=k+10$.)

Besides computing $\mtx{P}$, $\mtx{C}$, and $\mtx{Q}$, Algorithm \ref{alg:SingleViewSVD}
also prunes the approximation $\mtx{A} \approx \mtx{Q}\mtx{C}\mtx{P}^{*}$ by computing
an SVD of $\mtx{C}$ (line 6) and then throwing out the trailing $\ell-k$ modes (line 7).
The motivation for this truncation is that the approximation $\mtx{A} \approx \mtx{Q}\mtx{C}\mtx{P}^{*}$,
tends to capture the dominant singular modes of $\mtx{A}$ well, but the trailing ones have
very low accuracy. The same thing happens with the basic RSVD (Section~\ref{sec:rsvd}),
but the phenomenon is more pronounced in the single-view environment, in part because $\ell$
is substantially larger than $k$. Theorem \ref{thm:singleview} can be applied
to prove that the truncated factorization is as accurate as one can reasonably hope for;
see~\citeasnoun[Cor.~5.5]{TYUC19:Streaming-Low-Rank} for details.

\begin{remark}[Spectral norm bounds?]
Theorem~\ref{thm:singleview} provides a Frobenius norm
error bound for a matrix approximation algorithm.
For our survey, this is a \emph{rara avis in terra}.  Unfortunately,
relative error spectral norm error bounds are not generally possible
in the streaming setting~\cite[Chap.~6]{2014_woodruff_sketching}.
\end{remark}

\subsection{Structured test matrices}

Algorithm~\ref{alg:SingleViewSVD} can -- and should -- be implemented with structured
test matrices, rather than Gaussian test matrices.  This modification is especially
appealing in the single-view environment, where storage is often the main bottleneck.

For instance, consider the parameter selections $\ell = 4k$ and $s = 8k$ that we referenced
above. Then the four test matrices consist of $12k(m+n)$ floats that must be stored, and
the sketches add another $4k(m+n) + 64k^{2}$ floats.  Since $m$ and $n$ can be huge, these numbers
could severely limit the rank $k$ of the final matrix approximation.

If we swap out the Gaussian matrices for structured random matrices,
we can almost remove the cost associated with storing the test matrices.
In particular, the addition of the core sketch \eqref{eq:single9}
has a very light memory footprint because the sketch itself only uses $\bigO(k^{2})$ floats.
Empirically, when we use a structured random matrix, such as a sparse sign matrix (Section~\ref{sec:sparse-map})
or an SRTT (Section~\ref{sec:srtt}), the observed errors
are more or less indistinguishable from the errors attained with Gaussian test matrices.
See~\citeasnoun[Supplement, Figs.~SM2--7]{TYUC19:Streaming-Low-Rank}.

\subsection{A posteriori error estimation}
\label{sec:singleaposteriori}

In order to reduce the uncertainty associated with the single-view algorithms described in this
section, the ``certificate of accuracy'' technique described in Section \ref{sec:certificate} is
very useful.

Recall that the idea is to draw a separate test matrix whose only purpose is to
provide an independent estimate of the error in the computed solution.  This additional
test matrix can be \textit{very} thin (say 5 or 10 columns wide), and it still provides a dependable
bound on the computed error. These techniques can be incorporated without any difficulty in the
single-view environment, as outlined in \citeasnoun[Sec.~6]{TYUC19:Streaming-Low-Rank}.

Let us mention one caveat.  In the single-view environment,
we have no recourse when the \textit{a posteriori} error
estimator signals that the approximation error is unacceptable.
On the other hand, it is reassuring that the algorithm
can sound a warning that it has not met the desired accuracy.

\subsection{History}
\label{sec:singlehistory}

To the best of our knowledge,
\citeasnoun[Sec.~5.2]{WLRT08:Fast-Randomized}
described the first algorithm that can compute a low-rank matrix approximation
in the single-view computational model.
Their paper introduced the idea of independently sampling the row- and the column-space of a matrix,
as summarized in formulas \eqref{eq:single1}--\eqref{eq:single8b}.  This approach inspired the
single-view algorithms presented in \citeasnoun[Sec.~5.5]{HMT11:Finding-Structure}.
It is interesting that the primary objective of \citeasnoun{WLRT08:Fast-Randomized}
was to reduce the asymptotic flop count of the computation through the use of structured random test matrices.

\citeasnoun{2009_clarkson_woodruff} gave an explicit discussion of randomized NLA
in a streaming computational model.  They independently proposed a variant of the
algorithm from~\cite[Sec.~5.5]{HMT11:Finding-Structure}.
Later contributions to the field appeared in
\cite{LNW14:Turnstile-Streaming,2016_boutsidis_optimal,2016_feldman_dimensionality,GLPW16:Frequent-Directions} and \cite{2017_tropp_practical_sketching}.
The idea of introducing an additional sketch such as \eqref{eq:single9} to capture the ``core'' matrix
was proposed by \citeasnoun{2016_upadhyay_streaming}. \citeasnoun{TYUC19:Streaming-Low-Rank} have provided improvements of his approach, further analysis, and computational considerations.

\section{Factoring matrices of full or nearly full rank}
\label{sec:full}

So far, we have focused on techniques for computing low-rank approximations
of an input matrix. We will now upgrade to techniques for computing \textit{full}
rank-revealing factorizations such as the column-pivoted QR (CPQR) decomposition.

Classical deterministic techniques for computing these factorizations proceed through
a sequence of rank-one updates to the matrix, making them communication-intensive
and slow when executed on modern hardware. Randomization allows the algorithms to be
reorganized so that the vast majority of the arithmetic takes place inside matrix--matrix
multiplications, which greatly accelerates the execution speed.

When applied to an $n \times n$ matrix, most of the algorithms described in this section
have the same $\bigO(n^{3})$ asymptotic complexity as traditional methods;
the objective is to improve the practical speed by reducing communication costs.
However, randomization also allows us to incorporate Strassen-type techniques
to accelerate the matrix multiplications in a numerically stable manner,
attaining an overall cost of $\bigO(n^{\omega})$ where $\omega$
is the exponent of square matrix--matrix multiplication.

As well as the CPQR decomposition, we will consider algorithms for computing factorizations
of the form $\mtx{A} = \mtx{U}\mtx{R}\mtx{V}^{*}$ where $\mtx{U}$ and $\mtx{V}$ are
unitary matrices and $\mtx{R}$ is upper-triangular. Factorizations of this form can be
used for almost any task where either the CPQR or the SVD is currently used. The
additional flexibility allows us to improve on both the computational speed and on the
rank-revealing qualities of the factorization.

Sections \ref{sec:rankrevealing}--\ref{sec:demmelURV} introduce the key concepts
by describing a simple algorithm for computing a rank-revealing factorization
of a matrix.  This method is both faster than traditional column-pivoted QR
and better at revealing the spectral properties of the matrix.
Sections \ref{sec:classicCPQR}--\ref{sec:randUTV} are more technical; %
they describe how randomization can be used to resolve a longstanding challenge
of how to \textit{block} a classical algorithm for computing a column-pivoted QR decomposition by
applying groups of Householder reflectors simultaneously. They also describe how these ideas
can be extended to the task of computing a URV factorization.

\subsection{Rank-revealing factorizations}
\label{sec:rankrevealing}

Before we discuss algorithms, let us first define what we mean when we say that a factorization
is \textit{rank-revealing.} Given an $m\times n$ matrix $\mtx{A}$, we will consider factorizations
of the form
\begin{equation}
\label{eq:fullfactintro}
\begin{array}{ccccccccccccccccccc}
\mtx{A} &=& \mtx{U} &\mtx{R}&\mtx{V}^{*},\\
m\times n && m\times c & c\times n & n\times n
\end{array}
\end{equation}
where $c = \min(m,n)$, where $\mtx{U}$ and $\mtx{V}$ are orthonormal,
and where $\mtx{R}$ is upper-triangular (or banded upper-triangular).
We want the factorization to reveal the numerical rank of
$\mtx{A}$ in the sense that we obtain a near-optimal
approximation of $\mtx{A}$ when we truncate~\eqref{eq:fullfactintro}
to any level $k$.
That is,
\begin{equation}
\label{eq:rankreveal}
\|\mtx{A} - \mtx{U}(\colon,1:k)\mtx{R}(1:k,:)\mtx{V}^{*}\|
\approx
\inf\{\|\mtx{A} - \mtx{B}\|\,\colon\,\mtx{B}\mbox{ has rank }k\}
\end{equation}
for $k \in \{1,2,\dots,c\}$.
The factorization \eqref{eq:fullfactintro} can be viewed either as a generalization
of the SVD (for which $\mtx{R}$ is diagonal) or as a generalization of the
column-pivoted QR factorization (for which $\mtx{V}$ is a permutation matrix).

A factorization such as \eqref{eq:fullfactintro} that satisfies \eqref{eq:rankreveal}
is very handy.  It can be used to solve ill-conditioned linear
systems or least-squares problems; it can be used for estimating the
singular spectrum of $\mtx{A}$ (and all Schatten $p$-norms); and it
provides orthonormal bases for approximations to the four fundamental subspaces of
the matrix. Finally, it can be used to compute approximate low-rank factorizations
efficiently in situations where the numerical rank of the matrix is not that much
smaller than $m$ or $n$.  In contrast, all techniques described up to now
are efficient only when the numerical rank $k$ satisfies $k \ll \min(m,n)$.

\subsection{Blocking of matrix computations}
\label{sec:blocking}

A well-known feature of modern computing is that we can execute increasingly many floating-point
operations because CPUs are gaining more cores while GPUs and other accelerators are becoming
more affordable and more energy-efficient.
In contrast, the cost of communication (data transfer up and down levels of a memory hierarchy,
among servers, and across networks, etc.) is declining very slowly.
As a result, reducing communication is often the key to accelerating numerical
algorithms in the real world.

In the context of matrix computations, the main reaction to this development
has been to cast linear-algebraic operations as operating on blocks of the matrix,
rather than on individual entries or individual columns and rows
\cite{1998_dongarra_blocking,1993_stewart_blockQR_SVD,2015_blockQR_SISC}.
The objective is to reorganize an algorithm so that the majority of flops can be executed using
highly efficient algorithms for matrix--matrix multiplication (BLAS3),
rather than the slower methods for matrix--vector multiplications (BLAS2).

Unfortunately, it turns out that classical algorithms for computing rank-revealing factorizations
of matrices are very challenging to block. Column-pivoted QR proceeds through a sequence of rank-1
updates to the matrix.  The next pivot cannot be found until the previous update has been applied.
Techniques for computing an SVD of a matrix start by reducing the matrix to bidiagonal form.
Then they iterate on the bidiagonal matrix to drive it towards diagonal form. Both steps are
challenging to block.

To emphasize just how much of a difference blocking makes, let us peek ahead at the plot in
Figure \ref{fig:demmel_URV}. This graph shows computational times for computing certain matrix
factorizations versus matrix size on a standard desktop PC. In particular, look
at the times for column-pivoted QR (red solid line) and for unpivoted QR (red dashed line).
The asymptotic flop counts of these two algorithms are identical in the dominant term.
Yet the unpivoted factorization can easily be blocked, which means that it executes
one order of magnitude faster than the pivoted one.

\subsection{The powerURV algorithm}
\label{sec:powerURV}

There is a simple randomized algorithm for computing a rank-revealing factorization
of a matrix that perfectly illustrates the power of randomization to reduce communication.
Our starting point is an algorithm proposed by \citeasnoun{2007_demmel_fast_linear_algebra_is_stable}.
Given an $m\times n$ matrix $\mtx{A}$, typically with $m \geq n$, it proceeds as follows.
\begin{enumerate} \setlength{\itemsep}{1mm}
\item Draw an $n\times n$ matrix $\mtx{\Omega}$ from a standard normal distribution.
\item Perform an unpivoted QR factorization of $\mtx{\Omega}$ so that $[\mtx{V},\sim] = \texttt{qr}(\mtx{\Omega})$.
\item Perform an unpivoted QR factorization of $\mtx{A}\mtx{V}$ so that $[\mtx{U},\mtx{R}] = \texttt{qr}(\mtx{A}\mtx{V})$.
\end{enumerate}
Observe that the purpose of steps (1) and (2) is simply to generate a
matrix $\mtx{V}$ whose columns serve as a ``random'' orthonormal basis.
It is easily verified that the matrices $\mtx{U}$, $\mtx{R}$, and $\mtx{V}$ satisfy
\begin{equation}
\label{eq:basicURV}
\mtx{A} = \mtx{U}\mtx{R}\mtx{V}^{*}.
\end{equation}
The factorization (\ref{eq:basicURV}) is rank-revealing in theory
(see \citeasnoun[Theorem 5.2]{2007_demmel_fast_linear_algebra_is_stable} and \citeasnoun{2019_demmel_URV}),
but it does not reveal the rank particularly well in practice.

The cost to compute (\ref{eq:basicURV}) is dominated by the cost to perform two unpivoted QR
factorizations, and one matrix--matrix multiplication. (Simulating the random matrix
$\mtx{\Omega}$ requires only $\bigO(n^{2})$ flops.)

To improve the rank-revealing ability of the factorization, one can incorporate a small number of power iteration steps
\cite{2018_martinsson_powerurv}, so that step (2) in the recipe gets modified to
\begin{equation}
\label{eq:powerURV}
[\mtx{V},\sim] = \texttt{qr}((\mtx{A}^{*}\mtx{A})^{q}\mtx{\Omega}),
\end{equation}
where $q$ is a small integer. In practice, $q=1$ or $q=2$ is often enough to dramatically improve
the accuracy of the computation. Of course, incorporating power iteration increases the cost of
the procedure by adding $2q$ additional matrix--matrix multiplications. When the singular values
of $\mtx{A}$ decay rapidly, reorthonormalization in between each application of $\mtx{A}$ is
sometimes required to avoid loss of accuracy due to floating-point arithmetic.

Algorithm \ref{alg:powerURV} summarizes the techniques introduced in this section.
The method is simple, and easy to code. It requires far more
flops than traditional methods for computing rank-revealing factorizations,
yet it is faster in practice. For instance, if $m=n$ and $q=2$, then
powerURV requires $\approx 5n^{3}$ flops versus $0.5\,n^{3}$ flops for CPQR,
but Figure \ref{fig:demmel_URV} shows that powerURV is still faster. This
is noteworthy, since powerURV with $q=2$ does a \textit{far} better job
at revealing the numerical rank of $\mtx{A}$ than CPQR, as shown in Figure \ref{fig:powerURVaccuracy}.
See \citeasnoun{2018_martinsson_powerurv} for details.

\begin{algorithm}[t]
\begin{algorithmic}[1]
\caption{\textit{powerURV.} \newline
This algorithm computes a rank-revealing URV factorization of a given matrix
$\mtx{A}$; see Section \ref{sec:powerURV}. Reorthonormalization may be required
between applications of $\mtx{A}$ and $\mtx{A}^{*}$ on line 3 to combat round-off errors.
}
\label{alg:powerURV}

\Require	Target matrix $\mtx{A} \in \F^{m \times n}$ for $m\geq n$, power parameter $q$
\Ensure		Orthonormal matrices $\mtx{U} \in \F^{m\times n}$ and $\mtx{V} = \F^{n\times n}$, and upper triangular $\mtx{R} \in \F^{n\times n}$ such that
$\mtx{A} = \mtx{U}\mtx{R}\mtx{V}^{*}$
\Statex

\Function{powerURV}{$\mtx{A}$, $q$}

\State	Draw a test matrix $\mtx{\Omega} \in \F^{n\times n}$ from a standard normal distribution
\State  $[\mtx{V},\sim] = \texttt{qr\_econ}(\bigl(\mtx{A}^{*}\mtx{A}\bigr)^{q}\mtx{\Omega})$ \Comment Unpivoted QR
\State  $[\mtx{U},\mtx{R}] = \texttt{qr\_econ}(\mtx{A}\mtx{V})$ \Comment Unpivoted QR
\EndFunction
\end{algorithmic}
\end{algorithm}

\begin{figure}
\centering
\setlength{\unitlength}{1mm}
\begin{picture}(120,53)
\put(05,02){\includegraphics[width=115mm]{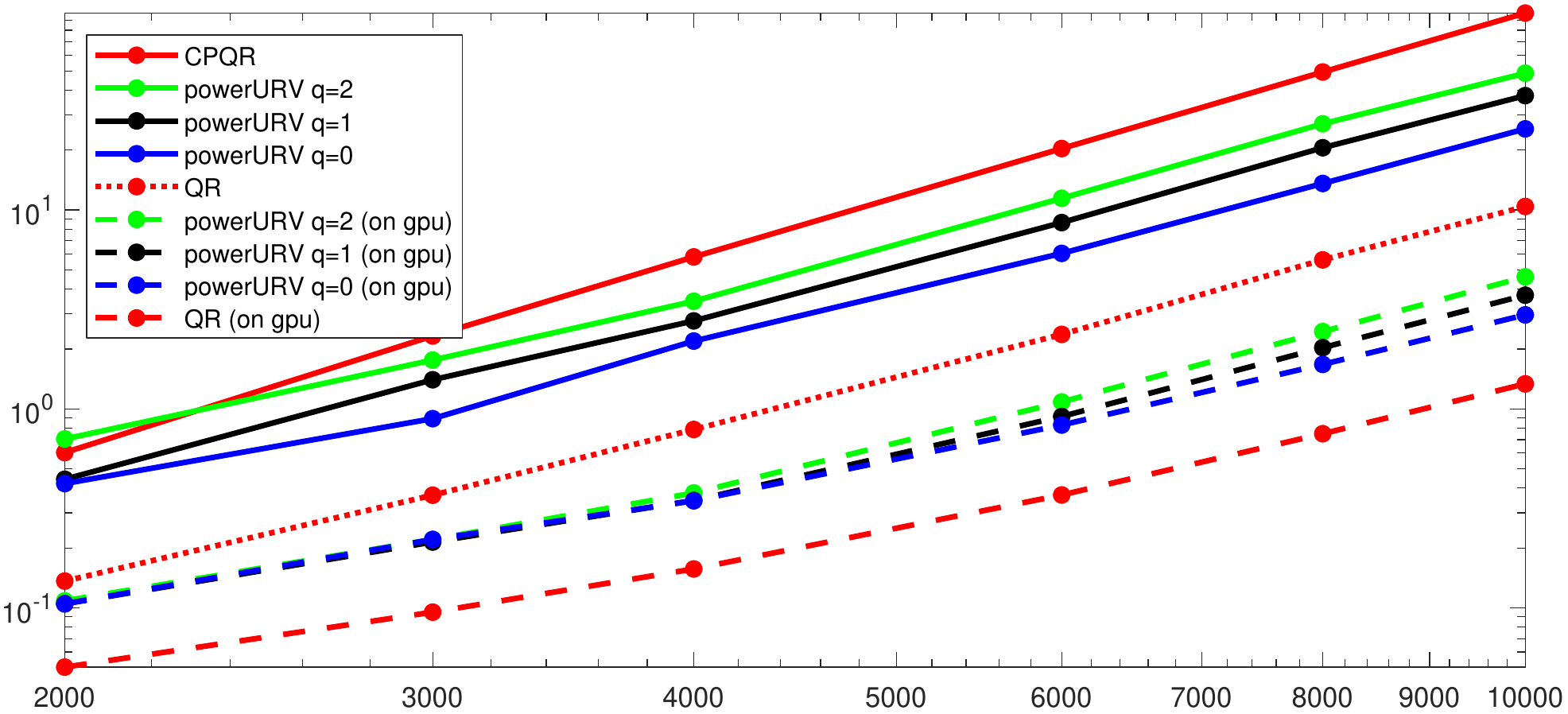}}
\put(00,04){\rotatebox{90}{\small\textit{Computational time in seconds}}}
\put(60,00){$n$}
\end{picture}
\caption{Computational times required for column-pivoted QR (CPQR) and unpivoted QR (QR)
of an $n\times n$ real matrix using MATLAB on an Intel i7-8700k CPU. We see that the unpivoted factorization is an
order of magnitude faster, despite having the identical asymptotic flop count. The graph
also shows the times required for the randomized rank-revealing factorization described in
Section \ref{sec:demmelURV}, executed both on a CPU (solid lines) and an Nvidia Titan V GPU (dashed lines).}
\label{fig:demmel_URV}
\end{figure}

\begin{figure}
\centering
\setlength{\unitlength}{1mm}
\begin{picture}(123,45)
\put(-3,04){\includegraphics[width=40mm]{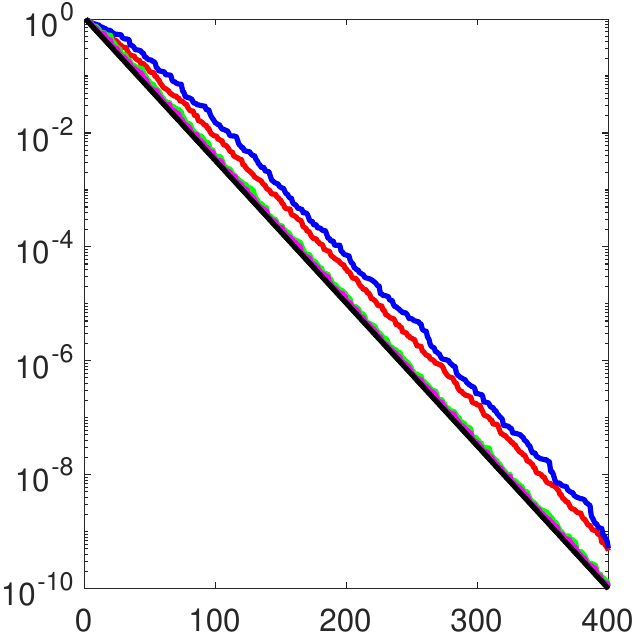}}
\put(40,04){\includegraphics[width=40mm]{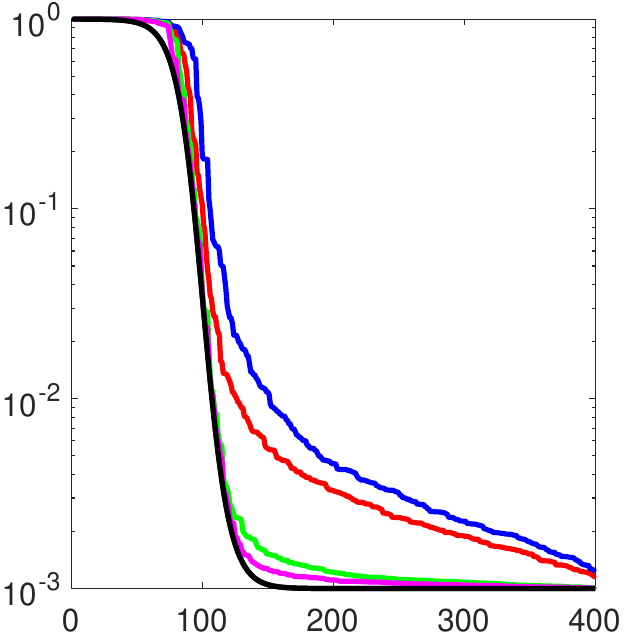}}
\put(83,04){\includegraphics[width=40mm]{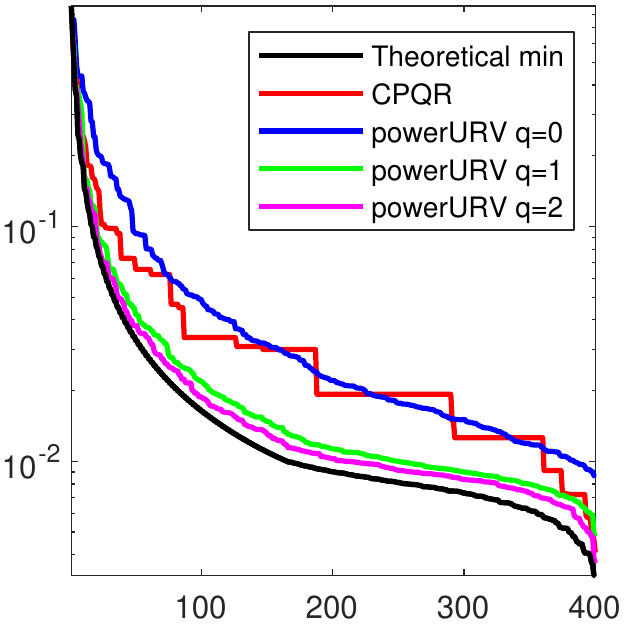}}
\put(18,00){(a)}
\put(58,00){(b)}
\put(98,00){(c)}
\end{picture}
\caption{The rank-revealing ability of CPQR and powerURV for different values
of the power parameter $q$, as discussed in Section \ref{sec:powerURV}.
The error
$e_{k} = \|\mtx{A} - \mtx{U}(:,1:k)\mtx{R}(1:k,:)\mtx{V}^{*}\|$ (see (\ref{eq:rankreveal})),
is plotted versus $k$ for three different matrices $\mtx{A}$ of size $400\times 400$.
The black lines plot the theoretical minimal values $\sigma_{k+1}$. (a) A matrix
whose singular values decay rapidly; we see that all methods perform well.
(b) A matrix whose singular values plateau; we see that CPQR performs poorly,
and so does the randomized method unless powering is used.
(c) A discretized boundary integral operator whose singular values decay slowly;
we again see the high precision of powerURV for $q=1$ and $q=2$.}
\label{fig:powerURVaccuracy}
\end{figure}

\subsection{Computing a rank-revealing factorization of an $n\times n$ matrix in less than $\bigO(n^{3})$ operations}
\label{sec:demmelURV}

The basic version of the randomized URV factorization algorithm described in Section \ref{sec:powerURV} was
originally proposed by \citeasnoun{2007_demmel_fast_linear_algebra_is_stable} for purposes loftier
than practical acceleration.
Indeed, randomization allows us to exploit fast matrix--matrix multiplication primitives
to design accelerated algorithms for other NLA problems, such as constructing rank-revealing factorizations.
The main point of this research is that, whenever the
fast matrix--matrix multiplication is stable, the computation
of a rank-revealing factorization is stable too.

Let us be more precise. \citeasnoun{2007_demmel_fast_linear_algebra_is_stable} embark from
the observation that there exist algorithms%
\footnote{The celebrated method of \citeasnoun{1969_strassen} has exponent $\omega = \log_{2}(7) = 2.807\cdots$.
It is a compelling algorithm in terms of both its numerical stability and its practical speed, even for modest
matrix sizes. More exotic algorithms, such as the Coppersmith--Winograd method and variants, attain
complexity of about $\omega \approx 2.37$, but they are not considered to be practically useful.}
for multiplying two $n\times n$ matrices using $\bigO(n^{\omega})$ flops, where $\omega < 3$.
Once such an algorithm is available, one can stably perform a whole range of other standard matrix operations
at the same asymptotic complexity. The idea is to apply a divide-and-conquer approach
that moves essentially all flops into the matrix--matrix multiplication. This approach
turns out to be relatively straightforward for decompositions that do not reveal the numerical
rank such as the unpivoted QR factorization.  It is harder to implement, however,
for (pivoted) rank-revealing factorizations.

\subsection{Classical column-pivoted QR}
\label{sec:classicCPQR}

The powerURV algorithm described in Section \ref{sec:powerURV} can be very
effective, but it operates on the whole matrix
at once, and it cannot be used to compute a partial factorization.
In the remainder of this section, we describe algorithms that build a
rank-revealing factorization incrementally.  These methods enjoy the property that the
factorization can be halted once a specified tolerance has been met.

We start off this discussion by reviewing a classical (deterministic)
method for computing a column-pivoted QR factorization.
This material is elementary, but the discussion serves to set
up a notational framework that lets us describe the
randomized version succinctly in Section \ref{sec:randCPQR}.
Suppose that we are given an $m\times n$ matrix $\mtx{A}$ with $m\geq n$.
We seek a factorization of the form
\begin{equation}
\label{eq:CPQRbasic}
\begin{array}{cccccccccc}
\mtx{A} &=& \mtx{Q} & \mtx{R} & \mtx{\Pi}^{*},\\
m\times n && m\times n & n\times n & n\times n
\end{array}
\end{equation}
where $\mtx{Q}$ has orthonormal columns,
where $\mtx{\Pi}$ is a permutation matrix, and
where $\mtx{R}$ is upper-triangular with diagonal elements that decay in magnitude
so that $|\mtx{R}(1,1)| \geq |\mtx{R}(2,2)| \geq |\mtx{R}(3,3)| \geq \cdots$.
The factors are typically built through a sequence of steps, where $\mtx{A}$
is driven to upper-triangular form one column at a time.

To be precise, we start by forming the matrix $\mtx{A}_{0} = \mtx{A}$.
Then we proceed using the iteration formula
$$
\mtx{A}_{j} = \mtx{Q}_{j}^{*}\mtx{A}_{j-1}\mtx{\Pi}_{j},
$$
where $\mtx{\Pi}_{j}$ is a permutation matrix that swaps the $j$th column of
$\mtx{A}_{j-1}$ with the column in $\mtx{A}_{j-1}(:,j:n)$ that has the largest
magnitude, and where $\mtx{Q}_{j}$ is a Householder reflector that zeros out
all elements below the diagonal in the $j$th column of $\mtx{A}_{j-1}\mtx{\Pi}_{j}$;
see Figure \ref{fig:classicCPQR}.
Once the process concludes, the relation \eqref{eq:CPQRbasic} holds for
$$
\mtx{Q}   = \mtx{Q}_{n}\mtx{Q}_{n-1}\mtx{Q}_{n-2}\dots\mtx{Q}_{1},\quad
\mtx{R}   = \mtx{A}_{n},\quad
\mtx{\Pi} = \mtx{\Pi}_{n}\mtx{\Pi}_{n-1}\mtx{\Pi}_{n-2}\dots\mtx{\Pi}_{1}.
$$

This algorithm is well understood, and it is ubiquitous in numerical computations.
For exotic matrices, it can produce factorizations that are quite far from optimal
\cite[Sec.~5]{1966_kahan_NLA}, but it typically works very well for many tasks.
For instance, it serves for revealing the numerical rank of a
matrix or for solving an ill-conditioned linear system.
However, a serious drawback to this algorithm is that
it fundamentally consists of a sequence of $n-1$ steps
(or $n$ steps if $m > n$),
where a large part of the matrix is updated in each step.

\begin{figure}
\centering
\begin{tabular}{cccc}
\includegraphics[width=20mm]{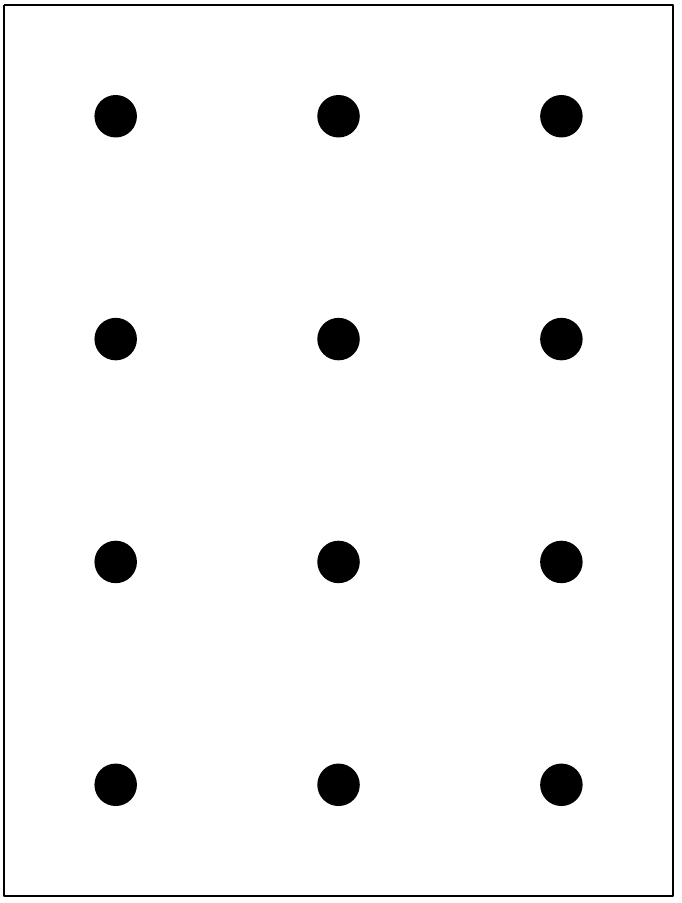}
&
\includegraphics[width=20mm]{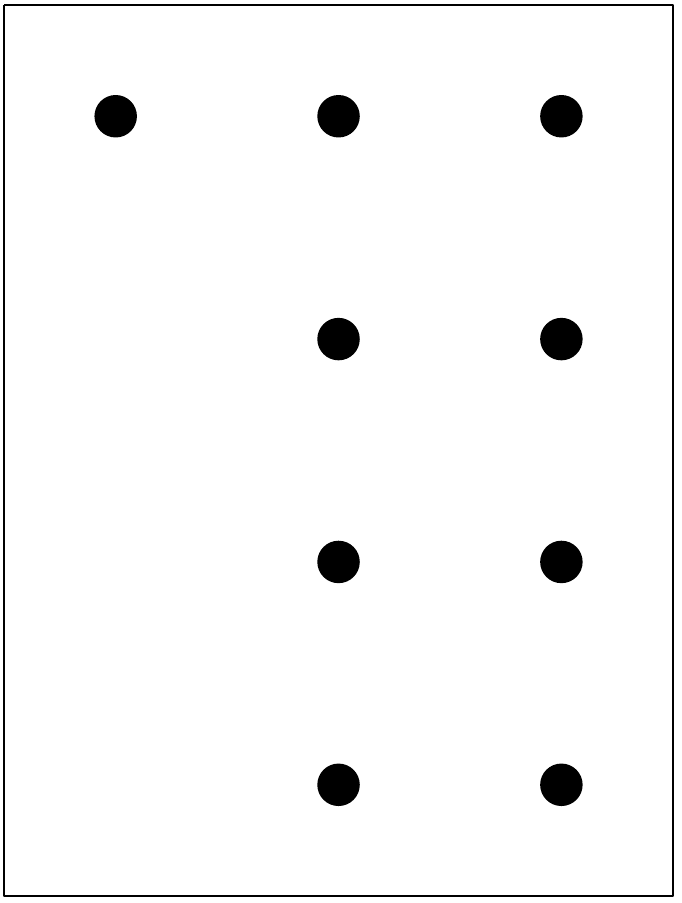}
&
\includegraphics[width=20mm]{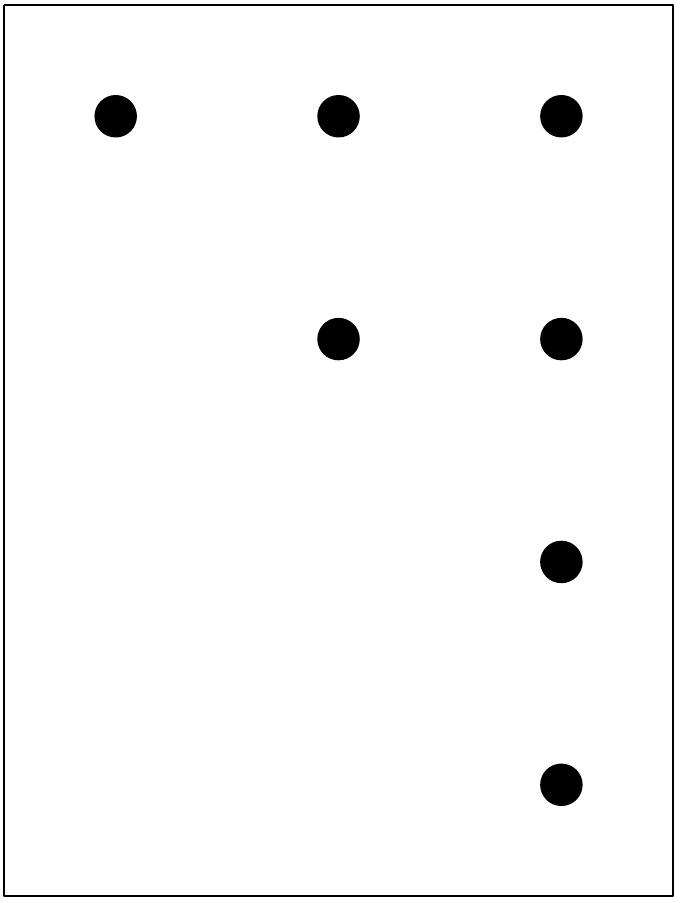}
&
\includegraphics[width=20mm]{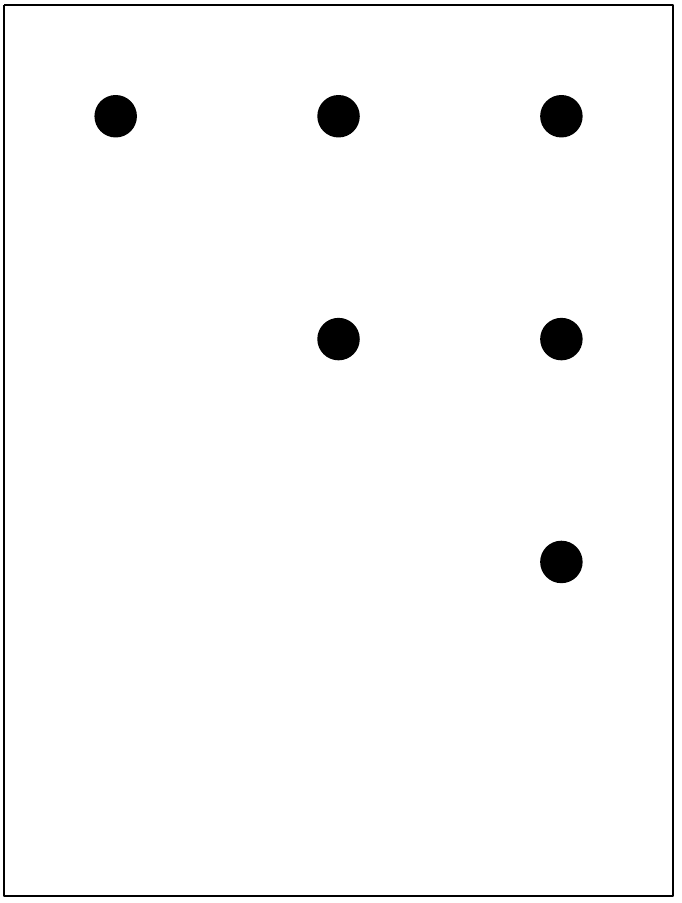}
\\
$\mtx{A}_{0} = \mtx{A}$
&
$\mtx{A}_{1} = \mtx{Q}_{1}^{*}\mtx{A}_{0}\mtx{\Pi}_{1}$
&
$\mtx{A}_{2} = \mtx{Q}_{2}^{*}\mtx{A}_{1}\mtx{\Pi}_{2}$
&
$\mtx{A}_{3} = \mtx{Q}_{3}^{*}\mtx{A}_{2}\mtx{\Pi}_{3}$
\end{tabular}
\caption{The figure shows the sparsity pattern of a $4\times 3$ matrix
$\mtx{A}$ as it is driven to upper-triangular form in the column-pivoted
QR factorization algorithm described in Section \ref{sec:classicCPQR}.
The process takes three steps in this case, and step $j$ involves the
application of a permutation matrix $\mtx{\Pi}_{j}$
from the right, and by a Householder reflector $\mtx{Q}_{j}$ from the left.}
\label{fig:classicCPQR}
\end{figure}

\subsection{A randomized algorithm for computing a CPQR decomposition}
\label{sec:randCPQR}

Our next objective is to recast the algorithm for computing a CPQR decomposition
that was introduced in Section \ref{sec:classicCPQR} so that it works with ``panels''
of $b$ contiguous columns, as shown in Figure \ref{fig:randCPQR}.
The difficulty is to find a set of $b$ pivot vectors without updating
the matrix between each selection. Fortuitously, the randomized algorithm for
interpolatory decomposition (Section~\ref{sec:randID}) is well adapted for
this task.
Indeed, a set of $b$ columns that forms a good basis for the column space also forms a
good set of pivot vectors.

To be specific, let us describe how to pick the first group of $b$ pivot columns for an
$m\times n$ matrix $\mtx{A}$. Adapting the ideas in Section \ref{sec:randID}, we draw a
Gaussian random matrix $\mtx{\Omega}$ of size $(b+p) \times m$, where $p$ is a small
oversampling parameter. We form a sample matrix
$$
\begin{array}{ccccccccccc}
\mtx{Y} &=& \mtx{\Omega} & \mtx{A}, \\
(b+p)\times n && (b+p) \times m & m\times n
\end{array},
$$
and then we execute $b$ steps of column-pivoted QR on the matrix $\mtx{Y}$
(either Householder or Gram--Schmidt is fine for this step).
The resulting $b$ pivot columns turn out to be good pivot columns for $\mtx{A}$
as well. Once these $b$ columns have been moved to
the front of $\mtx{A}$, we perform a local CPQR factorization of this panel.
We update the remaining $n-b$ columns using the computed Householder reflectors.

We could then proceed using exactly the same method a second time: draw a $(b+p) \times (m-b)$
Gaussian random matrix $\mtx{\Omega}$, form a $(b+p) \times (n-b)$ sample matrix $\mtx{Y}$,
perform classical CPQR on $\mtx{Y}$, and so on.  However, there is a shortcut.
We can update the sample matrix that was used in the first step,
which renders the overhead cost induced by randomization almost negligible
\cite[Sec.~4]{2017_blockQR_ming_article}.

\begin{figure}
\centering
\begin{tabular}{cccc}
\includegraphics[width=20mm]{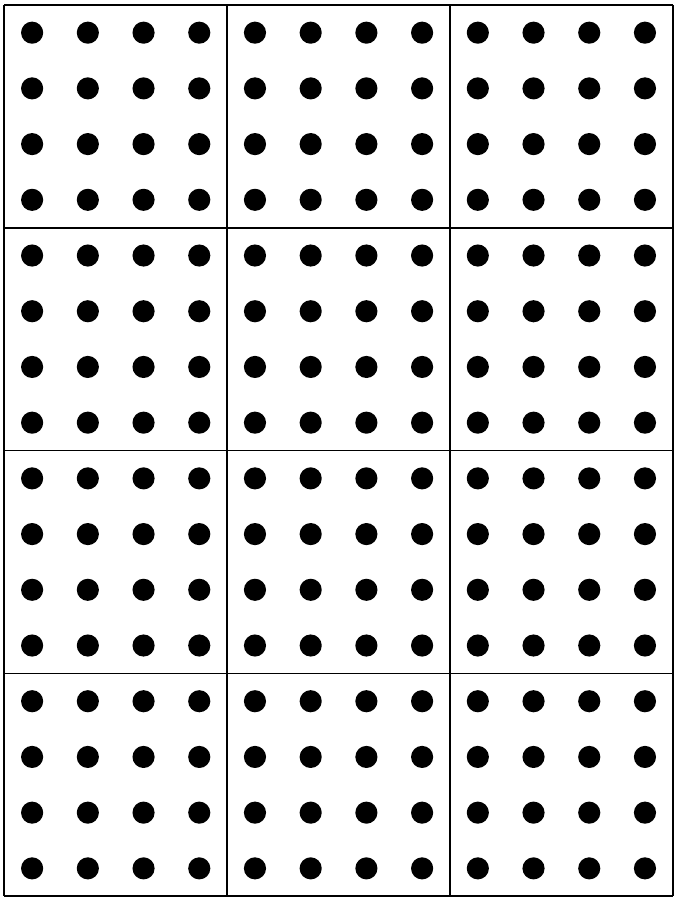}
&
\includegraphics[width=20mm]{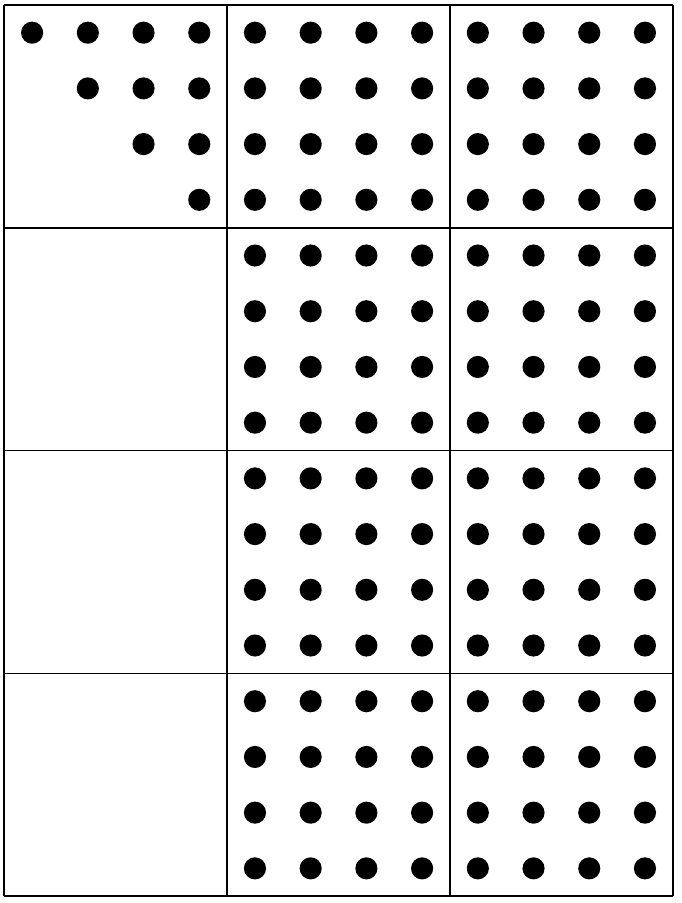}
&
\includegraphics[width=20mm]{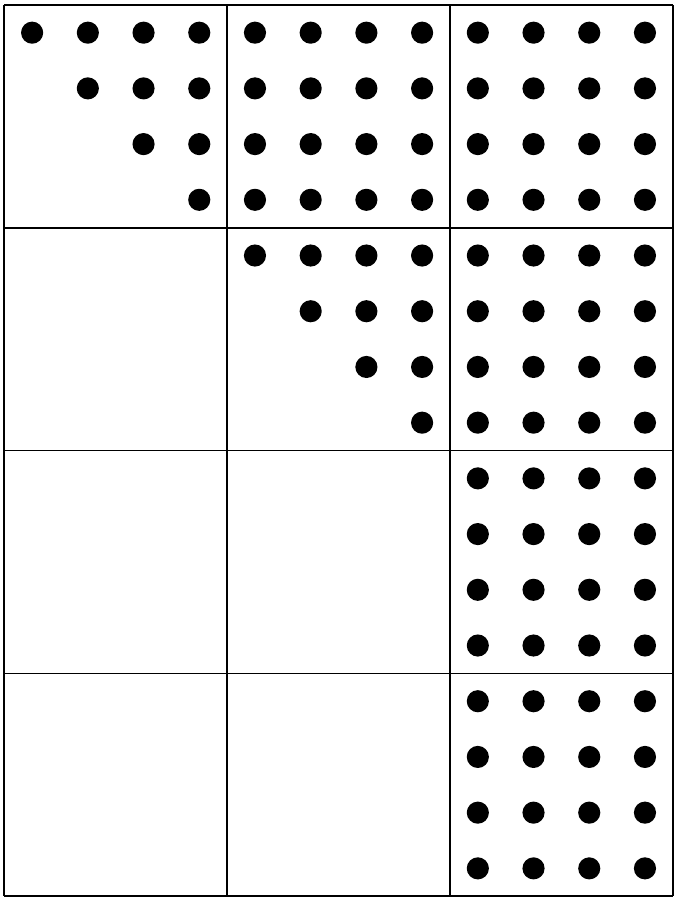}
&
\includegraphics[width=20mm]{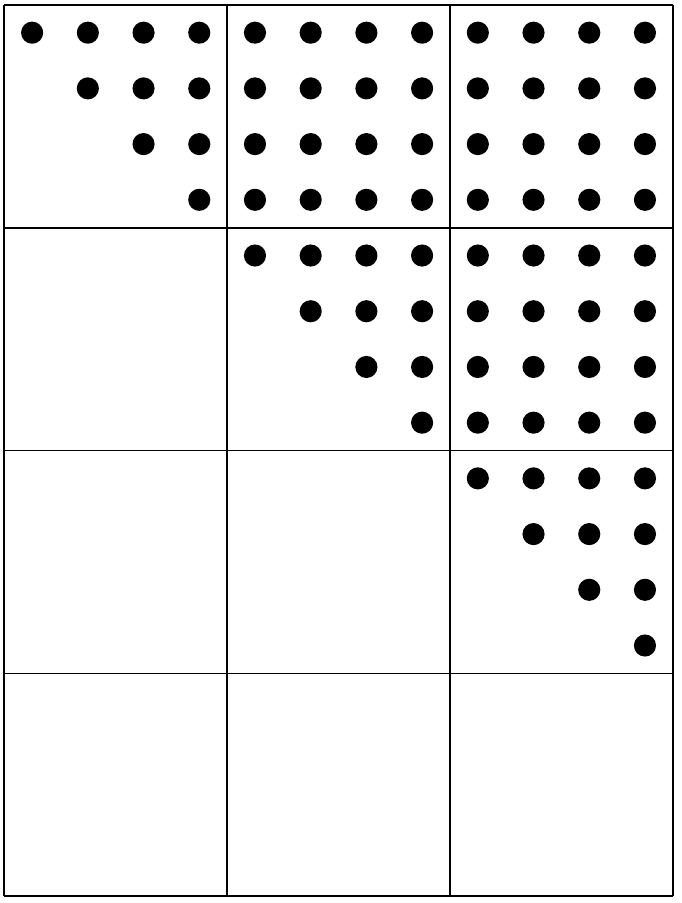}
\\
$\mtx{A}_{0} = \mtx{A}$
&
$\mtx{A}_{1} = \mtx{Q}_{1}^{*}\mtx{A}_{0}\mtx{\Pi}_{1}$
&
$\mtx{A}_{2} = \mtx{Q}_{2}^{*}\mtx{A}_{1}\mtx{\Pi}_{2}$
&
$\mtx{A}_{3} = \mtx{Q}_{3}^{*}\mtx{A}_{2}\mtx{\Pi}_{3}$
\end{tabular}
\caption{The sparsity pattern of a matrix $\mtx{A}$ consisting of $4\times 3$
blocks, each of size $3\times 3$, as it undergoes the blocked version of the Householder QR
algorithm described in Section \ref{sec:randCPQR}. Each matrix $\mtx{Q}_{j}$ is
a product of three Householder reflectors. The difficulty in building
an algorithm of this type is to find groups of pivot vectors \textit{before}
applying the corresponding Householder reflectors.}
\label{fig:randCPQR}
\end{figure}

\begin{figure}[b]
\centering
\setlength{\unitlength}{1mm}
\begin{picture}(100,54)
\put(07,05){\includegraphics[width=87mm]{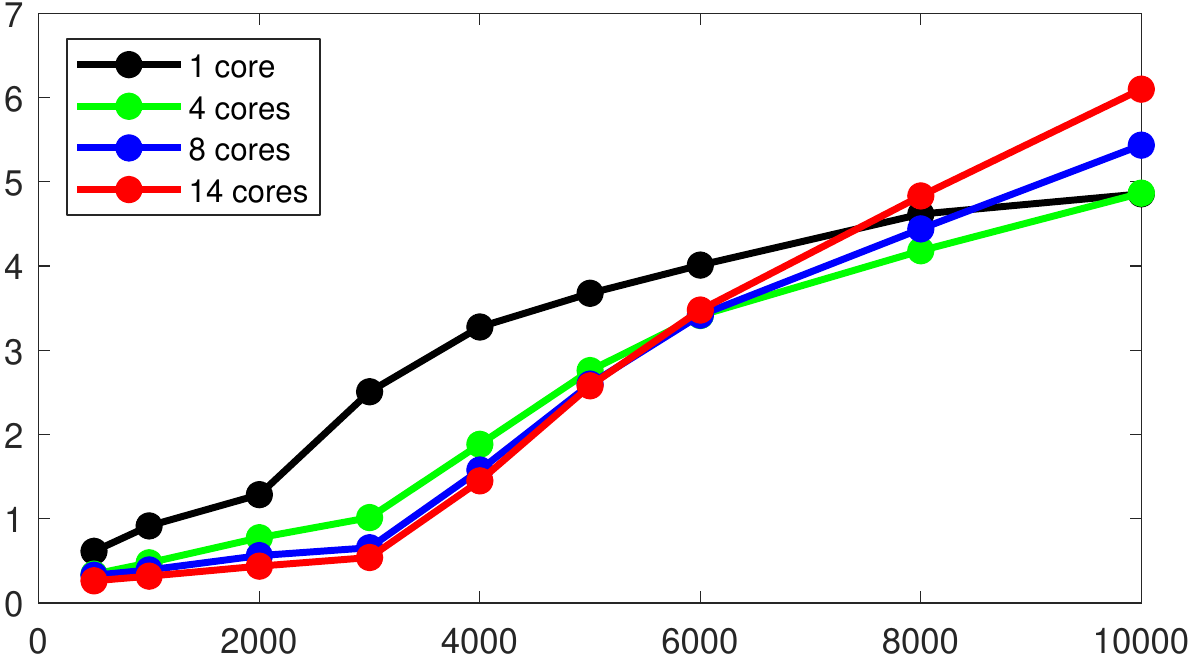}}
\put(40,00){\textit{Matrix size $n$.}}
\put(00,17){\rotatebox{90}{\textit{Speedup factor.}}}
\end{picture}
\caption{Speedup of the randomized algorithm for computing a column-pivoted QR decomposition
described in Section \ref{sec:randCPQR}, relative to LAPACK's faster routine (dgeqp3) as implemented
in the Intel MKL library (version 11.2.3), running on an Intel Xeon E5-2695 v3 processor.}
\label{fig:HQRRPspeedup}
\end{figure}

Extensive numerical work has demonstrated dramatic acceleration
over deterministic algorithms. Figure \ref{fig:HQRRPspeedup} draws on data from
\citeasnoun{2015_blockQR_SISC} that illustrates the acceleration over a state-of-the-art
software implementation of the classical CPQR method.
Computer experiments also show that the randomized scheme
chooses pivot columns whose quality is almost indistinguishable
from those chosen by traditional pivoting, in the sense that the
relation \eqref{eq:rankreveal} holds to about the same accuracy.
(However, the diagonal entries of $\mtx{R}$ do not strictly decay
in magnitude across the block boundaries.)

To understand the behavior of the algorithm, it is helpful to think about two extreme cases.
In the first, suppose that the singular values of $\mtx{A}$ decay very rapidly. Here,
the analysis in Section \ref{sec:random-rangefinder} can be modified to show that, for any $j$,
the first $j$ pivot columns chosen by the randomized algorithm is likely to span the column space nearly
as well as the optimal set of $j$ columns.  Therefore, they are excellent pivot vectors.
At the other extreme, suppose that the singular values of $\mtx{A}$ hardly decay at all.
In this case, the randomized method may pick a completely different set of pivot vectors
than the deterministic method, but this outcome is unproblematic because we can take any
group of columns as pivot vectors.
Of course, the interesting cases are intermediate between these two extremes.  It turns
out that the randomized methods work well regardless of how rapidly the singular values decay.
For a detailed analysis, see \cite{2017_blockQR_ming_article,2017_gu_langou,2015_gu_melgaard,2018_gu_flipflop}.

\begin{remark}[History]
Finding a blocked version of the CPQR algorithm described in Section \ref{sec:classicCPQR}
has remained an open challenge in NLA for some time \cite{1998_quintana_panelRRQR1,2015_demmel_communication_avoiding}.
The randomized technique described in this section was introduced in
\citeasnoun{2015_blockQR}, while the updating technique was
described in \citeasnoun{2015_blockQR_ming}. For full details, see
\cite{2015_blockQR_SISC} and \cite{2017_blockQR_ming_article}.
\end{remark}

\subsection{A randomized algorithm for computing a URV decomposition}
\label{sec:randUTV}

In this section, we describe an incremental randomized algorithm for computing the
URV factorization (\ref{eq:fullfactintro}). Let us recall that for $\mtx{A} \in \F^{m\times n}$,
with $m \geq n$, this factorization takes the form
\begin{equation}
\label{eq:UTVrepeat}
\begin{array}{cccccccccc}
\mtx{A} &=& \mtx{U} & \mtx{R} & \mtx{V}^{*},\\
m\times n && m\times n & n\times n & n\times n
\end{array}
\end{equation}
where $\mtx{U}$ and $\mtx{V}$ have orthonormal columns and where $\mtx{R}$ is
upper-triangular.

The algorithm we describe is blocked, and executes efficiently on
modern computing platforms. It is similar in speed to the randomized CPQR
described in Section \ref{sec:randCPQR}, and it shares the advantage that the
decomposition is built incrementally so that the process can be stopped once
a requested accuracy has been met. However, the URV factorization offers
compelling advantages: (1) It is almost as good at revealing the numerical rank
as the SVD (unlike the CPQR). (2) The URV factorization provides us with orthonormal
basis vectors for both the column and the row spaces. (3) The off-diagonal entries
of $\mtx{R}$ are very small in magnitude. (4) The diagonal entries of
$\mtx{R}$ form excellent approximations to the singular values of $\mtx{A}$.

The randomized algorithm for computing a URV factorization we present follows
the same algorithmic template as the randomized CPQR described in Section \ref{sec:randCPQR}.
It drives $\mtx{A}$ to upper-triangular form one block at a time, but it
replaces the permutation matrices $\mtx{\Pi}_{j}$ in the CPQR with general
unitary matrices $\mtx{V}_{j}$.
Using this increased freedom, we can obtain a factorization
where all diagonal blocks are themselves diagonal matrices
and all off-diagonal elements have small magnitude.
Figure \ref{fig:blockUTV} summarizes the process.

\begin{figure}
\setlength{\unitlength}{1mm}
\begin{picture}(125,40)
\put(000,04){\includegraphics[width=24mm]{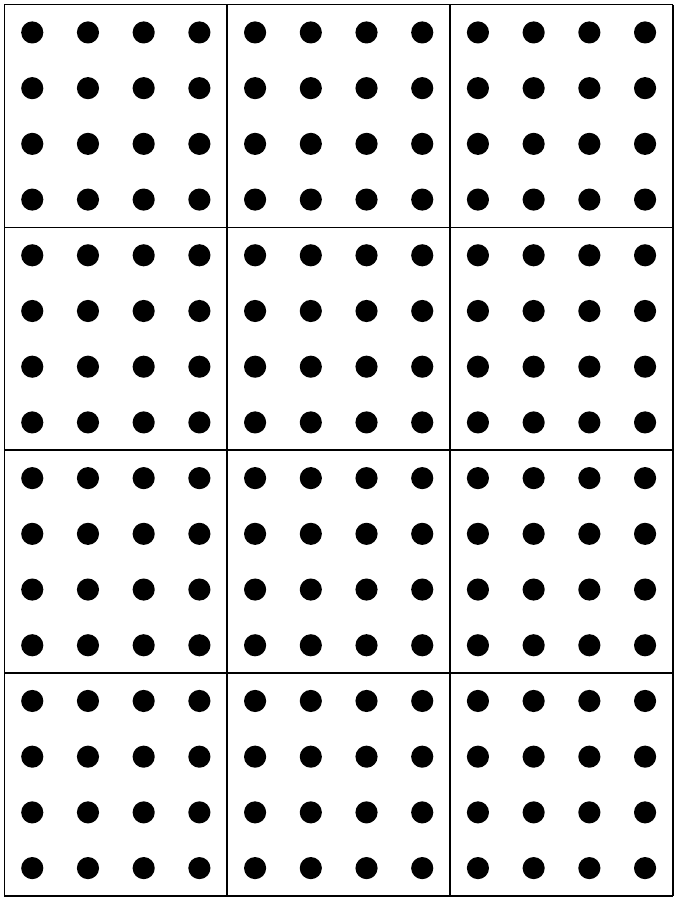}}
\put(032,04){\includegraphics[width=24mm]{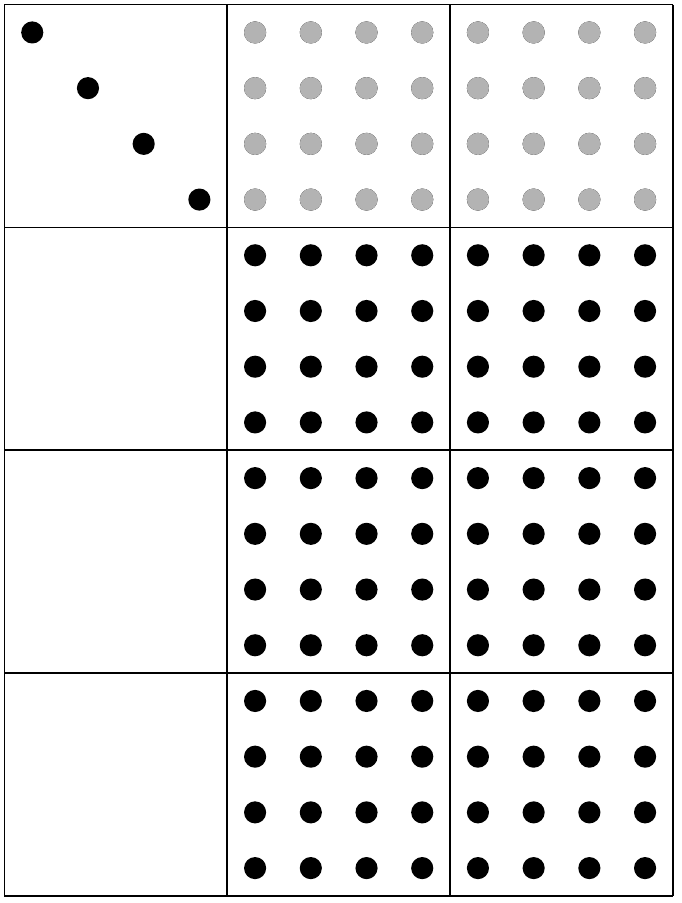}}
\put(064,04){\includegraphics[width=24mm]{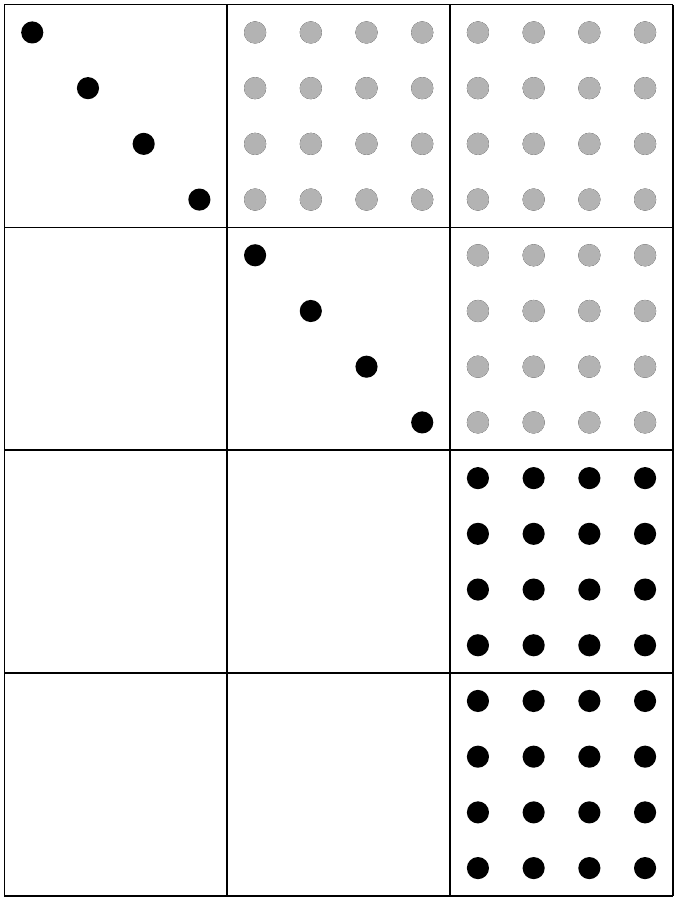}}
\put(096,04){\includegraphics[width=24mm]{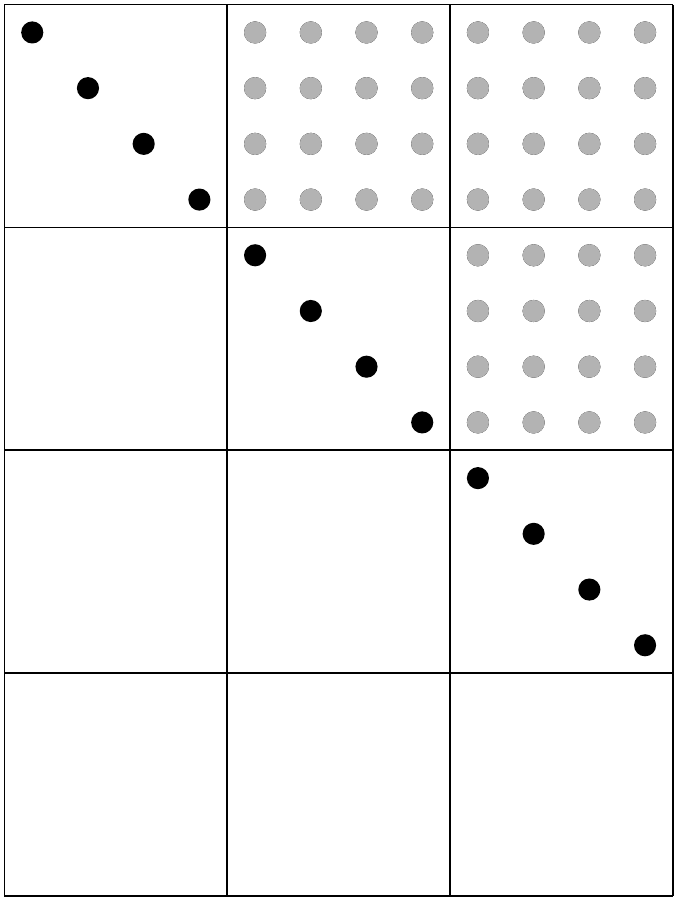}}
\put(007,00){\footnotesize $\mtx{A}_{0} = \mtx{A}$}
\put(035,00){\footnotesize $\mtx{A}_{1} = \mtx{U}_{1}^{*}\mtx{A}_{0}\mtx{V}_{1}$}
\put(067,00){\footnotesize $\mtx{A}_{2} = \mtx{U}_{2}^{*}\mtx{A}_{1}\mtx{V}_{2}$}
\put(099,00){\footnotesize $\mtx{A}_{3} = \mtx{U}_{3}^{*}\mtx{A}_{2}\mtx{V}_{3}$}
\end{picture}
\caption{Sparsity pattern of a matrix being driven to upper-triangular form in
the randomized URV factorization algorithm described in Section \ref{sec:randUTV};
see~Figure \ref{fig:randCPQR}. The matrices $\mtx{U}_{i}$ and $\mtx{V}_{i}$ are
now more general unitary transforms (consisting in the bulk of Householder
reflectors). The entries shown as gray are nonzero,
but are typically very small in magnitude.}
\label{fig:blockUTV}
\end{figure}

To provide details on how the algorithm works, suppose that we are given
an $m\times n$ matrix $\mtx{A}$ and a block size $b$. In the first step of the process,
our objective is then to build unitary matrices $\mtx{U}_{1}$ and $\mtx{V}_{1}$ such that
$$
\mtx{A} = \mtx{U}_{1}\mtx{A}_{1}\mtx{V}_{1}^{*},
$$
where $\mtx{A}_{1}$ has the block structure
$$
\mtx{A}_{1} =
\left[\begin{array}{cc}
\mtx{A}_{1,11} & \mtx{A}_{1,12} \\
\mtx{0}        & \mtx{A}_{1,22}
\end{array}\right] =
\raisebox{-9mm}{\includegraphics[height=18mm]{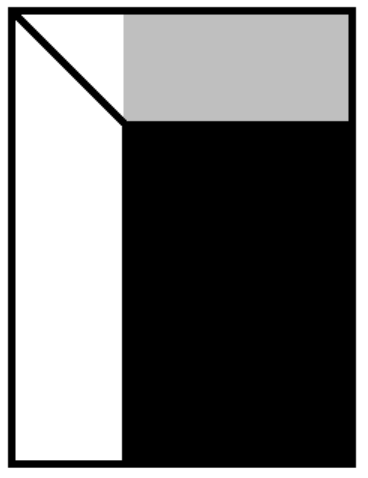}},
$$
so that the $b\times b$ matrix $\mtx{A}_{1,11}$ is diagonal and the entries of $\mtx{A}_{1,12}$ are
small in magnitude. To build $\mtx{V}_{1}$, we use the randomized power
iteration described in Section \ref{sec:rrf-subspace} to find a basis that approximately spans
the same space as the top $b$ right singular vectors of $\mtx{A}$. To be
precise, we form the sample matrix
$$
\mtx{Y} = \mtx{\Omega}\mtx{A}\bigl(\mtx{A}^{*}\mtx{A}\bigr)^{q},
$$
where $\mtx{\Omega}$ is a Gaussian random matrix of size $b \times m$ and
where $q$ is a parameter indicating the number of steps of power iteration
taken. We then perform an unpivoted QR factorization of the \textit{rows} of
$\mtx{Y}$ to form a matrix $\widetilde{\mtx{V}}$ whose first $b$ columns form
an orthonormal basis for the column space of $\mtx{Y}$. We then apply $\widetilde{\mtx{V}}$
from the right to form the matrix $\mtx{A}\widetilde{\mtx{V}}$, and we perform an unpivoted
QR factorization of the first $b$ columns of $\mtx{A}\widetilde{\mtx{V}}$. This results
in a new matrix
$$
\widetilde{\mtx{A}} = \bigl(\widetilde{\mtx{U}}\bigr)^{*}\,\mtx{A}\,\widetilde{\mtx{V}}
$$
that has the block structure
$$
\widetilde{\mtx{A}} =
\left[\begin{array}{cc}
\widetilde{\mtx{A}}_{1,11} & \widetilde{\mtx{A}}_{1,12} \\
\mtx{0}                & \widetilde{\mtx{A}}_{1,22}
\end{array}\right] =
\raisebox{-9mm}{\includegraphics[height=18mm]{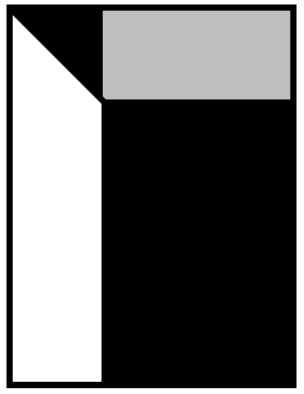}}.
$$
The top left $b\times b$ block $\widetilde{\mtx{A}}_{1,11}$ is upper-triangular,
and the bottom left block is zero. The entries of $\widetilde{\mtx{A}}_{1,12}$ are typically small in magnitude.
Next, we compute a full SVD of the block $\widetilde{\mtx{A}}_{1,11}$:
$$
\widetilde{\mtx{A}}_{1,11} = \widehat{\mtx{U}}\mtx{D}_{11}\widehat{\mtx{V}}^{*}.
$$
This step is inexpensive because $\widetilde{\mtx{A}}_{1,11}$ has size $b\times b$, where $b$ is small.
As a final step, we form the transformation matrices
$$
\mtx{U}_{1} =
\widetilde{\mtx{U}}
\left[\begin{array}{cc}
\widehat{\mtx{U}} & \mtx{0} \\
\mtx{0} & \mtx{I}_{m-b}
\end{array}\right],
\qquad\mbox{and}\qquad
\mtx{V}_{1} =
\widetilde{\mtx{V}}
\left[\begin{array}{cc}
\widehat{\mtx{V}} & \mtx{0} \\
\mtx{0} & \mtx{I}_{n-b}
\end{array}\right]
$$
and set
$$
\mtx{A}_{1} = \mtx{U}_{1}^{*}\mtx{A}\mtx{V}_{1}.
$$
The result of this process is that the diagonal entries of $\mtx{D}_{11}$ typically
give accurate approximations to the first $b$ singular values of $\mtx{A}$, and the
``remainder'' matrix $\mtx{A}_{1,22}$ has spectral norm that is similar to $\sigma_{b+1}$.
Thus,
$$
\|\mtx{A}_{1,22}\| \approx \inf\{\|\mtx{A} - \mtx{B}\|\,\colon\,\mtx{B}\mbox{ has rank }b\}.
$$
This process corresponds to the first step in Figure \ref{fig:blockUTV}.
The succeeding iterations execute the same process, at each step working
on the remaining lower-right part of the matrix that has not yet been driven to upper-triangular form.
We refer to \citeasnoun{2017_martinsson_UTV} and \citeasnoun{2018_PCMI_martinsson} for details.

From a theoretical point of view, the first step of the URV factorization described is well
understood since it is mathematically equivalent to the randomized power iteration described
in Section \ref{sec:rrf-subspace}. We observe that the first $b$ columns of $\mtx{U}_{1}$ do a better job of
spanning the column space of $\mtx{A}$ than the first $b$ columns of $\mtx{V}_{1}$ do for
spanning the row space; the reason for this asymmetry is that by forming the product
$\mtx{A}\mtx{V}_{1}$, we in effect perform an additional step of the power iteration
\cite[Sec.~6]{2018_martinsson_powerurv}.

An important feature of the method described in this section is that it is incremental, and
it can be halted once a given computational tolerance has been met. This feature has been a key
competitive advantage of the column-pivoted QR decomposition, and it is often cited as the motivation for
using CPQR. The method described in this section has almost all the advantages of the randomized
Householder CPQR factorization (it is blocked, it is incremental, and it executes very fast in
practice), while resulting in a factorization that is far closer to optimal in revealing the
rank.

\begin{remark}[Related work]
The idea of loosening the requirements on the factors in a rank-revealing factorization and
searching for a decomposition such as (\ref{eq:UTVrepeat}) is well explored in the literature
\cite{1999_hansen_UTVtools,1994_stewart_UTV,1995_elden_downdate_UTV}
and \cite{1998_stewart_volume1}.
Deterministic techniques
for computing the URV decomposition are described in \cite{1999_hansen_UTVtools,1999_stewart_QLP};
these algorithms combine some of the
appealing qualities of the SVD (high accuracy in revealing the rank) with some of the
appealing qualities of CPQR (the possibility of halting the execution once a requested tolerance
has been met). However, they were not blocked, and therefore they were subject to the same liabilities as
deterministic algorithms for computing the SVD and the CPQR. A more recent use of randomization
in this context is described in \cite{2018_gu_flipflop}.
\end{remark}

\section{General linear solvers}
\label{sec:linear-solve}

Researchers are currently exploring randomized algorithms for
solving linear systems, such as
\begin{equation}
\label{eq:Ax=b}
\mtx{A}\vct{x} = \vct{b},
\end{equation}
where $\mtx{A}$ is a given coefficient matrix and $\vct{b}$ is a given vector.
This section describes a few probabilistic approaches for solving \eqref{eq:Ax=b}.
For the most part, we restrict our attention to the case where $\mtx{A}$ is square
and the system is consistent, but we will also touch on linear regression problems. %
Research on randomized linear solvers has not progressed as rapidly as some other
areas of randomized NLA, so the discussion here is more preliminary
than other parts of this survey.

\subsection{Background: Iterative solvers}

It is important to keep in mind that existing iterative solvers often
work exceptionally well. Whenever $\mtx{A}$ is well-conditioned or, more generally, whenever
its spectrum is ``clustered,'' Krylov solvers such as the conjugate gradient (CG) algorithm
or GMRES tend to converge very rapidly.  For practical purposes, the cost of solving \eqref{eq:Ax=b}
is no larger than the cost of a handful of matrix--vector multiplications with $\mtx{A}$.
In terms of speed, it is very difficult to beat these techniques.
Consequently, we focus on the cases where known iterative methods converge slowly
and where we cannot deploy standard preconditioners to resolve the problem.

Having limited ourselves to this situation, %
the choice of solver for \eqref{eq:Ax=b} will depend on properties of the coefficient matrix:
Is it dense or sparse?  Does it fit in RAM?  Do we have access
to individual matrix entries?  Can we apply $\mtx{A}$ to a vector?
We will consider several of these environments.

\subsection{Accelerating solvers based on dense matrix factorizations}
\label{sec:parkeretc}

As it happens, one of the early examples of randomization in NLA
was a method for accelerating the solution of a dense linear system \eqref{eq:Ax=b}.
\citeasnoun{1995_parker_randombutterfly} observed that
we can precondition a linear system by left and right multiplying
the coefficient matrix by random unitary matrices $\mtx{U}$ and $\mtx{V}$.
With probability $1$, we can solve the resulting system
\begin{equation}
\label{eq:Ax=b_precond}
\bigl(\mtx{U}\mtx{A}\mtx{V}^{*}\bigr)\,\bigl(\mtx{V}\vct{x}\bigr) = \mtx{U}\vct{b}
\end{equation}
by Gaussian elimination \emph{without pivoting}.
More precisely, Parker proved that, almost surely, blocked Gaussian elimination will not
encounter a degenerate diagonal block.

Blocked Gaussian elimination without pivoting is substantially faster than
ordinary Gaussian elimination for two reasons: matrix operations
are more efficient than vector operations on modern computers,
and we avoid the substantial communication costs that arise when
we search for pivots.  (Section~\ref{sec:blocking} contains
more discussion about blocking.)
Parker also observed that structured random matrices
(such as the randomized trigonometric transforms from Section~\ref{sec:srtt})
allow us to perform the preconditioning step at lower cost
than the subsequent Gaussian elimination procedure.

\citeasnoun{1995_parker_randombutterfly} inspired many subsequent papers,
including
\cite{2014_li_random_butterfly_pivoting,2017_trogdon_random_butterfly,DGHL12:Communication-Optimal-Parallel,2017_baboulin_GPU} and \cite{2017_pan_randomized_gaussian_elim}.
Another related direction concerns the smoothed analysis of Gaussian elimination
undertaken in \cite{SST06:Smoothed-Analysis}.

As we saw in Section~\ref{sec:full}, randomization can be used to accelerate
the computation of rank-revealing factorizations of the matrix $\mtx{A}$.
In this context, randomness allows us to block the factorization method,
which increases its practical speed, even though the overall arithmetic cost remains at $\bigO(n^3)$.
Randomized rank-revealing factorizations are ideal for solving ill-conditioned
linear systems because they allow the user to stabilize the computation
by avoiding subspaces associated with small singular values.

For instance, suppose that we have computed a singular value decomposition (SVD):
$$
\mtx{A} =
\mtx{U}\mtx{D}\mtx{V}^{*} =
\sum_{j=1}^{n}\sigma_{j}\,\vct{u}_{j}\vct{v}_{j}^{*}.
$$
Let us introduce a truncation parameter $\eps$ and ignore all singular
modes where $\sigma_j \leq \eps$.  Then the stabilized solution to \eqref{eq:Ax=b}
is
$$
\vct{x}_{\eps} = \sum_{j \,:\, \sigma_{j} > \eps}\frac{1}{\sigma_{j}}\,\vct{v}_{j}\vct{u}_{j}^{*}\vct{b}.
$$
By allowing the residual
to take a nonzero value, we can ensure that $\vct{x}_\eps$ does not include large
components that contribute little toward satisfying the original equation.
The randomized URV decomposition, described in Section~\ref{sec:randUTV},
can also be used for stabilization, and we can compute it much faster than
an SVD.

\begin{remark}[Are rank-revealing factorizations needed?]
In some applications, computing a rank-revealing factorization is overkill for
purposes of solving the linear system (\ref{eq:Ax=b}). In particular, if we compute an unpivoted QR
decomposition of $\mtx{A}$, then it is easy to block both the factorization and the solve stages so that
very high speed is attained. This process is provably backwards stable, which is sometimes all
that is needed. (In practice, partially pivoted LU can often be used in an analogous manner, despite
being theoretically unstable.)

In contrast, when the actual entries of the computed solution $\vct{x}_{\rm approx}$ matter
(as opposed to the value of $\mtx{A}\vct{x}_{\rm approx}$),
a stabilized solver is generally preferred.
As a consequence, column-pivoted QR is often cited as a method of choice
for ill-conditioned problems in situations where an SVD is not affordable.
\end{remark}

\begin{remark}[Strassen accelerated solvers]
We saw in Section \ref{sec:demmelURV} that randomization has enabled us to compute a
rank-revealing factorization
of an $n\times n$ matrix in less than $\mathcal{O}(n^{3})$ operations. The idea was to use
randomized preconditioning as in (\ref{eq:Ax=b_precond}), and then accelerate an unpivoted
factorization of the resulting coefficient matrix using fast algorithms for the matrix-matrix
multiplication such as Strassen \cite{2007_demmel_fast_linear_algebra_is_stable}.
This methodology can of course be immediately applied to the
task of solving ill-conditioned linear systems.
For improved numerical stability, a few steps of power iteration can be incorporated to this approach; see \eqref{eq:powerURV}.
\end{remark}

\subsection{Sketch and precondition}
\label{sec:sketchtoprecond}

Another approach to preconditioning is to look for a random transformation
of the linear system that makes an iterative linear solver converge more quickly.
Typically, these preconditioning transforms need to cluster the eigenvalues
of the matrix.

The most successful example of this type of randomized preconditioning
does not concern square systems, but rather highly overdetermined least-squares problems.
See Section~\ref{sec:sketchandprecond} \textit{et seq.}~for a discussion of this idea.
This type of randomized preconditioning can greatly enhance the robustness and power of
``asynchronous'' solvers for communication-constrained environments \cite{2015_avron_revisiting}.
Related techniques for kernel ridge regression are described in \citeasnoun{2017_avron_kernel_ridge_regression}.
For linear systems involving high-dimensional tensors, see \citeasnoun{2016_kressner_precond_tensor}.

For square linear systems, the search for randomized preconditioners has been less fruitful.
Section~\ref{sec:sparse-cholesky} outlines the main success story.  Nevertheless, techniques
already at hand can be very helpful for solving linear systems in special situations,
which we illustrate with a small example.

Consider the task of solving \eqref{eq:Ax=b} for a positive-definite (PD) coefficient matrix $\mtx{A}$.
In this environment, the iterative method of choice is the
conjugate gradient (CG) algorithm~\cite{1952_hestenes_CG}.
A detailed convergence analysis for CG is available; for example, see~\cite[Sec.~38]{1997_trefethen_bau}.
In a nutshell, CG converges rapidly when the eigenvalues of $\mtx{A}$ are clustered,
as in Figure~\ref{fig:eval_distributions_for_CG}(a).  Therefore, our task is to find
a matrix $\mtx{M}$ for which $\mtx{M}^{-1}$ can be applied rapidly to vectors and
for which $\mtx{M}^{-1/2} \mtx{A} \mtx{M}^{-1/2}$ has a tightly clustered spectrum.

In a situation where $\mtx{A}$ has a few eigenvalues that are larger than the others
(Figure~\ref{fig:eval_distributions_for_CG}(b)),
randomized algorithms for low-rank approximation provide excellent preconditioners.
For instance, we can use the randomized Nystr{\"o}m method (Section~\ref{sec:nystrom})
to compute an approximation
\begin{equation}
\label{eq:spd_exact_lowrank}
\mtx{A} \approx \mtx{U}\mtx{D}\mtx{U}^{*}
\end{equation}
where $\mtx{D} \in \RR_{+}^{k\times k}$ is a diagonal matrix whose entries hold approximations to the largest $k$
eigenvalues of $\mtx{A}$, and where $\mtx{U} \in \F^{m\times k}$ is an orthonormal matrix holding the corresponding
approximate eigenvectors.  We then form a preconditioner for $\mtx{A}$ by setting
$$
\mtx{M} = (1/\alpha)\,\mtx{U}\mtx{D}\mtx{U}^{*} + \bigl(\mtx{I} - \mtx{U}\mtx{U}^{*}\bigr).
$$
It is trivial to invert $\mtx{M}$ because
$\mtx{M}^{-1} = \alpha\mtx{U}\mtx{D}^{-1}\mtx{U}^{*} + \bigl(\mtx{I} - \mtx{U}\mtx{U}^{*}\bigr)$.
Now, if \eqref{eq:spd_exact_lowrank} captured the top $k$ eigenmodes of $\mtx{A}$ exactly, then the
preconditioned coefficient matrix $\mtx{M}^{-1/2}\mtx{A}\mtx{M}^{-1/2}$ would have the same eigenvectors as $\mtx{A}$,
but with the top $k$ eigenvalues replaced by $\alpha$ and the remaining eigenvalues unchanged.
By setting $\alpha = \lambda_{k}$, say, the spectrum of $\mtx{M}^{-1/2}\mtx{A}\mtx{M}^{-1/2}$
would become far more tightly clustered. In reality, the columns of $\mtx{U}$ do not exactly align
with the eigenvectors of $\mtx{A}$.  Even so, the accuracy will be good for the eigenvectors
associated with the top eigenvalues, which is what matters.

\begin{figure}
\begin{tabular}{p{27mm}p{27mm}p{27mm}p{27mm}}
\includegraphics[width=30mm]{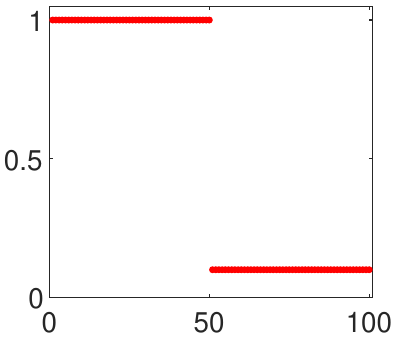} &
\includegraphics[width=30mm]{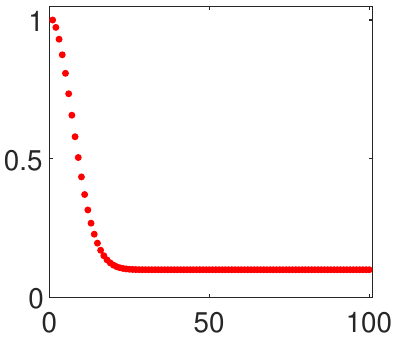} &
\includegraphics[width=30mm]{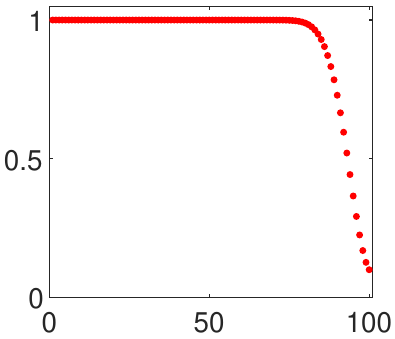} &
\includegraphics[width=30mm]{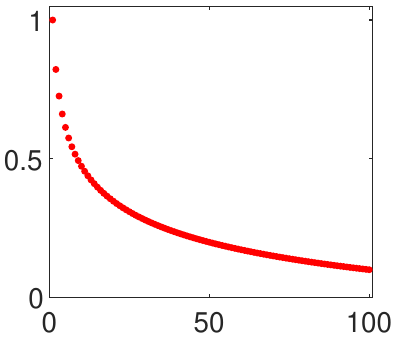} \\
\centering (a) & \centering (b) & \centering (c) & \centering (d)
\end{tabular}
\caption{The eigenvalues of four different PD matrices that all have condition number $10$
(since $\lambda_{\rm max}=1$ and $\lambda_{\rm min}=0.1$). As discussed in Section
\ref{sec:sketchtoprecond}, the difficulty of solving the corresponding linear systems using
conjugate gradients differ significantly between these cases.
(a) For this matrix, CG converges in two iterations, without the need for preconditioners.
(b) When the spectrum has some large outliers, the randomized preconditioner outlined in Section \ref{sec:sketchtoprecond}
works well. (c,d) Finding randomized preconditioners for matrices with spectra like these remains an open research problem.}
\label{fig:eval_distributions_for_CG}
\end{figure}

\subsection{The randomized Kaczmarz method and its relatives}
\label{sec:rk}

The Kaczmarz method is an iterative algorithm for solving linear
systems that is typically used for large, overdetermined problems
with inconsistent equations.  Randomized variants of the Kaczmarz
method have received a lot of attention in recent years,
in part because of close connections to stochastic gradient
descent (SGD) algorithms for solving least-squares problems.

To explain the idea, consider a (possibly inconsistent) linear system
\begin{equation} \label{eq:Ax=b_kacz}
\mtx{A}^* \vct{x} \approx \vct{b}
\quad\text{where}\quad
\mtx{A}^* \in \F^{m \times n}.
\end{equation}
The basic Kaczmarz algorithm starts with an initial guess $\vct{x}_0 \in \F^n$
for the solution.  At each iteration $t$, we select a new index $j = j(t) \in \{1, \dots, m\}$,
and we make the update
\begin{equation} \label{eq:kacz_trad}
\vct{x}_{t+1} = \vct{x}_t + \frac{\vct{b}(j) - \ip{\mtx{A}(:, j)}{\vct{x}_t}}{\norm{\mtx{A}(:,j)}^2} \mtx{A}(:, j).
\end{equation}
The rule~\eqref{eq:kacz_trad} has a simple interpretation:
it ensures that $\vct{x}_{t+1}$ is the closest point to $\vct{x}_t$
in the hyperplane containing solutions to the linear equation
determined by the $j$th equation in the system.

In implementing this method, we must choose a control mechanism
that determines the next index.  A simple and robust approach is
to cycle through the rows consecutively; that is, $j(t) = t \bmod m$.
Another effective, but expensive, option is to select the equation
with the largest violation.

The randomized Kaczmarz (RK) algorithm uses a probabilistic control
mechanism instead.  This kind of approach also has a long history,
and it is useful in cases where cyclic control is ineffective.
The RK method has received renewed attention owing to work of
\citeasnoun{SV09:Randomized-Kaczmarz}.
They proposed sampling each $j(t)$ independently at random,
with the probability of choosing the $i$th equation proportional to the
squared $\ell_2$ norm of the $i$th column of $\mtx{A}$.
They proved that this version of RK converges linearly with a rate determined by the
(Demmel) condition number of the matrix $\mtx{A}$.  Later,
it was recognized that this approach is just a particular
instantiation of SGD for the least-squares problem~\eqref{eq:Ax=b_kacz}.
See \citeasnoun{2014_needell_randomized_kaczmarz}, which draws
on results from \citeasnoun{2011_bach_moulines}.

There are many subsequent papers that have built on the RK
approach for solving inconsistent linear systems.
\citeasnoun{LL10:Randomized-Methods} observed that related
ideas can be used to design randomized Gauss--Seidel
and randomized Jacobi iterations.
\citeasnoun{2014_tropp_needell_paved} studied a blocked version
of the RK algorithm, which is practically more efficient
for many of the same reasons that other blocked algorithms
work well (Section~\ref{sec:blocking}).

\citeasnoun{2015_richtarik_randomized_kaczmarz} observed
that the RK algorithm is a particular type of iterative
sketching method.  Based on this connection, they proposed
a generalization.  At each iteration, we draw an
independent random embedding $\mtx{S}_t \in \F^{\ell \times m}$.
The next iterate is chosen by solving the least-squares problem
\begin{equation}
\label{eq:Ax=b_kacz_proj0}
\vct{x}_{t} = \argmin\nolimits_{\vct{y}} \norm{ \vct{x}_{t-1} - \vct{y} }^2
\quad\text{subject to}\quad
\mtx{S}_t \mtx{A}^* \vct{y} = \mtx{S}_t \vct{b}.
\end{equation}
The idea is to choose the dimension $\ell$ sufficiently small that
the sketched least-squares problem can be solved explicitly
using a direct method (e.g., QR factorization).
This flexibility leads to algorithms that converge more rapidly in practice
because the sketch $\mtx{S}_t$ can mix equations instead of just sampling.
Later, \citeasnoun{RT17:Stochastic-Reformulations}
showed that this procedure can be accelerated to achieve
rates that depend on the \emph{square root} of an appropriate condition
number; see also \citeasnoun{GHRS18:Accelerated-Stochastic}.

\section{Linear solvers for graph Laplacians}
\label{sec:sparse-cholesky}

In this section we describe the randomized algorithm,
\textsc{SparseCholesky}, which can efficiently solve
a linear system whose coefficient matrix is a graph Laplacian matrix.
Up to a polylogarithmic factor, this method achieves the minimum
possible runtime and storage costs.  This algorithm has the potential
to accelerate many types of computations involving graph Laplacian
matrices.

The \textsc{SparseCholesky} algorithm was developed by
\citeasnoun{KS16:Approximate-Gaussian} and further
refined by \citeasnoun{Kyn17:Approximate-Gaussian}.
These approaches are based on earlier work from Dan Spielman's
group, notably the paper of \citeasnoun{KLP+16:Sparsified-Cholesky}.
The presentation here is adapted from~\citeasnoun{Tro19:Matrix-Concentration-LN}.

\subsection{Overview}

We begin with a high-level approach for solving Laplacian
linear systems.  The basic idea is to construct a preconditioner
using a randomized variant of the incomplete Cholesky method.
Then we can solve the original linear system
by means of the preconditioned conjugate gradient (PCG) algorithm.

\subsubsection{Approximate solutions to the Poisson problem}

Let $\mtx{L} \in \Sym_n(\RR)$ be the Laplacian matrix of a weighted, loop-free, undirected graph
on the vertex set $V = \{1, \dots, n\}$.  We write $m$ for the number of edges in the graph,
i.e., the sparsity of the graph.
For simplicity, we will also assume that the graph is \emph{connected};
equivalently, $\ker(\mtx{L}) = \lspan\{ \mathbf{1} \}$ where
$\mathbf{1} \in \RR^n$ is the vector of ones.
See Section~\ref{sec:graph-sparsification} for definitions.
See Figure~\ref{fig:big-dipper} for an illustration of
an unweighted, undirected graph.

The basic goal is to find the unique solution $\vct{x}_{\star}$ to the Poisson problem
$$
\mtx{L} \vct{x} = \vct{f}
\quad\text{where}\quad
\mathbf{1}^* \vct{f} = {0}
\quad\text{and}\quad
\mathbf{1}^* \vct{x} = {0}.
$$
For a parameter $\eps > 0$, we can relax this requirement by asking
instead for an approximate solution $\vct{x}_{\eps}$ that satisfies
$$
\norm{ \vct{x}_{\eps} - \vct{x}_{\star} }_{\mtx{L}}
	\leq \eps \, \norm{ \vct{x}_{\star} }_{\mtx{L}}.
$$
We have written $\norm{ \vct{x} }_{\mtx{L}} := (\vct{x}^* \mtx{L} \vct{x})^{1/2}$
for the energy seminorm induced by the Laplacian matrix.

\subsubsection{Approximate Cholesky decomposition}

Imagine that we can efficiently construct a
sparse, approximate Cholesky decomposition of the Laplacian matrix $\mtx{L}$.
More precisely, we seek a morally lower-triangular matrix $\mtx{C}$
that satisfies
\begin{equation} \label{eqn:approx-cholesky}
0.5 \, \mtx{L} \psdle \mtx{CC}^* \psdle 1.5 \, \mtx{L}
	\quad\text{where}\quad
	\texttt{nnz}(\mtx{C}) = \bigO(m \log^2 n).
\end{equation}
In other words, there is a known permutation of the rows
that brings the matrix $\mtx{C}$ into lower-triangular form.
As usual, $\psdle$ is the semidefinite order, and
\texttt{nnz} returns the number of nonzero entries in a matrix.

This section describes an algorithm, called \textsc{SparseCholesky},
that can complete the task outlined in the previous paragraph.
This algorithm is motivated by the insight
that we can produce a sparse approximation
of a Laplacian matrix by random sampling (Section~\ref{sec:graph-sparsification}).
The main challenge is to obtain sampling probabilities without extra computation.
The resulting method can be viewed as a randomized variant of the incomplete
Cholesky factorization \cite[Sec.~11.5.8]{GVL13:Matrix-Computations-4ed}.

\subsubsection{Preconditioning}

Given the sparse, approximate Cholesky factor $\mtx{C}$, we can precondition the Poisson problem:
\begin{equation} \label{eqn:precond-poisson}
(\mtx{C}^\pinv \mtx{L} \mtx{C}^{*\pinv})(\mtx{C}^* \vct{x}) = (\mtx{C}^\pinv \vct{f}).
\end{equation}
When~\eqref{eqn:approx-cholesky} holds, the matrix $\mtx{C}^\pinv \mtx{L} \mtx{C}^{*\pinv}$
has condition number $\kappa \leq 3$.

Therefore, we can solve the preconditioned system~\eqref{eqn:precond-poisson} quickly using the PCG
algorithm \cite[Sec.~11.5]{GVL13:Matrix-Computations-4ed}.
If the initial iterate $\vct{x}_0 = \vct{0}$, then $j$ steps
of PCG produce an iterate $\vct{x}_j$ that satisfies
$$
\norm{ \vct{x}_j - \vct{x}_{\star} }_{\mtx{L}}
	\leq 2 \left[ \frac{\sqrt{\kappa} - 1}{\sqrt{\kappa} + 1} \right]^j
	\norm{ \vct{x}_{\star} }_{\mtx{L}}
	< 3^{1-j} \norm{ \vct{x}_{\star} }_{\mtx{L}}.
$$
As a consequence, we can achieve relative error $\eps$ after $1 + \log_3 (1/\eps)$ iterations.
Each iteration requires a matrix--vector product with $\mtx{L}$
and the solution of a (consistent) linear system $\mtx{CC}^* \vct{u} = \vct{y}$.
We can perform these steps in $\bigO(m \log^2 n)$ operations per iteration.
Indeed, $\mtx{L}$ has only $\bigO(m)$ nonzero entries.
The matrix $\mtx{C}$ is morally triangular with $\bigO(m \log^2 n)$
nonzero entries, so we can apply $(\mtx{CC}^*)^{\pinv}$ using two triangular
solves.

\begin{remark}[Triangular solve]
To solve a consistent linear system $(\mtx{CC}^*) \vct{u} = \vct{y}$,
we can apply $\mtx{C}^\pinv$ by triangular elimination and then
apply $\mtx{C}^{*\pinv}$ by triangular elimination.
The four subspace theorem ensures that solution to the
first problem renders the second problem consistent too.
Since $\ker(\mtx{CC}^*) = \lspan\{\vct{1}\}$, we can
enforce consistency numerically by removing the constant
component of the input $\vct{y}$.  Similarly, we can
remove the constant component of the output $\vct{u}$
to ensure it belongs to the correct space.
\end{remark}

\subsubsection{Main results}

The following theorem describes the performance of the \textsc{SparseCholesky} procedure.
This is the main result from~\citeasnoun{KS16:Approximate-Gaussian}.

\begin{theorem}[SparseCholesky] \label{thm:sparse-cholesky}
Let $\mtx{L}$ be the Laplacian of a connected graph on $n$ vertices, with $m$ weighted edges.
With high probability, the \textsc{SparseCholesky} algorithm produces a morally lower-triangular
matrix $\mtx{C}$ that satisfies
$$
0.5 \, \mtx{L} \psdle \mtx{CC}^* \psdle 1.5 \, \mtx{L}.
$$
The matrix $\mtx{C}$ has $\bigO(m \log^2 n)$ nonzero entries.  The expected running
time is $\bigO(m \log^3 n)$ operations.
\end{theorem}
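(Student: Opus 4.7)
The plan is to analyze the algorithm as an incremental randomized Cholesky factorization in which the vertices are eliminated one at a time in a uniformly random order, and at each step the exact Schur complement---which introduces a weighted clique among the neighbors of the pivot vertex---is replaced by a sparse unbiased random approximation. Writing $v_1, \dots, v_n$ for the elimination order and $\mtx{L}_t$ for the Laplacian on the remaining $n-t$ vertices, the sampled update $\widetilde{\mtx{L}}_{t+1}$ should be designed so that $\Expect[\widetilde{\mtx{L}}_{t+1} \mid \mathcal{F}_t] = \mtx{L}_t / v_{t+1}$ and has sparsity $\bigO(\deg(v_{t+1}) \log n)$, exactly in the spirit of the empirical approximation paradigm from Section~\ref{sec:matrix-mc}. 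The morally lower-triangular factor $\mtx{C}$ is assembled from the ``star'' pieces produced by each elimination.

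For the spectral sandwich $0.5\,\mtx{L} \psdle \mtx{CC}^* \psdle 1.5\,\mtx{L}$, I would pass to the normalizing map $\mtx{K}_{\mtx{L}}$ from Section~\ref{sec:graph-sparsification} and work with $\norm{\mtx{K}_{\mtx{L}}(\mtx{CC}^* - \mtx{L})}$, since by~\eqref{eqn:graph-comparison} bounding this spectral norm by $1/2$ is equivalent to the claimed psd inequalities. Telescoping the construction represents $\mtx{CC}^* - \mtx{L}$ as a sum of centered, adapted matrix increments $\mtx{E}_t$ supported on the neighborhood of $v_t$ in the current sparsified residual graph. A matrix Freedman inequality---the martingale extension of the matrix Bernstein bound in Theorem~\ref{thm:mtx-sampling}---applied to $\sum_t \mtx{K}_{\mtx{L}}(\mtx{E}_t)$ then yields the spectral norm bound with high probability, provided the per-clique-edge sampling count is $\bigO(\log n)$, which is exactly what is needed to control both the uniform bound $R$ on each increment and the cumulative variance proxy.

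For the sparsity and runtime claims, the structural input is that step $t$ deposits $\bigO(\deg_t(v_t) \log n)$ nonzeros into $\mtx{C}$, where $\deg_t(v_t)$ is the degree of the pivot in the current (sparsified) residual graph. A potential-function argument shows that, under the uniformly random elimination order, the expected degree of the pivot stays proportional to the average degree $m/(n-t)$ of the residual graph, up to a logarithmic factor accumulated from the repeated sparsifications. Summing over all $n$ eliminations then gives $\Expect[\texttt{nnz}(\mtx{C})] = \bigO(m \log^2 n)$, and an expected runtime of $\bigO(m \log^3 n)$ where the extra logarithmic factor accounts for maintaining the data structures needed to sample clique edges in proportion to their weights.

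The main obstacle will be the matrix martingale analysis, because each increment $\mtx{E}_t$ depends on the full history of pivot choices and previous sparsifications: both the pivot $v_{t+1}$ and the residual graph on which the next clique is sampled are random and highly correlated with the past. Controlling $\sum_t \Expect[\mtx{K}_{\mtx{L}}(\mtx{E}_t)^2 \mid \mathcal{F}_{t-1}]$ requires bounding effective-resistance-like quantities in the \emph{random} residual graph by comparable quantities in the fixed reference Laplacian $\mtx{L}$---this is the technical crux of the Kyng--Sachdeva argument and is where the uniformly random elimination order becomes essential. A secondary subtlety is ensuring that each sparsified Schur complement is itself a valid (weighted, loop-free) graph Laplacian, so that the recursion stays within the class to which the spectral comparison machinery applies, and so that the cumulative sparsity bound can be tracked cleanly across steps.
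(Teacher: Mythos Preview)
Your plan for the spectral sandwich is exactly the approach the paper sketches (and that Kyng--Sachdeva carry out): pass to the normalizing map, write $\mtx{CC}^*-\mtx{L}$ as a martingale in the normalized coordinates, and invoke a matrix Freedman inequality, with the random elimination order being essential to the variance bookkeeping. The paper does not prove the theorem; it points to \cite{KS16:Approximate-Gaussian,Kyn17:Approximate-Gaussian,Tro19:Matrix-Concentration-LN} for the details, so on this half your outline is aligned with what the paper says.

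The gap is in your sparsity/runtime mechanism. You propose that each sampled clique have $\bigO(\deg(v_t)\log n)$ multiedges and then appeal to a potential argument to keep the residual degree under control. But with that choice the star you remove has $\deg(v_t)$ multiedges while the approximate clique you add has $\Theta(\deg(v_t)\log n)$, so the multiedge count $M_t$ satisfies, under uniform pivot selection, $\Expect[M_{t+1}\mid\mathcal F_t]\ge M_t\bigl(1+c\log n/(n-t)\bigr)$. Telescoping over $t$ gives a blowup of order $\exp(c\log^2 n)$, not a polylogarithmic factor; no potential argument repairs this without an additional global resparsification step, which you do not include.

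The paper's algorithm avoids this entirely by a different device: it \emph{preprocesses} the input, splitting every edge into $R=\Theta(\log^2 n)$ parallel multiedges. This is where the variance control you want actually comes from (it bounds the effective resistance of every multiedge at the outset), and it allows the clique sampler to draw \emph{exactly} $\deg(v_t)$ multiedges---the same number as the star being removed---so that the total multiedge count is \emph{monotone nonincreasing} throughout the elimination. That monotonicity is the structural fact that yields both $\texttt{nnz}(\mtx{C})=\bigO(m\log^2 n)$ (the initial multiedge count after preprocessing) and, via $\Expect[\deg(v_t)\mid\mathcal F_{t-1}]\le 2M_0/(n-t+1)$ summed over $t$, the $\bigO(m\log^3 n)$ expected runtime. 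Your per-step oversampling is the wrong lever; the paper trades it for a one-time edge-splitting step and then keeps the edge budget flat.
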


In view of our discussion about PCG, we arrive at the following statement
about solving the Poisson problem.

\begin{corollary}[Poisson problem]
Suppose the \textsc{SparseCholesky} algorithm delivers
an approximation $\mtx{L} \approx \mtx{CC}^*$ that
satisfies~\eqref{eqn:approx-cholesky}.
Then we can solve each consistent linear system $\mtx{L}\vct{x} = \vct{f}$
to relative error $\eps$ in the seminorm $\norm{\cdot}_{\mtx{L}}$
using at most $1 + \log_3(1/\eps)$ iterations of PCG, each with a cost
of $\bigO(m \log^2 n)$ arithmetic operations.
\end{corollary}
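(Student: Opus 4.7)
The plan is to verify that the three standard ingredients for a preconditioned Krylov method are all in place: (i) a bound on the condition number of the preconditioned operator, (ii) the classical PCG convergence rate in the energy seminorm, and (iii) a per-iteration cost count. Throughout, we work on the subspace $\mathbf{1}^{\perp}$, on which both $\mtx{L}$ and $\mtx{CC}^{*}$ are strictly positive definite.

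First I would translate the psd sandwich $0.5\,\mtx{L}\psdle\mtx{CC}^{*}\psdle 1.5\,\mtx{L}$ into a spectral bound on the preconditioned operator $\mtx{M} := \mtx{C}^{\pinv}\mtx{L}\mtx{C}^{*\pinv}$. Conjugating by $\mtx{C}^{\pinv}$ on both sides converts the inequality into $(2/3)\,\mtx{\Pi}\psdle\mtx{M}\psdle 2\,\mtx{\Pi}$, where $\mtx{\Pi}$ is the orthogonal projector onto $\mathbf{1}^{\perp}$. Hence the effective condition number of $\mtx{M}$ on $\mathbf{1}^{\perp}$ satisfies $\kappa(\mtx{M})\leq 3$.

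Next I would invoke the standard convergence theorem for the conjugate gradient method applied in the $\mtx{L}$-inner product to the preconditioned system, which yields, for an initial iterate $\vct{x}_0 = \vct{0}$,
$$
\norm{ \vct{x}_j - \vct{x}_{\star} }_{\mtx{L}}
\leq 2\left[ \frac{\sqrt{\kappa}-1}{\sqrt{\kappa}+1} \right]^{j}\norm{ \vct{x}_{\star} }_{\mtx{L}}.
$$
Plugging in $\kappa\leq 3$ gives $(\sqrt{3}-1)/(\sqrt{3}+1)=2-\sqrt{3}<1/3$, so the right-hand side is bounded by $2\cdot 3^{-j} < 3^{1-j}\norm{\vct{x}_{\star}}_{\mtx{L}}$. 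Requiring $3^{1-j}\leq\eps$ yields $j\geq 1+\log_{3}(1/\eps)$, giving the stated iteration count.

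Finally I would tally the per-iteration work. Each PCG iteration requires one multiplication by $\mtx{L}$, which costs $\bigO(m)$ since $\mtx{L}$ has $\bigO(m)$ nonzeros; one preconditioner application $(\mtx{CC}^{*})^{\pinv}\vct{y}$, executed as two triangular solves on the morally triangular factor $\mtx{C}$, each of which costs $\bigO(\texttt{nnz}(\mtx{C})) = \bigO(m\log^{2}n)$ by Theorem~\ref{thm:sparse-cholesky}; and $\bigO(n)$ work for the inner products and vector updates. The dominant term is therefore $\bigO(m\log^{2}n)$. The main obstacle I anticipate is purely bookkeeping: verifying that the pseudoinverses and projections onto $\mathbf{1}^{\perp}$ are handled consistently throughout, so that the abstract PCG analysis applies verbatim on the positive definite restriction of $\mtx{M}$ to $\mathbf{1}^{\perp}$. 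Once that is settled, the argument reduces to the three-step recipe above.
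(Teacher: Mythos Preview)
Your proposal is correct and follows essentially the same route as the paper: the paper also extracts the condition-number bound $\kappa\leq 3$ from the psd sandwich, invokes the standard PCG energy-norm estimate $2[(\sqrt{\kappa}-1)/(\sqrt{\kappa}+1)]^{j}<3^{1-j}$, and then tallies the per-iteration cost as one multiply by $\mtx{L}$ plus two triangular solves with $\mtx{C}$. Your extra care about restricting to $\mathbf{1}^{\perp}$ is handled in the paper by a short remark on applying $(\mtx{CC}^{*})^{\pinv}$ via triangular elimination after projecting out the constant component.
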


\subsubsection{Discussion}

As we have mentioned, the Poisson problem %
serves as a primitive for undertaking many computations on
undirected graphs~\cite{Ten10:Laplacian-Paradigm}.
Potential applications include clustering, analysis of random walks,
and finite-element discretizations of elliptic PDEs.

The \textsc{SparseCholesky} algorithm achieves a near-optimal runtime
and storage guarantee for the Poisson problem on a graph.
Indeed, for a general graph with $m$ edges,
any algorithm must use $\bigO(m)$ storage and arithmetic.
There is a proof that the cost $\bigO(m \log^{1/2}(n))$
is achievable in theory \cite{CKM+14:Solving-SDD},
but the resulting methods are currently impractical.
Meanwhile, the simplicity of the \textsc{SparseCholesky}
method makes it a candidate for real-world computation.

For particular classes of Laplacian matrices, existing solvers
can be very efficient. Optimized sparse direct solvers
\cite{2016_acta_sparse_direct_survey} work very well for small-
and medium-size problems, but they typically have superlinear scaling,
which renders them unsuitable for truly large-scale problems.
Iterative methods such as multigrid or preconditioned
Krylov solvers can attain linear complexity for important
classes of problems, in particular for sparse systems arising
from the discretization of elliptic PDEs. However, we are not
aware of competing methods that provably enjoy near-optimal
complexity for all problems.

We regard the \textsc{SparseCholesky} algorithm as one of the
most dramatic examples of how randomization has the potential to accelerate
basic linear algebra computations, both in theory and in practice.

\subsection{Cholesky decomposition of a graph Laplacian}

To begin our explanation of the \textsc{SparseCholesky} algorithm,
let us summarize what happens when we apply the
standard Cholesky decomposition method to a graph Laplacian.

\subsubsection{The Laplacian of a multigraph}

For technical reasons, related to the design and analysis of the algorithm,
we need to work with multigraphs instead of ordinary graphs.
In the discussion, we will point out specific places where
this generality is important.

Consider a weighted, undirected \emph{multigraph} $G$,
defined on the vertex set $V = \{1, \dots, n\}$.
Each edge $e = \{u, v\}$ is an unordered pair of vertices;
we typically use the abbreviated notation $e = uv = vu$.
We introduce a weight function
$w_G$ that assigns a positive weight to each edge $e$
in the multigraph $G$.  Since $G$ is a multigraph,
there may be many multiedges, with distinct weights,
connecting the same two vertices.

Taking some notational liberties, we will identify the
multigraph $G$ with its Laplacian matrix $\mtx{L}$,
which we express in the form
\begin{equation} \label{eqn:multi-laplacian}
\mtx{L} = \sum\nolimits_{e \in \mtx{L}} w_{\mtx{L}}(e) \, \mtx{\Delta}_{e}.
\end{equation}
The matrix $\mtx{\Delta}_{e}$ is the elementary Laplacian on the
vertex pair $(u, v)$ that composes the edge $e = uv$.  That is,
$$
\mtx{\Delta}_e := \mtx{\Delta}_{uv} := (\vct{\delta}_{u} - \vct{\delta}_v)(\vct{\delta}_{u} - \vct{\delta}_v)^*
\quad\text{where $e = uv$.}
$$
The sum in~\eqref{eqn:multi-laplacian} takes place over all the multiedges $e$
in the multigraph $\mtx{L}$, so the same elementary Laplacian may
appear multiple times with different weights.

\subsubsection{Stars and cliques}
\label{sec:star-clique}

To describe the Cholesky algorithm on a graph, we need to
introduce a few more concepts from graph theory.
Define the \emph{degree} and the \emph{total weight} of a vertex $u$ in the multigraph $\mtx{L}$ to be
$$
\operatorname{deg}_{\mtx{L}}(u) := \sum\nolimits_{e=uv \in \mtx{L}} 1
\quad\text{and}\quad
w_{\mtx{L}}(u) := \sum\nolimits_{e = uv \in \mtx{L}} w_{\mtx{L}}(e).
$$
In other words, the degree of $u$ is the total number of multiedges $e$ that contain $u$.
The total weight of $u$ is the sum of the weights of the multiedges $e$ that contain $u$.

Let $u$ be a fixed vertex.  The \emph{star}
induced by $u$ is the Laplacian
$$
\textsc{star}(u, \mtx{L}) := \sum\nolimits_{e = uv \in \mtx{L}} w_{\mtx{L}}(e) \, \mtx{\Delta}_e.
$$
In words, the star includes precisely those multiedges $e$ in the multigraph $\mtx{L}$
that contain the vertex $u$.

The \emph{clique} induced by $u$ is defined implicitly as the correction
that occurs when we take the Schur complement~\eqref{eqn:schur-complement} of the Laplacian with
respect to the coordinate $u$:
$$
\mtx{L} / \vct{\delta}_u %
	=: (\mtx{L} - \textsc{star}(u, \mtx{L})) + \textsc{clique}(u, \mtx{L}).
$$
Recall that $\vct{\delta}_u$ is the standard basis vector in coordinate $u$.
By direct calculation, one may verify that
$$
\textsc{clique}(u, \mtx{L}) = \frac{1}{2 w_{\mtx{L}}(u)} \sum\limits_{e_1 = uv_1 \in \mtx{L}} \sum\limits_{e_2 = uv_2 \in \mtx{L}}
	w_{\mtx{L}}(e_1) w_{\mtx{L}}(e_2) \mtx{\Delta}_{v_1v_2}.
$$
Each sum takes place over all multiedges $e$ in $\mtx{L}$ that contain the vertex $u$.
It can be verified that the clique is also the Laplacian of a weighted multigraph.

Figures~\ref{fig:big-dipper} and~\ref{fig:star-clique} contain an illustration of a (simple) graph,
along with the star and clique induced by eliminating a vertex.  In our more general setting,
the edges in the star and clique would have associated weights.
These diagrams are courtesy of Richard Kueng.

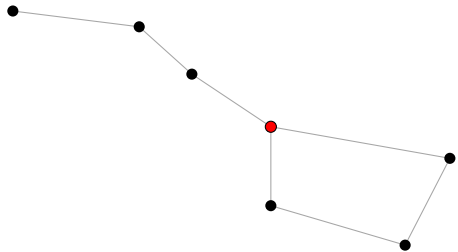
\begin{figure}
\centering
\begin{tikzpicture}[baseline,scale=0.7]
\coordinate (S1) at (0,0);
\coordinate (S2) at (-1.5,1);
\coordinate (S3) at (0,-1.5);
\coordinate (S4) at (2.55,-2.25);
\coordinate (S5) at (3.4,-0.6);
\coordinate (S6) at (-2.5,1.9);
\coordinate (S7) at (-4.9, 2.2);
\draw[black!35!white] (S1) -- (S2);
\draw[black!35!white] (S1) -- (S3);
\draw[black!35!white] (S3) -- (S4);
\draw[black!35!white] (S4) -- (S5);
\draw[black!35!white] (S5) -- (S1);
\draw[black!35!white] (S2) -- (S6);
\draw[black!35!white] (S6) -- (S7);
\draw[fill=red] (S1) circle (3pt);
\fill[black] (S2) circle (3pt);
\fill[black] (S3) circle (3pt);
\fill[black] (S4) circle (3pt);
\fill[black] (S5) circle (3pt);
\fill[black]  (S6) circle (3pt);
\fill[black]  (S7) circle (3pt);
\end{tikzpicture}
\caption{\emph{A combinatorial graph.} The Ursa Major graph with a distinguished vertex (the star Megrez) highlighted in red.}
\label{fig:big-dipper}
\end{figure}

\begin{figure}
\centering
\begin{tabular}{cc}
\begin{tikzpicture}[baseline,scale=0.7]
\coordinate (S1) at (0,0);
\coordinate (S2) at (-1.5,1);
\coordinate (S3) at (0,-1.5);
\coordinate (S4) at (2.55,-2.25);
\coordinate (S5) at (3.4,-0.6);
\coordinate (S6) at (-2.5,1.9);
\coordinate (S7) at (-4.9, 2.2);
\draw[black!35!white]  (S3) -- (S4);
\draw[black!35!white] (S2) -- (S6);
\draw[black!35!white] (S6) -- (S7);
\draw[black!35!white] (S4) -- (S5);
\fill[black!35!white] (S4) circle (3pt);
\fill[black!35!white] (S6) circle (3pt);
\fill[black!35!white] (S7) circle (3pt);
\draw[thick] (S1) -- (S2);
\draw[thick] (S1) -- (S3);
\draw[thick] (S5) -- (S1);
\draw[fill=red] (S1) circle (3pt);
\draw[fill=black] (S2) circle (3pt);
\draw[fill=black] (S3) circle (3pt);
\draw[fill=black] (S5) circle (3pt);
\end{tikzpicture} &
\begin{tikzpicture}[baseline,scale=0.7]
\coordinate (S1) at (0,0);
\coordinate (S2) at (-1.5,1);
\coordinate (S3) at (0,-1.5);
\coordinate (S4) at (2.55,-2.25);
\coordinate (S5) at (3.4,-0.6);
\coordinate (S6) at (-2.5,1.9);
\coordinate (S7) at (-4.9, 2.2);
\draw[black!35!white]  (S1) -- (S2);
\draw[black!35!white]  (S1) -- (S3);
\draw[black!35!white]  (S3) -- (S4);
\draw[black!35!white]  (S4) -- (S5);
\draw[black!35!white]  (S5) -- (S1);
\draw[black!35!white]  (S2) -- (S6);
\draw[black!35!white]  (S6) -- (S7);
\fill[white] (S1) circle (3pt);
\draw[fill=red,opacity=0.35] (S1) circle (3pt);
\fill[black!35!white] (S4) circle (3pt);
\fill[black!35!white]  (S6) circle (3pt);
\fill[black!35!white]  (S7) circle (3pt);
\draw[thick] (S2) -- (S3);
\draw[thick] (S2) -- (S5);
\draw[thick] (S3) -- (S5);
\fill[black] (S2) circle (3pt);
\fill[black]  (S3) circle (3pt);
\fill[black] (S5) circle (3pt);
\end{tikzpicture}
\end{tabular}
\caption{\emph{Illustration of a star and clique in the Ursa Major graph.}
(a) The star induced by the red vertex consists of the three solid black edges.
(b) The clique induced by the red vertex consists of the three solid black edges.
These edges are added when the red vertex is eliminated.
} \label{fig:star-clique}
\end{figure}
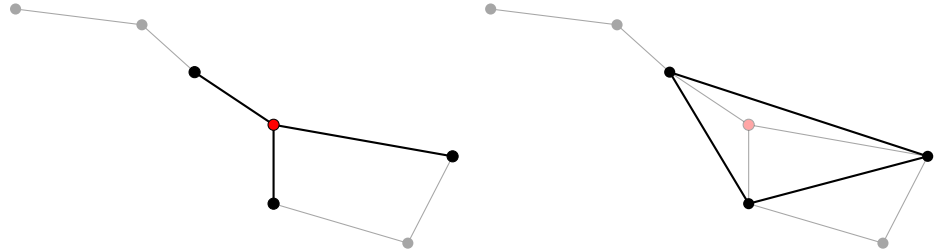

\subsubsection{Graphs and Cholesky}

With the notation introduced in Section~\ref{sec:star-clique},
we can present the graph-theoretic interpretation of the Cholesky algorithm
as it applies to the Laplacian $\mtx{L}$ of a weighted multigraph.

Define the initial Laplacian $\mtx{S}_0 := \mtx{L}$.  In each step $i = 1, 2, \dots, n$, we
select a new vertex $u_i$.  We extract the associated column of the current Laplacian:
$$
\vct{c}_i := \frac{1}{\sqrt{(\mtx{S}_{i-1})_{u_i u_i}}} \mtx{S}_{i-1} \vct{\delta}_{u_i}.
$$
We compute the Schur complement with respect to the vertex $u_i$:
$$
\mtx{S}_i := \mtx{S}_{i-1} / \vct{\delta}_{u_i} = (\mtx{S}_{i-1} - \textsc{star}(u_i, \mtx{S}_{i-1})) + \textsc{clique}(u_i, \mtx{S}_{i-1}).
$$
In other words, we remove the star induced by $u_i$ and replace it with
the clique induced by $u_i$.
Each $\mtx{S}_i$ is the Laplacian of a multigraph; it has no multiedge
that contains any one of the vertices $u_1, \dots, u_i$.  Therefore, we have reduced the
size of the problem.

After $n$ steps, the Cholesky factorization is determined by a vector
$\vct{\pi} = (u_1, u_2, \dots, u_n)$ that holds the chosen indices
and a matrix $\mtx{C} = \begin{bmatrix} \vct{c}_1 & \dots & \vct{c}_n \end{bmatrix}$
such that $\mtx{C}(\vct{\pi}, :)$ is lower-triangular.
The algorithm ensures that we have the exact decomposition
$$
\mtx{L} = \mtx{CC}^*.
$$
This is the (pivoted) Cholesky factorization of the Laplacian matrix.

To choose a vertex to eliminate, the classical approach is to find
a vertex with minimum degree or with minimum total weight.
Or one may simply select one of the remaining vertices at random.

\subsubsection{Computational costs}

The cost to compute a Cholesky factorization $\mtx{L} = \mtx{C}\mtx{C}^{*}$
of a Laplacian matrix $\mtx{L}$ is typically superlinear in $n$, with a worst-case cost of $\bigO(n^{3})$
arithmetic and $\bigO(n^{2})$ storage. This is the reason that $\mtx{C}$ is
less sparse than $\mtx{L}$: the clique that is introduced at an elimination
step has more edges than the star that it replaces, a phenomenon referred
to as \emph{fill-in}. The exact growth in the number of nonzero entries depends
on the sparsity pattern of $\mtx{L}$ and on the chosen elimination order. For
special cases, using a nested dissection ordering can provably improve on the
worst-case estimates \cite{2016_acta_sparse_direct_survey}. For instance, if $\mtx{L}$ results from the
finite-difference or finite-element discretization of an elliptic PDE, then
$\texttt{nnz}(\mtx{C}) = \bigO(n\log(n))$ in two dimensions and
$\texttt{nnz}(\mtx{C}) = \bigO(n^{4/3})$ in three.

For general graphs, one path towards improving the efficiency of the Cholesky factorization procedure is to randomly approximate
the clique by sampling, in order to curb the fill-in. Section~\ref{sec:graph-sparsification} already
indicates that this innovation may be possible, provided that we can find
a way to obtain sampling probabilities.

\subsection{The \textsc{SparseCholesky} algorithm}

We are now prepared to present the \textsc{SparseCholesky} procedure,
which uses randomized sampling to compute a sparse, approximate
Cholesky factorization.

\subsubsection{Procedure}

Let $\mtx{L}$ be the Laplacian of a weighted
multigraph on $V = \{1, \dots, n\}$.  We
perform the following steps.

\begin{enumerate} \setlength{\itemsep}{1mm}
\item	\textbf{Preprocessing.}	Split each multiedge $e = uv$ in $\mtx{L}$ into
$R = \lceil 64 \log^2(\econst n) \rceil$
multiedges,
each connecting $\{u, v\}$, and each with weight $w_{\mtx{L}}(e) / R$.
The purpose of this step is to control the effective resistance~\eqref{eqn:effective-resistance}
of each multiedge at the outset of the algorithm.
Note that this splitting results in a weighted multigraph,
even if we begin with a simple graph.

\item	\textbf{Initialization.} Form the initial Laplacian $\mtx{S}_0 = \mtx{L}$ and the list of remaining vertices $F_0 = V$.

\item	\textbf{Iteration.}  For each $i = 1, 2, \dots, n$:

\begin{enumerate} \setlength{\itemsep}{1mm}
\item	\textbf{Select a vertex.}  Choose a vertex $u_i$ uniformly at random from $F_{i-1}$.  Remove this vertex from the list: $F_i = F_{i-1} \setminus \{ u_i \}$.

\item	\textbf{Extract the column.}  Copy the normalized $u_i$th column from the current Laplacian:
$$
\vct{c}_i = \frac{1}{\sqrt{(\mtx{S}_{i-1})_{u_i u_i}}} \mtx{S}_{i-1} \vct{\delta}_{u_i}.
$$
Set $\vct{c}_i = \vct{0}$ if the denominator equals zero.

\item	\textbf{Sampling the clique.}  Construct the Laplacian $\mtx{K}_i$ of a random sparse approximation of $\textsc{clique}(u_i, \mtx{S}_{i-1})$.
We will detail this procedure in Section~\ref{sec:clique-sample}.

\item	\textbf{Approximate Schur complement.}  Form
$$
\mtx{S}_i = (\mtx{S}_{i-1} - \textsc{star}(u_i, \mtx{S}_{i-1})) + \mtx{K}_i.
$$
\end{enumerate}

\item	\textbf{Decomposition.}  Collate the columns $\vct{c}_i$ into the Cholesky factor
$$
\mtx{C} = \begin{bmatrix} \vct{c}_1 & \dots & \vct{c}_n \end{bmatrix}.
$$
Define the row permutation $\pi(i) = u_i$ for each $i$.
\end{enumerate}

Once these operations are complete, $\mtx{C}$ is a sparse, morally lower-triangular matrix.
It is also very likely that $\mtx{L} \approx \mtx{CC}^*$.  Theorem~\ref{thm:sparse-cholesky}
makes a rigorous accounting of these claims.

\subsubsection{Clique sampling}
\label{sec:clique-sample}

The remaining question is how to construct a random approximation $\mtx{K}$ of a clique
$\textsc{clique}(u, \mtx{S})$.  Here is the procedure:

\begin{enumerate}
\item	\textbf{Probabilities.}  Construct a probability mass $\vct{p}$ such that
$$
p(e) = \frac{w_{\mtx{S}}(e)}{w_{\mtx{S}}(u)}
\quad\text{for each $e \in \textsc{star}(u, \mtx{S})$.}
$$

\item	\textbf{Sampling.}  For each $i = 1, \dots, d = \operatorname{deg}_{\mtx{S}}(u)$,

\begin{enumerate}
	\item	Draw a random multiedge $e_1 = uv_1$ from the multiedges in $\textsc{star}(u, \mtx{S})$
according to the probability mass $\vct{p}$.

\item	Draw a second random multiedge $e_2 = uv_2$ from the multiedges in $\textsc{star}(u, \mtx{S})$
according to the uniform distribution.

\item	Form the random Laplacian matrix of a new multiedge:
$$
\mtx{X}_i = \frac{w_{\mtx{S}}(e_1) \, w_{\mtx{S}}(e_2)}{w_{\mtx{S}}(e_1) + w_{\mtx{S}}(e_2)} \mtx{\Delta}_{v_1 v_2}.
$$
\end{enumerate}

\item	\textbf{Approximation.}  Return $\mtx{K} = \sum_{i=1}^d \mtx{X}_i$.
\end{enumerate}

The key fact about this construction is that it produces an unbiased estimator $\mtx{K}$
of the clique:
$$
\Expect \mtx{K} = \textsc{clique}(u, \mtx{S}).
$$
Furthermore, %
each summand $\mtx{X}_i$ creates a multiedge with uniformly bounded effective resistance~\eqref{eqn:effective-resistance}.
This property persists as the \textsc{SparseCholesky} algorithm executes,
and it ensures that the random matrix $\mtx{K}$ has controlled variance.
Note that the sampling procedure can result in several edges between the same
pair of vertices, which is another reason we need the multigraph formalism.

Next, observe that the number $d$ of multiedges in $\mtx{K}$ is no greater
than the number $d$ of multiedges in the star that we are removing from $\mtx{S}$.
(For comparison, note that the full clique has $d^2$ multiedges.)
As a consequence, the clique approximation is inexpensive to construct.
Moreover, the total number of multiedges in the Laplacian can
only decrease as the \textsc{SparseCholesky} algorithm proceeds.

\subsubsection{Analysis}

The analysis of \textsc{SparseCholesky} is well beyond the scope of this paper.
The key technical tool is a concentration inequality for
matrix-valued martingales that was derived in
\cite{Oli09:Concentration-Adjacency} and \cite{Tro11:Freedmans-Inequality}.
To activate this result, \cite{KS16:Approximate-Gaussian} use the fact that the
random clique approximation is unbiased and low-variance,
conditional on previous choices made by the algorithm.
The proof also relies heavily on the fact that we eliminate a random vertex
at each step of the iteration.
For the technical details,
see~\cite{KS16:Approximate-Gaussian,Kyn17:Approximate-Gaussian}
and~\cite{Tro19:Matrix-Concentration-LN}.

\subsubsection{Implementation}

The \textsc{SparseCholesky} algorithm is fairly simple to describe,
but it demands some care to develop an implementation that achieves
the runtime guarantees stated in Theorem~\ref{thm:sparse-cholesky}.

The most important point is that we need to use data structures
for weighted multigraphs.
One method is to maintain the vertex--multiedge adjacency matrix,
along with a list of weights.  This approach requires sparse matrix libraries,
including efficient iterators over the rows and columns.

A secondary point is that we need fast methods for constructing
finite probability distributions and sampling from them repeatedly.
See \citeasnoun{BP17:Efficient-Sampling}.

It is unlikely that the \textsc{SparseCholesky} procedure will fail
to produce a factor $\mtx{C}$ that satisfies~\eqref{eqn:approx-cholesky}.
Even so, it is reassuring to know that we can detect failures.  Indeed,
we can estimate the extreme singular values of the preconditioned
linear system~\eqref{eqn:precond-poisson} using the methods from
Section~\ref{sec:max-eig}.

The failure probability can also be reduced
by modifying the random vertex selection rule.
Instead, we can draw a random vertex whose total weight
is at most twice the average total weight of the remaining vertices
\cite{Kyn17:Approximate-Gaussian}.
In practice, it may suffice to use the classical elimination rules
based on minimum degree or minimum total weight.

The main shortcoming of the \textsc{SparseCholesky} procedure arises
from the initialization step, where we split each multiedge into $\bigO(\log^2 n)$ pieces.
This step increases the storage and computation costs of the algorithm
enough to make it uncompetitive (e.g.~with fast direct Poisson solvers)
for some problem instances.  At present, it is unclear whether
the initialization step can be omitted or relaxed,
while maintaining the reliability and correctness of the algorithm.

\section{Kernel matrices in machine learning}
\label{sec:kernel}

Randomized NLA algorithms have played a major role in
developing scalable kernel methods for large-scale
machine learning.
This section contains a brief introduction to kernels.
Then it treats two probabilistic techniques that have had
an impact on kernel matrix computations:
Nystr{\"o}m approximation by random coordinate sampling \cite{WS01:Using-Nystrom}
and empirical approximation by random features \cite{RR08:Random-Features}.

The literature on kernel methods is truly vast,
so we cannot hope to achieve comprehensive coverage within our survey.
There are also many computational considerations and
learning-theoretic aspects that fall outside the
realm of NLA.  Our goal is simply to give a taste of the
ideas, along with a small selection of key references.

\subsection{Kernels in machine learning}

We commence with a crash course on kernels
and their applications in machine learning.
The reader may refer to \citeasnoun{SS01:Learning-Kernels}
for a more complete treatment.

\subsubsection{Kernel functions and kernel matrices}
\label{sec:kernel-functions}

Let $\mathcal{X}$ be a set, called the \emph{input space}
or \emph{data space}.
Suppose that we acquire a finite set of observations
$\{ \vct{x}_1, \dots, \vct{x}_n \} \subset \mathcal{X}$.
We would like to use the observed data to perform
learning tasks.

One approach is to introduce a \emph{kernel function}:
$$
k : \mathcal{X} \times \mathcal{X} \to \F.
$$
The value $k(\vct{x}, \vct{y})$ of the kernel function
is interpreted as a measure of similarity between two data points
$\vct{x}$ and $\vct{y}$.
We can tabulate the pairwise similarities of the observed
data points in a \emph{kernel matrix}:
$$
(\mtx{K})_{ij} := k( \vct{x}_i, \vct{x}_j )
\quad\text{for $i, j = 1, \dots, n$.}
$$
The kernel matrix is an analog of the Gram matrix of a
set of vectors in a Euclidean space.
In Sections~\ref{sec:kpca} and~\ref{sec:krr},
we will explain how to use the matrix $\mtx{K}$
to solve some core problems in data analysis.

The kernel function is required to be \emph{positive definite}.
That is, for each natural number $n$ and each set
$\{\vct{x}_1, \dots, \vct{x}_n\} \subset \mathcal{X}$
of observations, the associated kernel matrix
$$
\mtx{K} = \big[ k(\vct{x}_i, \vct{x}_j) \big]_{i, j = 1, \dots, n} \in \Sym_n
\quad\text{is psd.}
$$
In particular, $k(\vct{x}, \vct{x}) \geq 0$ for all $\vct{x} \in \mathcal{X}$.
The kernel must also be (conjugate) symmetric in its arguments:
$k(\vct{x}, \vct{y}) = k(\vct{y}, \vct{x})^*$ for all $\vct{x}, \vct{y} \in \mathcal{X}$.
These properties mirror the properties of a Gram matrix.
In Section~\ref{sec:kernel-examples}, we give some
examples of positive definite kernel functions.

\subsubsection{The feature space}

It is common to present kernel functions using the
theory of \emph{reproducing kernel Hilbert spaces}.
This approach gives an alternative interpretation
of the kernel function as the inner product defined
on a feature space.  We give a very brief
treatment, omitting all technical details.

Let $\mathcal{F}$ be a Hilbert space, called the \emph{feature space}.
We introduce a \emph{feature map} $\Phi: \mathcal{X} \to \mathcal{F}$,
which maps a point in the input space to a point in the feature space.
Heuristically, the feature map extracts information from
a data point that is relevant for learning applications.

Under mild conditions, we can construct a positive definite
kernel function $k$ from the feature map:
$$
k(\vct{x}, \vct{y}) = \ip{ \Phi(\vct{x}) }{ \Phi(\vct{y}) }
\quad\text{for all $\vct{x}, \vct{y} \in \mathcal{X}$.}
$$
In other words, the kernel function reports the inner product
between the features associated with the data points
$\vct{x}$ and $\vct{y}$.
Conversely, a positive definite kernel always induces a feature
map into an appropriate feature space.

\subsubsection{Examples of kernels}
\label{sec:kernel-examples}

Kernel methods are powerful because we can select or design
a kernel that automatically extracts relevant feature information
from our data.  This approach applies in all sorts of domains,
including images and text and DNA sequences.  Let us present
a few kernels that commonly arise in applications.
See~\citeasnoun{SS01:Learning-Kernels} for many additional examples
and references.

\begin{example}[Inner product kernel]
The simplest example of a kernel is the ordinary inner product.
Let $\mathcal{X} = \F^d$.  Evidently,
$$
k(\vct{x}, \vct{y}) = \ip{\vct{x}}{\vct{y}}
\quad\text{for $\vct{x}, \vct{y} \in \F^d$}
$$
is a positive definite kernel.
\end{example}

\begin{example}[Angular similarity] \label{ex:angsim}
Another simple example is the angular similarity map.
Let $\mathcal{X} = \mathbb{S}^{d-1}(\RR) \subset \RR^d$.
This kernel is given by the formula
$$
k(\vct{x}, \vct{y}) = \frac{2}{\pi} \arcsin\ \ip{ \vct{x} }{ \vct{y} }
\quad\text{for $\vct{x}, \vct{y} \in \mathcal{X}$.}
$$
This kernel is positive definite because of Schoenberg's theorem~\cite{Sch42:Positive-Definite}.
We will give a short direct proof in Example~\ref{ex:angsim-rf}.
\end{example}

\begin{example}[Polynomial kernels]
Let $\mathcal{X}$ be a subset of $\F^d$.
For a natural number $p$, the inhomogeneous polynomial kernel is
$$
k(\vct{x}, \vct{y}) = (1 + \ip{\vct{x}}{\vct{y}})^p.
$$
This kernel is also positive definite because of Schoenberg's theorem;
see~\citeasnoun{KK12:Random-Feature}.  There is a short direct proof
using the Schur product theorem.
\end{example}

\begin{example}[Gaussian kernel] \label{ex:rbf}
An important example is the Gaussian kernel.
Let $\mathcal{X} = \F^d$.
For a bandwidth parameter $\sigma > 0$, define
$$
k( \vct{x}, \vct{y} ) = \exp\left(-\frac{\norm{ \vct{x} - \vct{y} }^2 }{ 2 \sigma^2 } \right)
	\quad\text{for $\vct{x}, \vct{y} \in \F^d$.}
$$
This kernel is positive definite because of Bochner's theorem~\cite{Boc33:Monotone-Funktionen}.
We will give a short direct proof in Example~\ref{ex:rbf-rf}.
\end{example}

\subsubsection{The kernel trick}

As we have mentioned, kernels can be used for a wide range of tasks
in machine learning.
\citeasnoun[Rem.~2.8]{SS01:Learning-Kernels} state the key idea succinctly:

\begin{quotation}
Given an algorithm which is formulated in terms of a positive definite kernel $k$,
one can construct an alternative algorithm by replacing $k$ with another
positive definite kernel $\tilde{k}$.
\end{quotation}

In particular, any algorithm that can be formulated in terms of the inner product
kernel applies to every other kernel.  That is to say, an algorithm for Euclidean
data
that depends only on the Gram matrix can be implemented with a
kernel matrix instead.  The next two subsections give two specific examples of this
methodology; there are many other applications.

\subsubsection{Kernel PCA}
\label{sec:kpca}

Given a set of observations in a Euclidean space,
principal component analysis (PCA) searches for orthogonal directions
in which the data has the maximum variability.
The nonlinear extension,
kernel PCA (KPCA), was proposed in \citeasnoun{SSM96:Nonlinear-Component};
see also~\citeasnoun[Chap.~14]{SS01:Learning-Kernels}.

Let $\{\vct{x}_1, \dots, \vct{x}_n \} \subset \mathcal{X}$ be a set
of observations.  For a kernel $k$ associated with a feature map $\Phi$,
construct the kernel matrix $\mtx{K} \in \Sym_n$ associated with the observations.
For a natural number $\ell$, we compute a truncated eigenvalue decomposition
of the kernel matrix:
$$
\mtx{K} = \sum_{i=1}^{\ell} \lambda_i \, \vct{u}_i \vct{u}_i^*.
$$
Each unit-norm eigenvector $\vct{u}_i$ determines a direction
$(n\lambda_{i})^{-1/2}\sum_{j=1}^n \vct{u}_i(j) \, \Phi(\vct{x}_j)$
of high variability in the feature space, called
the $i$th kernel principal component.

To find the projection of a new point $\vct{x} \in \mathcal{X}$ onto the
$i$th kernel principal component, we embed it into the feature space
via $\Phi(\vct{x})$ and compute the inner product with
the $i$th kernel principal component.
In terms of the kernel function,
$$
\mathrm{PC}_i(\vct{x}) :=
\frac{1}{\sqrt{n\lambda_{i}}}\sum_{j=1}^n \vct{u}_i(j) \, k(\vct{x}, \vct{x}_j).
$$
We can summarize the observation $\vct{x}$ with the vector
$$
(\mathrm{PC}_1(\vct{x}), \dots, \mathrm{PC}_{\ell}(\vct{x})) \in \mathbb{F}^\ell.
$$
This representation provides a data-driven feature that can be used
for downstream learning tasks.

In practice, it is valuable to center the feature space representation of
the data, which requires a simple modification of the kernel matrix.
We also need to center each observation before
computing its projection onto the kernel principal components.
See~\citeasnoun[App.~1]{SSM96:Nonlinear-Component} for details. %

\subsubsection{Kernel ridge regression}
\label{sec:krr}

Given a set of labeled observations in a Euclidean space, ridge regression
uses regularized least-squares to model the labels as a linear functional
of the observations.
The nonlinear extension of this approach
is called \emph{kernel ridge regression} (KRR).
We refer to \citeasnoun[Chap.~4]{SS01:Learning-Kernels} for a more detailed treatment,
including an interpretation in terms of a nonlinear feature map.

Let $\{ (\vct{x}_i, y_i) : i = 1, \dots, n \} \subset \mathcal{X} \times \F$
be a set of paired observations.  For a kernel $k$, construct the kernel
matrix $\mtx{K} \in \Sym_n$ associated with the observations $\vct{x}_i$
(but not the numerical values $y_i$).
For a regularization parameter $\tau > 0$, the kernel ridge regression problem
takes the form
$$
\underset{\vct{\alpha} \in \F^n}{\text{minimize}} \quad \frac{1}{n} \sum_{i=1}^n \left[ y_i - (\mtx{K} \vct{\alpha})_i \right]^2
	+ \frac{\tau}{2} \vct{\alpha}^* \mtx{K} \vct{\alpha}.
$$
The solution to this optimization problem is obtained by solving an ordinary linear system:
$$
( \mtx{K} + \tau n \, \Id ) \vct{\alpha} = \vct{y}
\quad\text{where}\quad
\vct{y} = (y_1, \dots, y_n).
$$
Let $\widehat{\vct{\alpha}}$ be the solution to this system.

Given a new observation $\vct{x} \in \mathcal{X}$,
we can make a prediction $\widehat{y} \in \F$
for its label via the formula
$$
\widehat{y}(\vct{x}) %
	:= \sum_{j=1}^n \widehat{\alpha}_j \, k(\vct{x}, \vct{x}_j).
$$
In practice, the regularization parameter $\tau$
is chosen by cross-validation
with a holdout set of the paired observations.

\subsubsection{The issue}

Kernel methods are powerful tools for data analysis.  Nevertheless,
in their native form, they suffer from two weaknesses.

First, it is very expensive to compute the kernel matrix explicitly.
For example, if points in the data space $\mathcal{X}$
have a $d$-dimensional parameterization,
we may expect that it will cost $\bigO(d)$ arithmetic
operations to evaluate the kernel a single time.
Therefore, the cost of forming the kernel matrix $\mtx{K}$
for $n$ observations is $\bigO(n^2 d)$.

Second, after computing the kernel matrix $\mtx{K}$, it remains expensive to
perform the linear algebra required by kernel methods.  Both
KPCA and KRR require $\bigO(n^3)$
operations if we use direct methods.

The poor computational profile of kernel methods limits our ability to
use them directly for large-scale data applications.

\subsubsection{The solution}

Fortunately, there is a path forward.  To implement kernel methods,
we simply need to \emph{approximate} the kernel matrix
\cite[Sec.~10.2]{SS01:Learning-Kernels}.
Surprisingly,
using the approximation often results in \emph{better} learning outcomes
than using the exact kernel matrix.
Even a poor approximation of the kernel can suffice
to achieve near-optimal performance, both in theory and in practice
\cite{2013_bach_sharp,RCR15:Less-More,RCR17:FALKON-Optimal}.
Last, working with a structured approximation of the kernel can accelerate
the linear algebra computations dramatically.

Randomized algorithms provide several effective tools for approximating
kernel matrices.  Since we pay a steep price for each kernel evaluation,
we need to develop algorithms that explicitly control this cost.
The rest of this section describes two independent approaches. %
In Section~\ref{sec:nystrom-kernel}, we present coordinate sampling algorithms for Nystr{\"o}m approximations,
while Section~\ref{sec:random-features} develops the method of random features.

\begin{remark}[Function approximation]
Let us remark that approximation of the kernel matrix is usually incidental to the goals
of learning theory.  In many applications, such as KRR, we actually need to
approximate a function on the input space.  The sampling complexity of the
latter task may be strictly lower than the complexity
of approximating the full kernel matrix.  We cannot
discuss this issue in detail because it falls outside
the scope of NLA.
\end{remark}

\subsection{Coordinate Nystr{\"o}m approximation of kernel matrices}
\label{sec:nystrom-kernel}

One way to approximate a kernel matrix is to form
a Nystr{\"o}m decomposition with respect to a judiciously
chosen coordinate subspace.  A natural idea is to draw
these coordinates at random.  This basic technique
was proposed by \citeasnoun{WS01:Using-Nystrom}.

Coordinates play a key role here
because we only have access to individual
entries of the kernel matrix.  There is no
direct way to compute a matrix--vector product
with the kernel matrix, so we cannot easily
apply the more effective constructions of random embeddings
(e.g., Gaussians or sparse maps or SRTTs).
Indeed, kernel computation is the primary setting where
coordinate sampling is a practical idea.

\subsubsection{Coordinate Nystr{\"o}m approximation}

Suppose that $\{\vct{x}_1, \dots, \vct{x}_n \} \subset \mathcal{X}$
is a collection of observations.  Let $\mtx{K}$ be the psd
kernel matrix associated with some kernel function $k$.

Given a set $I \subseteq \{1, \dots, n\}$ consisting of $r$ indices,
we can form a psd Nystr{\"o}m approximation of the kernel matrix:
$$
\mtx{K}\nys{I} := \mtx{K}(:, I) \, \mtx{K}(I, I)^\pinv \, \mtx{K}(I, :).
$$
This matrix is equivalent to the Nystr{\"o}m decomposition~\eqref{eqn:nystrom-def}
with respect to a test matrix $\mtx{X}$ whose range is $\lspan \{ \vct{\delta}_i : i \in I \}$.

To obtain $\mtx{K}\nys{I}$, the basic cost is $nr$ kernel evaluations,
which typically require $\mathcal{O}(nrd)$ operations for a $d$-dimensional input space $\mathcal{X}$.
We typically do not form the pseudoinverse directly, but rather use the factored
form of the Nystr{\"o}m approximation for downstream calculations.

For kernel problems, it is common to regularize the coordinate Nystr{\"o}m approximation.
One approach replaces the core matrix $\mtx{K}(I, I)$ with its truncated eigenvalue decomposition
before computing the pseudoinverse; for example, see \citeasnoun{2005_drineas_nystrom}.
The RSVD algorithm (Section~\ref{sec:rsvd}) has been proposed for this purpose
\cite{LBKL15:Large-Scale-Nystrom}.
When it is computationally feasible, we recommend taking a truncated eigenvalue decomposition
of the full Nystr{\"o}m approximation $\mtx{K}\nys{I}$, rather than just the core;
see \citeasnoun{TYUC17:Fixed-Rank-Approximation}.

\subsubsection{Greedy selection of coordinates}

Recall from Section~\ref{sec:nystrom}
that the error $\mtx{K} / I := \mtx{K} - \mtx{K}\nys{I}$ in the Nystr{\"o}m decomposition
is simply the Schur complement of $\mtx{K}$ with respect to the coordinates in $I$.

This connection suggests that we should use a pivoted Cholesky method or a pivoted QR
algorithm to select the coordinates $I$.  These techniques lead to Nystr{\"o}m approximations with
superior learning performance; for example,
see~\cite{FS01:Efficient-SVM,BJ05:Predictive-Low-Rank} and~\cite{2013_bach_sharp}.
Unfortunately the $\mathcal{O}(n^2 r)$ cost is prohibitive in applications.
See~\citeasnoun[Sec.~10.2]{SS01:Learning-Kernels} for some randomized strategies
that can reduce the expense.

\subsubsection{Ridge leverage scores}

A natural approach to selecting the coordinate set $I$ is to perform
randomized sampling.  To describe these approaches, we need to take
a short detour.

Fix a regularization parameter $\tau > 0$.
Consider the smoothed projector:
$$
h_{\tau}(\mtx{K}) := \mtx{K} (\mtx{K} + \tau n \Id)^{-1}.
$$
The number of  effective degrees of freedom at regularization level $\tau$ is
$$
\nu_{\rm eff} := \trace h_{\tau}(\mtx{K}). %
$$
The maximum marginal number of degrees of freedom at regularization level $\tau$ is
$$
\nu_{\rm mof} := n \cdot \max_{i = 1, \dots, n} (h_{\tau}(\mtx{K}))_{ii}.
$$
Observe that $\nu_{\rm eff} \leq \nu_{\rm mof}$.
The statistic $\nu_{\rm mof}$ is analogous
with the coherence that appears
in our initial discussion of coordinate sampling
(Section~\ref{sec:coord-embed}).

The \emph{ridge leverage scores} at regularization level $\tau$ are the (normalized)
diagonal entries of the smoothed projector:
$$
p_i = \frac{(h_{\tau}(\mtx{K}))_{ii}}{\nu_{\rm eff}}
\quad\text{for $i = 1, \dots, n$.}
$$
Evidently, $(p_1, \dots, p_n)$ is a probability distribution.
The ridge leverage scores and related quantities are expensive to compute
directly, but there are efficient algorithms for approximating them well.
These approximations suffice for applications.
See Section~\ref{sec:ridge-leverage} for more discussion.

\begin{remark}[History]
\citeasnoun{2013_bach_sharp} identified the core role
of the smoothed projector for KRR.
\citeasnoun{AM15:Fast-Randomized} proposed the definition
of the ridge leverage scores and described a simple
method for approximating them.
At present, the most practical algorithm for approximating ridge leverage scores
appears in \citeasnoun{RCCR18:Fast-Leverage}.
\citeasnoun{MM17:Recursive-Sampling} recognize that
ridge leverage scores also have relevance for KPCA.
\end{remark}

\subsubsection{Uniform sampling}

The simplest way to select a set $I$ of $r$ coordinates
for the Nystr{\"o}m approximation $\mtx{K}\nys{I}$ is to draw
the set uniformly at random.  Although this approach
seems na{\"i}ve, it can be surprisingly effective in
practice.  The main failure mode occurs when there
are a few significant observations that make
outsize contributions to the kernel matrix;
uniform sampling is likely to miss these influential
data points.

\citeasnoun{2013_bach_sharp} proves that we can achieve
optimal learning guarantees for KRR with a uniformly sampled
Nystr{\"o}m approximation.  It suffices that the
number $r$ of coordinates is proportional to
$\nu_{\rm mof} \log n$.  A similar result holds
for KPCA.

Nystr{\"o}m approximation with uniform coordinate sampling
was proposed by \citeasnoun{WS01:Using-Nystrom}.
The theoretical and numerical performance
of this approach has been studied in many subsequent works, including
\cite{KMT12:Sampling-Methods,Git13:Topics-Randomized,2013_bach_sharp}
and \cite{RCR17:FALKON-Optimal}.

\subsubsection{Sampling with ridge leverage scores}
\label{sec:ridge-leverage}

Suppose that we have computed an approximation
of the ridge leverage score distribution.
We can construct a coordinate set $I$
for the Nystr{\"o}m approximation $\mtx{K}\nys{I}$
by sampling $r$ coordinates independently
at random from the ridge leverage score distribution.
Properly implemented, this method is unlikely to miss
influential observations.

\citeasnoun{AM15:Fast-Randomized} prove that we can achieve
optimal learning guarantees for KRR by ridge leverage score
sampling.  It suffices that the number $r$ of sampled
coordinates is proportional to $\nu_{\rm eff} \log n$.
This bound improves over the uniform sampling bound.
\citeasnoun{MM17:Recursive-Sampling} give related
theoretical results for KPCA.

Effective algorithms for estimating ridge leverage scores
are based on multilevel procedures that sequentially
improve the ridge leverage score estimates.
The basic idea is to start with a small uniform sample of coordinates, which we use to approximate the smoothed projector for a very large regularization parameter $\tau_0$. From this smoothed projector, we estimate the ridge leverage scores at level $\tau_0$. We then sample a larger set of coordinates non-uniformly using the approximate ridge leverage score distribution at level $\tau_0$. These samples allow us to approximate the smoothed projector at level $\tau_1 = \mathrm{const.} \tau_0$ for a constant smaller than one. We obtain an estimate for the ridge leverage score distribution at level $\tau_1$. This process is repeated.
In this way, the sampling and the matrix approximation
are intertwined.  See \cite{MM17:Recursive-Sampling}
and \cite{RCCR18:Fast-Leverage}.

\citeasnoun{MM17:Recursive-Sampling} provide empirical evidence
that ridge leverage score sampling is more efficient than
uniform sampling for KPCA, including the cost of the ridge
leverage score approximations.
Likewise, \citeasnoun{RCCR18:Fast-Leverage} report empirical evidence
that ridge leverage score sampling is more efficient
than uniform sampling for KRR.

\subsection{Random features approximation of kernels}
\label{sec:random-features}

A second approach to kernel approximation is based on
the method of empirical approximation (Section~\ref{sec:matrix-mc}).
This technique constructs a random rank-one matrix that
serves as an unbiased estimator for the kernel matrix.
By averaging many copies of the estimator, we can obtain
superior approximations of the kernel matrix.
The individual rank-one components
are called \emph{random features}.

\citeasnoun{Nea96:Priors-Infinite} proposed the idea
of using empirical approximation for kernels arising
in Gaussian process regression.
Later, \citeasnoun{RR08:Random-Features}
and \citeasnoun{RR08:Weighted-Sums}
developed empirical approximations for translation-invariant
kernels and Mercer kernels, and they coined the term
``random features.''  Our presentation is based on
an abstract formulation of the random feature method
from \citeasnoun[Sec.~6.5]{Tro15:Introduction-Matrix};
see also \citeasnoun{Bac17:Equivalence-Kernel}.

In this subsection, we introduce the idea of a random
feature map, along with some basic examples.
We explain how to use random feature maps to
construct empirical approximations of a kernel matrix,
and we give a short analysis.
Afterwards, we summarize two randomized NLA methods
for improving the computational profile of random features.

\subsubsection{Random feature maps}

In many cases, a kernel function $k$ on a domain $\mathcal{X}$
can be written as an expectation, and we can exploit this representation to obtain
empirical approximations of the kernel matrix.

Let $\mathcal{W}$ be a probability space equipped with a probability measure $\rho$.
Assume that there is a bounded function
$$
\psi : \mathcal{X} \times \mathcal{W} \to \{ z \in \CC : \abs{z} \leq b \}
$$
with the reproducing property
\begin{equation} \label{eqn:repro-prop}
k(\vct{x}, \vct{y}) = \int \psi(\vct{x}; \vct{w}) \, \psi(\vct{y}; \vct{w})^* \, \rho(\diff{\vct{w}})
\quad\text{for all $\vct{x}, \vct{y} \in \mathcal{X}$.}
\end{equation}
The star ${}^*$ denotes the conjugate of a complex number.
We call $(\psi, \rho)$ a \emph{random feature map} for the kernel function $k$.
As we will see in Section~\ref{sec:rf-kernel}, a kernel that admits a random feature map must be positive definite.

It is not obvious that we can equip kernels of practical interest with
random feature maps, so let us offer a few concrete examples.

\begin{example}[The inner product kernel]
\label{ex:innerprodkernel}

There are many ways to construct a random feature map
for the inner product kernel on $\F^d$.  One simple example is
$$
\psi(\vct{x}; \vct{w}) = \ip{ \vct{x} }{ \vct{w} }
\quad\text{with $\vct{w} \sim \textsc{normal}(\mtx{0}, \Id_d)$.}
$$
To check that this map satisfies the reproducing property~\eqref{eqn:repro-prop},
just note that $\vct{w}$ is isotropic: $\Expect[ \vct{ww}^* ] = \Id$.
This formulation is closely related to the theory of random embeddings (Sections~\ref{sec:gauss} and \ref{sec:dimension-reduction})
and to approximate matrix multiplication (Section~\ref{sec:approx-mtx-mult}).
\end{example}

\begin{example}[Angular similarity kernel]
\label{ex:angsim-rf}

For the angular similarity map defined in Example~\ref{ex:angsim},
we can construct a random feature map using an elegant fact from geometry.
Indeed, the function
$$
\psi(\vct{x}; \vct{w}) = \operatorname{sgn}\ \ip{\vct{x}}{\vct{w}}
\quad\text{with $\vct{w} \sim \textsc{uniform}(\mathbb{S}^{d-1}(\RR))$}
$$
gives a random feature map for the angular similarity kernel.
As a consequence, the angular similarity kernel is positive definite.
\end{example}

\begin{example}[Translation-invariant kernels]
\label{ex:rbf-rf}

A kernel function $k$ on $\F^d$ is called
\emph{translation-invariant} if it has the form
$$
k( \vct{x}, \vct{y} ) = \phi( \vct{x} - \vct{y} )
\quad\text{for all $\vct{x}, \vct{y} \in \F^d$.}
$$
A classic result from analysis, Bochner's theorem~\cite{Boc33:Monotone-Funktionen},
gives a characterization of these kernels.
A kernel is continuous, positive definite and translation-invariant
if and only if it is the Fourier transform of a positive probability measure $\rho$
on $\F^d$:
$$
\phi( \vct{x} - \vct{y} ) = c_{\phi} \int \econst^{\iunit \ip{\vct{x}}{\vct{w}}} \econst^{-\iunit \ip{\vct{y}}{\vct{w}}} \, \rho(\diff{\vct{w}}).
$$
The constant $c_{\phi}$ is a normalizing factor that depends only on $\phi$,
and $\iunit$ is the imaginary unit.

Bochner's theorem immediately delivers a random feature map
for the translation-invariant kernel $k$:
$$
\psi( \vct{x}; \vct{w} ) = \sqrt{c_{\phi}} \, \econst^{\iunit \ip{\vct{x}}{\vct{w}}}
\quad\text{where $\vct{w} \sim \rho$.}
$$
This was one of the original examples
of a random feature map~\cite{RR08:Random-Features}.
When working with data in $\RR^d$, the construction
can also be modified to avoid complex values.

The key example of a positive definite, translation-invariant kernel is the
Gaussian kernel on $\F^d$, defined in Example~\ref{ex:rbf}.
The Gaussian kernel is derived from the function
$$
\phi( \vct{x} ) = \econst^{-\norm{\vct{x}}^2 / (2 \sigma^2)}
\quad\text{where the bandwidth $\sigma > 0$.}
$$
The associated random feature map is
$$
\psi( \vct{x}; \vct{w} ) = \econst^{\iunit \ip{\vct{x}}{\vct{w}}}
\quad\text{where}\quad
\vct{w} \sim \textsc{normal}(\vct{0}, \sigma^{-2} \Id ) \in \F^d.
$$
This fact is both beautiful and useful because of the ubiquity
of the Gaussian kernel in data analysis.
\end{example}

There are many other kinds of kernels that admit random feature maps.
Random feature maps for dot product kernels were obtained in
\cite{KK12:Random-Feature,PP13:Fast-Scalable} and \cite{HXGD14:Compact-Random}.
For nonstationary kernels, see \cite{SR15:Generalized-Spectral} and
\cite{TFSB18:Spatial-Mapping}. %
Catalogs of examples appear in \citeasnoun[App.~E]{RR17:Generalization-Properties}
and \citeasnoun{Bac17:Equivalence-Kernel}.

\subsubsection{Random features and kernel matrix approximation}
\label{sec:rf-kernel}

We can use the random feature map to construct an empirical approximation of the kernel matrix
$\mtx{K} \in \Sym_n$ induced by the dataset $\{ \vct{x}_1, \dots, \vct{x}_n \}$.
To do so, we draw a random variable $\vct{w} \in \mathcal{W}$ with
the distribution $\rho$.  Then we form a random vector
$$
\vct{z} = \begin{bmatrix} z_1 \\ \vdots \\ z_n \end{bmatrix}
	= \begin{bmatrix} \psi(\vct{x}_1; \vct{w}) \\ \vdots \\ \psi(\vct{x}_n; \vct{w}) \end{bmatrix} \in \F^n.
$$
Note that we are using the \emph{same} random variable $\vct{w}$ for each data point.
A realization of the random vector $\vct{z}$ is called a \emph{random feature}.
The reproducing property~\eqref{eqn:repro-prop} ensures that the random feature
verifies the identity
$$
(\mtx{K})_{ij} = k(\vct{x}_i, \vct{x}_j)
	= \int \psi(\vct{x}_i;\vct{w}) \, \psi(\vct{x}_j;\vct{w})^* \, \rho(\diff{\vct{w}})
	= \Expect[ z_i \cdot z_j^* ].
$$
In matrix form,
$$
\mtx{K} = \Expect[ \vct{zz}^* ].
$$
Therefore, the random rank-one psd matrix $\mtx{Z} = \vct{zz}^*$
is an unbiased estimator for the kernel matrix.
The latter display proves that
a kernel $k$ must be positive definite
if it admits a random feature map.

To approximate the kernel matrix, we can average $r$ copies of the rank-one estimator:
$$
\bar{\mtx{Z}}_r:= \frac{1}{r} \sum\nolimits_{i=1}^r \mtx{Z}_i
\quad\text{where $\mtx{Z}_i \sim \mtx{Z}$ are iid.}
$$
If the points in the input space $\mathcal{X}$ are parameterized by $d$ numbers,
each random feature typically requires $\bigO(nd)$ arithmetic.
The total cost of forming the approximation $\bar{\mtx{Z}}_r$ is thus $\bigO(rnd)$.
When $r \ll n$, we can obtain substantial improvements over the direct approach
of computing the kernel matrix $\mtx{K}$ explicitly at a cost of $\bigO(n^2 d)$.

\subsubsection{Analysis of the random feature approximation}

How many random features are enough to approximate the
kernel matrix in spectral norm?  Theorem~\ref{thm:mtx-sampling} delivers bounds. %

For simplicity, assume that the kernel satisfies $k(\vct{x}, \vct{x}) = 1$
for all $\vct{x} \in \mathcal{X}$; the angular similarity kernel and
the Gaussian kernel both enjoy this property.
For an accuracy parameter $\eps > 0$, suppose that we select
$$
r \geq 2b \eps^{-2} \intdim(\mtx{K}) \log(2n).
$$
The number $b$ is the uniform bound on the feature map $\psi$ defined in~\eqref{eqn:repro-prop},
and the intrinsic dimension is defined in~\eqref{eqn:intdim}.
Theorem~\ref{thm:mtx-sampling} implies that
the empirical approximation $\bar{\mtx{Z}}_r$ satisfies
$$
\frac{\Expect \norm{ \bar{\mtx{Z}}_r - \mtx{K} }}{\norm{\mtx{K}}}
	\leq \eps + \eps^2.
$$
In other words, we achieve a relative error approximation of
the kernel matrix in spectral norm when the number $r$ of random
features is proportional to the number of energetic dimensions
in the range of the matrix $\mtx{K}$.

This analysis is due to~\citeasnoun{LSS+14:Randomized-Nonlinear};
see also~\cite[Sec.~6.5]{Tro15:Introduction-Matrix}.
For learning applications, such as KPCA or KRR,
this result suggests that we need about $\bigO(n\log n)$
random features to obtain optimal generalization guarantees (where $\eps = n^{-1/2}$).
In fact, roughly $\bigO(\sqrt{n} \, \log n)$ random features are sufficient
to achieve optimal learning rates.  This claim depends on involved
arguments from learning theory that are outside the realm of linear algebra.
For example, see \cite{SS15:Optimal-Rates,RR17:Generalization-Properties,UMMA18:Streaming-Kernel,SS19:Kernel-Derivative}
and \cite{Wan19:Simple-Almost}.

\subsubsection{Randomized embeddings and random features}
\label{sec:fastfood}

Random feature approximations are faster than explicit computation
of a kernel matrix.
Even so, it takes a significant amount of effort to extract
$r$ random features and form an empirical approximation $\bar{\mtx{Z}}_r$
of the kernel matrix.  Several groups have proposed using structured
random embeddings (Section~\ref{sec:dimension-reduction}) to accelerate this process;
see \cite{PP13:Fast-Scalable,LSS13:Fastfood-Computing} and \cite{HXGD14:Compact-Random}.

As an example, let us summarize a heuristic method, called \emph{FFT Fastfood}
\cite{LSS13:Fastfood-Computing}, for speeding up the computation of random features
for the complex Gaussian kernel with bandwidth $\sigma^2$.
Consider the matrix $\mtx{X}$ formed from $n$ observations in $\CC^d$:
$$
\mtx{X} = \begin{bmatrix} \vct{x}_1^* \\ \vdots \\ \vct{x}_n^* \end{bmatrix} \in \CC^{n \times d}.
$$
Let $\mtx{\Gamma} \in \CC^{d \times d}$ be a matrix with iid
complex $\textsc{normal}(0, \sigma^{-2})$ entries.  Then we can simultaneously compute
$d$ random features $\vct{z}_1, \dots, \vct{z}_d$ for the Gaussian
kernel by forming a matrix product and applying the exponential map:
$$
\exp\cdot\,(\iunit \mtx{X} \mtx{\Gamma}) = \begin{bmatrix} \vct{z}_1 & \dots & \vct{z}_d \end{bmatrix} \in \CC^{n \times d}.
$$
We have written $\exp\cdot$ for the \emph{entrywise} exponential.
This procedure typically involves $\bigO(nd^2)$ arithmetic. %

The idea behind FFT Fastfood is to accelerate this computation
by replacing the Gaussian matrix with a structured random matrix.
This exchange is motivated by the observed universality properties
of random embeddings.  Consider a random matrix of the form
$$
\mtx{S} = \frac{1}{\sigma} \, \mtx{E} \mtx{\Pi} \mtx{F} \in \CC^{d \times d},
$$
where  $\mtx{E}$ is a random sign flip, $\mtx{\Pi}$ is a random permutation,
and $\mtx{F}$ is the discrete DFT. The FFT algorithm supports efficient
matrix products with $\mtx{S}$.  Therefore,
we can simultaneously extract $d$ random features by computing $\exp\cdot\,(\iunit \mtx{XS}) \in \CC^{n \times d}$.
This procedure uses only $\bigO(nd \log d)$ operations.
To form $r$ random features where $r > d$, we simply repeat
the same process $\lceil r/d \rceil$ times.

Compared with using a Gaussian matrix product,
FFT Fastfood gives a substantial reduction in arithmetic.
Even so, the performance for learning is almost identical
to a direct application of the random feature approximation.

\subsubsection{Random features and streaming matrix approximation}
\label{sec:streaming-kpca}

Suppose that we wish to perform KPCA.
The direct random features approach requires us to form the empirical
approximation $\bar{\mtx{Z}}_r$ of the kernel matrix $\mtx{K}$ and to compute
its rank-$\ell$ truncated eigenvalue decomposition.
It is often the case that the desired number $\ell$ of principal components
is far smaller than the number $r$ of random features we need to
obtain a suitable approximation of the kernel matrix.
In this case, we can combine random features with streaming matrix approximation %
to make economies in storage and computation.

Let $\{ \vct{z}_1, \vct{z}_2, \vct{z}_3, \dots \} \subset \F^n$
be an iid sequence of random features for the kernel matrix $\mtx{K} \in \Sym_n$.
The empirical approximation $\bar{\mtx{Z}}_r$ of the kernel matrix, obtained from the first $r$
random features, follows the recursion
$$
\bar{\mtx{Z}}_0 = \mtx{0}
\quad\text{and}\quad
\bar{\mtx{Z}}_t = (1 - t^{-1}) \bar{\mtx{Z}}_{t-1} + t^{-1} \, \vct{z}_t \vct{z}_t^*
\quad\text{for $t = 1, 2, 3, \dots$.}
$$
This is a psd matrix, generated by a stream of linear updates.  Therefore,
we can track the evolution using a streaming Nystr{\"o}m approximation
(Section~\ref{sec:nystrom}).

Let $\mtx{\Omega} \in \F^{n \times s}$ be a random test matrix,
with $s > \ell$.  By performing rank-one updates, we can efficiently maintain
the sample matrices
$$
\mtx{Y}_t = \bar{\mtx{Z}}_t \mtx{\Omega} \in \F^{n \times s}
\quad\text{for $t = 1, 2, 3, \dots$.}
$$
After collecting a sufficient number $r$ of samples, we can
apply Algorithm~\ref{alg:random-nystrom} to $\mtx{Y}_t$ to
obtain a near-optimal rank-$\ell$ eigenvalue decomposition
of the empirical approximation $\bar{\mtx{Z}}_r$.

It usually suffices to take the sketch size $s$ to be proportional
to the rank $\ell$ of the truncated eigenvalue decomposition.
In this case, the overall approach uses $\bigO(\ell n)$ storage.  We can generate and
process $r$ random features using $\bigO((d+\ell)rn)$ arithmetic, where $d$ is the dimension of $\mathcal{X}$.
The subsequent cost of the Nystr{\"o}m approximation is $\bigO(\ell^2 n)$ operations.
The streaming random features approach has storage and arithmetic costs
roughly $\ell / r$ times those of the direct random features approach.
The streaming method can be combined with dimension reduction techniques
(Section~\ref{sec:fastfood}) for further acceleration.

\begin{remark}[History]
\citeasnoun{GPP16:Streaming-Kernel} proposed using a stream of
random features to perform KPCA; their algorithm tracks the stream
with the (deterministic) frequent directions sketch~\cite{GLPW16:Frequent-Directions}.
We have presented a new variant, based on randomized Nystr{\"o}m approximation,
that is motivated by the work in~\citeasnoun{TYUC17:Fixed-Rank-Approximation}.
\citeasnoun{UMMA18:Streaming-Kernel} have develop a somewhat
different streaming KPCA algorithm based on Oja's method~\cite{Oja82:Simplified-Neuron}.
At present, we lack a full empirical comparison of these alternatives.
\end{remark}

\newcommand{\thedomain}{D}

\section{High-accuracy approximation of kernel matrices}
\label{sec:rankstructured}

In this section, we continue the discussion of kernel matrices that we started in Section \ref{sec:kernel},
but we now consider the high-accuracy regime.
In particular, given a kernel matrix $\mtx{K}$, we seek an approximation $\mtx{K}_{\rm approx}$ for which $\|\mtx{K} - \mtx{K}_{\rm approx}\|$
is small, say of relative accuracy $10^{-3}$ or $10^{-6}$.
This objective was not realistic for the applications discussed in Section \ref{sec:kernel},
but it can be achieved in situations where we have access to fast techniques for evaluating
the matrix-vector product $\vct{x} \mapsto \mtx{K}\vct{x}$ (and also $\vct{x} \mapsto \mtx{K}^{*}\vct{x}$ when $\mtx{K}$
is not self-adjoint).
The algorithms that we describe will build a data sparse approximation
to $\mtx{K}$ by using information in samples such as $\mtx{K}\vct{x}$ and $\mtx{K}^{*}\vct{x}$ for
random vectors $\vct{x}$.
These techniques are particularly well suited to problems that arise in modeling physical
phenomena such as electromagnetic scattering, or the deformation of solid bodies; we describe
how the fast matrix-vector application we need can be realized in Section \ref{sec:cpkernel_application}.

As in Section \ref{sec:kernel}, we say that a matrix $\mtx{K} \in \mathbb{C}^{n\times n}$ is a
\emph{kernel matrix} if its entries are given by a formula such as
\begin{equation}
\label{eq:Aij_kernel}
\mtx{K}(i,j) = k(\vct{x}_{i},\vct{x}_{j}),
\end{equation}
where $\{\vct{x}_{i}\}_{i=1}^{n}$ is a set of points in $\mathbb{R}^{d}$, and where
$k\,\colon\,\mathbb{R}^{d} \times \mathbb{R}^{d} \rightarrow \mathbb{C}$ is a kernel function.
The kernel matrices that we consider tend to have singular values that decay slowly or not at
all, which rules out the possibility that $\mtx{K}_{\rm approx}$ could have low rank.
Instead, we build an approximation
$\mtx{K}_{\rm approx}$ that is tessellated into $\bigO(n)$ blocks in such as way that each
off-diagonal block has low rank. Figure \ref{fig:tree}(b) shows a representative
tessellation pattern. We say that a matrix of this type is a \textit{rank-structured
hierarchical matrix.}

The purpose of determining a rank-structured approximation to an operator for which we
already have fast matrix-vector multiplication techniques available is that the new
representation can be used to rapidly execute a whole range of linear algebraic operations:
matrix inversion, LU factorization, and even full spectral decompositions in certain cases.

This section is structured to provide a high-level description of the core ideas in Sections
\ref{sec:sepvar} -- \ref{sec:cpkernel_application}. Additional details follow in Sections \ref{sec:HODLR} -- \ref{sec:ASKIT}.

\subsection{Separation of variables and low-rank approximation}
\label{sec:sepvar}

The reason that many kernel matrices can be tessellated into blocks that have low numerical
rank is that the function $(\vct{x},\vct{y}) \mapsto k(\vct{x},\vct{y})$ is typically smooth
as long as $\vct{x}$ and $\vct{y}$ are not close.
To illustrate the connection, let us consider a computational domain $\thedomain$ that holds a set
of points $\{\vct{x}_{i}\}_{i=1}^{n} \subset \mathbb{R}^{2}$, as shown in Figure \ref{fig:sources_targets}.
Suppose further that $\thedomain_{\rm s}$ and $\thedomain_{\rm t}$ are two subdomains of $\thedomain$ that are
located a bit apart from each other, as shown in the figure. When the kernel function $k$ is smooth,
we can typically approximate it to high accuracy through an approximate separation of variables of the form
\begin{equation}
\label{eq:kernel_sepvar}
k(\vct{x},\vct{y}) \approx \sum_{p=1}^{P}b_{p}(\vct{x})\,c_{p}(\vct{y}),\qquad\vct{x} \in \thedomain_{\rm t},\ \vct{y} \in \thedomain_{\rm s}.
\end{equation}
Let $I_{\rm s}$ and $I_{\rm t}$ denote two index vectors that identify the points
located in $\thedomain_{\rm s}$ and $\thedomain_{\rm t}$, respectively (so that,
for example, $i\in I_{\rm s}$ if and
only if $\vct{x}_{i} \in \thedomain_{\rm s}$). Then combining the formula (\ref{eq:Aij_kernel}) with
the separation of variables (\ref{eq:kernel_sepvar}), we get
\begin{equation}
\label{eq:A_sepvar}
\mtx{K}(i,j) \approx \sum_{p=1}^{P}b_{p}(\vct{x}_{i})\,c_{p}(\vct{x}_{j}),\qquad i \in I_{\rm t},\ j \in I_{\rm s}.
\end{equation}
Equation (\ref{eq:A_sepvar}) is exactly the low-rank approximation to the block $\mtx{K}(I_{\rm t},I_{\rm s})$
that we seek. To be precise, (\ref{eq:A_sepvar}) can be written as
$$
\mtx{K}(I_{\rm t},I_{\rm s}) \approx \mtx{B}\mtx{C},
$$
where $\mtx{B}$ and $\mtx{C}$ are defined via
$\mtx{B}(i,p) = b_{p}(\vct{x}_{i})$ and
$\mtx{C}(p,j) = c_{p}(\vct{x}_{j})$.

A separation of variables such as (\ref{eq:kernel_sepvar}) is sometimes provided through analytic knowledge
of the kernel function, as illustrated in Example \ref{example:multipole}.
Perhaps more typically, all we know is that such a formula \textit{should} in principle exist, for instance
because we know that the matrix approximates a singular integral operator for which a Calder\'on-Zygmund
decomposition must exist. It is then the task of the randomized algorithm to explicitly build the factors
$\mtx{B}$ and $\mtx{C}$, given a computational tolerance.

\begin{figure}
\setlength{\unitlength}{1mm}
\begin{picture}(123,67)
\put(05,00){\includegraphics[width=65mm]{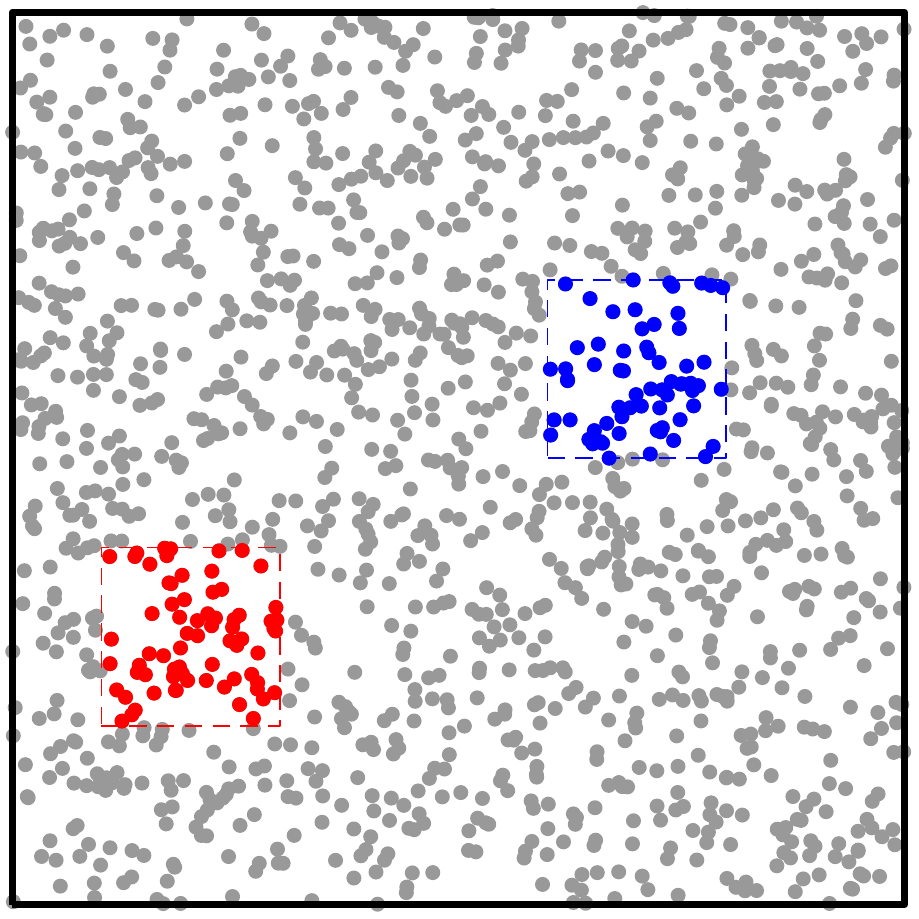}}
\put(00,03){\large$\thedomain$}
\put(26,15){\large\color{red} $\bm\thedomain_{\rm s}$}
\put(57,34){\large\color{blue}$\bm\thedomain_{\rm t}$}
\put(72,34){\begin{minipage}{50mm}
A box $\thedomain$  holding points $\{\vct{x}_{i}\}_{i=1}^{n}$ (the blue, red, and gray dots)
that define a kernel matrix $\mtx{K}(i,j) = k(\vct{x}_{i},\vct{x}_{j})$.
The regions $\thedomain_{\rm s}$ (the red box) and $\thedomain_{\rm t}$ (the blue box) are separated enough
that $k(\vct{x},\vct{y})$ is smooth when $\vct{x} \in \thedomain_{\rm t}$ and $\vct{y} \in \thedomain_{\rm s}$.
In consequence, $\mtx{K}(I_{\rm t},I_{\rm s})$ has low numerical rank,
where $I_{\rm s}$ identifies the red points and $I_{\rm t}$ the blue.
\end{minipage}}
\end{picture}
\caption{The geometry discussed in Section \ref{eq:kernel_sepvar}.}
\label{fig:sources_targets}
\end{figure}

\begin{example}[Laplace kernel in two dimensions]
\label{example:multipole}
A standard example of a kernel matrix in mathematical physics is the matrix $\mtx{K}$ that maps a vector
of electric source strengths $\vct{q} = (q_{j})_{j=1}^{n}$ to a vector of potentials $\vct{u} = (u_{i})_{i=1}^{n}$.
When the set $\{\vct{x}_{i}\}_{i=1}^{n} \subset \mathbb{R}^{2}$ identifies the locations of both the source
and the target points, $\mtx{K}$ takes the form (\ref{eq:Aij_kernel}), for
$$
k(\vct{x},\vct{y}) =
\left\{\begin{aligned}
\log|\vct{x}-\vct{y}|,&\quad\mbox{when}\ \vct{x}\neq \vct{y},\\
0,&\quad\mbox{when}\ \vct{x} = \vct{y}.
\end{aligned}\right.
$$
There is a well-known result from potential theory that provides the required
separation of variables. Expressing $\vct{x}$ and $\vct{y}$ in polar coordinates
with respect to some expansion center $\vct{c}$, so that
$\vct{x} - \vct{c} = r(\cos\theta,\,\sin\theta)$ and
$\vct{y} - \vct{c} = r'(\cos\theta',\,\sin\theta')$,
the separation of variables (known as a \textit{multipole expansion})
\begin{equation}
\label{eq:mpole}
k(\vct{x},\vct{y}) = \log(r) - \sum_{p=1}^{\infty}\frac{1}{p}\left(\frac{r'}{r}\right)^{p}\bigl(\cos(p\theta)\cos(p\theta') + \sin(p\theta)\sin(p\theta')\bigr)
\end{equation}
is valid whenever $r' < r$.
\end{example}

\subsection{Rank-structured matrices and randomized compression}
\label{sec:bestiary}

The observation that the off-diagonal blocks of a kernel matrix often have low
numerical rank underpins many ``fast'' algorithms in computational physics.
In particular, it is the foundation of the Barnes-Hut \cite{1986_barnes_hut}
and fast multipole methods \cite{rokhlin1987} for evaluating all $\bigO(n^{2})$
pairwise interactions between $n$ electrically charged particles in linear or
close-to-linear complexity. These methods were generalized by Hackbusch and co-workers,
who developed the $\mathcal{H}$- and $\mathcal{H}^{2}$-matrix frameworks
\cite{hackbusch,2003_hackbusch,2002_hackbusch_H2}. These explicitly linear algebraic
formulations enable fast algorithms not only for matrix-vector multiplication but
also for matrix inversion, LU factorization, matrix-matrix multiplication and many more.

A fundamental challenge that arises when rank-structured matrix formulations are used
is how to find the data sparse representation of the operator in the first place. The
straightforward approach would be to form the full matrix, and then loop over all the
compressible off diagonal blocks and compress them using, for example, a singular value decomposition.
The cost of such a process is necessarily at least $\bigO(n^{2})$, which is rarely
affordable. Using randomized compression techniques, it turns out to be possible to compress
all the off-diagonal blocks \textit{jointly}, without any need for sampling each block
individually. In this section, we describe two such methods. Both require
the user to supply a fast algorithm for applying the full matrix (and its adjoint) to
vectors; see Section \ref{sec:cpkernel_application}.
The two methods have different computational profiles.
\begin{itemize}
\item[(a)] The technique described in Sections \ref{sec:HODLR} and \ref{sec:HODLRcompression} is
a true ``black-box'' technique that interacts with $\mtx{K}$ only through the matrix-vector
multiplication. It has storage complexity $O(n\log n)$ and it requires $\bigO(\log n)$
applications of $\mtx{K}$ to a random matrix of size $n\times (r+p)$ where $r$ is an upper
bound on the ranks of the off-diagonal blocks, and $p$ is a small over-sampling parameter.
\item[(b)] The technique described in Sections \ref{sec:HBS} and \ref{sec:HBScompression} attains
true linear $O(n)$ complexity, and requires only a single application of $\mtx{K}$ and $\mtx{K}^{*}$
to a random matrix of size $n\times (r+p)$, where $r$ and $p$ are as in (a). Its drawbacks are
that it requires evaluation of $\bigO(rn)$ individual matrix entries, and that it works
only for a smaller class of matrices.
\end{itemize}

\begin{remark}[Scope of Section \ref{sec:rankstructured}]
To keep the presentation as uncluttered by burdensome notation as possible,
in this survey we
restrict attention to two basic ``formats'' for representing a rank-structured hierarchical
matrix. In Sections \ref{sec:HODLR} and \ref{sec:HODLRcompression}, we use the
\textit{hierarchically off-diagonal low rank (HODLR)} format and in Sections
\ref{sec:HBS} and \ref{sec:HBScompression} we use the \textit{hierarchically block separable (HBS)}
format (sometimes referred to as \textit{hierarchically semi separable (HSS)} matrices). The
main limitation of these formats is that they require all off-diagonal blocks of the matrix to have
low numerical rank. This is realistic only when the points $\{\vct{x}_{i}\}_{i=1}^{n}$ are restricted
to a low-dimensional manifold. In practical applications, one sometimes has to leave a larger part of
the matrix uncompressed, to avoid attempting to impose a separation of variables such as
(\ref{eq:kernel_sepvar}) on a kernel $k = k(\vct{x},\vct{y})$ when $\vct{x}$ and $\vct{y}$ are
too close. This is done through enforcing what is called a ``strong admissibility condition''
(in contrast to the ``weak admissibility condition'' of the HODLR and HBS formats), as was done
in the original Barnes-Hut and fast multipole methods.
\end{remark}

\begin{remark}[Alternative compression strategies]
\label{remark:compressioncompetition}
Let us briefly describe what alternatives to randomized compression exist.
The original papers on $\mathcal{H}$-matrices used Taylor approximations to
derive a separation of variables, but this works only when the kernel is
given analytically. It also tends to be quite expensive.
The  \textit{adaptive cross-approximation (ACA)} technique of \cite{2002_rjasanow_ACA} and \cite{2006_bebendorf_ACA}
relies on using ``natural basis'' vectors (see Section \ref{sec:CURandIDdef}) that are
found using semi-heuristic techniques. The method can work very well in practice,
but is not guaranteed to provide an accurate factorization.
When the kernel matrix comes from mathematical physics, specialized techniques
that exploit mathematical properties of the kernel function often perform
well \cite{2005_martinsson_fastdirect}.
For a detailed discussion of compression of rank-structured matrices, we refer
to \cite[Ch.~17]{2019_book}.
\end{remark}

\subsection{Computational environments}
\label{sec:cpkernel_application}

The compression techniques that we describe rely on the user providing
a fast algorithm for applying the operator to be compressed to vectors. Representative
environments where such fast algorithms are available include the following.

\textit{Matrix-matrix multiplication:} Suppose that $\mtx{K} = \mtx{B}\mtx{C}$,
where $\mtx{B}$ and $\mtx{C}$ are matrices that can rapidly be applied to vectors.
Then the randomized compression techniques allow us to compute their product.

\textit{Compression of boundary integral operators:} It is often possible to
reformulate a boundary value problem involving an elliptic partial different
operator, such as the Laplace or the Helmholtz operators, as an equivalent
boundary integral equation (BIE), see \cite[Part III]{2019_book}.
When such a BIE is discretized, the result is a kernel matrix which can
be applied rapidly to vectors using fast summation techniques such as the
fast multipole method \cite{rokhlin1987}. Randomized compression techniques
allow us to build an approximation to the matrix that can be
factorized or inverted, thus enabling direct (as opposed
to iterative) solvers.

\textit{Dirichlet-to-Neumann (DtN) operators:} A singular integral operator of central
importance in engineering, physics, and scientific computing is the DtN operator
which maps given Dirichlet data for an elliptic boundary value problem to the
boundary fluxes of the corresponding solution. The kernel of the DtN operator
is rarely known analytically, but the operator can be applied through a fast solver
for the PDE, such as, e.g., a finite-element discretization combined with a multigrid
solver. The randomized techniques described allow for the DtN operator to
be built and stored explicitly.

\textit{Frontal matrices in sparse direct solvers:} Suppose that $\mtx{S}$ is a large
sparse matrix arising from the discretization of an elliptic PDEs. A common technique
for solving $\mtx{S}\vct{x} = \vct{b}$ is to compute an LU factorization of the
matrix $\mtx{S}$. There are ways to do this that preserve sparsity as far as possible,
but in the course of the factorization procedure, certain dense matrices of increasing
size will need to be factorized. These matrices turn out to be kernel matrices
with kernels that are not known explicitly, but that can be built using the techniques described here
\cite{2013_xia_randomized,2017_ghysels_robust}.

\subsection{Hierarchically off-diagonal low-rank matrices}
\label{sec:HODLR}

In order to illustrate how randomized methods can be used to construct data
sparse representations of rank-structured matrices, we will describe a particularly
simple `format' in this section that is often referred to as the hierarchically
off-diagonal low-rank (HODLR) format.
This is a basic format that works well when the points
are organized on a one- or two-dimensional manifold.

The first step towards defining the HODLR format is to
build a binary  tree on the index vector $I = [1,2,3,\dots,n]$ through a process that is
illustrated in Figure \ref{fig:tree}(a).
With any node $\tau$
in the tree, we associate an index vector $I_{\tau} \subseteq I$. The root of the tree is given the index $\tau=1$,
and we associate it with the full index vector, so that $I_{1} = I$. At the next finer level
of the tree, we split $I_{1}$ into two parts $I_{2}$ and $I_{3}$ so that $I_{1} = I_{2} \cup I_{3}$
forms a disjoint partition. Then continue splitting the index vectors until each remaining
index vector is ``short''. (Exactly what ``short'' means is application-dependent, but one
may think of a short vector as holding a few hundred indices or so.) We let $\ell$ denote
a \textit{level} of the tree, with $\ell=0$ denoting the root, so that level $\ell$ holds
$2^{\ell}$ nodes. We use the terms \textit{parent} and \textit{child} in the obvious way,
and say that a pair of nodes $\{\alpha,\beta\}$ forms a \textit{sibling pair} if they have
the same parent. A \textit{leaf} node is of course a node that has no children.

The binary tree that we defined induces a natural tessellation of the kernel matrix $\mtx{K}$ into
$\bigO(n)$ blocks. Figure \ref{fig:tree}(b) shows the tessellation that follows from the tree
in Figure \ref{fig:tree}(a). Each parent node $\tau$ in the tree gives rise to two
off-diagonal blocks that both have low numerical rank. Letting $\{\alpha,\beta\}$ denote
the children of $\tau$, these two blocks are
\begin{equation}
\label{eq:siblingboxes}
\mtx{K}_{\alpha,\beta} = \mtx{K}(I_{\alpha},I_{\beta})
\qquad\mbox{and}\qquad
\mtx{K}_{\beta,\alpha} = \mtx{K}(I_{\beta},I_{\alpha}).
\end{equation}
For each leaf node $\tau$, we define a corresponding diagonal block as
\begin{equation}
\label{eq:diagbox}
\mtx{D}_{\tau} = \mtx{K}(I_{\tau},I_{\tau}).
\end{equation}
The disjoint partition of $\mtx{K}$ into blocks is now formed by all sibling
pairs, as defined in (\ref{eq:siblingboxes}), together with all diagonal blocks,
as defined by (\ref{eq:diagbox}). When each off-diagonal block in this tessellation
has low rank, we say that $\mtx{K}$ is a \textit{hierarchically off-diagonal low-rank (HODLR)} matrix.

\begin{figure}
\centering
\setlength{\unitlength}{1mm}
\begin{picture}(120,120)
\put(00,115){(a)}
\put(25,85){\begin{picture}(122,44)
\put( 20, 0){\includegraphics[width=75mm]{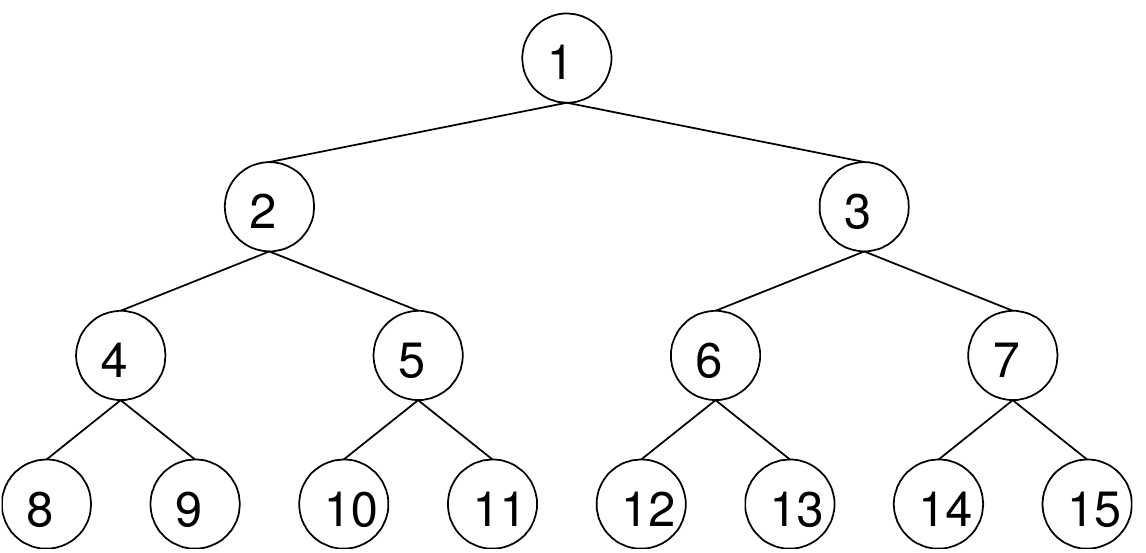}}
\put(  0,30){\textit{Level $0$:}}
\put(  0,21){\textit{Level $1$:}}
\put(  0,12){\textit{Level $2$:}}
\put(  0,03){\textit{Level $3$:}}
\end{picture}}
\put(00,77){(b)}
\put(40,02){\begin{picture}(80,80)
\put(-2,-2){\includegraphics[width=82mm]{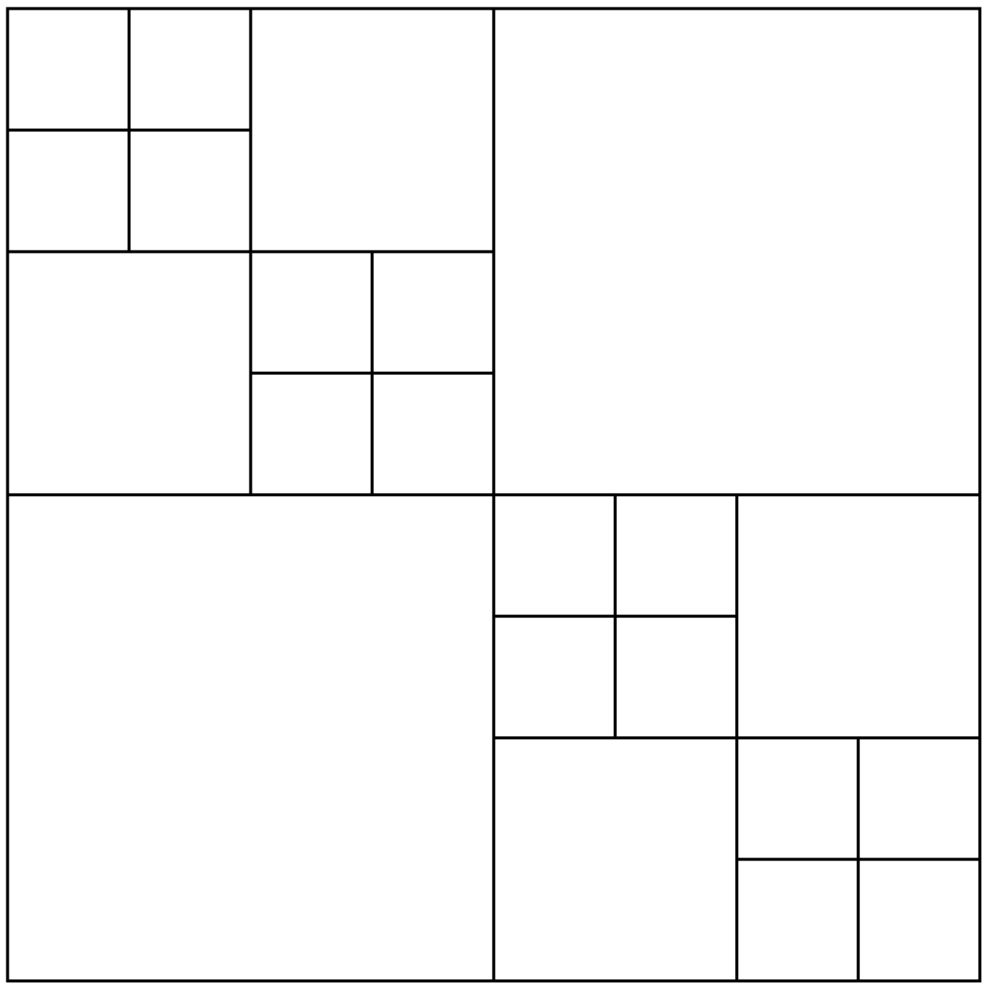}}
\put(00,73){\small$\mtx{D}_{8}$}
\put(10,63){\small$\mtx{D}_{9}$}
\put(20,53){\small$\mtx{D}_{10}$}
\put(30,43){\small$\mtx{D}_{11}$}
\put(40,33){\small$\mtx{D}_{12}$}
\put(50,23){\small$\mtx{D}_{13}$}
\put(60,13){\small$\mtx{D}_{14}$}
\put(70,03){\small$\mtx{D}_{15}$}
\put(00,63){\small$\mtx{K}_{9,8}$}
\put(10,73){\small$\mtx{K}_{8,9}$}
\put(60,03){\small$\mtx{K}_{15,14}$}
\put(70,13){\small$\mtx{K}_{14,15}$}
\put(40,23){\small$\mtx{K}_{13,12}$}
\put(50,33){\small$\mtx{K}_{12,13}$}
\put(20,43){\small$\mtx{K}_{11,10}$}
\put(30,53){\small$\mtx{K}_{10,11}$}
\put(16,18){\small$\mtx{K}_{3,2}$}
\put(56,58){\small$\mtx{K}_{2,3}$}
\put(06,48){\small$\mtx{K}_{5,4}$}
\put(26,68){\small$\mtx{K}_{4,5}$}
\put(46,08){\small$\mtx{K}_{7,6}$}
\put(66,28){\small$\mtx{K}_{6,7}$}
\put(-10,38){\small$\mtx{K} = $}
\end{picture}}
\end{picture}

\caption{(a) A binary tree with three levels; see~Section \ref{sec:HODLR}.
Each node $\tau$ in the tree owns an index vector $I_{\tau}$ that is
a subset of the full index vector $I = 1:n$. For the root node, we set $I_{1} = I$.
Each split in the tree represents a disjoint partition of the corresponding
index vectors, so that, for example, $I_1 = I_2 \cup I_3$ and $I_7 = I_{14} \cup I_{15}$.
(b) A matrix $\mtx{K}$ tessellated according to the tree shown in (a).
}
\label{fig:tree}
\end{figure}

\subsection{Compressing a rank-structured hierarchical matrix through the matrix-vector multiplication only}
\label{sec:HODLRcompression}

In this section, we describe a randomized technique for computing a data sparse representation of a
matrix that is compressible in the ``HODLR'' format that we introduced in Section \ref{sec:HODLR}. This technique
interacts with $\mtx{K}$ only through the application of $\mtx{K}$ and $\mtx{K}^{*}$ to vectors,
and is in this sense a ``black-box'' technique. In particular, we do not need the ability to evaluate
individual entries of $\mtx{K}$. The following theorem summarizes the main result:

\begin{theorem}
Let $\mtx{K}$ be a HODLR matrix associated with a fully populated binary tree on the index vector,
as described in Section \ref{sec:HODLR}. Suppose that the tree has $L$ levels, that each off-diagonal
block has rank at most $k$, and that each leaf node in the tree holds at most $ck$ indices for some fixed number $c$. Then
the diagonal blocks $\mtx{D}_{\tau}$, as well as rank-$k$ factorizations of all sibling interaction
matrices $\mtx{K}_{\alpha,\beta}$ can be computed by a randomized algorithm with cost at most
$$
T_{\rm total} = T_{\rm matvec} \times (4L+c)k + T_{\rm flop} \times \bigO(L^{2}k^{2}n),
$$
where $T_{\rm matvec}$ is the cost of applying either $\mtx{K}$ or $\mtx{K}^{*}$ to a vector, and
$T_{\rm flop}$ is the cost of a floating point operation.
\end{theorem}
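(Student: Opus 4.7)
My approach is to establish the theorem by constructing a top-down ``peeling'' algorithm that recursively extracts the rank-$k$ factorizations of the sibling interaction matrices one level at a time, starting from the coarsest level $\ell=1$ and descending to the finest level $\ell=L$; the leaf diagonal blocks are then recovered with a single final batched matrix--matrix product. At level $\ell$ I will process all $2^{\ell-1}$ sibling pairs simultaneously. For each pair $\{\alpha,\beta\}$ and each direction, I construct a test matrix $\mtx{\Omega}^{(\ell)} \in \F^{n \times (k+p)}$ (with $p$ a small fixed oversampling parameter, as in Section~\ref{sec:rrf-pseudo}) whose restriction $\mtx{\Omega}^{(\ell)}(I_\beta,:)$ is iid Gaussian and whose restriction $\mtx{\Omega}^{(\ell)}(I_\alpha,:)$ is zero; a companion test matrix reverses the pattern within each pair. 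Two additional test matrices $\mtx{\Psi}_1^{(\ell)},\mtx{\Psi}_2^{(\ell)}$ are built analogously to collect row samples by applying $\mtx{K}^{*}$, which is needed because $\mtx{K}_{\alpha,\beta}^{*} \neq \mtx{K}_{\beta,\alpha}$ in general.

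The crucial observation is a cancellation property for the prescribed sparsity structure. Decomposing $\mtx{K} = \mtx{D}_{\rm leaf} + \sum_{\ell'=1}^{L}\mtx{K}^{(\ell')}_{\rm off}$, where $\mtx{K}^{(\ell')}_{\rm off}$ collects the sibling interactions at level $\ell'$, I will argue that for every \emph{finer} level $\ell' > \ell$, the product $(\mtx{K}^{(\ell')}_{\rm off}\mtx{\Omega}^{(\ell)})(I_\alpha,:)$ vanishes. This is because any sibling pair at level $\ell'$ is strictly nested inside a single level-$\ell$ node on which $\mtx{\Omega}^{(\ell)}$ is either entirely random or entirely zero, so finer blocks never ``cross'' the level-$\ell$ block boundary determined by the mask. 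The coarser contributions $(\mtx{K}^{(\ell')}_{\rm off}\mtx{\Omega}^{(\ell)})(I_\alpha,:)$ for $\ell'<\ell$ involve only already-extracted factorizations and can be subtracted; the leaf-diagonal contribution vanishes on $I_\alpha$ because $\mtx{\Omega}^{(\ell)}(I_\alpha,:) = 0$. After subtraction, the restricted sample equals $\mtx{K}_{\alpha,\beta}\mtx{\Omega}^{(\ell)}(I_\beta,:)$ exactly, which is a bona fide Gaussian sketch of the rank-$k$ block $\mtx{K}_{\alpha,\beta}$, and so Theorem~\ref{thm:rrf-gauss} yields an orthonormal basis for its column space with high probability. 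Combined with a row basis obtained analogously from $\mtx{K}^{*}\mtx{\Psi}^{(\ell)}$, this delivers the rank-$k$ factorization. The roles of $\alpha$ and $\beta$ are then interchanged to recover $\mtx{K}_{\beta,\alpha}$.

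Counting matvecs: at each of the $L$ levels I use four structured test matrices, each of width $k+p$ (two with $\mtx{K}$, two with $\mtx{K}^{*}$), for a total of $4k$ vector applications per level, absorbing $p$ into the leading constant. After all off-diagonal blocks have been peeled, I recover the leaf diagonal blocks $\mtx{D}_\tau$ by applying $\mtx{K}$ once to a block-diagonal test matrix of width $ck$ whose restriction to each $I_\tau$ is the $|I_\tau|\times|I_\tau|$ identity (padded with zeros to width $ck$ if necessary), and then subtracting the now-known off-diagonal contributions. This accounts for $ck$ further vector matvecs, giving the claimed $T_{\rm matvec}\times(4L+c)k$. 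For the arithmetic, the dominant cost per level is the subtraction in step (ii) of coarser-level contributions: at level $\ell$, evaluating the action of the $\ell-1$ previously compressed off-diagonal matrices on a test matrix of width $O(k)$ costs $O(\ell n k^{2})$ because each compressed level has $O(nk)$ total nonzero content. Summing over $\ell = 1,\dots,L$ yields $O(L^{2}nk^{2})$, and the QR/SVD post-processing on the per-pair samples contributes a lower-order $O(Lnk^{2})$ that is absorbed into the bound.

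The step that I expect to require the most care is the cancellation argument (i), together with an honest bookkeeping of (ii). Conceptually the argument hinges only on the nested block structure of the HODLR tree and the linearity of the decomposition $\mtx{K} = \mtx{D}_{\rm leaf} + \sum_{\ell'}\mtx{K}^{(\ell')}_{\rm off}$, but writing out explicitly why no ``stray'' finer-level contribution sneaks into the extracted level-$\ell$ sample --- and that every coarser contribution is subtractable within the flop budget --- is where a careless proof would go astray. A minor secondary issue is handling the oversampling parameter $p$ so that Theorem~\ref{thm:rrf-gauss} applies; since we only need rank-$k$ approximations of genuinely rank-$k$ blocks, the required $p$ is a small constant and is absorbed into the stated complexity.
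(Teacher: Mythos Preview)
Your proposal is correct and follows essentially the same top-down peeling strategy as the paper: structured zero patterns in the level-$\ell$ test matrices isolate the current sibling interactions, finer-level contributions vanish automatically by nesting, coarser-level contributions are subtracted using already-computed factorizations, and the leaf diagonals are recovered at the end with a stacked-identity probe. Your cancellation argument and your flop bookkeeping ($O(\ell n k^{2})$ per level for the subtractions, summing to $O(L^{2}nk^{2})$) match the paper.

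The only noteworthy difference is in how the row information is extracted. The paper, having computed orthonormal column bases $\mtx{U}_{\alpha}$ at level $\ell$, uses \emph{those} bases (interlaced with zeros) as the test matrix for $\mtx{K}^{*}$, obtaining $\mtx{Z}_{\beta}=\mtx{K}_{\alpha,\beta}^{*}\mtx{U}_{\alpha}$ and hence the factorization $\mtx{K}_{\alpha,\beta}=\mtx{U}_{\alpha}\mtx{Z}_{\beta}^{*}$ directly --- this is the Stage-B step of the randomized SVD. You instead apply $\mtx{K}^{*}$ to fresh Gaussian matrices $\mtx{\Psi}^{(\ell)}$ to get an independent row basis $\mtx{P}_{\beta}$, and then must recover the $k\times k$ core from the available sketches; since the blocks are assumed to have exact rank $k$, the single-view formula $\mtx{C}=(\mtx{Q}_{\alpha}^{*}\mtx{Y}_{\alpha})(\mtx{P}_{\beta}^{*}\mtx{\Omega}_{\beta})^{-1}$ does this without further matvecs. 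Both routes cost $4k$ matvecs per level, so the bound is unaffected; the paper's version is slightly cleaner because it avoids the core-recovery step, while yours has the mild advantage that the $\mtx{K}$ and $\mtx{K}^{*}$ applications at each level can be launched in parallel rather than sequentially.
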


The proof of the theorem consists of an explicit algorithm for building all matrices that is often
referred to as the ``peeling algorithm''. It was originally published in \cite{2011_lin_lu_ying},
with later modifications proposed in \cite{2016_martinsson_hudson2}. The proof below is based
on \cite[Sec.~17.4]{2019_book}. We give the proof for the case described in the theorem involving a
matrix whose off-diagonal blocks have exact rank $k$. In practical applications, it is of course more
typical to have the off-diagonal blocks be only of approximate low rank. In this case, the exact same
algorithm can be used, but the number of samples drawn should be increased from $k$ to $k+p$ for some
modest oversampling parameter $p$.

\begin{proof}
The algorithm is a ``top-down'' technique that
compresses the largest blocks first, and then moves on to process one level at a time of successively
smaller blocks. In describing the technique, we assume that the blocks are numbered as shown in Figure
\ref{fig:tree}(a).

In the first step of the algorithm, we build approximations to the
two largest blocks $\mtx{K}_{2,3}$ and $\mtx{K}_{3,2}$, shown in red in Figure \ref{fig:peeling}(a).
To do this, we form a random matrix $\mtx{\Omega}$ of size  $n\times 2k$ and then use the
matrix-vector multiplication to form a sample matrix
$$
\begin{array}{cccccccccccc}
\mtx{Y} &=& \mtx{K}&\mtx{\Omega}.\\
n \times 2k && n \times n & n\times 2k
\end{array}
$$
The key idea of the peeling algorithm is to insert zero blocks in the random matrix $\mtx{\Omega}$ in the
pattern shown in Figure \ref{fig:peeling}(a). The zero blocks permit us to extract ``pure'' samples
from the column spaces of the blocks $\mtx{K}_{2,3}$ and $\mtx{K}_{3,2}$. For instance, the blocks
labeled $\mtx{Y}_{2}$ and $\mtx{Y}_{3}$ in the figure are given by the formulas
$$
\begin{array}{cccccccccccc}
\mtx{Y}_{2} &=& \mtx{K}_{2,3}&\mtx{\Omega}_{3}\\
\tfrac{n}{2} \times k && \tfrac{n}{2} \times \tfrac{n}{2} & \tfrac{n}{2}\times k
\end{array}
\qquad\mbox{and}\qquad
\begin{array}{cccccccccccc}
\mtx{Y}_{3} &=& \mtx{K}_{3,2}&\mtx{\Omega}_{2}\\
\tfrac{n}{2} \times k && \tfrac{n}{2} \times \tfrac{n}{2} & \tfrac{n}{2}\times k
\end{array}
$$
By orthonormalizing the matrices $\mtx{Y}_{2}$ and $\mtx{Y}_{3}$, we obtain ON bases
$\mtx{U}_{2}$ and $\mtx{U}_{3}$ for the off-diagonal blocks $\mtx{K}_{2,3}$ and $\mtx{K}_{3,2}$.
In order to complete the factorization of $\mtx{K}_{2,3}$ and $\mtx{K}_{3,2}$, we will perform
an operation that is the equivalent of ``Stage B'' in Section \ref{sec:rsvd}. To this end,
we form a test matrix by interlacing the matrices $\mtx{U}_{2}$ and $\mtx{U}_{3}$ with zero
blocks, to form the $n\times 2k$ matrix shown in Figure \ref{fig:peeling_trans}. Applying
$\mtx{K}^{*}$ to this test matrix, we get the sample matrices
$$
\mtx{Z}_{2} = \mtx{K}_{3,2}^{*}\mtx{U}_{3},
\qquad\mbox{and}\qquad
\mtx{Z}_{3} = \mtx{K}_{2,3}^{*}\mtx{U}_{2}.
$$
Since $\mtx{U}_{2}$  holds an ON-basis for the column space of $\mtx{K}_{2,3}$, it follows that
$$
\mtx{K}_{2,3} = \mtx{U}_{2}\mtx{U}_{2}^{*}\mtx{K}_{2,3} = \mtx{U}_{2}\mtx{Z}_{3}^{*},
$$
which establishes the low rank factorization of $\mtx{K}_{2,3}$.\footnote{One may if desired
continue the factorization process and form an ``economy size'' SVD of $\mtx{Z}_{3}^{*}$
so that $\mtx{Z}_{3}^{*} = \hat{\mtx{U}}_{2}\mtx{\Sigma}_{2,3}\mtx{V}_{3}^{*}$. This results in the
factorization $\mtx{K}_{2,3} = \bigl(\mtx{U}_{2}\hat{\mtx{U}}_{2}\bigr)\mtx{\Sigma}_{2,3}\mtx{V}_{3}^{*}$,
which is an SVD of $\mtx{K}_{2,3}$. However, for purposes of establishing the theorem, we may leave
$\mtx{Z}_{3}$ alone, and let $\{\mtx{U}_{2},\mtx{Z}_{3}\}$ be our ``compressed'' representation of
$\mtx{K}_{2,3}$.} The block $\mtx{K}_{3,2}$ is of course factorized analogously.

\begin{figure}
\centering
\begin{tabular}{p{40mm}p{88mm}}
(a) In the first step of the algorithm, the sibling interaction matrices
$\mtx{K}_{2,3}$ and $\mtx{K}_{3,2}$ on level 1, shown in red, are compressed.
&
\raisebox{-55mm}{\includegraphics[width=85mm]{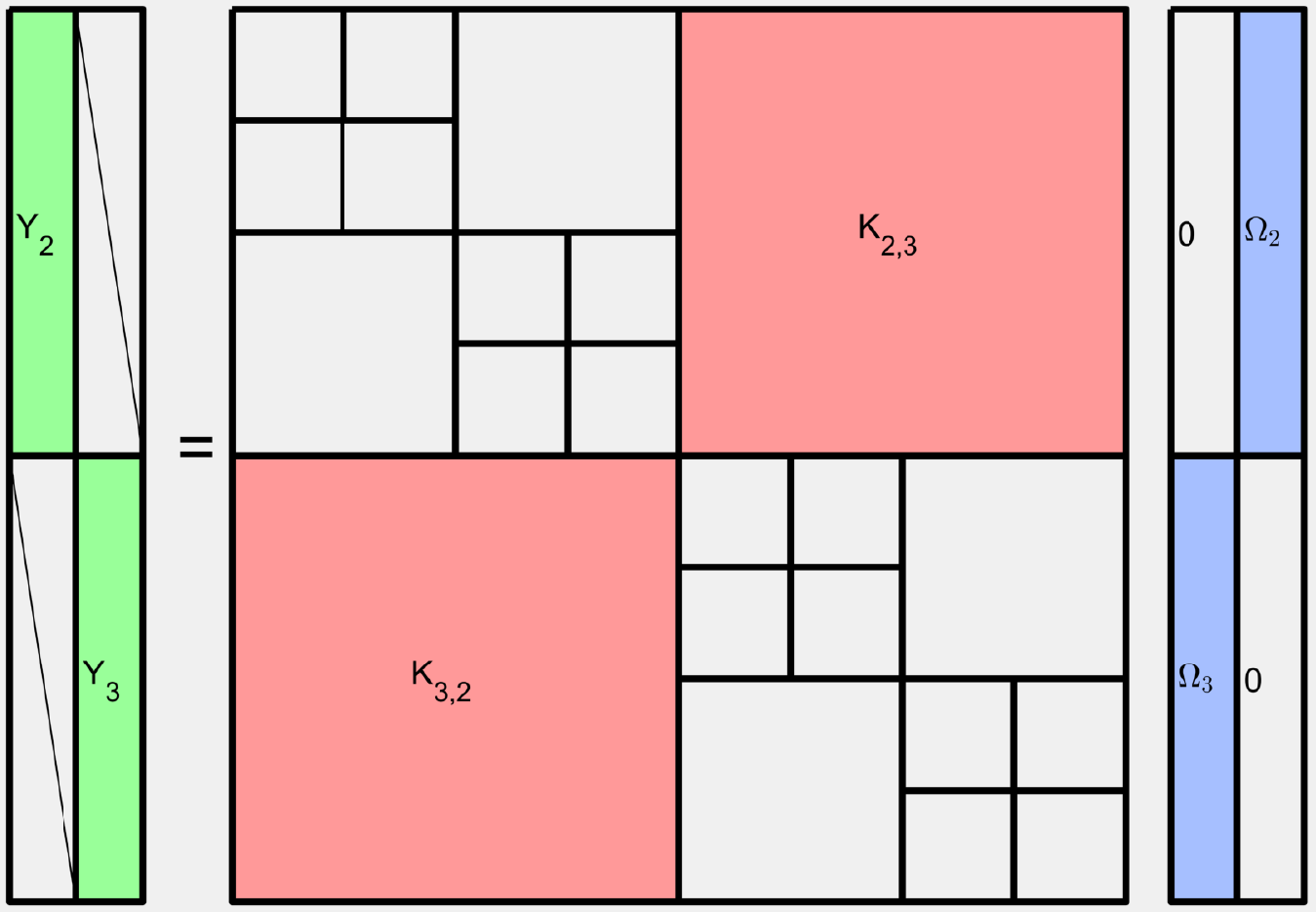}}\\
(b) In the second step, the four sibling interaction matrices on level 2,
shown in red, are compressed. In this step, we exploit that we now possess
factorizations of the gray blocks.
&
\raisebox{-55mm}{\includegraphics[width=85mm]{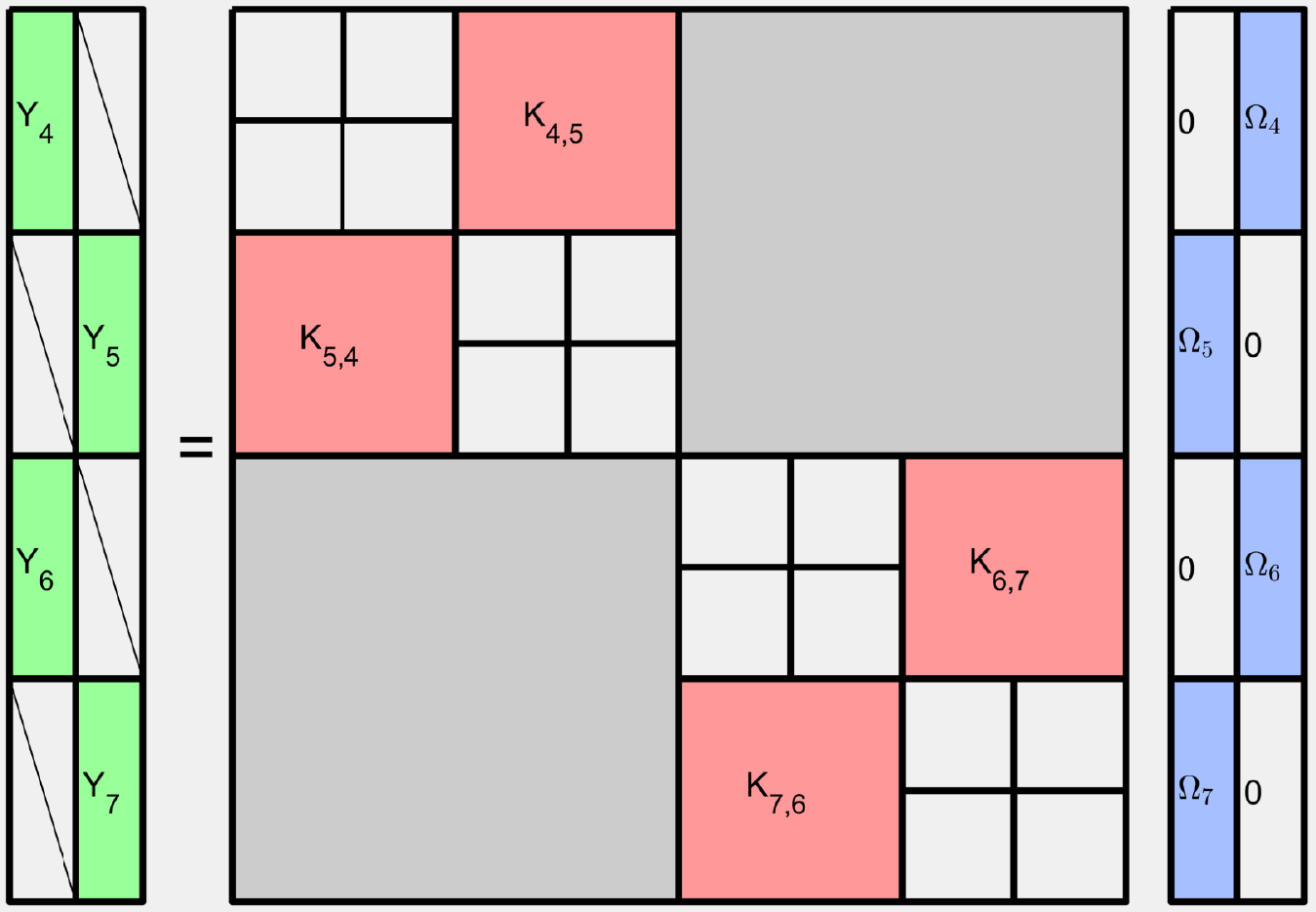}}\\
(c) In the third step, the eight sibling interaction matrices on level 8,
shown in red, are compressed. We again exploit that we possess
factorizations of the gray blocks.
&
\raisebox{-55mm}{\includegraphics[width=85mm]{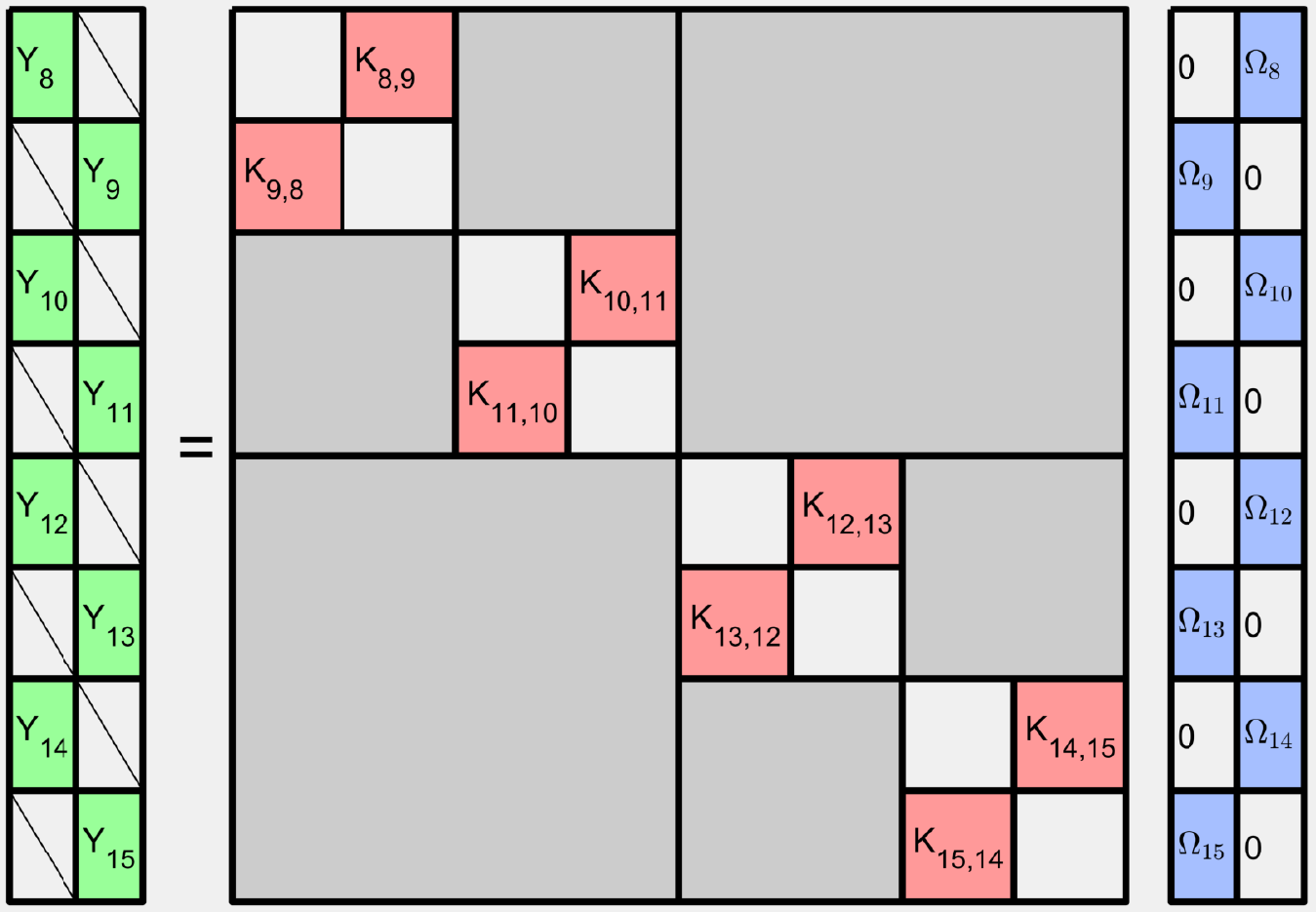}}
\end{tabular}
\caption{The ``peeling algorithm'' for computing a HODLR representation of a matrix
described in Section \ref{sec:HODLRcompression}.}
\label{fig:peeling}
\end{figure}

At the second step of the algorithm, the objective is to build approximations to the sibling matrices
at the next finer level, shown in red in Figure \ref{fig:peeling}(b). We use a random matrix $\mtx{\Omega}$
of the same size, $n\times 2k$, as in the previous step, but now with four zero blocks, as shown
in the figure. Looking at the sample matrix $\mtx{Y}_{4}$, we find that it takes the form
$$
\begin{array}{cccccccccccc}
\mtx{Y}_{4} &=& \mtx{K}_{4,5}&\mtx{\Omega}_{5} &+& \mtx{B}_{4},\\
\tfrac{n}{4} \times k && \tfrac{n}{4} \times \tfrac{n}{4} & \tfrac{n}{4}\times k && \tfrac{n}{4}\times k
\end{array}
$$
where $\mtx{B}_{4}$ represents contributions from interactions between the block $\mtx{K}_{2,3}$ and
the random matrix and $\mtx{\Omega}_{7}$. But now observe that we at this point
are in possession of a low rank approximation to $\mtx{K}_{2,3}$. This allows us to compute $\mtx{B}_{4}$
explicitly, and then obtain a ``pure'' sample from the column space of $\mtx{K}_{4,5}$ by subtracting
this contribution out:
$$
\mtx{Y}_{4} - \mtx{B}_{4} = \mtx{K}_{4,5}\mtx{\Omega}_{5}.
$$
By orthonormalizing the matrix $\mtx{Y}_{4} - \mtx{B}_{4}$, we obtain the ON matrix $\mtx{U}_{4}$ whose
columns span the column space of $\mtx{K}_{4,5}$.
The same procedure of course also allows us to build bases for the column spaces of all matrices at
this level, and we can then extract bases for the row spaces and the cores of the matrices by sampling
$\mtx{K}^{*}$.

At the third step of the algorithm, the object is to compress the blocks marked in red in Figure \ref{fig:peeling}(c).
The random matrix used remains of size $n\times 2k$ but now contains 8 zero blocks, as shown in the figure.
The sample matrix $\mtx{Y}_{8}$ takes the form
$$
\begin{array}{cccccccccccc}
\mtx{Y}_{8} &=& \mtx{K}_{8,9}&\mtx{\Omega_{9}} &+& \mtx{B}_{8}\\
\tfrac{n}{8} \times k && \tfrac{n}{8} \times \tfrac{n}{8} & \tfrac{n}{8}\times k && \tfrac{n}{8}\times k
\end{array}
$$
where $\mtx{B}_{8}$ represents contributions from off-diagonal blocks at the coarser levels, shown in
gray in the figure. Since we already possess data sparse representations of these blocks, we can
subtract their contributions out, just as at the previous level.

Once all levels have been processed, we hold low-rank factorizations of all off-diagonal blocks, and
all that remains is to extract the diagonal matrices $\mtx{D}_{\tau}$ for all leaf nodes $\tau$. Let us
for simplicity assume that all such block are of the same size $m\times m$. We then form a test matrix
by stacking $n/m$ copies of an $m\times m$ identity matrix atop of each other and applying $\mtx{K}$
to this test matrix. We subtract off the contributions from all the off-diagonal blocks using the
representation we have on hand, and are then left with a sample matrix consisting of all
the blocks $\mtx{D}_{\tau}$ stacked on top of each other.
\end{proof}

\begin{figure}
\centering
\includegraphics[width=85mm]{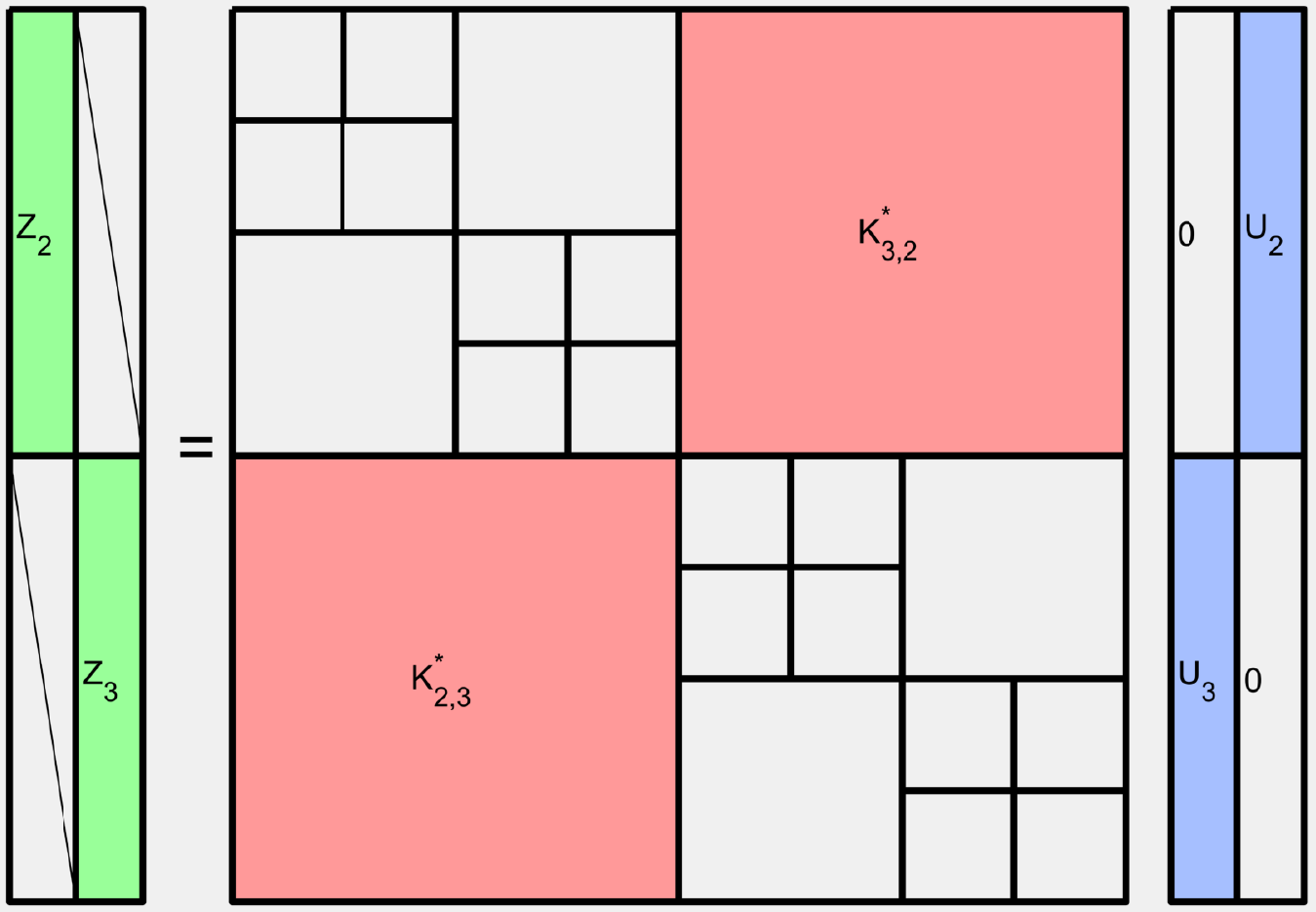}
\caption{The process used to complete the factorization of the blocks $\mtx{K}_{2,3}$
and $\mtx{K}_{3,2}$ once the basis matrices $\mtx{U}_{2}$ and $\mtx{U}_{3}$ have been
constructed. We put $\mtx{U}_{2}$ and $\mtx{U}_{3}$ into the $n\times 2k$ test matrix shown,
and form the sample matrix that holds $\mtx{Z}_{2}$ and $\mtx{Z}_{3}$ by applying $\mtx{K}^{*}$.}
\label{fig:peeling_trans}
\end{figure}

\subsection{A linear complexity data sparse format}
\label{sec:HBS}

The ``HODLR'' data sparse matrix format discussed in Sections \ref{sec:HODLR} and \ref{sec:HODLRcompression}
is simple to use, and is in many applications efficient enough. However, its storage complexity
is at least $\bigO(n\log n)$ as the matrix size $n$ grows. We will next describe
how the logarithmic factor can be eliminated. To keep the presentation concise, we focus on
the asymptotic storage requirements, although similar estimates hold for the asymptotic flop count.

To start, let us first investigate why there are logarithmic factors in the storage requirement in the
HODLR format. Suppose that for a sibling pair $\{\alpha,\,\beta\}$ the corresponding off-diagonal
block $\mtx{K}_{\alpha,\beta}$ is of size $m\times m$, has rank $k$, and is stored in the form
of a factorization
$$
\begin{array}{ccccccccccccccc}
\mtx{K}_{\alpha,\beta} &=& \mtx{U}^{\rm long}_{\alpha}&\tilde{\mtx{K}}_{\alpha,\beta}&\bigl(\mtx{V}_{\beta}^{\rm long}\bigr)^{*}\\
m\times m && m\times k & k\times k& k\times m
\end{array}
$$
where $\mtx{U}_{\alpha}^{\rm long}$ and $\mtx{V}_{\beta}^{\rm long}$ are two matrices whose columns
form bases for the column and row space of $\mtx{K}_{\alpha,\beta}$, respectively.\footnote{The matrices
$\mtx{U}_{\alpha}^{\rm long}$ and $\mtx{V}_{\beta}^{\rm long}$ were denoted
$\mtx{U}_{\alpha}$ and $\mtx{V}_{\beta}$ in Sections \ref{sec:HODLR} and \ref{sec:HODLRcompression}.}
We see that $\mtx{K}_{\alpha,\beta}$ can be stored using about $2mk$ floating point numbers (where we ignore
$\bigO(k^{2})$ terms, since typically $k \ll m$). Now let us consider how many floats are required to store
all sibling interaction matrices on a given level $\ell$ in the tree. There are $2^{\ell}$ such
matrices, and they each have size $m \approx 2^{-\ell}n$, resulting in a total of
$2^{\ell}\times 2\times 2^{-\ell}nk = 2nk$ floats. Since there are $\bigO(\log(n))$ levels in the tree,
it follows that just the task of storing all the basis matrices requires $\bigO(kn\log(n))$ storage.

A standard technique for overcoming the problem of storing all the ``long'' basis matrices is to
assume that the basis matrices on one level can be formed through slight modifications to the basis
matrices on the next finer level. To be precise, we assume that if $\tau$ is a node in the tree
with children $\alpha$ and $\beta$, then there exists a ``short'' basis matrix $\mtx{U}_{\tau}$
of size $2k\times k$ such that
\begin{equation}
\label{eq:nestedbasis}
\begin{array}{ccccccccccccc}
\mtx{U}_{\tau}^{\rm long} &=&
\left[\begin{array}{cc} \mtx{U}_{\alpha}^{\rm long} & \mtx{0} \\ \mtx{0} & \mtx{U}_{\beta}^{\rm long}\end{array}\right]
&
\mtx{U}_{\tau}\\
m\times m && m\times 2k & 2k \times k
\end{array}
\end{equation}
(with of course an analogous statement holding for the ``V'' matrices holding bases for the row spaces).
The point is that if we already have the ``long'' basis matrices for the children available,
then the long basis matrices for the parent can be formed using the information in the small
matrix $\mtx{U}_{\tau}$. By applying this argument recursively, we see
that for any parent node $\tau$, all that needs to be stored explicitly are small
matrices of size $2k\times k$.

To illustrate the idea, suppose that we use this technique for storing basis matrices to the
tree with three levels described in Section \ref{sec:HODLR}. Then for each of the 8 leaf boxes,
we compute basis matrices of size $n/8 \times k$ and store these. At the next coarser level, the
long basis matrix for a box such as $\tau=4$ can be expressed through the formula
$$
\begin{array}{ccccccccccccc}
\mtx{U}_{4}^{\rm long} &=&
\left[\begin{array}{cc} \mtx{U}_{8}^{\rm long} & \mtx{0} \\ \mtx{0} & \mtx{U}_{9}^{\rm long}\end{array}\right]
&
\mtx{U}_{4},\\
(n/4)\times m && (n/4)\times 2k & 2k \times k
\end{array}
$$
so that only the $2k\times k$ matrix $\mtx{U}_{4}$ needs to be stored. At the next coarser level
still, the long basis matrix $\mtx{U}_{2}^{\rm long}$ is expressed as
$$
\begin{array}{ccccccccccccc}
\mtx{U}_{2}^{\rm long} &=&
\left[\begin{array}{cccc}
\mtx{U}_{8}^{\rm long}  & \mtx{0}                 & \mtx{0}                 & \mtx{0} \\
\mtx{0}                 & \mtx{U}_{9}^{\rm long}  & \mtx{0}                 & \mtx{0} \\
\mtx{0}                 & \mtx{0}                 & \mtx{U}_{10}^{\rm long} & \mtx{0} \\
\mtx{0}                 & \mtx{0}                 & \mtx{0}                 & \mtx{U}_{11}^{\rm long}
\end{array}\right]
&
\left[\begin{array}{cc} \mtx{U}_{4} & \mtx{0} \\ \mtx{0} & \mtx{U}_{5}\end{array}\right]
&
\mtx{U}_{2}.\\
(n/2)\times m && (n/2)\times 4k & 4k\times 2k & 2k \times k
\end{array}
$$

We say that a matrix that can be represented using nested basis matrices in the manner
described is a \textit{hierarchically block separable (HBS)} matrix. (This format is
very closely related to the \textit{hierarchically semi separable (HSS)} matrix format
described in \cite{2010_gu_xia_HSS,2005_gu_HSS}.) When the HBS format is used,
it is natural to assume that each leaf index vector holds $\bigO(k)$ indices,
which means that there are overall $\bigO(n/k)$ nodes in the tree. Since we only need $\bigO(k^2)$
storage per node, we see that the overall storage requirements improve from $\bigO(kn\log(n))$ to
$\bigO(kn)$.

There is a price to be paid for using ``nested'' basis matrices, and it is that the rank $k$ required
to reach a requested precision $\varepsilon$ typically is higher when nested basis matrices are used,
in comparison to the HODLR format. To be precise, one can show that for the relation (\ref{eq:nestedbasis})
to hold, it is necessary and sufficient that for a node $\alpha$, the columns of $\mtx{U}_{\alpha}^{\rm long}$
must span the column space of the matrix $\mtx{K}(I_{\alpha},I_{\alpha}^{\rm c})$
(where $I_{\alpha}^{\rm c} = I \backslash I_{\alpha}$). In contrast, in the HODLR
format, they only need to span the columns of $\mtx{K}(I_{\alpha},I_{\beta})$, where $\beta$ is the sibling of
$\alpha$. This is easier to do since $I_{\beta}$ is a subset of the index vector $I_{\alpha}^{\rm c}$.

\subsection{A linear complexity randomized compression technique}
\label{sec:HBScompression}

We saw in Section \ref{sec:HBS} that the so called ``HBS'' data sparse format allows us to store
a matrix using $\bigO(kn)$ floating point numbers, without any factor of $\log(n)$. But is it also
possible to compute such a representation in optimal linear complexity? The answer is yes, and a
number of deterministic techniques that work in specific environments have been proposed
\cite[Ch.~17]{2019_book}.
A linear complexity randomized technique was proposed in \cite{2011_martinsson_randomhudson}.
This method is based on analyzing samples of the matrix obtained through the application of $\mtx{K}$
and $\mtx{K}^{*}$ to Gaussian random vectors, but is not quite a black-box method, as it
also requires the ability to evaluate $\bigO(kn)$ individual elements of $\mtx{K}$ itself. This
is not always possible, of course, but when it is, the technique is lightning fast in
practice. To be precise, \cite{2011_martinsson_randomhudson} establishes the following:

\begin{theorem}
\label{thm:randHBS}
Let $\mtx{K}$ be an $n\times n$ matrix that is compressible in the HBS format described in Section
\ref{sec:HBS}, for some rank $k$. Let $T_{\rm apply}$ denote the time it takes to evaluate
the two products
\begin{align}
\label{eq:randomhudsonsample1}
\mtx{Y} =&\ \mtx{K}\mtx{G},\\
\label{eq:randomhudsonsample2}
\mtx{Z} =&\ \mtx{K}^{*}\mtx{G},
\end{align}
where $\mtx{G}$ is an $n\times k$ matrix drawn from a Gaussian distribution.
Then a full HBS representation of $\mtx{K}$ can be computed at cost bounded by
$$
T_{\rm total} = T_{\rm apply} + T_{\rm entry} \times \bigO(nk) + T_{\rm flop} \times \bigO(nk^{2}),
$$
where $T_{\rm flop}$ is the cost of a floating point operation, and where $T_{\rm entry}$ is the
time it takes to evaluate an individual entry of $\mtx{K}$
\end{theorem}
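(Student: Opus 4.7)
The strategy is a bottom-up traversal of the binary tree, in which we extract the leaf bases directly from the raw samples $\mtx{Y} = \mtx{K}\mtx{G}$ and $\mtx{Z} = \mtx{K}^{*}\mtx{G}$, and then at each successively coarser level form ``compressed samples'' of size proportional to $k$ (not $n$) that let us extract the small nested-basis matrices $\mtx{U}_{\tau}, \mtx{V}_{\tau}$ and the sibling interaction matrices $\tilde{\mtx{K}}_{\alpha,\beta}$. The hope is that the single application of $\mtx{K}$ to an $n\times k$ Gaussian matrix implicitly contains enough information at every level, once the ``known'' (already-compressed) contributions are subtracted away using direct entry evaluations.

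At the leaf level, for each leaf $\tau$ with index set $I_{\tau}$ (of size $\bigO(k)$), I would extract the rows $\mtx{Y}(I_{\tau},:) = \mtx{K}(I_{\tau},I_{\tau})\mtx{G}(I_{\tau},:) + \mtx{K}(I_{\tau},I_{\tau}^{\rm c})\mtx{G}(I_{\tau}^{\rm c},:)$ from the single global sample. Since $\mtx{K}(I_{\tau},I_{\tau})$ has only $\bigO(k^{2})$ entries, I evaluate it directly and subtract its contribution, leaving a pure sample of $\mtx{K}(I_{\tau},I_{\tau}^{\rm c})\mtx{G}(I_{\tau}^{\rm c},:)$. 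By the Gaussian rangefinder analysis (Theorem \ref{thm:rrf-gauss}), a QR of this block yields a basis matrix $\mtx{U}_{\tau}^{\rm long}$ spanning the column space of $\mtx{K}(I_{\tau},I_{\tau}^{\rm c})$ to high accuracy. The row bases $\mtx{V}_{\tau}^{\rm long}$ are extracted analogously from $\mtx{Z}(I_{\tau},:)$, and the sibling interaction matrix $\tilde{\mtx{K}}_{\alpha,\beta}$ for each leaf pair is then recovered by evaluating a few entries of $\mtx{K}(I_\alpha, I_\beta)$ and solving a small least-squares problem (or equivalently reading coefficients directly via an interpolatory decomposition).

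To move up one level, I exploit the nested basis property \eqref{eq:nestedbasis}: for a parent $\tau$ with children $\alpha,\beta$, the long basis $\mtx{U}_{\tau}^{\rm long}$ factors as a block-diagonal arrangement of $\mtx{U}_{\alpha}^{\rm long},\mtx{U}_{\beta}^{\rm long}$ times a short $2k \times k$ matrix $\mtx{U}_{\tau}$. I form ``projected samples'' by stacking
\[
\hat{\mtx{Y}}_{\tau} = \begin{bmatrix} (\mtx{U}_{\alpha}^{\rm long})^{*} \mtx{Y}(I_{\alpha},:) \\ (\mtx{U}_{\beta}^{\rm long})^{*} \mtx{Y}(I_{\beta},:) \end{bmatrix},
\]
and then subtract the already-known sibling contributions (expressed through $\tilde{\mtx{K}}_{\alpha,\beta}$ and the projected Gaussian $(\mtx{V}_{\beta}^{\rm long})^{*}\mtx{G}(I_{\beta},:)$), leaving a pure sample of the ``external'' part of the block row corresponding to $I_{\tau}$. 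Because the external block has rank at most $k$ by HBS compressibility, a QR on the resulting $2k \times k$ matrix gives $\mtx{U}_{\tau}$. Recursing this construction level by level produces all required nested basis matrices and sibling interaction blocks, and the diagonal leaf blocks $\mtx{D}_{\tau}$ are recovered by direct entry evaluation.

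The main obstacle is the probabilistic step of showing that a \emph{single} Gaussian test matrix of inner dimension $k$ (modulo a small oversampling that I would silently add) simultaneously yields accurate range approximations at every level of the tree. The difficulty is that the projected samples $\hat{\mtx{Y}}_{\tau}$ are correlated across levels through $\mtx{G}$, so one cannot simply invoke Theorem~\ref{thm:rrf-gauss} independently at each node. The remedy is to observe that, conditional on $\mtx{G}(I_{\tau}^{\rm c},:)$, the projected test matrix at node $\tau$ remains a (pushforward of a) Gaussian of the correct shape because orthonormal projections of Gaussians are Gaussian, and because $I_{\tau}$ and $I_{\tau}^{\rm c}$ index disjoint columns of $\mtx{G}$. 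A union bound over the $\bigO(n/k)$ nodes, together with mild oversampling, then controls the global failure probability. The complexity bookkeeping is routine: the matrix--vector products cost $T_{\rm apply}$; there are $\bigO(n/k)$ nodes, each contributing $\bigO(k^{2})$ entry evaluations and $\bigO(k^{3})$ flops for the local QR/ID/least-squares; summing gives $\bigO(nk)$ entry evaluations and $\bigO(nk^{2})$ flops, matching the claimed bound.
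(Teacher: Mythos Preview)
Your bottom-up strategy---subtract the known diagonal/sibling contributions from the global samples $\mtx{Y},\mtx{Z}$, compress what remains, then recurse to coarser levels on short $2k\times k$ projected samples---is exactly the paper's algorithm. The complexity bookkeeping and the idea of carrying compressed samples upward also match.

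The one substantive discrepancy is your use of QR (orthonormal bases) where the paper insists on the interpolatory decomposition, and this is not a cosmetic choice. With the ID, the row basis for a node $\alpha$ comes with a skeleton index set $\tilde{I}_{\alpha}^{\rm row}\subset I_{\alpha}$ of size $k$, and the sibling interaction matrix at \emph{every} level is literally the $k\times k$ submatrix $\mtx{K}(\tilde{I}_{\alpha}^{\rm row},\tilde{I}_{\beta}^{\rm col})$, obtained by $k^{2}$ entry evaluations. This is what keeps the total entry count at $\bigO(nk)$. With orthonormal bases, the core at a non-leaf sibling pair is $(\mtx{U}_{\alpha}^{\rm long})^{*}\mtx{K}(I_{\alpha},I_{\beta})\mtx{V}_{\beta}^{\rm long}$, and there is no way to evaluate this within budget: the middle block is large, it is not one of the already-compressed pieces, and you cannot isolate it from the global sample because the ``external-to-$\tau$'' contribution you are simultaneously trying to estimate is mixed into the same rows. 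Your leaf-level recipe (``evaluate a few entries and solve a small least-squares problem'') works only because leaf blocks are $\bigO(k)\times\bigO(k)$; it does not recurse. The paper flags this explicitly: the ID is ``essential to making the linear complexity compression scheme work.'' Your parenthetical ``or equivalently reading coefficients directly via an interpolatory decomposition'' is in fact the fix, not an equivalent alternative---commit to it throughout.

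Two minor points. First, when forming $\hat{\mtx{Y}}_{\tau}$ from the raw $\mtx{Y}(I_{\alpha},:)$ you must subtract both the sibling \emph{and} the diagonal self-interaction of each child; you only mention the former. Second, your probabilistic discussion is more careful than the paper's (which treats the exact-rank case and simply remarks that oversampling $k\to k+p$ handles the approximate case), but note that in the exact-rank setting there is no failure mode: any Gaussian block $\mtx{G}(I_{\tau}^{\rm c},:)$ has full column rank almost surely, so the range is captured exactly at every node and no union bound is needed.
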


When the off-diagonal blocks are only approximately of rank $k$, we do some over sampling as usual,
and replace the number $k$ in the theorem by $k+p$ for some small number $p$.
This of course results in an approximate HBS representation of $\mtx{K}$.

The proof of Theorem \ref{thm:randHBS} is an algorithm that explicitly builds the data sparse
representation of the matrix within the specified time budget. The algorithm consists of a pass
through all nodes in the hierarchical tree, going from smaller boxes to larger; ``bottom-up'', as
opposed to the ``top-down'' method for HODLR matrices in Section \ref{sec:HODLRcompression}.
We provide an outline of the proof below. This outline is written to convey the main ideas without
introducing cumbersome notation; for full details, see the original article
\cite{2011_martinsson_randomhudson} or \cite[Sec.~17.4]{2019_book}.

\begin{proof}
To describe the algorithm that establishes Theorem \ref{thm:randHBS}, we will walk through how it applies
to a matrix tessellated in accordance with the
hierarchical tree in Figure \ref{fig:tree}(a). We start by showing how we can use the information in the
matrix $\mtx{Y}$ defined by (\ref{eq:randomhudsonsample1})
to build the basis matrix $\mtx{U}_{8}$. As we saw in Section \ref{sec:HBS}, we need the columns of $\mtx{U}_{8}$
to span the columns in the submatrix $\mtx{K}(I_{8},I_{8}^{\rm c})$. We achieve this by constructing the
sample matrix
$$
\mtx{Y}_{8} :=
\mtx{K}(I_{8},I_{8}^{\rm c})\mtx{G}(I_{8}^{\rm c},:).
$$
Since $\mtx{G}(I_{8}^{\rm c},:)$ is a Gaussian random matrix of the appropriate size, the columns of $\mtx{Y}_{8}$
will form an approximate basis for the column space of $\mtx{K}(I_{8},I_{8}^{\rm c})$.
But how do we form $\mtx{Y}_{8}$ from the sample matrix $\mtx{Y}$  defined by (\ref{eq:randomhudsonsample1})?
The method we use is illustrated in Figure \ref{fig:hbs_randomized}(a), where the block $\mtx{K}(I_{8},I_{8}^{\rm c})$
consists of the gray block inside the red rectangle. We see that in order to isolate the product between
$\mtx{K}(I_{8},I_{8}^{\rm c})$  and $\mtx{G}(I_{8}^{\rm c},:)$, all we need to do is to subtract the contribution
from the diagonal block $\mtx{K}(I_{8},I_{8})$ (marked in white in the figure). In other words,
\begin{equation}
\label{eq:Y8}
\mtx{Y}_{8} =
\mtx{K}(:,I_{8})\mtx{G} - \mtx{K}(I_{8},I_{8})\mtx{G}(I_{8},:) =
\mtx{Y}(I_{8},:) - \mtx{K}(I_{8},I_{8})\mtx{G}(I_{8},:).
\end{equation}
The formula (\ref{eq:Y8}) can be evaluated inexpensively since the block $\mtx{K}(I_{8},I_{8})$ is small, and can
be explicitly formed since we assume that we have access to individual entries of the matrix $\mtx{K}$.
Once $\mtx{Y}_{8}$ has been formed, we compute its row interpolatory decomposition to form a basis matrix $\mtx{U}_{8}$
and an index vector $\tilde{I}_{8}^{\rm row}$ such that
$$
\mtx{K}(I_{8},I_{8}^{\rm c}) = \mtx{U}_{8}\,\mtx{K}(\tilde{I}_{8}^{\rm row},I_{8}^{\rm c}).
$$
In an entirely analogous way, we form a sample matrix $\mtx{Y}_{9}$ whose columns span the block
$\mtx{K}(I_{9},I_{9}^{\rm c})$ through the formula
$$
\mtx{Y}_{9} =
\mtx{Y}(I_{9},:) - \mtx{K}(I_{9},I_{9})\mtx{G}(I_{9},:),
$$
cf.~Figure \ref{fig:hbs_randomized}(b), and then compute the row ID $\{\mtx{U}_{9},\tilde{I}_{9}^{\rm row}\}$ of $\mtx{Y}_{9}$.

\begin{figure}
\centering
\includegraphics[width=\textwidth]{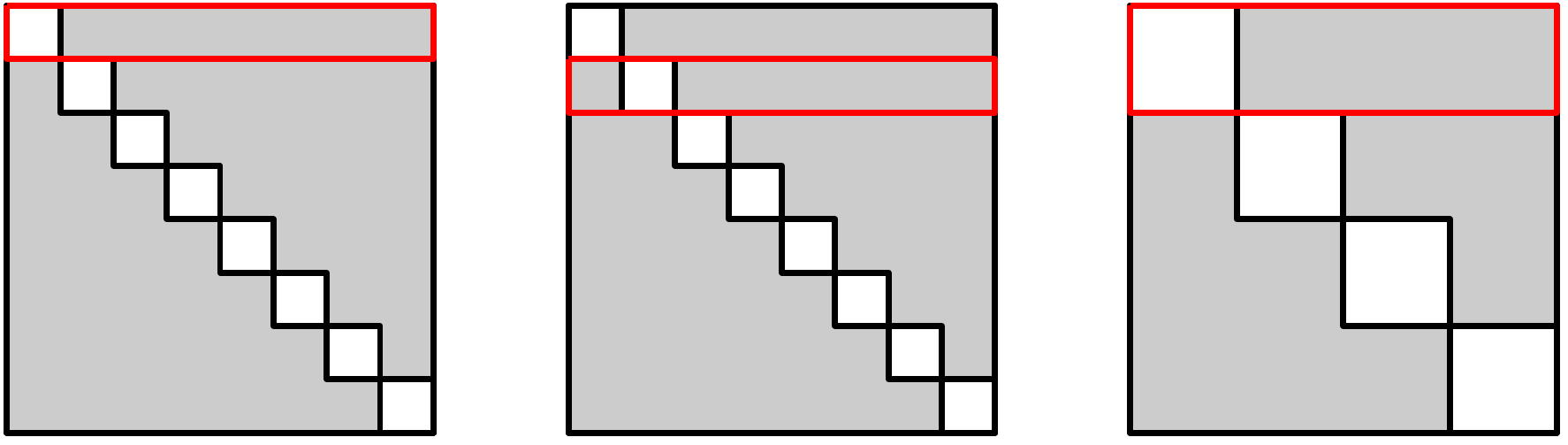}

(a) \hspace{35mm} (b) \hspace{35mm} (c)
\caption{The algorithm that establishes Theorem \ref{thm:randHBS} is illustrated using
a matrix $\mtx{K}$ tessellated in accordance with the tree shown in Figure \ref{fig:tree}(a).
(a) The block $\mtx{K}(I_{8},\colon)$ is marked with a red rectangle. Within the red
rectangle, the shaded area represents the block $\mtx{K}(I_{8},I_{8}^{\rm c})$.
(b) The block $\mtx{K}(I_{9},\colon)$ is marked with a red rectangle, with $\mtx{K}(I_{9},I_{9}^{\rm c})$ shaded.
(c) The block $\mtx{K}(I_{4},\colon)$ is marked with a red rectangle, with $\mtx{K}(I_{4},I_{4}^{\rm c})$ shaded.
}
\label{fig:hbs_randomized}
\end{figure}

The basis matrices $\mtx{V}_{\tau}$ that span the row spaces of the offdiagonal blocks for any leaf $\tau$
are built through the same procedure, but starting with the sample matrix $\mtx{Z}$ defined by
(\ref{eq:randomhudsonsample2}). For instance, we form the sample matrix
\begin{equation}
\label{eq:Z8}
\mtx{Z}_{8} =
\mtx{K}(I_{8},:)^{*}\mtx{G} - \mtx{K}(I_{8},I_{8})^{*}\mtx{G}(I_{8},:) =
\mtx{Z}(I_{8},:) - \mtx{K}(I_{8},I_{8})^{*}\mtx{G}(I_{8},:),
\end{equation}
and then compute the ID of $\mtx{Z}_{8}$ to build a basis matrix $\mtx{V}_{8}$ and an index vector
$\tilde{I}_{8}^{\rm col}$ such that
$$
\mtx{K}(I_{8}^{\rm c},I_{8}) = \mtx{K}(I_{8}^{\rm c},\tilde{I}_{8}^{\rm col})\mtx{V}_{8}^{*}.
$$

The choice to use the interpolatory decomposition in factorizing the off-diagonal blocks
is essential to making the linear complexity compression scheme work. In particular, observe
that once we have formed the row and column IDs of all the leaf boxes, we automatically obtain
rank-$k$ factorizations of all the corresponding sibling interaction matrices. For instance,
if $\{\alpha,\beta\}$ is a sibling pair consisting of two leaf nodes, then the $m\times m$
sibling interaction matrix $\mtx{K}(I_{\alpha},I_{\beta})$ admits the factorization
\begin{equation}
\label{eq:randHBSsibs}
\begin{array}{cccccccccccccc}
\mtx{K}(I_{\alpha},I_{\beta}) &=& \mtx{U}_{\alpha} & \mtx{K}(\tilde{I}_{\alpha}^{\rm row},\tilde{I}_{\beta}^{\rm col}) & \mtx{V}_{\beta}^{*}.\\
m\times m && m\times k & k\times k & k\times m
\end{array}
\end{equation}
In order to evaluate (\ref{eq:randHBSsibs}), we merely need to form the
matrix $\mtx{K}(\tilde{I}_{\alpha}^{\rm row},\tilde{I}_{\beta}^{\rm col})$.

Once the leaf nodes have all been processed, we proceed to the next coarser level.
Consider for instance the node $\tau=4$ with children $\alpha=8$ and $\beta=9$.
Our task is in principle to build the long basis
matrix $\mtx{U}_{4}^{\rm long}$ that spans the columns of $\mtx{K}(I_{4},I_{4}^{\rm c})$,
which is the shaded matrix inside the red rectangle in Figure \ref{fig:hbs_randomized}(c).
(We say ``in principle'' since it will not actually be explicitly formed.)
To this end, we define the sample matrix
$$
\mtx{Y}_{4} := \mtx{K}(I_{4},I_{4}^{\rm c})\mtx{G}(I_{4}^{\rm c},:).
$$
The idea is now to repeat the same technique that we used for the leaf boxes, and write
\begin{equation}
\label{eq:Y4}
\mtx{Y}_{4} = \mtx{Y}(I_{4},:) - \mtx{K}(I_{4},I_{4})\mtx{G}(I_{4},:).
\end{equation}
It turns out that we can evaluate (\ref{eq:Y4}) very efficiently:
The block $\mtx{K}(I_{4},I_{4})$ is made up of the two diagonal blocks
$\mtx{K}(I_{8},I_{8})$ and $\mtx{K}(I_{9},I_{9})$, and the two off-diagonal blocks
$\mtx{K}(I_{8},I_{9})$ and $\mtx{K}(I_{9},I_{8})$. Now observe that at this point
in the execution of the algorithm, we have compressed representations of all
these blocks available. Using this information, we
form a short sample matrix $\tilde{\mtx{Y}}_{4}$ of size $2k\times k$ that we can
compress to build the basis matrix $\mtx{U}_{4}$ and the associated index vector
$\tilde{I}_{4}^{\rm row}$ in an ID of $\tilde{\mtx{Y}}_{4}$.
\end{proof}

\subsection{Butterfly matrices}
\label{sec:butterfly}

An interesting class of rank-structured matrices arises from a generalization of the discrete
Fourier transform.
These so called ``butterfly matrices'' provide a data sparse format
for many matrices that appear in the analysis of neural networks, wave propagation problems, and signal processing
\cite{dao2019learning,2007_oneil_thesis,2009_candes_demanet_ying_butterfly}.
This format involves an additional complication in comparison to the HODLR and HBS
formats in that it requires $\bigO(\log n)$ different tessellations of the matrix, as illustrated
for a simple case in Figure \ref{fig:butterfly}. It can be demonstrated that when all of the resulting
submatrices are of numerically low rank, the matrix as a whole can be written (approximately) as a product
of $\bigO(\log n)$
sparse matrices, in a manner analogous to the butterfly representation of an FFT
\cite[Sec.~10.4]{1995_briggs_DFT}.

\begin{figure}
\includegraphics[width=\textwidth]{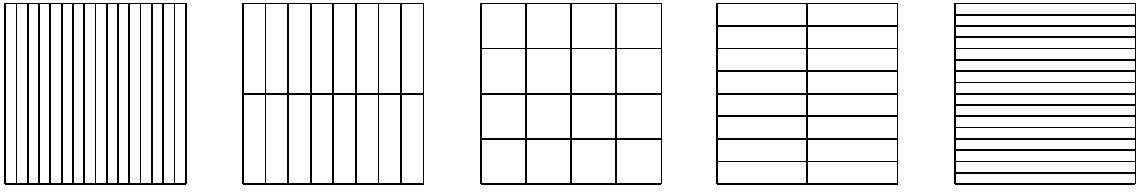}
\caption{Illustration of the \emph{butterfly} rank-structured matrix format described in Section \ref{sec:butterfly}.
The figures show the blocks that must all be of numerically low rank for a butterfly matrix arising from a binary
tree with 16 leaf nodes.}
\label{fig:butterfly}
\end{figure}

Randomization has proven to be a powerful tool for finding butterfly representations of matrices. The techniques
involved are more complex than the methods described in Sections \ref{sec:HODLR} -- \ref{sec:HBS}
due to the multiplicative nature of the representation, and typically involve iterative refinement
rather than direct approximation
\cite{2015_ying_butterfly,2017_interpolative_butterfly,2018_ying_multidimensional_butterfly}.
Similar techniques played an essential role in a recent ground breaking paper
\cite{2017_michielssen_butterfly_IE} that exploits butterfly representations to
directly solve linear systems arising from the modeling of scattering problems in
the high frequency regime.

\subsection{Applications of rank-structured matrices in data analysis}
\label{sec:ASKIT}

The machinery for working with rank-structured hierarchical matrices that we have described
can be used also for the kernel matrices discussed in Section \ref{sec:kernel}
that arise in machine learning and computational statistics.
For instance, \citeasnoun{2015_oneil_gaussian_processes}
demonstrate that data sparse formats of this type can be very effective for simulating
Gaussian processes in low dimensional spaces.

When the underlying dimension grows, the techniques that we have described so far
become uncompetitive. Even $d=4$ would be considered a stretch. Fortunately, significant
progress has recently been made towards extending the essential ideas to higher dimensions.
For instance, \citeasnoun{2015_biros_ASKIT} describe a technique that is designed to
uncover intrinsic lower dimensional structures that are often present in sets of
points $\{\vct{x}_{i}\}_{i=1}^{n}$ that ostensibly live in higher dimensional spaces.
The idea is to use randomized algorithms both for organizing the points into
a hierarchical tree (that induces the tessellation of the matrix)
and for computing low rank approximations to the resulting admissible blocks.
The authors report promising numerical results for a wide selection of
kernel matrices.

In this context, it is as we saw in Section \ref{sec:kernel} rarely possible
to execute a matrix-vector multiplication, or even to evaluate more than a tiny
fraction of the entries of the matrix. This means on the one hand that sampling
must form an integral part of the compression strategy, and on the other that
firm performance guarantees are typically not available. The saving grace is that when
a kernel matrix is used for learning and data analysis, a rough approximation is
often sufficient.

\begin{remark}[Geometry oblivious methods]
A curious observation is that techniques developed for kernel matrices appear to also
be applicable for certain symmetric positive definite (pd) matrices that are not explicitly
presented as kernel matrices. This is a consequence of the well known fact that any pd matrix
$\mtx{K}$ admits a factorization
\begin{equation}
\label{eq:pd_K_fact}
\mtx{K} = \mtx{G}^{*}\mtx{G}
\end{equation}
for a so called ``Gramian matrix'' $\mtx{G}$. (If the eigenvalue decomposition of $\mtx{K}$
takes the form $\mtx{K} = \mtx{U}\mtx{\Lambda}\mtx{U}^{*}$, then a matrix of the form
$\mtx{G} = \mtx{V}\mtx{\Lambda}^{1/2}\mtx{U}^{*}$ is a Gramian if and only if $\mtx{V}$ is unitary.)
The factorization (\ref{eq:pd_K_fact}) says that the entries of $\mtx{K}$ are formed by
the inner products between the columns of $\mtx{G}$,
\begin{equation}
\label{eq:gofmm_kernel}
\mtx{K}(i,j) = \ip{\vct{g}_{i}}{\vct{g}_{j}} = k(\vct{g}_{i},\vct{g}_{i}),
\end{equation}
where $\vct{g}_{i}$ is the $i$'th column of $\mtx{G}$ (the ``Gram vector'') and
where the $k$ is the inner product kernel of Example \ref{ex:innerprodkernel}.
At this point, it becomes plausible that the techniques of \cite{2015_biros_ASKIT}
for kernel matrices associated with points in high dimensional spaces may apply
to certain pd matrices. The key to make this work is the observation that it is
not necessary to explicitly form the Gram factors $\mtx{G}$. All that is needed in
order to organize the points $\{\vct{g}_{i}\}_{i=1}^{n}$ are relative distances
and angles between the points, and we can evaluate these
from the matrix entries of $\mtx{K}$, via the formula
$$
\|\vct{g}_{i} - \vct{g}_{j}\|^{2} =
\|\vct{g}_{i}\|^{2} - 2\mbox{Re}\,\ip{\vct{g}_{i}}{\vct{g}_{j}} + \|\vct{g}_{j}\|^{2} =
\mtx{K}(i,i) - 2\mbox{Re}\,\mtx{K}(i,j) + \mtx{K}(j,j).
$$
The resulting technique was presented in  \cite{2017_biros_GOFMM} as a ``geometry oblivious FMM (GOFMM)'',
along with numerical evidence of its usefulness for important classes of matrices.
\end{remark}
\bibliographystyle{actaagsm}

\bibliography{acta_pgm_bib,acta_jat_bib}

\end{document}